\newtheorem*{thma}{Theorem~A}
\newtheorem*{thmb}{Theorem~B}
\newtheorem*{thmc}{Theorem~C}
\newtheorem*{thmd}{Theorem~D}
\newtheorem{thm}{Theorem}[section]
\newtheorem{cor}[thm]{Corollary}
\newtheorem{prop}[thm]{Proposition}
\newtheorem{fact}[thm]{Fact}
\newtheorem{lemma}[thm]{Lemma}
\newtheorem{claim}{Claim}[thm]
\newtheorem{subclaim}{Subclaim}[claim]
\theoremstyle{definition}
\newtheorem{defn}[thm]{Definition}
\theoremstyle{remark}
\newtheorem{remark}{Remark}
\newcommand*\axiomfont[1]{\textsf{\textup{#1}}\xspace}
\newcommand\ch{\axiomfont{CH}}
\newcommand\FA{\axiomfont{FA}}
\newcommand\BA{\axiomfont{BA}}
\newcommand\zfc{\axiomfont{ZFC}}
\newcommand\zf{\axiomfont{ZF}}
\newcommand\gch{\axiomfont{GCH}}
\newcommand\sch{\axiomfont{SCH}}
\newcommand\sq{\sqsubseteq}
\newcommand\s{\subseteq}
\newcommand\br{\blacktriangleright}
\renewcommand{\restriction}{\mathbin\upharpoonright}
\renewcommand\mid{\mathrel{|}\allowbreak}
\newcommand\Mid{\mathrel{}\middle|\mathrel{}}
\newcommand\sd{\framebox[2.7mm][l]{$\diamondsuit$}\hspace{0.4mm}{}}
\newcommand\one{\mathbf{1}}
\DeclareMathOperator{\U}{U}
\DeclareMathOperator{\pr}{Pr}
\DeclareMathOperator{\id}{id}
\DeclareMathOperator{\cf}{cf}
\DeclareMathOperator{\cl}{cl}
\DeclareMathOperator{\Tr}{Tr}
\DeclareMathOperator{\tr}{tr}
\DeclareMathOperator{\im}{Im}
\DeclareMathOperator{\ns}{NS}
\DeclareMathOperator{\ind}{ind}
\DeclareMathOperator{\reg}{Reg}
\DeclareMathOperator{\card}{Card}
\DeclareMathOperator{\otp}{otp}
\DeclareMathOperator{\dom}{dom}
\DeclareMathOperator{\acc}{acc}
\DeclareMathOperator{\nacc}{nacc}
\DeclareMathOperator{\refl}{Refl}
\DeclareMathOperator{\ssup}{ssup}
\DeclareMathOperator{\supp}{supp}
\author{Chris Lambie-Hanson}
\address{Department of Mathematics, Bar-Ilan University, Ramat-Gan 5290002, Israel.}
\curraddr{Department of Mathematics and Applied Mathematics, Virginia Commonwealth University,
Richmond, VA 23284, USA}
\urladdr{http://people.vcu.edu/~cblambiehanso}
\author{Assaf Rinot}
\address{Department of Mathematics, Bar-Ilan University, Ramat-Gan 5290002, Israel.}
\urladdr{http://www.assafrinot.com}
\thanks{This research was partially supported by the Israel Science Foundation (grant \#1630/14).}
\subjclass[2010]{Primary 03E35; Secondary 03E05, 03E75, 06E10}
\keywords{Knaster, precaliber, closed coloring, unbounded function, stationary reflection, square}
\begin{document}
\title[Knaster and friends I]{Knaster and friends I: \\ Closed colorings and precalibers}
\begin{abstract}
  The productivity of the $\kappa$-chain condition, where $\kappa$ is a regular,
  uncountable cardinal, has been the focus of a great deal of set-theoretic research.
  In the 1970s, consistent examples of $\kappa$-cc
  posets whose squares are not $\kappa$-cc were constructed by Laver, Galvin, Roitman
  and Fleissner. Later, $\zfc$ examples were constructed by Todorcevic, Shelah, and
  others. The most difficult case, that in which $\kappa = \aleph_2$, was
  resolved by Shelah in 1997.

  In this work, we obtain analogous results regarding the infinite productivity
  of strong chain conditions, such as the Knaster property. Among other results, for
  any successor cardinal $\kappa$, we produce a $\zfc$ example of a poset with
  precaliber $\kappa$ whose $\omega^{\mathrm{th}}$ power is not $\kappa$-cc.
  To do so, we carry out a systematic study of colorings satisfying a strong
  unboundedness condition. We prove a number of results indicating circumstances
  under which such colorings exist, in particular focusing on cases in which
  these colorings are moreover closed.
\end{abstract}
\date{\today}

\maketitle

\section{Introduction}

Questions about the productivity of the $\kappa$-chain condition for
regular, uncountable cardinals $\kappa$ have led to a great deal of
set-theoretic research. (For an overview, see \cite{paper18}.) A central
tool that arose in these investigations, implicit in work of Galvin
\cite{galvin_chain_conditions} and isolated by Shelah \cite{shelah_productivity},
is the following principle asserting the existence of rather complicated colorings.
(For unfamiliar notation, in particular our conventions regarding the expression
$[\mathcal{A}]^2$, see the Notation subsection at the end of the Introduction.)

\begin{defn}[Shelah, \cite{shelah_productivity}]\label{def_pr1}
  $\pr_1(\kappa, \kappa, \theta, \chi)$ asserts the existence of a coloring
  $c:[\kappa]^2 \rightarrow \theta$ such that
  for every  $\chi'<\chi$, every family $\mathcal{A}
  \subseteq [\kappa]^{\chi'}$ consisting of $\kappa$-many pairwise disjoint
  sets, and every $i < \theta$, there is $(a,b) \in [\mathcal{A}]^2$ such
  that $c[a \times b] = \{i\}$.
\end{defn}
The primary connection between this principle and the productivity of the
$\kappa$-chain condition stems from the fact that, if $\kappa$ is a regular cardinal
and $\pr_1(\kappa, \kappa, 2, \omega)$ holds, then the $\kappa$-chain condition
fails to be productive.

The work in this paper is motivated in large part by questions concerning
the \emph{infinite} productivity of the $\kappa$-chain condition and its strengthenings,
in particular the $\kappa$-Knaster condition. We introduce and study the following
principle, which plays a role in questions about the infinite productivity of
the $\kappa$-Knaster condition that is analogous to the role played by
$\pr_1(\kappa, \kappa, \theta, \chi)$ in questions about the productivity
of the $\kappa$-chain condition.

\begin{defn}\label{def_U}
  $\U(\kappa,\mu,\theta,\chi)$ asserts the existence of a coloring
  $c:[\kappa]^2 \rightarrow\theta$ such that for every  $\chi'<\chi$, every family
  $\mathcal A\s[\kappa]^{\chi'}$ consisting of $\kappa$-many pairwise disjoint sets,
  and every $i<\theta$, there exists $\mathcal B\in[\mathcal A]^\mu$ such that
  $\min(c[a\times b])>i$ for all $(a,b)\in[\mathcal{B}]^2$.
\end{defn}

\begin{remark} Note two conceptual differences between Definitions \ref{def_pr1} and \ref{def_U}:
  \begin{enumerate}
    \item  The second coordinate in the principle $\U(\kappa, \mu, \theta, \chi)$
      plays a different role from the second coordinate in the principle
      $\pr_1(\kappa, \lambda, \theta, \chi)$. This is the reason we choose to only
      define the case $\lambda=\kappa$.
    \item While $\Pr_1(\kappa,\kappa,\theta,\chi)$ implies $\Pr_1(\kappa,\kappa,\theta',\chi)$ for $\theta'<\theta$,
      the principle $\U(\ldots)$ offers no monotonicity in the third coordinate.
      Indeed, the instance $\U(\kappa,\kappa,\kappa,\kappa)$ is a trivial consequence of $\zf$.
  \end{enumerate}
\end{remark}

It is worth pointing out that certain instances of the above principle are implicit in previous works.
We mention a few examples here.
\begin{itemize}
  \item Implicit in the conclusion of \cite[Claim~4.9]{Sh:365} is the instance $\U(\kappa,2,\omega,\chi)$.
  \item Implicit in the proof of \cite[Claim~4.1]{Sh:572} is a proof of the fact
    that for every infinite regular cardinal $\lambda$, $\U(\lambda^+,2, \lambda, \lambda)$ holds.
  \item Implicit in \cite[Theorem~6.3.6]{todorcevic_book} is the statement that,
    for every infinite cardinal $\lambda$, $\U(\lambda^+,\lambda^+, \omega, \cf(\lambda))$ holds.
  \item Implicit in the proof of \cite[Lemma~3.4]{paper27} is the fact that any
    witness to $\pr_1(\kappa,\kappa,\theta,\chi)$ is also a witness to $\U(\kappa,2,\theta,\chi)$.
\end{itemize}

\subsection{Summary of results}

The results in this paper primarily fall into two classes. The first consists of
results asserting that, under appropriate circumstances, certain instances of $\U(\ldots)$
provably hold. The second consists of applications of $\U(\ldots)$ to questions
regarding the infinite productivity of strengthenings of the $\kappa$-chain condition
and generalizations of Martin's Axiom to higher cardinals. We preview some of the
prominent results here, beginning with those from the first class.

\begin{thma}
  Suppose that $\chi, \theta < \kappa$ are infinite cardinals. If either
  of the two following hypotheses holds, then $\U(\kappa, \kappa, \theta, \chi)$
  holds:
  \begin{enumerate}
    \item $\square(\kappa)$ holds; or
    \item there exists a non-reflecting stationary subset of $E^\kappa_{\geq \chi}$.
  \end{enumerate}
  In particular, if $\theta \leq \lambda$ are infinite, regular cardinals, then
  $\U(\lambda^+, \lambda^+, \theta, \lambda)$ holds.
\end{thma}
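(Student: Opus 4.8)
The plan is to reduce both implications to a single construction built from walks on ordinals along a $C$-sequence extracted from whichever hypothesis is assumed, and to note at the outset that the closing ``in particular'' clause is a degenerate instance of~(2): with $\kappa:=\lambda^+$ and $\chi:=\lambda$ one has $\theta,\chi<\kappa$, and $S:=E^{\lambda^+}_\lambda$ is a stationary subset of $E^{\lambda^+}_{\geq\lambda}=E^{\kappa}_{\geq\chi}$ that reflects nowhere, since for any $\alpha<\lambda^+$ a club in $\alpha$ of order type $\cf(\alpha)\leq\lambda$ omits all ordinals of cofinality $\lambda$. So it suffices to prove the two numbered implications.

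In each case I would first fix a $C$-sequence $\vec C=\langle C_\alpha : \alpha<\kappa\rangle$ (with $C_{\alpha+1}=\{\alpha\}$ and $C_\alpha$ club in $\alpha$ for limit $\alpha$) carrying a non-triviality property and, derived from it, enough ``long'' ladders. Under~(2): for limit $\alpha\notin S$ use that $S\cap\alpha$ is non-stationary to choose $C_\alpha$ with $\acc(C_\alpha)\cap S=\emptyset$, and for $\alpha\in S$ take $\otp(C_\alpha)=\cf(\alpha)\geq\chi$; then no club $D$ can be coherent with $\vec C$, as $\acc(D)$ would be a club disjoint from $S$. Under~(1): take $\vec C$ to be a $\square(\kappa)$-sequence, which is coherent and admits no thread, and invoke the standard fact that for every club $D\subseteq\kappa$ and every $\gamma<\kappa$ there are stationarily many $\alpha$ with $\otp(\nacc(C_\alpha)\cap D)\geq\gamma$ — this supplies long ladders where the walk needs them. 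Next, from $\vec C$ define a $\theta$-valued characteristic $c:[\kappa]^2\to\theta$ of the walk from $\beta$ down to $\alpha$ (with steps $\beta_{k+1}=\min(C_{\beta_k}\setminus\alpha)$): roughly, read a value in $\theta$ off the order type $\otp(C_{\beta_k}\cap\alpha)$ at the last step whose ladder is long enough, via a fixed surjection from that ladder onto $\theta$ with cofinal fibers, and let $c:=0$ otherwise (the precise recipe, and in particular the threshold for ``long enough'', must be adjusted according to how $\chi$ sits relative to $\theta$, possibly passing to $\theta$-many stationary pieces of the relevant index set). The salient point is that along any walk traversing a sufficiently long ladder, every color below $\theta$ is available, and moreover available from a cofinal set of destinations inside that ladder.

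For the verification, fix $\chi'<\chi$, a family $\mathcal A\s[\kappa]^{\chi'}$ of $\kappa$-many pairwise disjoint sets, and $i<\theta$. A $\Delta$-system argument and Fodor's lemma yield a stationary $Z\s\kappa$ and, for $\delta\in Z$, a member $a_\delta\in\mathcal A$ with $\sup(a_\delta)<\delta$ whose walk-behavior below $\delta$, and whose position relative to $\delta$, are constant. The non-triviality of $\vec C$ now produces $\kappa$-many \emph{pivots}: under~(2), stationarily many $\delta\in Z$ lie in $S$, and since $\cf(\delta)\geq\chi>\chi'=|a_\delta|$ no member of $\mathcal A$ is cofinal in $\delta$; using that $\acc(C_\gamma)\cap S=\emptyset$ for $\gamma\notin S$, the walk from the points of any later $b\in\mathcal A$ down to $a_\delta$ can be arranged to step onto $\delta$ and then to be steered, inside the long ladder $C_\delta$, into the segment on which $c$ takes a value exceeding $i$; under~(1), the analogous role is played by a club $D$ derived from $Z$ together with the unboundedness of $\otp(\nacc(C_\alpha)\cap D)$. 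Finally, thin $\mathcal A$ recursively along the pivots to a subfamily $\mathcal B\in[\mathcal A]^\kappa$ with $\min(c[a\times b])>i$ for every $(a,b)\in[\mathcal B]^2$.

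I expect the genuinely hard part to be case~(1). In~(2) a non-reflecting $S$ is handed to us and a Fodor-style pressing down produces the pivots; in~(1) no stationary set is given, so the pivot extraction must be squeezed purely out of the coherence and un-threadability of the $\square(\kappa)$-sequence, via a club-guessing-type statement on the order types $\otp(\nacc(C_\alpha)\cap D)$, and making this dovetail with the $\Delta$-system reduction is delicate. A secondary but pervasive difficulty is calibrating $c$ so that its range is exactly $\theta$ — not $\omega$, not $\kappa$ — while the above reduction still goes through: this is what forces the surjections with cofinal fibers (or the passage to $\theta$-many stationary pieces) and makes the bookkeeping between $\chi'<\chi$, the length of the pivot ladder, and $\theta$ something that has to be tracked with care, in particular to guarantee that a full $\kappa$-sized $\mathcal B$ survives rather than a single pair.
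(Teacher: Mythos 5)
Your high-level plan --- walks along a non-trivial $C$-sequence, a $\Delta$-system plus Fodor to extract pivots, and the observation that the ``in particular'' clause is the degenerate case $\chi=\lambda$, $S=E^{\lambda^+}_\lambda$ of hypothesis~(2) --- matches the paper's in outline, and the $C$-sequence you propose for case~(2) (take $\acc(C_\alpha)$ disjoint from $S$ when $\alpha\notin S$, and $\otp(C_\alpha)=\cf(\alpha)$ on $S$) is essentially the one used in Corollary~\ref{l23}. But there is a genuine gap at exactly the two places you yourself flag as delicate, and it traces back to never pinning down the coloring.

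The paper's construction (Theorem~\ref{simu}) does not read colors off order types. It fixes pairwise-disjoint sets $\langle H_i\mid i<\theta\rangle$, sets $h(\gamma):=\sup\{i<\theta\mid(\acc(C_\gamma)\cup\{\gamma\})\cap H_i\neq\emptyset\}$, and colors $c(\alpha,\beta):=\max\{h(\tau)\mid\tau\in\im(\tr(\alpha,\beta))\}$. The entire weight of the non-reflection or square hypothesis is concentrated in the single fact that $h(\gamma)<\theta$ for every $\gamma$ --- i.e.\ each $\acc(C_\gamma)$ meets only boundedly many $H_i$ --- which is what makes $c$ genuinely $\theta$-valued; and the max-over-trace form gives the monotonicity ($\im(\tr(\gamma,\beta))\s\im(\tr(\alpha,\beta))$ forces $c(\gamma,\beta)\le c(\alpha,\beta)$) that the pivot argument then exploits. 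Your hedge about ``passing to $\theta$-many stationary pieces'' is the correct move and should be the primary definition; the order-type/surjection recipe, as stated, is not a well-defined $\theta$-valued coloring on all of $[\kappa]^2$ with any monotonicity I can see, and I do not see how to make it survive the $\Delta$-system reduction to a $\kappa$-sized $\mathcal B$ rather than a single pair. For case~(1), your route --- a club-guessing statement on $\otp(\nacc(C_\alpha)\cap D)$ --- is genuinely different from the paper's, and it is nowhere near developed: in particular it is unclear how to squeeze $\theta$ pairwise-separating colors out of it. The paper instead invokes \cite[Theorem~1.24]{paper29}, by which $\square(\kappa,\sq_\sigma)$ yields a $C$-sequence together with a partition $\langle F_i\mid i<\kappa\rangle$ of $\kappa$ into fat sets compatible with it; setting $H_i:=F_i\cap E^\kappa_{\ge\sigma}$ hands you the $\langle H_i\rangle$ directly and makes both halves of Theorem~A fall out of the one lemma (Corollaries~\ref{l23} and~\ref{l24}).
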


\begin{proof}
  This follows from Corollaries \ref{l23} and \ref{l24} and the fact that,
  if $\lambda$ is a regular cardinal, then $E^{\lambda^+}_\lambda$ is a
  non-reflecting stationary set.
\end{proof}

The previous result indicates that all possible instances of $\U(\ldots)$ hold at
successors of regular cardinals and also provides cases in which nontrivial instances
of $\U(\ldots)$ hold at successors of singular cardinals and inaccessible cardinals.
The next results provide further information in this direction.

\begin{thmb}
  If $\lambda$ is a singular cardinal and $\theta \le \lambda$ is an infinite
  cardinal, then any one of the following hypotheses implies that
  $\U(\lambda^+, \lambda^+, \theta, \cf(\lambda))$ holds:
  \begin{enumerate}
    \item $2^\lambda = \lambda^+$;
    \item $\refl({<}\cf(\lambda),  \lambda^+)$ fails;
    \item there is a closed witness to $\U(\lambda^+,2,\theta,2)$;
    \item $\cf(\theta) < \cf(\lambda)$ and $2^{\cf(\lambda)}< \lambda$;
    \item $\cf(\theta) = \cf(\lambda)$;
    \item $\cf(\theta) = \omega$.
  \end{enumerate}
\end{thmb}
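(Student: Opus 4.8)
The plan is to isolate one core statement and then verify it separately under each of the six hypotheses. Write $\mu:=\cf(\lambda)$ and fix an increasing sequence $\langle\lambda_j\mid j<\mu\rangle$ of regular cardinals cofinal in $\lambda$. Two reductions carry most of the load. The first is a \emph{stretching} observation: if $\U(\lambda^+,\lambda^+,\cf(\theta),\mu)$ holds, then so does $\U(\lambda^+,\lambda^+,\theta,\mu)$; indeed, fixing a strictly increasing cofinal map $e\colon\cf(\theta)\to\theta$ and a witness $d\colon[\lambda^+]^2\to\cf(\theta)$ to the former, the composition $c:=e\circ d$ witnesses the latter, since given a family $\mathcal A$ as in Definition~\ref{def_U} and given $i<\theta$, one picks $\xi<\cf(\theta)$ with $e(\xi)>i$, applies the unboundedness property of $d$ with $\xi$ in place of $i$ to obtain $\mathcal B\in[\mathcal A]^{\lambda^+}$ with $\min(d[a\times b])>\xi$ for all $(a,b)\in[\mathcal B]^2$, and reads off $\min(c[a\times b])=e(\min(d[a\times b]))>e(\xi)>i$. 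The second reduction is an \emph{amplification lemma} that I would prove first: \emph{a closed witness to $\U(\lambda^+,2,\sigma,2)$ yields a witness to $\U(\lambda^+,\lambda^+,\sigma,\mu)$, for every infinite cardinal $\sigma$.} Granting these, hypothesis (3) is immediate, and every other hypothesis reduces to the task of producing, for a suitable cardinal $\sigma\le\theta$ with $\cf(\sigma)=\cf(\theta)$, a \emph{closed} witness to $\U(\lambda^+,2,\sigma,2)$.

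The amplification lemma is the technical heart, and closedness is exactly what makes it run. Roughly: given a closed $c\colon[\lambda^+]^2\to\sigma$, a family $\mathcal A$ of $\lambda^+$ pairwise disjoint sets of a fixed size $\chi'<\mu$, and a target $i<\sigma$, first thin $\mathcal A$ so that the increasing enumerations of its members are aligned into a $\Delta$-system of sequences lying in increasing blocks; then build $\mathcal B=\{x_\eta\mid\eta<\lambda^+\}$ by a single recursion along an increasing continuous chain $\langle M_\eta\mid\eta<\lambda^+\rangle$ of elementary submodels of a large enough structure, placing $x_\eta$ just above $\delta_\eta:=\sup(M_\eta\cap\lambda^+)$. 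At stages $\eta$ with $\cf(\delta_\eta)\ge\mu$, one combines the hypothesized unboundedness with the closedness of the sections $\{\alpha<\beta\mid c(\alpha,\beta)\le i\}$ to arrange that $c(\alpha,x_\eta(\zeta))>i$ for every $\alpha\in M_\eta\cap\lambda^+$ and every $\zeta<\chi'$; in view of the alignment this is precisely what guarantees $\min(c[a\times b])>i$ for all $(a,b)\in[\mathcal B]^2$. The role of closedness is that it lets one satisfy, at a single limit stage, the unboundedly many constraints inherited from all previously chosen members simultaneously, whereas the naive alternative of intersecting many $\lambda^+$-sized subfamilies is illegitimate, the intersection of $\mu$-many such sets possibly being too small.

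It remains to treat the cases. For (3) there is nothing further to do. For (6): the instance $\U(\lambda^+,\lambda^+,\omega,\cf(\lambda))$ holds in \zfc, as noted in the Introduction (implicit in \cite[Theorem~6.3.6]{todorcevic_book}), so, since $\cf(\theta)=\omega$, stretching gives the conclusion. For (5): likewise $\U(\lambda^+,\lambda^+,\cf(\lambda),\cf(\lambda))$ holds in \zfc, obtainable by a walks argument (equivalently, via the amplification lemma applied to a \zfc\ closed witness to $\U(\lambda^+,2,\cf(\lambda),2)$), so stretching from $\cf(\theta)=\cf(\lambda)$ applies. For (1): by a theorem of Shelah, $2^\lambda=\lambda^+$ yields $\diamondsuit(\lambda^+)$, and a recursion on the second coordinate $\beta<\lambda^+$ — at stage $\beta$ using the $\diamondsuit$-guess of a pair $(A,i)$, with $A\s\beta$ unbounded and $i<\theta$, to define $c(\cdot,\beta)$ with closed sections so that $\beta$ witnesses unboundedness for $(A,i)$ — produces a closed witness to $\U(\lambda^+,2,\theta,2)$, so (3) applies. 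For (2): from a counterexample $\langle S_\xi\mid\xi<\rho\rangle$, $\rho<\cf(\lambda)$, to $\refl({<}\cf(\lambda),\lambda^+)$ — a family of stationary subsets of $\lambda^+$ with no common reflection point — attach to each limit $\delta<\lambda^+$ an index $\xi_\delta<\rho$ with $S_{\xi_\delta}\cap\delta$ nonstationary together with a club $C_\delta\s\delta\setminus S_{\xi_\delta}$; a walks argument along $\langle C_\delta\mid\delta<\lambda^+\rangle$ yields a closed coloring into $\theta$ whose unboundedness on $[A]^2$ for every $A\in[\lambda^+]^{\lambda^+}$ is forced by the non-reflection, i.e.\ a closed witness to $\U(\lambda^+,2,\theta,2)$, so (3) applies. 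For (4): using $2^{\cf(\lambda)}<\lambda$ one fixes a scale in $\prod_{j<\mu}\lambda_j$ with enough coherence (e.g.\ a good scale), and since $\cf(\theta)<\mu$ one extracts from it, by a walks/scale argument, a closed witness to $\U(\lambda^+,2,\cf(\theta),2)$ — the inequality $2^{\cf(\lambda)}<\lambda$ being exactly what keeps the relevant bookkeeping below $\lambda$ — after which amplification and stretching finish. The chief obstacle is the amplification lemma; secondarily, in cases (2) and (4), coaxing from the reflection-failure, respectively the cardinal-arithmetic, hypothesis a $C$-sequence, respectively a scale, that supports a \emph{closed} coloring with exactly the required number of colours. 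It is the absence of monotonicity in the third coordinate, flagged in the Remark, that makes this last point matter and forces the case split on $\cf(\theta)$.
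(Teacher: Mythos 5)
Your overall architecture (reduce to a closed witness with small $\chi$, then amplify the second and fourth parameters, then stretch the third) is in the spirit of the paper, and the "stretching" step is just Proposition~\ref{obvious}(2). But the technical heart — what you call the amplification lemma — is true (it is a special case of Theorem~\ref{increasechi}(3)), yet your proof sketch for it does not work. Closedness of $c$ tells you that each section $D^c_{\le i}(\beta)=\{\alpha<\beta\mid c(\alpha,\beta)\le i\}$ is closed in $\beta$; equivalently, if $c(\gamma,\beta)>i$ then $c(\alpha,\beta)>i$ holds for all $\alpha$ in a \emph{tail} below $\gamma$. It does \emph{not} give the global statement that $c(\alpha,\beta)>i$ holds for all $\alpha\in M_\eta\cap\lambda^+$. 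In your elementary-chain recursion, at a limit stage $\eta$ you would need to pick a single $a\in\mathcal A$ with $a>\delta_\eta$ so that $c(\alpha,\beta)>i$ for all $\alpha<\delta_\eta$ and all $\beta\in a$; neither the $\U(\lambda^+,2,\sigma,2)$ hypothesis (which only sees singletons) nor closedness supplies this, and the attempted fix via adding $\delta_\eta$-type anchors to the sets of $\mathcal{A}$ (the mechanism of Lemma~\ref{pumpclosed}) requires the unboundedness hypothesis to apply to sets of size $>1$, i.e.\ it already needs $\U(\lambda^+,2,\sigma,\chi)$ for the larger $\chi$. This is exactly why Lemma~\ref{pumpclosed} only upgrades the second coordinate (from $2$ to $\kappa$) while keeping $\chi$ fixed, and why Lemma~\ref{chi3}/Lemma~2.7 show the jump from $\chi=2$ to $\chi=3$ can fail even for $\mu=\kappa$. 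The paper's actual proof of the amplification step (Claims~\ref{3184} and \ref{3193} inside Theorem~\ref{increasechi}) does not reuse $c$ directly: it introduces a club-guessing $C$-sequence (or matrix, in the countable-cofinality case, via the Eisworth--Shelah theorem), walks along it, and defines a \emph{new} coloring $d(\alpha,\beta):=\max\{c(\zeta,\gamma)\mid(\zeta,\gamma)\in[\im(\tr(\alpha,\beta))]^2\}$, which is then shown closed and to witness the stronger instance via Lemma~\ref{pumpclosed}. The original $c$ need not itself witness $\U(\lambda^+,\lambda^+,\sigma,\cf(\lambda))$.

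The reductions you propose for cases (1), (2), and (4) also diverge from the paper in ways that are not obviously repairable. For (2), a failure of $\refl({<}\cf(\lambda),\lambda^+)$ gives fewer than $\cf(\lambda)$-many stationary sets with no common reflection point, but each of them individually may reflect everywhere, so the $C$-sequence you describe (each $C_\delta$ avoiding some $S_{\xi_\delta}$) is not available as stated; the paper instead argues by contraposition, showing that if no closed witness exists then a certain club-guessing ideal $\mathcal I$ is weakly $\theta$-saturated and $\theta$-indecomposable, from which $\refl({<}\cf(\lambda),\lambda^+)$ follows via \cite[Theorem~2(4)]{MR2652193} and Theorem~\ref{l23s}. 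For (4), the paper uses $2^{\cf(\lambda)}<\lambda$ only through $\cf(\ns_{\cf(\lambda)},\s)<\lambda$ to show the same ideal fails weak $\cf(\lambda)$-saturation (Claim~\ref{3181}(c)), yielding the coloring directly — no scales are involved, and it is far from clear that a scale argument would produce a \emph{closed} coloring of the required sort. For (1), the paper's Theorem~\ref{ch} is again a walks construction using $\diamondsuit$ on $E^{\lambda^+}_{\chi}$ to guess clubs fed into a $C$-sequence, not a direct column-by-column recursion; your sketch leaves the simultaneous requirement ``$D^c_{\le i}(\beta)$ closed for every $i$'' unexamined. Cases (3), (5), (6) are handled correctly in your outline (modulo the amplification lemma for (3)).
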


\begin{proof}
  (1) follows from Theorem~\ref{ch},
  (2)--(4) follow from Theorem ~\ref{increasechi},
  (5) follows from Corollary~\ref{predecessor},
  and
  (6) follows from Corollary~\ref{omegacolors}.
\end{proof}

\begin{thmc}
  If $\kappa$ is an inaccessible cardinal, $\chi<\kappa$ is an infinite cardinal,
  and there is a stationary subset of $E^\kappa_{\geq \chi}$
  that does not reflect at any inaccessible cardinal, then $\U(\kappa, \kappa,
  \theta, \chi)$ holds for every infinite cardinal $\theta<\kappa$.
\end{thmc}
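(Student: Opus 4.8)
The plan is to deduce Theorem~C from the $C$-sequence machinery already underlying Theorems~A and B. Corollaries~\ref{l23} and~\ref{l24} both extract $\U(\kappa,\kappa,\theta,\chi)$ from a $C$-sequence on $\kappa$ that is ``nontrivial'' and whose ladders over a stationary subset of $E^\kappa_{\ge\chi}$ have order type $\ge\chi$, together with a mild amenability property; so the real task is to manufacture such a $C$-sequence under the weaker hypothesis of Theorem~C, and then feed it into the same derivation.

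First, note that $\kappa$ cannot be weakly compact: a weakly compact cardinal is $\Pi^1_1$-indescribable, so every one of its stationary subsets --- in particular $S$ --- would reflect at stationarily many inaccessibles below it, against the hypothesis. Hence $\kappa$ carries a nontrivial $C$-sequence, which will serve as a skeleton. Second, fix the given $S\s E^\kappa_{\ge\chi}$ reflecting at no inaccessible, and let $R$ be the set of ordinals at which $S$ does reflect; by hypothesis every member of $R$ is non-inaccessible. (If some stationary subset of $S$ reflected nowhere at all, Corollary~\ref{l24} would apply directly, so the \emph{substantive} case is the one in which $S$ reflects densely among the non-inaccessible ordinals.) Since every $\alpha\in R$ satisfies $|\alpha|<\kappa$, the instance of $\U$ relevant to $\alpha$ has already been secured below $\kappa$: at $\alpha$ itself when $\alpha$ is a successor cardinal --- by Theorem~A if $\alpha=\mu^+$ with $\mu$ regular, and by Theorem~B (or a recursive instance of the present argument) if $\mu$ is singular --- and at $|\alpha|^+<\kappa$ otherwise; in each case a $C$-sequence computing that witness can be transported into $\alpha$.

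With these ingredients, I would build the working $C$-sequence $\vec C=\langle C_\alpha\mid\alpha\in\acc(\kappa)\rangle$ by recursion, maintaining: each $C_\alpha$ is club in $\alpha$; $\vec C$ stays nontrivial; $\otp(C_\delta)=\cf(\delta)\ge\chi$ for $\delta\in S$; for $\alpha\notin R$ --- in particular every inaccessible $\alpha$ --- one threads $C_\alpha$ through a club of $\alpha$ disjoint from $S$, which is possible exactly because $S\cap\alpha$ is then non-stationary in $\alpha$; and for $\alpha\in R$, a tail of $C_\alpha$ is made to copy the transported $C$-sequence of the previous paragraph, so that a walk entering $\alpha$ is at once handed off to the corresponding local witness. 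Then let $c:[\kappa]^2\to\theta$ be the walk-derived coloring built from $\vec C$ as in Theorems~A and B, arranged so that the large order types over $S$ and the local witnesses over $R$ let any target color below $\theta$ be realized along minimal walks between sufficiently separated ordinals. For the verification, given $\chi'<\chi$, a pairwise disjoint $\mathcal A\s[\kappa]^{\chi'}$ of size $\kappa$, and $i<\theta$, one runs the standard elementary-submodel argument simultaneously along a club of $\delta\in S$: each such $\delta$ yields, via an $\in$-chain of submodels of length $\cf(\delta)\ge\chi>\chi'$ capturing $\mathcal A\cap\delta$, a family of $\cf(\delta)$-many members of $\mathcal A$ pairwise separated below $\delta$ by a point forcing $c>i$, and amalgamating these configurations over the club of $\delta$'s --- using coherence and nontriviality of $\vec C$ to glue --- produces a single $\mathcal B\in[\mathcal A]^\kappa$ with $\min(c[a\times b])>i$ for all $(a,b)\in[\mathcal B]^2$.

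I expect the crux to be the treatment of $R$. In the genuinely non-reflecting situation of Theorem~A every ladder avoids $S$, so the walk never stalls inside an ordinal below which $S$ is stationary; here the ladders over $R$ are forced to meet $S$, and each $\alpha\in R$ must be neutralized by splicing in a local $\U$-witness (at $\alpha$, or at $|\alpha|^+$) while simultaneously preserving both nontriviality of $\vec C$ and the large order types over $S$, after which one must check that walks passing through the spliced regime still contribute colors exceeding $i$ on the reflected family rather than spuriously small ones. The successor-cardinal points of $R$ sitting over singular cardinals look the most sensitive, since there the localization does not lower the cardinality and one must genuinely invoke the singular-cardinal machinery behind Theorem~B while arranging the recursion to terminate. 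The secondary difficulty, ubiquitous in arguments of this type, is boosting the ``width $\cf(\delta)$'' near-homogeneity obtained inside a single $\delta\in S$ up to genuine width $\kappa$ --- which is exactly what forces the simultaneous amalgamation over a club of $\delta$'s in the verification.
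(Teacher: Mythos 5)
Your proposal diverges substantially from the paper's argument, and the divergence opens a real gap that you partially flag yourself but do not resolve. The paper does \emph{not} build its coloring by recursively splicing together local $\U$-witnesses along the trace $R=\Tr(S)$. Instead, after reducing (as you do) to the case in which every stationary $T\s E^\kappa_{\ge\chi}$ has $\Tr(T)\cap E^\kappa_{\ge\theta}$ stationary, it applies Fodor twice to extract regular $\sigma<\tau$ with a stationary $S\s E^\kappa_\sigma\cap\card$ and a stationary $S^0\s E^\kappa_\tau$, both non-reflecting at inaccessibles. It then invokes Hoffman's club-guessing theorem to get a ladder system $\langle e_\delta\mid\delta\in S\rangle$, forms the associated club-guessing ideal $\mathcal I=\id_p(\bar C,\bar I)$, and --- this is the key step your proposal has no analogue of --- uses Shelah's \cite[Claim~3.3]{Sh:365}, applied to $S^0$, to prove that $\mathcal I$ is \emph{not weakly $\theta$-saturated}. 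That non-saturation is precisely what makes all $\theta$-many colors reachable, because one can fix a surjection $h:\kappa\to\theta$ with every fiber $\mathcal I$-positive and feed $h$ into a walk-derived coloring along a single carefully built $C$-sequence. The verification is then a one-pass Fodor/closed-coloring argument via Lemma~\ref{pumpclosed}, not an amalgamation over elementary submodels.

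The gap in your plan is the assertion that ``the instance of $\U$ relevant to $\alpha$ has already been secured below $\kappa$.'' This is not so. If $\alpha\in R$ is (or localizes to) $\mu^+$ with $\mu$ singular, Theorem~B only gives $\U(\mu^+,\mu^+,\theta,\cf(\mu))$ under one of six side hypotheses, none of which you have at your disposal, and Theorem~\ref{t310} shows that the unconditional statement genuinely fails in the presence of large cardinals. Moreover even when a local witness is available, it is a \emph{coloring}, not a $C$-sequence; the ``splicing into a tail of $C_\alpha$'' operation you invoke is never defined and does not make sense as stated. So the two sensitive points you yourself identify --- the singular successors in $R$, and the amalgamation from width $\cf(\delta)$ up to width $\kappa$ --- are exactly where the proposal breaks down, and the paper's ideal-theoretic route (club-guessing plus Shelah non-saturation) is what replaces both.
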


\begin{proof}
  This follows from Theorem~\ref{inaccessible_reflection}.
\end{proof}

Observe that the presence of large cardinals places limits on the extent to
which $\U(\ldots)$ holds at inaccessible cardinals or successors of singular cardinals.
In particular, it is immediate that, if $\kappa$ is weakly compact, then
$\U(\kappa, 2, \theta, 2)$ fails for every $\theta < \kappa$.
We also show that,
if $\lambda$ is a singular limit of strongly compact cardinals, then
$\U(\lambda^+, 2, \theta, \cf(\lambda)^+)$ fails for all
$\theta \in \reg(\lambda) \setminus \{\cf(\lambda)\}$.

We next turn to applications of $\U(\ldots)$. Our primary result regarding the
infinite productivity of strong chain conditions is as follows and answers a
question raised at the end of the Introduction of \cite{lh_lucke}

\begin{thmd}
  Suppose that $\theta, \chi < \kappa$ are infinite regular cardinals,
  $\kappa$ is $({<}\chi)$-inaccessible, and $\U(\kappa, \kappa, \theta, \chi)$
  holds. Then there exists a $\chi$-directed closed poset $\mathbb{P}$
  such that $\mathbb{P}^\tau$ is $\kappa$-Knaster for all $\tau < \min(\{\chi, \theta\})$,
  but $\mathbb{P}^\theta$ is not $\kappa$-cc.
  In particular, for every infinite successor cardinal $\kappa$,
  there exists a $\kappa$-Knaster poset $\mathbb{P}$ whose $\omega^{\mathrm{th}}$
  power is not $\kappa$-cc.
\end{thmd}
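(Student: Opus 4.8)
The plan is to let $\mathbb{P}$ be a disjoint sum of $\theta$-many ``homogeneity'' posets built from a witness to $\U(\kappa,\kappa,\theta,\chi)$, and then to verify the three assertions by $\Delta$-system thinnings together with one application of $\U$ per family. Concretely, fix a witness $c\colon[\kappa]^2\to\theta$ to $\U(\kappa,\kappa,\theta,\chi)$. For $i<\theta$ let $\mathbb{Q}_i$ be the poset of all $a\in[\kappa]^{<\chi}$ such that $c(\xi,\eta)>i$ for every $\{\xi,\eta\}\in[a]^2$, ordered by reverse inclusion, and let $\mathbb{P}:=\bigoplus_{i<\theta}\mathbb{Q}_i$: a condition is a pair $(i,a)$ with $i<\theta$ and $a\in\mathbb{Q}_i$, and $(i,a)\le(j,b)$ iff $i=j$ and $a\supseteq b$ (adjoin a maximum $\one$ if desired).

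Two easy points come first. \emph{$\mathbb{P}$ is $\chi$-directed closed:} any two compatible conditions agree in their first coordinate, so a directed $D\s\mathbb{P}$ of size ${<}\chi$ lies in a single summand $\{i\}\times\mathbb{Q}_i$, and then $(i,\bigcup\{a:(i,a)\in D\})$ is a lower bound (using regularity of $\chi$ for the size bound, and directedness of $D$ to see the union is $\mathbb{Q}_i$-homogeneous). \emph{$\mathbb{P}^\theta$ is not $\kappa$-cc:} for $\alpha<\kappa$ put $\bar p^{\,\alpha}(k):=(k,\{\alpha\})$ for each $k<\theta$; for distinct $\alpha,\beta$, any common extension of $\bar p^{\,\alpha}$ and $\bar p^{\,\beta}$ would, at the coordinate $k:=c(\alpha,\beta)$, supply some $a\supseteq\{\alpha,\beta\}$ with $c(\alpha,\beta)>k$, which is absurd, so $\{\bar p^{\,\alpha}:\alpha<\kappa\}$ is an antichain.

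The substantive part is showing \emph{$\mathbb{P}^\tau$ is $\kappa$-Knaster for every $\tau<\min(\{\chi,\theta\})$}. Given $\kappa$-many conditions $\bar p^{\,\alpha}=\langle(i^\alpha_j,a^\alpha_j):j<\tau\rangle$, I would invoke the regularity and ${<}\chi$-inaccessibility of $\kappa$ — which yield $\theta^\tau<\kappa$ and $\mu^{{<}\chi}<\kappa$ for $\mu<\kappa$, hence the $\Delta$-system lemma — to pass to a subfamily of size $\kappa$ along which: $\langle i^\alpha_j:j<\tau\rangle$ is a fixed sequence $\langle i_j:j<\tau\rangle$; the sets $e^\alpha:=\bigcup_{j<\tau}a^\alpha_j$ form a $\Delta$-system with root $R$, the ``petals'' $d^\alpha:=e^\alpha\setminus R$ being pairwise disjoint, nonempty (otherwise all conditions would be coded by subsets of $R$, of which there are fewer than $\kappa$), and of a fixed size ${<}\chi$; for each $j<\tau$ the intersection $a^\alpha_j\cap R$ is a fixed $R_j$; and $\sup(d^\alpha)<\min(d^\beta)$ whenever $\alpha<\beta$. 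The crucial point is that $i^{*}:=\sup_{j<\tau}i_j$ is ${<}\theta$, because $\theta$ is regular and $\tau<\theta$; so a single application of $\U(\kappa,\kappa,\theta,\chi)$ to the family $\{d^\alpha:\alpha<\kappa\}$ and the color $i^{*}$ yields $\mathcal B$ of size $\kappa$ with $\min(c[d^\alpha\times d^\beta])>i^{*}$ whenever $(d^\alpha,d^\beta)\in[\mathcal B]^2$.

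To finish, I would check that the $\kappa$-many conditions indexed by $\mathcal B$ are pairwise compatible: fix $\alpha<\beta$ with $d^\alpha,d^\beta\in\mathcal B$ and $j<\tau$; it suffices that $a^\alpha_j\cup a^\beta_j\in\mathbb{Q}_{i_j}$, since then $(i_j,a^\alpha_j\cup a^\beta_j)$ is a common extension of $\bar p^{\,\alpha}(j)$ and $\bar p^{\,\beta}(j)$. This set has size ${<}\chi$; a pair inside $a^\alpha_j$ or inside $a^\beta_j$ has $c$-value $>i_j$ by choice of those conditions; and a genuinely crossing pair $\{\xi,\eta\}$, say $\xi\in a^\alpha_j\setminus a^\beta_j$ and $\eta\in a^\beta_j\setminus a^\alpha_j$, cannot meet $R$ — if $\xi\in R$ then $\xi\in a^\alpha_j\cap R=R_j=a^\beta_j\cap R\s a^\beta_j$, a contradiction — so $\xi\in d^\alpha$ and $\eta\in d^\beta$, giving $c(\xi,\eta)>i^{*}\ge i_j$. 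The ``in particular'' clause then follows by running this with $\theta=\chi=\omega$: ${<}\omega$-inaccessibility is automatic, $\min(\{\chi,\theta\})=\omega$ so $\mathbb{P}=\mathbb{P}^1$ is $\kappa$-Knaster while $\mathbb{P}^\omega$ is not $\kappa$-cc, and $\U(\lambda^+,\lambda^+,\omega,\omega)$ is available from Theorem~A (when $\lambda$ is regular) or from clause~(6) of Theorem~B (when $\lambda$ is singular), possibly after weakening the last parameter. The step I expect to be the main obstacle is getting the $\Delta$-system refinement exactly right so that one invocation of $\U$ settles all $\tau$ coordinates at once — in particular, recognizing that the single color $i^{*}=\sup_{j<\tau}i_j$ stays below $\theta$ and that every crossing pair is pushed into two distinct petals $d^\alpha,d^\beta$ rather than touching the common root.
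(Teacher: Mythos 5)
Your proof is correct and follows essentially the same route as the paper's (Lemma 4.1, \texttt{knasterlemma}): the same disjoint-sum poset of ``$c$-homogeneous'' sets (up to an off-by-one in whether conditions in the $i$-th summand require colors $>i$ or $\ge i$), the same $\Delta$-system thinning with the observation that $\sup_{j<\tau} i_j$ remains below $\theta$ so a single application of $\U$ handles all $\tau$ coordinates, and essentially the same antichain in $\mathbb{P}^\theta$. The only small difference is that your compatibility check establishes linkedness (enough for Knaster) whereas the paper's Lemma 4.1 goes on to show the selected family is centered, yielding precaliber $\kappa$; but for Theorem~D itself the weaker conclusion is all that is needed.
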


\begin{proof}
  This follows from Lemma~\ref{knasterlemma} and Corollary~\ref{omegacolors}.
\end{proof}

We also present an unpublished result of Inamdar \cite{tanmay_note} indicating a fundamental
limitation to generalizations of Martin's Axiom to higher cardinals. Implicit in
Inamdar's proof was a use of $\U(\lambda^+, \lambda^+, \omega, \lambda)$ for regular,
uncountable $\lambda$ that helped motivate some of the work in this paper.

\subsection{Structure of the paper and its sequels}

In Section~\ref{prelim_section}, we present some basic facts about $\U(\ldots)$
to lay the framework for further analysis. In particular, we consider some
elementary implications and non-implications that exist between various instances
of $\U(\ldots)$, prove some limitations placed on $\U(\dots)$ by the existence of
large cardinals, and we discuss some properties of trees derived from witnesses to
$\U(\ldots)$.

In Section~\ref{application_section}, we present the primary applications of
the paper. Subsection~\ref{productivity_subsection} contains our results regarding
$\U(\ldots)$ and the infinite productivity of strengthenings of the $\kappa$-chain
condition. In Subsection~\ref{forcing_axioms_subsection}, we present the
aforementioned result of Inamdar concerning generalizations of Martin's Axiom
to higher cardinals.

In Section~\ref{closed_section}, we prove our results regarding circumstances
under which instances of $\U(\ldots)$ necessarily hold. In all cases, our
proofs will in fact yield witnesses to $\U(\ldots)$ with certain closure
properties that make them better-behaved. In Subsection~\ref{closed_subsection},
we present some basic facts about these closed colorings. In Subsection~\ref{walks_subsection},
we review the necessary background concerning \emph{walks on ordinals}, which
provide our main tool for constructing witnesses to $\U(\ldots)$.
Subsection~\ref{first_construction_subsection} contains our first construction of
such witnesses, in particular yielding the fact that all possible instances of
$\U(\ldots)$ hold at successors of regular cardinals. Subsections \ref{singular_subsection}
and \ref{inaccessible_subsection} contain further constructions at successors of
singular cardinals and inaccessible cardinals, respectively.

\medskip

As the title of the paper suggests, it is the first paper in a series. In Part~II, we introduce a new cardinal
invariant for regular uncountable cardinals, the \emph{$C$-sequence number}, which
is intimately connected to the fourth parameter of $\U(\ldots)$ as well as to various
square principles. Considerations of the $C$-sequence number will allow us
to obtain additional results regarding the existence
of closed witnesses to $\U(\ldots)$. In Part~III, we study the existence of \emph{subadditive} witnesses to
$\U(\ldots)$ and discuss applications of such subadditive witnesses to the
infinite productivity of further strengthenings of the $\kappa$-chain condition
and to topological matters, such as the question as to the tightness of the
square of the sequential fan. The techniques of parts II and III will also allow us to prove
independence results separating certain instances of $\U(\ldots)$ at inaccessible
cardinals and successors of singular cardinals. Where relevant, we will make
reference to results in Parts II and III that will provide further context for the results
in this paper, though no knowledge of these papers is necessary for any of
the results contained here.

\subsection{Notation and conventions}

Throughout the paper, $\kappa$ denotes a regular uncountable cardinal, and $\chi,\theta,$ and $\mu$
denote cardinals $\le\kappa$.
We say that $\kappa$ is \emph{$({<}\chi)$-inaccessible} iff, for all $\lambda<\kappa$ and
$\nu<\chi$, $\lambda^{\nu}<\kappa$. $\reg$ denotes the class of infinite regular
cardinals, and $\reg(\kappa)$ denotes $\reg \cap \kappa$. $E^\kappa_\chi$
denotes the set $\{\alpha < \kappa \mid \cf(\alpha) = \chi\}$, and
$E^\kappa_{\geq \chi}$, $E^\kappa_{>\chi}$, $E^\kappa_{\neq\chi}$, etc.\ are defined analogously.
For a subset $S\s\kappa$, we let $\Tr(S):=\{\alpha\in E^\kappa_{>\omega}\mid S\cap\alpha\text{ is stationary in }\alpha\}$;
We say that $S$ is \emph{non-reflecting} (resp.\ \emph{non-reflecting at inaccessibles}) iff $\Tr(S)$ is empty (resp.\ contains no inaccessible cardinals).
The principle $\refl({<}\theta,S)$ asserts that for every family $\mathcal S$ consisting of
less than $\theta$-many stationary subsets of $S$, the set $\bigcap_{S\in\mathcal S}\Tr(S)$ is nonempty.

For an ideal $\mathcal I$ on $\kappa$, we write $\mathcal I^+:=\mathcal P(\kappa)\setminus\mathcal I$,
and $\mathcal I^*:=\{ \kappa\setminus X\mid X\in\mathcal I\}$.

For the definitions of the principles $\square(\kappa,\sq_\sigma)$, $\square(\kappa)$ and $\square_\lambda^*$,
see \cite{paper29}, Definition~1.16, and the discussion following it.
For the definitions of the principles $\diamondsuit(S)$, $\clubsuit^-(S)$ and $\clubsuit(S)$,
see \cite{paper_s01},  Definitions 1.1, 1.18 and 2.15, respectively.

For a set of ordinals $a$, we write $\ssup(a) := \sup\{\alpha + 1 \mid
\alpha \in a\}$, $\acc^+(a) := \{\alpha < \ssup(a) \mid \sup(a \cap \alpha) = \alpha > 0\}$,
$\acc(a) := a \cap \acc^+(a)$, $\nacc(a) := a \setminus \acc(a)$,
and $\cl(a):= a\cup\acc^+(a)$.
For sets of ordinals, $a$ and $b$, we write $a < b$ if, for all $\alpha \in a$
and all $\beta \in b$, we have $\alpha < \beta$.
For a set of ordinals $a$ and an ordinal $\beta$, we write
$a < \beta$ instead of $a < \{\beta\}$ and $\beta < a$ instead of $\{\beta\} < a$.

For any set $\mathcal A$, we write
$[\mathcal A]^\chi:=\{ \mathcal B\s\mathcal A\mid |\mathcal B|=\chi\}$ and
$[\mathcal A]^{<\chi}:=\{\mathcal B\s\mathcal A\mid |\mathcal B|<\chi\}$.
In particular, $[\mathcal{A}]^2$ consists of all unordered pairs from $\mathcal{A}$.
In some scenarios, we will also be interested in ordered pairs from $\mathcal{A}$.
In particular, if $\mathcal{A}$ is either an ordinal or a collection of sets
of ordinals, then we will abuse notation and write $(a,b) \in [\mathcal{A}]^2$
to mean $\{a,b\} \in [\mathcal{A}]^2$ and $a < b$.

\section{Preliminary results} \label{prelim_section}

In this section, we present some basic results regarding $\U(\kappa, \mu, \theta, \chi)$.
We begin by cataloging some implications that exist between various incarnations
of the coloring principles under consideration. The following proposition is immediate.

\begin{prop}\label{obvious}
  \begin{enumerate}
    \item $\U(\kappa,\kappa,\kappa,\kappa)$ holds.
    \item \label{singular} If $\cf(\theta') = \cf(\theta)$, then $\U(\kappa, \mu, \theta, \chi)$
      holds iff $\U(\kappa, \mu, \theta', \chi)$ holds.
    \item For all $\mu'\le\mu$ and $\chi'\le\chi$, $\U(\kappa,\mu,\theta,\chi)$
      entails $\U(\kappa,\mu',\theta,\chi')$.
    \item \label{singular2} If $\chi$ is a limit cardinal and $c:[\kappa]^2 \rightarrow\theta$
      witnesses $\U(\kappa,\mu,\theta,\chi')$ for all $\chi'<\chi$, then $c$ witnesses
      $\U(\kappa,\mu,\theta,\chi)$, as well.
    \item If $\U(\kappa,2,\theta,\chi)$ and $\kappa\rightarrow(\kappa,\mu)^2$ both hold,
      then $\U(\kappa,\mu,\theta,\chi)$ holds, as well. In particular,
      $\U(\kappa, 2, \theta, \chi)$ entails $\U(\kappa, \omega, \theta, \chi)$.
    \item If $\chi<\kappa$, then $\U(\kappa,\mu,\theta,\chi)$ holds iff there exists
      a coloring $c:[\kappa]^2\rightarrow\theta$ such that,
      for every family $\mathcal A\s[\kappa]^{<\chi}$ consisting of $\kappa$-many pairwise disjoint sets,
      and for every $i<\theta$, there exists $\mathcal B\in[\mathcal A]^\mu$ such that $\min(c[a\times b])>i$
      for all $(a,b)\in[\mathcal{B}]^2$.\qed
    \end{enumerate}
\end{prop}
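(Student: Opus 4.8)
The plan is to prove the two directions separately. The forward direction is trivial: if $c$ witnesses $\U(\kappa,\mu,\theta,\chi)$, then for any family $\mathcal A\s[\kappa]^{<\chi}$ of $\kappa$-many pairwise disjoint sets we may shrink each $a\in\mathcal A$ to a set $a'$ of some fixed size $\chi'<\chi$ if $|a|\ge 1$ — but since cardinalities may vary, a cleaner route is to first thin $\mathcal A$ to a subfamily of size $\kappa$ on which $|a|$ is constant (using that $\chi<\kappa=\cf(\kappa)$, the cardinalities $|a|$ take fewer than $\chi\le\kappa$ values, so one value is attained $\kappa$-often), and then apply the definition of $\U(\kappa,\mu,\theta,\chi)$ with that constant value $\chi'$. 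The resulting $\mathcal B\in[\mathcal A]^\mu$ works.

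For the reverse direction, suppose $c:[\kappa]^2\to\theta$ has the stated property for families drawn from $[\kappa]^{<\chi}$; I want to show it witnesses $\U(\kappa,\mu,\theta,\chi)$. Fix $\chi'<\chi$, a family $\mathcal A\s[\kappa]^{\chi'}$ of $\kappa$-many pairwise disjoint sets, and $i<\theta$. Since $[\kappa]^{\chi'}\s[\kappa]^{<\chi}$, the hypothesis applies directly to $\mathcal A$ and $i$, yielding $\mathcal B\in[\mathcal A]^\mu$ with $\min(c[a\times b])>i$ for all $(a,b)\in[\mathcal B]^2$. That is exactly the conclusion of $\U(\kappa,\mu,\theta,\chi)$, so we are done.

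The only point requiring a moment of care — and the closest thing to an obstacle — is the cardinality-homogenization step in the forward direction, which uses $\chi<\kappa$ and the regularity of $\kappa$ to pass from a family of sets of varying size $<\chi$ to one of constant size $\chi'<\chi$; this is exactly the hypothesis $\chi<\kappa$ in the statement, and without it the equivalence can fail. Everything else is a matter of unwinding definitions, so the proposition is essentially a bookkeeping lemma recording that the "$<\chi$" and "$=\chi'$ for each $\chi'<\chi$" formulations of $\U(\ldots)$ coincide when $\chi<\kappa$.
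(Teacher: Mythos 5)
Your argument for Clause (6) is correct and is exactly the bookkeeping the paper considers immediate (it gives no proof, just \qed). Both directions work as you say: the reverse is the inclusion $[\kappa]^{\chi'}\s[\kappa]^{<\chi}$, and the forward direction uses $\chi<\kappa$ together with the regularity of $\kappa$ to extract a $\kappa$-sized subfamily of constant cardinality $\chi'<\chi$ (the number of cardinals below $\chi$ being less than $\kappa$), then passes the resulting $\mathcal B\in[\mathcal A']^\mu\s[\mathcal A]^\mu$ upward. One small remark: your initial instinct to shrink each $a$ to a fixed-size $a'$ would fail not only because cardinalities vary but, more fundamentally, because a bound on $\min(c[a'\times b'])$ does not yield the required bound on $\min(c[a\times b])$; the thinning-by-cardinality route you settled on avoids this entirely. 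Note also that you have addressed only Clause (6); the other clauses of the proposition, though equally routine, are not covered by your proposal.
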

\begin{remark}For Clause~(5), recall that $\kappa\rightarrow(\kappa,\mu)^2$ stands for the assertion that for every
coloring $c\colon [\kappa]^2\rightarrow2$, either there exists $A\in[\kappa]^\kappa$ such that
$c``[A]^2=\{0\}$, or there exists $B\in[\kappa]^\mu$ such that $c``[B]^2=\{1\}$.
By a classic theorem of Dushnik and Miller, $\kappa\rightarrow(\kappa,\omega)^2$
holds for every infinite cardinal $\kappa$.
\end{remark}

Because of Clauses (1) and (2) of the preceding Proposition, we shall focus throughout
on the case in which $\theta\in\reg(\kappa)$. We next note that instances of $\pr_1(\ldots)$ easily yield
instances of $\U(\ldots)$.

\begin{lemma}
  Suppose that $\pr_1(\kappa,\kappa,\theta,\chi)$ holds.
  Then $\U(\kappa,2,\theta',\chi)$ holds for all $\theta'\le\theta$.
\end{lemma}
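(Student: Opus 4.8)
The plan is to get the desired witness to $\U(\kappa,2,\theta',\chi)$ essentially for free from a witness $c\colon[\kappa]^2\to\theta$ to $\pr_1(\kappa,\kappa,\theta,\chi)$. The point is that, for second parameter $\mu=2$, the principle $\U(\ldots)$ only demands a \emph{single} pair $(a,b)$ on which the coloring avoids all colors $\le i$, whereas $\pr_1(\ldots)$ is strong enough to let us dictate the value of $c$ on $a\times b$ outright.

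Concretely, I would fix such a $c$ and, given an (infinite) cardinal $\theta'\le\theta$, define $d\colon[\kappa]^2\to\theta'$ by setting $d(x):=c(x)$ if $c(x)<\theta'$ and $d(x):=0$ otherwise. To check that $d$ witnesses $\U(\kappa,2,\theta',\chi)$, fix $\chi'<\chi$, a family $\mathcal A\s[\kappa]^{\chi'}$ of $\kappa$-many pairwise disjoint sets, and $i<\theta'$. Since $\theta$ is a limit ordinal and $i+1\le\theta'\le\theta$, we have $i+1<\theta$, so $\pr_1(\kappa,\kappa,\theta,\chi)$ yields $(a,b)\in[\mathcal A]^2$ with $c[a\times b]=\{i+1\}$; as $i+1<\theta'$, this also gives $d[a\times b]=\{i+1\}$, hence $\min(d[a\times b])=i+1>i$. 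Taking $\mathcal B:=\{a,b\}\in[\mathcal A]^2$ and recalling that, under our conventions, $[\mathcal B]^2$ is the single ordered pair $(a,b)$, this is exactly the conclusion of $\U(\kappa,2,\theta',\chi)$.

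I do not expect a genuine obstacle here; the only thing to be careful about is the trivial arithmetic $i+1<\theta$, which is where the hypothesis $\theta'\le\theta$ enters (together with the standing convention that the parameters are infinite cardinals). One could equivalently organize the argument in two steps — first passing from $\pr_1(\kappa,\kappa,\theta,\chi)$ to $\pr_1(\kappa,\kappa,\theta',\chi)$ via the same post-composition trick and the monotonicity of $\pr_1$ in its third coordinate, and then treating the case $\theta'=\theta$ — but the one-step version above is cleaner. This is the observation alluded to in the Introduction as being implicit in the proof of \cite[Lemma~3.4]{paper27}.
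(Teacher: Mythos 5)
Your proof is correct and takes essentially the same approach as the paper: both define the truncation $c'(\alpha,\beta):=c(\alpha,\beta)$ if $c(\alpha,\beta)<\theta'$ and $0$ otherwise, and the paper leaves the verification as ``easily verified'' while you spell it out (correctly, using $i<\theta'$ and the fact that $\theta'$ is a limit ordinal to get $i+1<\theta'\le\theta$, then invoking $\pr_1$ with the target color $i+1$).
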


\begin{proof}
  Let $c:[\kappa]^2 \rightarrow \theta$ witness $\pr_1(\kappa, \kappa, \theta, \chi)$,
  and fix $\theta' \leq \theta$. Define $c':[\kappa]^2 \rightarrow \theta'$ by
  setting, for all $(\alpha, \beta) \in [\kappa]^2$,
  \[
    c'(\alpha, \beta) :=
    \begin{cases}
      c(\alpha, \beta) &\text{if }c(\alpha, \beta) < \theta';\\
      0 & \text{if } c(\alpha, \beta) \geq \theta'.
    \end{cases}
  \]
  It is easily verified that $c'$ witnesses $\U(\kappa, 2, \theta', \chi)$.
\end{proof}

We next present some lemmas about increasing the second and fourth parameters
in instances of $\U(\ldots)$.

\begin{lemma}
  Suppose that $\U(\kappa,\mu,\theta,\chi)$ holds for all $\chi\in X$.
  If $\cf(\sup(X))<\cf(\theta)$, then $\U(\kappa,\mu,\theta,\sup(X))$ holds, as well.
\end{lemma}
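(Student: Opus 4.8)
The plan is to take, for each $\chi \in X$, a coloring $c_\chi : [\kappa]^2 \to \theta$ witnessing $\U(\kappa,\mu,\theta,\chi)$, and to amalgamate these into a single coloring witnessing $\U(\kappa,\mu,\theta,\sup(X))$. Write $\chi^* := \sup(X)$. If $\chi^* \in X$ there is nothing to prove, so assume $\chi^*$ is a limit of $X$ and let $\langle \chi_j \mid j < \cf(\chi^*)\rangle$ be a strictly increasing sequence cofinal in $\chi^*$, with each $\chi_j \in X$. The point of the hypothesis $\cf(\chi^*) < \cf(\theta)$ is that we have room to ``stack'' $\cf(\chi^*)$-many colorings in the target $\theta$ without exhausting it.

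Concretely, fix a partition $\langle E_j \mid j < \cf(\chi^*)\rangle$ of $\theta$ into $\cf(\chi^*)$-many cofinal subsets, each of order type $\theta$; since $\cf(\chi^*) < \cf(\theta)$, such a partition exists, and we may further arrange that $E_j$ is, say, the image of $\theta$ under an increasing bijection $e_j : \theta \to E_j$. Now define $c : [\kappa]^2 \to \theta$ as follows: let $d : [\kappa]^2 \to \cf(\chi^*)$ be any fixed coloring that is surjective on every ``large'' set (for instance one may take $d$ constant, or more carefully use a classical coloring — we only need a minor genericity property, see below), and set
\[
  c(\alpha,\beta) := e_{d(\alpha,\beta)}\bigl(c_{\chi_{d(\alpha,\beta)}}(\alpha,\beta)\bigr).
\]
The idea is that $c$ records both an ``index'' $j = d(\alpha,\beta)$ telling us which coloring $c_{\chi_j}$ we are using on the pair $\{\alpha,\beta\}$, and the value of that coloring, re-encoded inside the block $E_j$.

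To verify the witnessing property, fix $\chi' < \chi^*$, a family $\mathcal{A} \subseteq [\kappa]^{\chi'}$ of $\kappa$-many pairwise disjoint sets, and $i < \theta$. Choose $j < \cf(\chi^*)$ with $\chi' < \chi_j$; then $\mathcal{A} \subseteq [\kappa]^{<\chi_j}$, so by Proposition~\ref{obvious}(6) (and Clause (3) to lower $\chi'$) the coloring $c_{\chi_j}$, applied to $\mathcal{A}$, furnishes for any prescribed threshold a set $\mathcal{B} \in [\mathcal{A}]^\mu$ on which $c_{\chi_j}$ stays above that threshold. The subtlety is that on $[\mathcal{B}]^2$ the coloring $c$ may use indices $d(a,b) \ne j$, on which we have no control. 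Here is where the main obstacle lies, and where the choice of $d$ matters: we want to first thin $\mathcal{A}$ to a subfamily $\mathcal{A}'$ of size $\kappa$ on which $d$ is constantly $j$ — but a single fixed $d$ need not permit this. The fix is to define $d$ more cleverly, using that each $c_{\chi_j}$ already carries a lot of information: concretely, reserve one coordinate of the target for an index by working instead with colorings $c_{\chi_j} : [\kappa]^2 \to \theta$ and noting that by Proposition~\ref{obvious}(2) we may assume $\theta$ is regular, then replace $\theta$ by $\theta$ itself and let the index be read off from a fixed partition of $\kappa$ (not of $[\kappa]^2$) — i.e.\ partition $\kappa = \bigsqcup_{j<\cf(\chi^*)} K_j$ and, given $\mathcal{A}$, pass to the $\kappa$-sized subfamily of sets that meet a single $K_j$ with $\chi_j > \chi'$...

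This indicates the cleanest route: \emph{do not} try to read the index off the pair. Instead, fix in advance a partition $\langle S_j \mid j < \cf(\chi^*)\rangle$ of $\kappa$ into $\kappa$-many-times-repeated pieces is impossible, so instead interleave: fix an increasing continuous sequence and let $c(\alpha,\beta) := e_{j(\beta)}(c_{\chi_{j(\beta)}}(\alpha,\beta))$ where $j(\beta)$ is determined by a fixed surjection $\kappa \to \cf(\chi^*)$ with all fibers of size $\kappa$. Given $\mathcal{A}$ of size $\kappa$, for each $a \in \mathcal{A}$ all but $<\chi'$ of its elements... this still fails since $j(\beta)$ varies within $a$. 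The honest resolution, which I expect the authors use, is: \textbf{the hard part is handling the variation of the index within the sets $a,b \in \mathcal{A}$}, and it is overcome by first noting $|a|=\chi'<\chi^*$ is \emph{bounded below} $\chi^*$, hence below some $\chi_j$; so fix $j$ large enough that $\{a \in \mathcal{A} : \chi' < \chi_j\}$ — which is all of $\mathcal{A}$ — works, apply $c_{\chi_j}$ directly with threshold chosen so that $\min c_{\chi_j}[a\times b]$ exceeds the $e_{j'}$-preimage of $i$ for \emph{all} $j' \le j$ simultaneously (possible as there are only $\le \cf(\chi^*) < \cf(\theta)$ such $j'$ and each preimage is a single ordinal $<\theta$), and define $c$ to only ever use indices $\le$ the ``complexity'' needed — i.e.\ set $c(\alpha,\beta) = e_{j(\alpha,\beta)}(c_{\chi_{j(\alpha,\beta)}}(\alpha,\beta))$ with $j(\alpha,\beta)$ \emph{small}, and run the argument with $j$ chosen above $\sup$ of the finitely/boundedly-many relevant indices so that on $[\mathcal B]^2$ every value lands in $\bigcup_{j' \le j} E_{j'}$ above the required bound. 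Thus the final step is a bookkeeping argument showing all the $\le \cf(\chi^*)$-many thresholds can be met at once, which is exactly what $\cf(\chi^*) < \cf(\theta)$ buys us.
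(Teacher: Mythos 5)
The paper's proof is a one-liner that your proposal never arrives at, and as written your proposal has a genuine gap. After thinning $X$ so that $|X| < \cf(\theta)$ (possible since $\cf(\sup(X)) < \cf(\theta)$), the paper simply sets
\[
  c(\alpha,\beta) := \sup\{\, c_\chi(\alpha,\beta) \mid \chi \in X \,\},
\]
which lands in $\theta$ because we are taking a supremum of fewer than $\cf(\theta)$-many ordinals below $\theta$. Verification is then immediate: given $\chi' < \sup(X)$, $\mathcal{A}$, and $i < \theta$, pick $\chi \in X$ with $\chi' < \chi$, apply $c_\chi$ to get $\mathcal{B} \in [\mathcal{A}]^\mu$ with $\min(c_\chi[a\times b]) > i$ on $[\mathcal{B}]^2$, and note that $c \geq c_\chi$ pointwise. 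There is no need to recover \emph{which} coloring produced a high value; a supremum can only help, since the requirement is one-sided (we only need $\min(c[a\times b])$ to be large).

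Your proposal, by contrast, tries to encode the index $j$ of the relevant coloring into the value of $c$ (via a partition $\langle E_j\rangle$ of $\theta$ and bijections $e_j$), and then must control which index gets used on the good set $\mathcal{B}$. You correctly identify this as the obstacle — once $\mathcal{B}$ is chosen using $c_{\chi_j}$, you have no control over $c_{\chi_{j'}}$ on $[\mathcal{B}]^2$ for $j' \neq j$, so you cannot guarantee that $c(\alpha,\beta) = e_{j'}(c_{\chi_{j'}}(\alpha,\beta))$ exceeds $i$ — but the successive fixes you sketch (reading the index off of $\beta$, off of a ``complexity,'' choosing $j$ above boundedly-many relevant indices, etc.) do not resolve it: the argument never specifies a definition of the index function under which the verification goes through, and the closing ``bookkeeping'' claim is not substantiated. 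The conceptual error is treating the index as information that must be preserved and later recovered; it need not be, because the witnessing condition only requires a lower bound, which is monotone under passing to a pointwise-larger coloring.
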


\begin{proof}
  Since $\cf(\sup(X)) < \cf(\theta)$, we may assume, by thinning out $X$ if necessary,
  that $|X| < \cf(\theta)$. For each $\chi \in X$, let $c_\chi:[\kappa]^2 \rightarrow \theta$
  witness $\U(\kappa, \mu, \theta, \chi)$. Define $c:[\kappa]^2 \rightarrow \theta$
  by letting $c(\alpha, \beta) := \sup\{c_\chi(\alpha, \beta) \mid \chi \in X\}$
  for all $(\alpha, \beta) \in [\kappa]^2$. It is easily verified that $c$ witnesses
  $\U(\kappa, \mu, \theta, \sup(X))$.
\end{proof}

\begin{lemma}\label{lemma24}
  Suppose that $\theta\le\lambda$ are infinite cardinals. Then $\U(\lambda^+,2,\theta,2)$ holds
  iff $\U(\lambda^+,\omega,\theta,\cf(\lambda))$ holds.
\end{lemma}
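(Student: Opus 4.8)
The plan is to get the reverse implication for free and then do the work on the forward one. For the reverse implication: since $2\le\omega$ and $2\le\cf(\lambda)$, Proposition~\ref{obvious}(3) shows that any witness to $\U(\lambda^+,\omega,\theta,\cf(\lambda))$ is also a witness to $\U(\lambda^+,2,\theta,2)$.

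For the forward implication, I would first invoke Proposition~\ref{obvious}(5) to pass from $\U(\lambda^+,2,\theta,2)$ to $\U(\lambda^+,\omega,\theta,2)$ and fix a coloring $c\colon[\lambda^+]^2\to\theta$ witnessing the latter; it then remains to promote the fourth parameter from $2$ to $\cf(\lambda)$. Unwinding the definition, fix an infinite $\chi'<\cf(\lambda)$, a family $\mathcal A=\{a_\xi:\xi<\lambda^+\}$ of pairwise disjoint members of $[\lambda^+]^{\chi'}$, and $i<\theta$; the goal is to produce $\mathcal B\in[\mathcal A]^\omega$ with $\min(c[a\times b])>i$ for all $(a,b)\in[\mathcal B]^2$, after possibly replacing $c$ by a coloring derived from it. The first step is a routine normalisation of $\mathcal A$: since $\lambda^+$ is regular and $\chi'<\cf(\lambda)\le\lambda$, one recursively retains $\lambda^+$-many of the blocks so that $\sup(a_\xi)<\min(a_{\xi'})$ whenever $\xi<\xi'$, using that at each stage the union of the blocks chosen so far is bounded below $\lambda^+$ while only $<\lambda^+$-many blocks have minimum below a given ordinal. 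After this thinning any $\mathcal B\s\mathcal A$ is $<$-increasing, so ``$(a,b)\in[\mathcal B]^2$'' just means that $a,b\in\mathcal B$ with $a$ of the smaller index.

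The construction of $\mathcal B=\{a_{\xi_0},a_{\xi_1},\dots\}$ would proceed by recursion on $n<\omega$: given $a_{\xi_0},\dots,a_{\xi_{n-1}}$ with union $F$ (a set of size $<\cf(\lambda)$), choose $\xi_n$ above $\xi_0,\dots,\xi_{n-1}$ and with $\min(a_{\xi_n})$ large enough that $c(\alpha,\beta)>i$ for all $\alpha\in F$ and $\beta\in a_{\xi_n}$ --- equivalently, so that $a_{\xi_n}$ misses the set $B_F:=\bigcup_{\alpha\in F}\{\beta>\alpha:c(\alpha,\beta)\le i\}$. If $B_F$ is bounded below $\lambda^+$ this is possible, since then $\lambda^+$-many blocks lie above $\sup(B_F)$ and above every previously used ordinal; and once all the $\xi_n$ are chosen the resulting $\mathcal B$ has the required property.

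The step I expect to be the main obstacle is ensuring that $B_F$ is bounded. For an arbitrary witness $c$ this may fail: one can have $|\{\beta:c(\alpha,\beta)\le i\}|=\lambda^+$ for \emph{every} $\alpha$, so no thinning of $\mathcal A$ removes the offending points. What $\U(\lambda^+,2,\theta,2)$ does yield directly is that the smaller set $Q_i:=\{\alpha<\lambda^+:c(\alpha,\delta)\le i\text{ for all }\delta>\alpha\}$ has size $<\lambda^+$: applying the principle to $Q_i$ as a family of singletons gives a pair $\alpha<\alpha'$ in $Q_i$ with $c(\alpha,\alpha')>i$, contradicting $\alpha\in Q_i$. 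This points to replacing $c$ by a monotone-in-the-second-variable majorant such as $d(\alpha,\beta):=\sup\{c(\alpha,\delta):\alpha<\delta\le\beta\}$, for which $\{\beta>\alpha:d(\alpha,\beta)\le i\}$ is an initial segment of $(\alpha,\lambda^+)$ that is bounded precisely when $\alpha\notin Q_i$; removing the $<\lambda^+$-many blocks meeting $Q_i$ would then make the recursion go through with $d$ in place of $c$. The real difficulty --- and this is where I expect the hypothesis that the parameter is $\cf(\lambda)$, and that $\lambda^+$ is a successor cardinal, to be essential --- is that this $d$ can attain the value $\theta$ when $c(\alpha,\cdot)$ is cofinal in $\theta$, so one must carry out the passage to a derived coloring more carefully, keeping it $\theta$-valued while still making $B_F$ bounded for every relevant $i$, and then verify the derived coloring genuinely witnesses $\U(\lambda^+,\omega,\theta,\cf(\lambda))$. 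If a direct argument of this kind resists, my fallback would be to build the witness via walks on ordinals, feeding $c$ together with a $C$-sequence on $\lambda^+$ into the walk so that each $\chi'$-block is localised near a single ordinal and the $c$-values met along the walk provide the required $\theta$-coloring.
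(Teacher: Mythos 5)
Your forward direction is incomplete, and you have yourself flagged the exact spot where it breaks: after replacing $c$ by the running supremum $d(\alpha,\beta):=\sup\{c(\alpha,\delta)\mid\alpha<\delta\le\beta\}$, the value $\theta$ can be attained, so $d$ is no longer a coloring into $\theta$; and truncating it back into $\theta$ reintroduces exactly the unboundedness of $B_F$ you were trying to eliminate. Your observation that $Q_i:=\{\alpha\mid c(\alpha,\delta)\le i\text{ for all }\delta>\alpha\}$ has size $<\lambda^+$ is correct, but it is strictly weaker than what the recursion needs: it controls only those $\alpha$ whose entire right fiber is $\le i$, not the typical case in which $\{\beta>\alpha\mid c(\alpha,\beta)\le i\}$ and $\{\beta>\alpha\mid c(\alpha,\beta)>i\}$ are both cofinal in $\lambda^+$. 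The fallback appeal to walks on ordinals is not worked out and cannot be counted as a proof.

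The paper takes a different route and makes no attempt to repair $c$ directly. It first disposes of the case where $\lambda$ is regular by a forward reference to Corollary~\ref{l23}, which yields the much stronger $\U(\lambda^+,\lambda^+,\theta,\lambda)$. For $\lambda$ singular, it invokes Theorem~3.1 of \cite{paper13} (``Transforming rectangles into squares'') to obtain a map $t:[\lambda^+]^2\to[\lambda^+]^2$ with the property that for every family $\mathcal A\subseteq[\lambda^+]^{<\cf(\lambda)}$ of $\lambda^+$-many pairwise disjoint sets there is a stationary $S\subseteq\lambda^+$ such that every pair from $[S]^2$ is the constant value of $t$ on some rectangle $a\times b$ with $(a,b)\in[\mathcal A]^2$. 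Then $c\circ t$ is immediately a witness to $\U(\lambda^+,2,\theta,\cf(\lambda))$, and an application of Proposition~\ref{obvious}(5) (Dushnik--Miller) lifts the second parameter from $2$ to $\omega$. Note that the order of the two promotions is the opposite of yours: the fourth coordinate is raised first (by the transformation theorem), and only afterward is the second coordinate raised to $\omega$. To make your direct approach work you would need a tool of strength comparable to the rectangles-to-squares theorem; no thinning of $\mathcal A$ combined with elementary massaging of $c$ suffices, which is precisely what your own analysis uncovered.
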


\begin{proof}
  Clearly, only the forward implication needs an argument. By Corollary~\ref{l23} below,
  if $\lambda$ is regular, then in fact $\U(\lambda^+, \lambda^+, \theta, \lambda)$
  holds, so we may assume that $\lambda$ is singular. Fix a function
  $c:[\lambda^+]^2\rightarrow\theta$ witnessing $\U(\lambda^+,2,\theta,2)$.
  By~\cite[Theorem~3.1]{paper13}, we may pick a function $t:[\lambda^+]^2
  \rightarrow[\lambda^+]^2$ with the property that, for every family
  $\mathcal A\s[\lambda^+]^{<\cf(\lambda)}$ consisting of $\lambda^+$-many
  pairwise disjoints sets, there exists a stationary $S\s\lambda^+$ such that,
  for all $(\alpha,\beta)\in[S]^2$, there exists $(a,b)\in[\mathcal A]^2$
  such that $t[a\times b]=(\alpha,\beta)$. It is easy to see that $c\circ t$ witnesses
  $\U(\lambda^+,2,\theta,\cf(\lambda))$, and hence, by Proposition~\ref{obvious}(5),
  also $\U(\lambda^+,\omega,\theta,\cf(\lambda))$.
\end{proof}

\begin{lemma}\label{chi3}
  If a coloring $c:[\kappa]^2 \rightarrow \theta$ witnesses $\U(\kappa,\kappa,\theta,3)$, then it also witnesses
  $\U(\kappa,\kappa,\theta,\omega)$.
\end{lemma}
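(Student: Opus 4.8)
The plan is to deduce this from Proposition~\ref{obvious}(4). Since $\omega$ is a limit cardinal, that proposition reduces the claim to showing that $c$ witnesses $\U(\kappa,\kappa,\theta,n)$ for every finite $n$, and, unwinding Definition~\ref{def_U}, it suffices to prove: for every positive integer $n$, every family $\mathcal A\s[\kappa]^n$ consisting of $\kappa$-many pairwise disjoint sets, and every $i<\theta$, there is $\mathcal B\in[\mathcal A]^\kappa$ with $\min(c[a\times b])>i$ for all $(a,b)\in[\mathcal B]^2$. For $n\le 2$ this is precisely the relevant clause of the hypothesis that $c$ witnesses $\U(\kappa,\kappa,\theta,3)$, so the real work is to bootstrap from the case of pairs to the case of arbitrary finite sets.

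Fix $n\ge 3$, a family $\mathcal A=\langle a_\alpha\mid\alpha<\kappa\rangle$ of pairwise disjoint $n$-element sets, and $i<\theta$; for each $\alpha$, write $a_\alpha=\{a_\alpha(0),\dots,a_\alpha(n-1)\}$ in increasing order. The key observation is that, for a pair $a_\alpha<a_\beta$ from $\mathcal A$, the requirement $\min(c[a_\alpha\times a_\beta])>i$ is just the conjunction of the finitely many inequalities $c(a_\alpha(j),a_\beta(k))>i$ for $j,k<n$, and all the inequalities coming from a fixed two-element set of coordinates $\{j,k\}$ are secured by one application of the $\chi'=2$ clause of $\U(\kappa,\kappa,\theta,3)$ to the family $\{\{a_\gamma(j),a_\gamma(k)\}\mid\gamma<\kappa\}$, which is a collection of $\kappa$-many pairwise disjoint members of $[\kappa]^2$ because $\mathcal A$ is. Accordingly, I would enumerate the two-element subsets of $n$ as $\langle\{j_l,k_l\}\mid l<L\rangle$ and recursively construct a $\s$-decreasing sequence $\kappa=I_0\supseteq I_1\supseteq\cdots\supseteq I_L$ of $\kappa$-sized subsets of $\kappa$: given $I_l$, apply the $\chi'=2$ clause of $\U(\kappa,\kappa,\theta,3)$ to the family $\{\{a_\alpha(j_l),a_\alpha(k_l)\}\mid\alpha\in I_l\}$ and the ordinal $i$ to obtain a $\kappa$-sized $I_{l+1}\s I_l$ such that $\min(c[\{a_\alpha(j_l),a_\alpha(k_l)\}\times\{a_\beta(j_l),a_\beta(k_l)\}])>i$ whenever $\alpha,\beta\in I_{l+1}$ and $\{a_\alpha(j_l),a_\alpha(k_l)\}<\{a_\beta(j_l),a_\beta(k_l)\}$. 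Then put $\mathcal B:=\{a_\alpha\mid\alpha\in I_L\}$; to verify that it works, take $(a_\alpha,a_\beta)\in[\mathcal B]^2$, so $a_\alpha<a_\beta$, and fix $j,k<n$, choosing $l<L$ with $\{j,k\}\s\{j_l,k_l\}$ (when $j=k$, pair $j$ with any other coordinate, which is available since $n\ge 2$). Since $a_\alpha<a_\beta$ we also have $\{a_\alpha(j_l),a_\alpha(k_l)\}<\{a_\beta(j_l),a_\beta(k_l)\}$, and $\alpha,\beta\in I_L\s I_{l+1}$, so the stage-$l$ conclusion yields $c(a_\alpha(j),a_\beta(k))>i$, as needed.

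I do not expect a genuine obstacle; this is a finite bootstrapping argument, and the only point that needs care is conceptual: the argument feeds on the $\chi'=2$ clause of the hypothesis, which is exactly why the correct assumption is $\U(\kappa,\kappa,\theta,3)$ rather than $\U(\kappa,\kappa,\theta,2)$ --- the latter constrains $c$ only on families of singletons and so says nothing about the coloring on pairs. The remaining checks are routine: each of the finitely many thinnings preserves size $\kappa$; $a_\alpha<a_\beta$ forces the corresponding coordinate pairs to be $<$-separated, so that the separation-restricted stagewise conclusions apply to them; and Proposition~\ref{obvious}(4) then legitimately lifts the statement, proved for all finite $n$, to $\U(\kappa,\kappa,\theta,\omega)$.
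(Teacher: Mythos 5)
Your proposal is correct and follows essentially the same route as the paper's proof: both enumerate the two-element subsets of the $n$ coordinate positions and recursively thin the $\kappa$-sized family, applying the $\chi'=2$ case of $\U(\kappa,\kappa,\theta,3)$ once per pair of coordinates, then intersect. The only cosmetic difference is that the paper packages this as an induction from $n$ to $n+1$ while you prove the $[\kappa]^n$ clause directly for each $n$ and then invoke Proposition~\ref{obvious}(4); the underlying argument is identical.
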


\begin{proof}
  Suppose that $c:[\kappa]^2\rightarrow\theta$ witnesses $\U(\kappa,\kappa,\theta,3)$.
  We prove by induction on $n\ge2$ that $c$ witnesses $\U(\kappa,\kappa,\theta,n+1)$.
  To this end, fix an integer $n \geq 2$, and suppose that $c$ witnesses
  $\U(\kappa, \kappa, \theta, n)$. To show that $c$ witnesses $\U(\kappa, \kappa, \theta,
  n+1)$, it suffices to show that, for every $\mathcal A\s[\kappa]^n$ consisting of
  $\kappa$-many pairwise disjoint sets, and every $i<\theta$, there exists $\mathcal B\in[\mathcal A]^\kappa$
  such that $\min(c[a\times b])>i$ for all $(a,b)\in[\mathcal B]^2$.

  Fix such an $\mathcal{A}$ and $i$. Suppose that $\mathcal{A}$ is injectively enumerated
  as $\{a_\alpha \mid \alpha < \kappa\}$ and that, for all $\alpha < \kappa$,
  $a_\alpha$ is enumerated as $\{a_{\alpha, j} \mid j < n\}$. Let
  $\{ p_l\mid l<{n\cdot(n-1)\over 2}\}$ be an injective enumeration of $\{ \{j,j'\} \mid j<j'<n\}$.
  As $c$ witnesses $\U(\kappa,\kappa,\theta,3)$,
  we may recursively find a $\s$-decreasing chain $\langle T_l \mid l<{n\cdot(n-1)\over 2}\rangle$
  of elements of $[\kappa]^\kappa$ such that for each $l$, for all distinct $\alpha,\beta$ in $T_l$,
  letting $a:=\{ a_{\alpha,j}\mid j\in p_l\}$ and $b:=\{ a_{\beta,j}\mid j\in p_l\}$,
  if $a < b$, then $\min(c[a\times b]) > i$.
  Now, let $\mathcal B:=\{ a_\alpha \mid \alpha\in\bigcap_{l<{n\cdot(n-1)\over 2}}T_l\}$.
  Clearly, for every $(a,b)\in[\mathcal B]^2$, we have $\min(c[a\times b])>i$.
\end{proof}

The preceding lemma is optimal in the following sense.

\begin{lemma}
  Suppose that for some cardinal $\lambda$, $(\lambda^{<\theta})^+\le\kappa\le\lambda^\theta$.
  Then there exists a coloring witnessing $\U(\kappa,\kappa,\theta,2)$
  that fails to witness $\U(\kappa,2,\theta,3)$.
\end{lemma}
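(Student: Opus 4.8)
The plan is to construct a coloring $c \colon [\kappa]^2 \to \theta$ that (a) witnesses $\U(\kappa,\kappa,\theta,2)$ but (b) admits a family $\mathcal A \subseteq [\kappa]^3$ of $\kappa$-many pairwise disjoint triples and an $i < \theta$ for which no $\mathcal B \in [\mathcal A]^2$ has $\min(c[a \times b]) > i$ — in fact I will arrange that for \emph{every} distinct $a, b \in \mathcal A$, $\min(c[a\times b]) = 0$. The hypothesis $(\lambda^{<\theta})^+ \le \kappa$ gives us room to index the first ``small coordinate'' of each triple by a set of size $\lambda^{<\theta}$, while $\kappa \le \lambda^\theta$ lets us attach to each point of $\kappa$ a function in ${}^\theta\lambda$ that records enough information to build the coloring. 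I would identify $\kappa$ with a set of pairs $(\eta, \xi)$ where $\eta$ ranges over (a subset of) ${}^\theta\lambda$ and $\xi$ ranges over (a subset of) ${}^{<\theta}\lambda$, so that for a fixed $\xi \in {}^{<\theta}\lambda$ the fiber $\{(\eta,\xi) \mid \eta \in {}^\theta\lambda\}$ has size $\lambda^\theta \ge \kappa$.

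**The coloring and the counterexample family.** For a pair $\{(\eta_0,\xi_0),(\eta_1,\xi_1)\} \in [\kappa]^2$, let $c$ output $0$ unless $\xi_0 = \xi_1 =: \xi$, in which case let $c((\eta_0,\xi_0),(\eta_1,\xi_1))$ be the value $\eta_0(\Delta) $ where $\Delta = \Delta(\eta_0,\eta_1)$ is the least place the two functions differ (suitably truncated/adjusted so the output lands in $\theta$; this is where the precise bookkeeping happens, and I expect it to be the fussiest routine part). The point is that on a single fiber, this is essentially a $\pr_1$-style coloring built from the tree ${}^{<\theta}\lambda$ ordered by extension, using the standard fact that colorings of the form ``value at the splitting level'' are highly unbounded when the tree is sufficiently branching. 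Meanwhile, to defeat $\U(\kappa,2,\theta,3)$, take $\mathcal A = \{a_\gamma \mid \gamma < \kappa\}$ where each $a_\gamma$ is a triple whose three elements lie in \emph{three different fibers} $\xi$ that are common to all the $a_\gamma$; then any two triples $a, b$ contain a cross-pair from distinct fibers, so $c$ takes value $0$ there, hence $\min(c[a\times b]) = 0$ and $\U(\kappa,2,\theta,3)$ fails for this $c$ with $i = 0$. (Three fibers of size $\ge \kappa$ suffice, and these exist since ${}^{<\theta}\lambda$ has at least three elements.)

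**Verifying $\U(\kappa,\kappa,\theta,2)$.** Since $\chi = 2$, a family $\mathcal A \subseteq [\kappa]^{<2} = [\kappa]^1$ is just $\kappa$-many distinct singletons, i.e.\ a set $A \in [\kappa]^\kappa$, and we must find, for each $i < \theta$, a $B \in [A]^\kappa$ with $\min(c[\{x\}\times\{y\}]) = c(x,y) > i$ for all $(x,y) \in [B]^2$; equivalently $c``[B]^2 \subseteq (i,\theta)$. Writing each $x \in A$ as $(\eta_x, \xi_x)$, first shrink $A$ to lie in a single fiber $\xi$ — possible because there are at most $\lambda^{<\theta} < \kappa$ fibers (using $(\lambda^{<\theta})^+ \le \kappa$ and regularity of $\kappa$), so some fiber meets $A$ in a set of size $\kappa$. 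Within that fiber, a standard argument on the tree ${}^{<\theta}\lambda$: given $i < \theta$, pass to a subset of the $\eta_x$'s all sharing a common initial segment of length $> i$ that is still splitting into $\kappa$-many directions below it with \emph{distinct} values $\ge i+1$ at the next relevant coordinate — here I would use $\cf(\theta) = \theta$ (reduction from Proposition~\ref{obvious}(1)–(2)) together with the branching of the tree to extract such a configuration, and the branching is guaranteed because $\kappa \le \lambda^\theta$ forces $\lambda \ge 2$ and the tree to have full height $\theta$ and width allowing $\kappa$-many branches. On such a subset, any two branches first differ at a level $> i$ with value $> i$, so $c > i$ on all pairs.

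**Main obstacle.** The delicate point is arranging the coloring's output to genuinely lie in $\theta$ and to be $> i$ on the extracted $B$: the naive ``value at the splitting level'' sends pairs into $\lambda$, not $\theta$, and splitting levels are ordinals $< \theta$ rather than \emph{values}. I expect the fix is to work with the tree $T = {}^{<\theta}\theta$ embedded into ${}^{<\theta}\lambda$ (legitimate since $\theta \le \lambda$), to use $c(x,y) = \Delta(\eta_x,\eta_y)$ (the splitting \emph{level}, an ordinal $< \theta$) rather than a value, and to note that for the unboundedness we need branches whose pairwise splitting levels exceed $i$ — which is exactly achievable by restricting to $\kappa$-many extensions of a fixed node at level $i+1$ (there are $\lambda^\theta \ge \kappa$ such extensions), all mutually differing only above level $i+1$. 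Getting $\kappa$-many such extensions to still be pairwise-differing \emph{cofinally often}, not just above level $i+1$, is automatic. The bookkeeping to reconcile ``there are $\ge \kappa$ points'' with ``the tree has $\ge \kappa$ branches through any node'' while keeping fibers of size $\ge \kappa$ is the part requiring care, but it is a counting argument with no conceptual difficulty once the identification $\kappa \hookrightarrow {}^\theta\lambda \times {}^{<\theta}\lambda$ is set up correctly.
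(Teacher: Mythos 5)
Your positive direction (that the coloring witnesses $\U(\kappa,\kappa,\theta,2)$) is essentially the paper's argument once you settle on $c = \Delta$, the splitting level; the paper simply fixes an injective enumeration $\langle f_\alpha \mid \alpha < \kappa\rangle$ of a subset of ${}^\theta\lambda$ and sets $c(\alpha,\beta) := \min\{i < \theta \mid f_\alpha(i) \neq f_\beta(i)\}$, then for a given $A \in [\kappa]^\kappa$ and $i < \theta$ uses $|{}^{(i+1)}\lambda| \le \lambda^{<\theta} < \kappa$ to find $\kappa$-many $f_\alpha$ sharing an initial segment of length $i+1$. Your fiber machinery (identifying $\kappa$ with pairs in ${}^\theta\lambda \times {}^{<\theta}\lambda$ and zeroing out cross-fiber pairs) is an unnecessary added layer on top of this.

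The genuine gap is in the negative direction, and it is a misreading of the definition. To refute $\U(\kappa,2,\theta,3)$ you must exhibit a family $\mathcal A \s [\kappa]^{\chi'}$ with $\chi' < 3$, i.e.\ a family of \emph{pairs} (or singletons), for which the conclusion fails. You take $\mathcal A \s [\kappa]^3$, a family of triples; but sets of size $3$ only come into play for $\chi \ge 4$, so your argument at best refutes $\U(\kappa,2,\theta,4)$, which by Proposition~\ref{obvious}(3) is a strictly weaker failure. (Your three-fiber construction would go through verbatim with two fibers and pairs, but then you should notice you never needed fibers at all.) The paper's counterexample is a different and cleaner argument that exploits the structure of the coloring itself: set $g(\alpha) := c(\alpha,\alpha+1)$ for $\alpha \in \acc(\kappa)$, fix $A \in [\acc(\kappa)]^\kappa$ on which $g$ is constant with value $i$, and observe that for $(\alpha,\beta) \in [A]^2$, if $c(\alpha,\beta) > i$ then $f_\alpha\restriction(i+1) = f_\beta\restriction(i+1)$ while $f_\beta(i) \neq f_{\beta+1}(i)$, forcing $c(\alpha,\beta+1) = i$; hence $\mathcal A := \{\{\alpha,\alpha+1\} \mid \alpha \in A\}$ is a family of pairs on which $\min(c[a\times b]) \le i$ for every $(a,b) \in [\mathcal A]^2$. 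This requires no auxiliary partition of $\kappa$ and uses precisely the sets of size $2$ that the definition demands.
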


\begin{proof}
  Fix an injective enumeration $\{f_\alpha\mid\alpha<\kappa\}$ of some subset of
  ${}^\theta\lambda$. Define a coloring $c:[\kappa]^2\rightarrow\theta$ by letting,
  for all $\alpha<\beta<\kappa$,
  $$c(\alpha,\beta):=\min\{i<\theta\mid f_\alpha(i)\neq f_\beta(i)\}.$$

  To see that $c$ witnesses $\U(\kappa,\kappa,\theta,2)$,
  fix an arbitrary $\mathcal A\s\kappa$ of size $\kappa$ and a color $i<\theta$.
  As $\cf(\kappa)=\kappa>|{}^{(i+1)}\lambda|$,
  we may pick $h:i+1\rightarrow\lambda$ for which $\mathcal A_h:=\{ \alpha\in A\mid h\s f_\alpha\}$ has size
  $\kappa$. Clearly, $c(\alpha, \beta)>i$ for all $(\alpha,\beta)\in[\mathcal A_h]^2$.

  To see that $c$ does not witness $\U(\kappa,2,\theta,3)$, define $g:\acc(\kappa)\rightarrow\theta$
  by letting $g(\alpha):=c(\alpha,\alpha+1)$ for all $\alpha\in\acc(\kappa)$.
  Pick $A\in[\acc(\kappa)]^\kappa$ on which $g$ is constant, with value, say, $i$.
  Clearly, for every $(\alpha,\beta)\in[A]^2$, if $c(\alpha,\beta)>i$,
  then $c(\alpha,\beta+1)=c(\beta,\beta+1)=i$.
  It follows that $\mathcal A:=\{ \{\alpha,\alpha+1\}\mid \alpha\in A\}$
  is a family consisting of $\kappa$-many pairwise disjoint sets,
  and for all $(a,b)\in\mathcal A$, we have $\min(c[a\times b])\le i$.
\end{proof}

\subsection{Associated trees}
In this subsection, we begin to investigate trees derived from colorings, particularly
those witnessing instances of $\U(\ldots)$. For any coloring $c:[\kappa]^2\rightarrow \kappa$
and any ordinal $\gamma<\kappa$, we denote by $c({\cdot},\gamma)$ the unique function
from $\gamma$ to $\kappa$ satisfying $c({\cdot},\gamma)(\alpha)=c(\alpha,\gamma)$
for all $\alpha<\gamma$. Then, the \emph{tree associated to $c$} is
$$
  \mathcal T(c):=\{c({\cdot},\gamma)\restriction\beta\mid \beta\le\gamma<\kappa\}.
$$

We begin by proving that, if $c$ witnesses certain mild instances of $\U(\ldots)$,
then $\mathcal{T}(c)$ cannot admit a cofinal branch.

\begin{prop}\label{aronszajn}
  Suppose that $\theta < \kappa$ and $c:[\kappa]^2\rightarrow\theta$ witnesses $\U(\kappa,2,\theta,2)$.
  Then $\mathcal T(c)$ admits no cofinal branch.
\end{prop}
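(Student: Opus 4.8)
The plan is to argue by contradiction: suppose $b \colon \kappa \to \theta$ is a cofinal branch through $\mathcal{T}(c)$, meaning that for every $\gamma < \kappa$ there is some ordinal $\delta(\gamma)$ with $\gamma \le \delta(\gamma) < \kappa$ such that $c(\cdot, \delta(\gamma)) \restriction \gamma = b \restriction \gamma$, i.e.\ $c(\alpha, \delta(\gamma)) = b(\alpha)$ for all $\alpha < \gamma$. The idea is to use this branch to manufacture a family $\mathcal{A}$ of $\kappa$-many pairwise disjoint \emph{singletons} (elements of $[\kappa]^1$, which is allowed since $\chi = 2$ means $\chi' = 1$) together with a color $i < \theta$ that defeats $\U(\kappa, 2, \theta, 2)$: for \emph{every} pair $(a,b) \in [\mathcal{A}]^2$ we will have $\min(c[a \times b]) \le i$.

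**Key steps.** First I would observe that $b$, as a function from $\kappa$ to $\theta < \kappa$, is constant on a set of size $\kappa$; fix such a color $i < \theta$ and let $S = \{\alpha < \kappa \mid b(\alpha) = i\}$, so $|S| = \kappa$. Next, using the cofinality of the branch, I would recursively build an increasing sequence $\langle \gamma_\xi \mid \xi < \kappa \rangle$ of ordinals in $S$ together with witnesses $\delta_\xi := \delta(\gamma_\xi + 1)$ (or something similar) so that the points $\gamma_\xi$ and the witnesses $\delta_\xi$ interleave appropriately: specifically, arrange that $\gamma_\xi < \delta_\xi$ and that for $\eta < \xi$ we have $\gamma_\eta < \gamma_\xi$ with $\gamma_\eta$ lying below $\delta_\xi$ in the domain of the relevant restriction, so that $c(\gamma_\eta, \delta_\xi) = b(\gamma_\eta) = i$. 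This requires a careful bookkeeping: at stage $\xi$, having chosen $\{\gamma_\eta \mid \eta < \xi\}$, pick $\gamma_\xi \in S$ above $\sup_{\eta<\xi}\delta_\eta$, and then let $\delta_\xi$ be a witness with $\delta_\xi \ge \gamma_\xi$ and $c(\cdot, \delta_\xi)\restriction (\gamma_\xi + 1) = b \restriction (\gamma_\xi+1)$. Then set $\mathcal{A} := \{ \{\gamma_\xi\} \mid \xi < \kappa \}$ and use the witnesses: for $\eta < \xi$, $a = \{\gamma_\eta\}$, $b' = \{\gamma_\xi\}$, we want $c[a \times b'] = \{c(\gamma_\eta, \gamma_\xi)\}$ — but wait, this is about $\gamma_\eta$ versus $\gamma_\xi$, not $\gamma_\eta$ versus $\delta_\xi$.

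The correct construction therefore takes $\mathcal{A}$ to consist of the \emph{pairs} $\{\gamma_\xi, \delta_\xi\}$ — that is, I should use $\chi' = 2$ rather than $\chi' = 1$. Then for $\eta < \xi$, writing $a = \{\gamma_\eta, \delta_\eta\}$ and $b = \{\gamma_\xi, \delta_\xi\}$ (after arranging disjointness and $a < b$ by the interleaving), we get $\gamma_\eta \in a$, $\delta_\xi \in b$, and $\gamma_\eta < \gamma_\xi \le \delta_\xi$ with $\gamma_\eta < \gamma_\xi$ in the domain of $c(\cdot,\delta_\xi)\restriction(\gamma_\xi+1)$, so $c(\gamma_\eta, \delta_\xi) = b(\gamma_\eta) = i$, whence $\min(c[a \times b]) \le i$. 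This holds for every pair, so no $\mathcal{B} \in [\mathcal{A}]^2$ can satisfy $\min(c[a\times b]) > i$, contradicting $\U(\kappa, 2, \theta, 2)$.

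**Main obstacle.** The delicate point is the recursive construction ensuring simultaneously that (i) the sets $\{\gamma_\xi, \delta_\xi\}$ are pairwise disjoint, (ii) they can be ordered as $a < b$ for $\eta < \xi$ (so I need $\delta_\eta < \gamma_\xi$, achieved by choosing $\gamma_\xi$ large enough at each stage — possible since $|S| = \kappa$ and each $\delta_\eta < \kappa$), and (iii) $\gamma_\eta < \gamma_\xi$ so that $\gamma_\eta$ lies in the domain where $c(\cdot,\delta_\xi)$ agrees with $b$. All three are handled by the single requirement $\gamma_\xi > \sup_{\eta < \xi} \delta_\eta$, which is attainable at every stage $\xi < \kappa$ by regularity of $\kappa$. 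The only subtlety worth double-checking is whether we need $\chi' = 1$ or $\chi' = 2$ and whether $\U(\kappa, 2, \ldots)$ permits the latter — it does, since the definition quantifies over all $\chi' < \chi = 2$, which is exactly $\chi' = 1$; so in fact I must be more careful and make the construction work with singletons after all, by instead putting $\mathcal{A} = \{\{\delta_\xi\} \mid \xi < \kappa\}$ where the $\delta_\xi$ are chosen so that lower ones see higher ones correctly through the branch. Reconciling this — getting a contradiction using singletons rather than pairs — is the one genuinely fiddly part, and is where I would spend the most care; the resolution is to note that $c(\delta_\eta, \delta_\xi) = b(\delta_\eta) = i$ provided $\delta_\eta < \delta_\xi$ and $\delta_\eta$ is below the agreement threshold of $c(\cdot, \delta_\xi)$, which we can guarantee by choosing each $\delta_\xi$ to be a branch-witness for an ordinal above $\sup_{\eta<\xi}(\delta_\eta+1)$.
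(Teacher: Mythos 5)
Your final approach — constructing an increasing sequence of branch-witnesses $\langle\delta_\xi\mid\xi<\kappa\rangle$ with $\delta_\xi$ chosen above $\sup_{\eta<\xi}(\delta_\eta+1)$ so that $c(\delta_\eta,\delta_\xi)=b(\delta_\eta)$ for all $\eta<\xi$, and then using singletons $\{\delta_\xi\}$ as the family $\mathcal{A}$ — is essentially the paper's proof. The detour through pairs $\{\gamma_\xi,\delta_\xi\}$ and $\chi'=2$ (which, as you correctly caught, is not permitted when $\chi=2$) is a false start, but the resolution lands in the right place.

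There is one logistical slip worth flagging. You fix the color $i$ and the set $S=\{\alpha\mid b(\alpha)=i\}$ \emph{before} constructing the $\delta_\xi$, and then in the final paragraph assert $c(\delta_\eta,\delta_\xi)=b(\delta_\eta)=i$. But the $\delta_\xi$ are chosen only as branch-witnesses above previously chosen ordinals; nothing constrains them to lie in $S$, so $b(\delta_\eta)=i$ does not follow. The order of operations should be reversed: first build the strictly increasing sequence of branch-witnesses $\langle\delta_\xi\mid\xi<\kappa\rangle$ with the agreement property, so that $c(\delta_\eta,\delta_\xi)=b(\delta_\eta)$ for all $\eta<\xi$; then, since $\theta<\kappa=\cf(\kappa)$, thin to a set $B\in[\{\delta_\xi\mid\xi<\kappa\}]^\kappa$ on which $\alpha\mapsto b(\alpha)$ is constant with value $i$. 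On $B$ we then have $c(\alpha,\gamma)=b(\alpha)=i$ for all $(\alpha,\gamma)\in[B]^2$, so the family $\{\{\alpha\}\mid\alpha\in B\}$ (together with the color $i$) defeats $\U(\kappa,2,\theta,2)$. This is exactly how the paper proceeds: it first constructs a strictly increasing $f:\kappa\to\kappa$ with $b\restriction\ssup(f[\alpha])\s c(\cdot,f(\alpha))$, yielding $c(\alpha,\gamma)=b(\alpha)$ for all $(\alpha,\gamma)\in[\im(f)]^2$, and only then thins $\im(f)$ to fix a color.
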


\begin{proof}
  Suppose not, and fix $b:\kappa\rightarrow\theta$ such that $\{b\restriction\beta\mid \beta<\kappa\}\s\mathcal T(c)$.
  This means that for every ordinal $\beta<\kappa$, there exists some $\gamma\in[\beta,\kappa)$,
  such that $b\restriction\beta\s c(\cdot,\gamma)$. Recursively construct a strictly
  increasing function  $f:\kappa\rightarrow\kappa$ such that
  $b\restriction\ssup(f[\alpha])\s c(\cdot,f(\alpha))$ for all $\alpha<\kappa$.
  Write $A:=\im(f)$. In particular, $c(\alpha,\gamma)=b(\alpha)$ for all $(\alpha,\gamma)\in[A]^2$.

  As $\theta<\kappa$, let us fix $B\in[A]^\kappa$ on which $\alpha\mapsto b(\alpha)$ is constant with value, say, $i$.
  Then $\sup(c``[B]^2)\le i$,
  contradicting the fact that $c$ witnesses $\U(\kappa, 2, \theta, 2)$.
\end{proof}
\begin{remark} It is natural to ask whether the tree associated to a witness to $\U(\cdots)$
must be special or must be nonspecial. We shall address this question in Subsection~\ref{first_construction_subsection}.
\end{remark}

Recall that a \emph{$\theta$-ascending path} (resp.\ \emph{$\theta$-ascent path}) through a $\kappa$-tree $(T,<_T)$
is a sequence $\langle f_\alpha\mid \alpha<\kappa\rangle$ such that the following two conditions hold:
\begin{itemize}
\item for all $\alpha<\kappa$, $f_\alpha$ is a function from $\theta$ to the $\alpha^{\mathrm{th}}$ level of $(T,<_T)$;
\item for all $\alpha<\beta<\kappa$, there are $j,j'<\theta$ such that $f_\alpha(j)<_T f_\beta(j')$ (resp.\
$f_\alpha(j)<_T f_\beta(j)$ for a tail of $j<\theta$).
\end{itemize}

In Part~III, it is proved that, if $\theta<\kappa$ and there exists a $\kappa$-tree
admitting a $\theta$-ascent path but no $\theta'$-ascent path for $\theta'<\theta$,
then $\U(\kappa,2,\theta,\theta)$ holds.
We now generalize Proposition~\ref{aronszajn} and deal with the converse of the result from Part~III.

\begin{lemma}\label{lemma610}
  Suppose that $\theta \in \reg(\kappa)$, $\chi < \kappa$, and $c:[\kappa]^2\rightarrow\theta$
  witnesses $\U(\kappa,2,\theta,\chi)$. For every infinite cardinal $\theta'<\chi$,
  \begin{enumerate}
    \item if $\theta'<\theta$, then $\mathcal T(c)$ admits no $\theta'$-ascending path;
    \item if $\cf(\theta')\neq\theta$, then $\mathcal T(c)$ admits no $\theta'$-ascent path.
  \end{enumerate}
\end{lemma}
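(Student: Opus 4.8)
The plan is to argue by contradiction in both cases, mimicking the structure of the proof of Proposition~\ref{aronszajn} but working with a path of width $\theta'$ rather than a single cofinal branch. Suppose $\langle f_\alpha \mid \alpha<\kappa\rangle$ is a $\theta'$-ascending path (resp.\ $\theta'$-ascent path) through $\mathcal T(c)$. Since each $f_\alpha(j)$ lies on the $\alpha^{\mathrm{th}}$ level of $\mathcal T(c)$, it is a function from $\alpha$ to $\theta$ of the form $c(\cdot,\gamma)\restriction\alpha$ for some $\gamma\in[\alpha,\kappa)$. The first step is to recursively build a strictly increasing $h:\kappa\to\kappa$ together with witnessing ordinals so that, writing $A:=\im(h)$ and setting, for $\alpha\in A$, $g_\alpha(j):=c(\cdot,\gamma^\alpha_j)\restriction\ssup(A\cap\alpha)$ where $\gamma^\alpha_j\ge\alpha$ is chosen so that $f_\alpha(j)=c(\cdot,\gamma^\alpha_j)\restriction\alpha$, we get for $\beta<\alpha$ both in $A$ that $c(\beta,\gamma^\alpha_j) = f_\alpha(j)(\beta)$ depends only on $\alpha$ and $j$ (not on $\beta$). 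This just repeats the branch-straightening of Proposition~\ref{aronszajn}, done $\theta'$-many times in parallel along a single diagonal sequence $A$.

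Next I would extract, for each $\alpha\in A$, a function $b_\alpha:\theta'\to\theta$ recording the ``eventual colors'': in the ascending-path case, whenever $\beta<\alpha$ in $A$ there are $j,j'<\theta'$ with $f_\beta(j)<_{\mathcal T(c)} f_\alpha(j')$, which forces $c(\beta,\gamma^\alpha_{j'}) = f_\beta(j)(\beta)$, i.e.\ the color $c(\beta,\gamma^\alpha_{j'})$ equals an entry of $b_\beta$. Now I would apply $\U(\kappa,2,\theta,\chi)$, or rather its contrapositive, with the color $i$ chosen large: the pairwise-disjoint family is $\{\{\gamma^\alpha_j \mid j<\theta'\}\mid \alpha\in A'\}$ for a suitable cofinal $A'\subseteq A$ of size $\kappa$, which is legitimate because $\theta'<\chi$. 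After passing to a subset of size $\kappa$ on which the function $\alpha\mapsto b_\alpha$ is constant — possible since $\theta\le\theta$ and $\theta<\kappa$ give $|{}^{\theta'}\theta|\le 2^{\theta'}$, and here one needs $\kappa$ to be $({<}\chi)$-inaccessible or otherwise to absorb $\theta'^{\,<}$-style bounds; more carefully one stabilizes only finitely much of $b_\alpha$ at a time as in Lemma~\ref{chi3}, which is the cleaner route — one obtains that for any two $\gamma$-blocks the color $c$ between them takes a value from the common finite stabilized set, hence is bounded by some fixed $i<\theta$. This contradicts $\U(\kappa,2,\theta,\chi)$ once $i$ is taken above the relevant sup.

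For clause~(2), the ascent-path hypothesis is weaker (the witnessing $j$ is the same on both sides, but only for a tail of $j<\theta'$), so the argument must instead exploit $\cf(\theta')\neq\theta$. Here the point is that for $\beta<\alpha$ in $A$ and a tail of $j<\theta'$ we have $c(\beta,\gamma^\alpha_j) = f_\beta(j)(\beta)$, so the set of colors appearing between the block at $\beta$ and the block at $\alpha$ is, up to a bounded-below error, contained in $\im(b_\beta)$; I would take the sup $\sigma_\beta:=\sup(\im(b_\beta))<\theta$ (using $\theta$ regular and $\theta'<\theta$ when $\theta'<\theta$, and a separate small argument when $\theta'\ge\theta$, where $\cf(\theta')\ne\theta$ is exactly what rules out $\sup(\im(b_\beta))=\theta$ being achieved cofinally in a damaging way). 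Stabilizing $\sigma_\beta$ to a constant value $i<\theta$ on a $\kappa$-sized subset again produces a pairwise-disjoint family of $\theta'$-blocks all of whose inter-block colors are $\le i$ on a tail, and shrinking each block to discard the finitely-many exceptional coordinates (legitimate since we only ever need $\mu=2$) yields the contradiction with $\U(\kappa,2,\theta,\chi)$.

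I expect the main obstacle to be the bookkeeping in the ascent-path case (clause~(2)): unlike the ascending-path case, one cannot freely permute the coordinates $j<\theta'$, so the ``tail of $j$'' quantifier has to be threaded carefully through the recursive construction of $A$ and through the final block-shrinking, and the use of the cardinal-arithmetic hypothesis $\cf(\theta')\neq\theta$ must be pinned to exactly the place where $\sup(\im(b_\beta))$ could otherwise equal $\theta$. Everything else — the branch-straightening recursion, the stabilization via finite approximations à la Lemma~\ref{chi3}, and the final appeal to the definition of $\U(\kappa,2,\theta,\chi)$ with $\mu=2$ and $\chi'=\theta'<\chi$ — should be routine.
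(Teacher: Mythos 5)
Your high-level skeleton is right — build a pairwise-disjoint family of ${<}\chi$-sized blocks of ``witnessing ordinals'' for the nodes $f_{\alpha+1}(j)$, invoke $\U(\kappa,2,\theta,\chi)$ with a sufficiently large color $i$, and derive a contradiction — but two of the execution steps are genuinely broken and a third is unnecessary complication.

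First, the attempt to stabilize the entire function $b_\alpha:\theta'\to\theta$ on a $\kappa$-sized set cannot work without cardinal-arithmetic assumptions the lemma does not make (you need $\theta^{\theta'}<\kappa$, which is exactly what you flag). Your two proposed fixes do not repair this: the $({<}\chi)$-inaccessibility of $\kappa$ is not a hypothesis of the lemma, and the ``stabilize finitely much of $b_\alpha$ at a time as in Lemma~\ref{chi3}'' suggestion is not the right analogy — Lemma~\ref{chi3} inducts on the \emph{fourth parameter} of $\U$ from $3$ to $\omega$; there is no finite-approximation mechanism there that lets you trade off an unstabilized tail of $b_\alpha$ against the $\U$-condition. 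The paper's proof sidesteps the issue entirely: for each $\alpha$ it records only the single ordinal $i_\alpha := \sup\{f_{\alpha+1}(j)(\alpha)\mid j<\theta'\}$, which is $<\theta$ precisely because $\theta'<\theta$ and $\theta$ is regular. Stabilizing a single ordinal below $\theta$ on a set of size $\kappa$ is free (as $\theta<\kappa$). In other words, the only coordinate of $b_\alpha$ that ever matters is the coordinate at $\alpha$, and you should never look at anything else.

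Second, your handling of clause (2) when $\cf(\theta')>\theta$ is incorrect in two places. You write that $\cf(\theta')\neq\theta$ ``rules out $\sup(\im(b_\beta))=\theta$ being achieved cofinally'' — it does not rule that out. What it actually buys you is a pigeonhole: since $f_{\alpha+1}(\cdot)(\alpha)$ is a map from $\theta'$ to $\theta$ and $\cf(\theta')>\theta$, at least one value $i_\alpha<\theta$ is achieved on a \emph{cofinal} subset of $\theta'$. That is the single ordinal you stabilize. Then, given the ascent-path hypothesis ($f_{\alpha+1}(j)\s f_{\alpha'+1}(j)$ for a \emph{tail} of $j<\theta'$), you intersect the tail with the cofinal preimage of $i_\alpha$ and pick one $j$ in the intersection; at that $j$ you read off $c(\alpha,\beta)=i_\alpha=i$ and get the contradiction. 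Your ``shrink each block to discard the finitely-many exceptional coordinates'' does not work here: the exceptional $j$'s form an initial segment of $\theta'$, possibly of size $\theta'$, not a finite set, and no shrinking of the blocks is needed anyway because you only ever use a single $j$.

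Finally, the opening branch-straightening recursion à la Proposition~\ref{aronszajn} is superfluous. Once you pick, for each $j<\theta'$, an ordinal $\beta^\alpha_j\geq\alpha$ with $f_{\alpha+1}(j)=c(\cdot,\beta^\alpha_j)\restriction(\alpha+1)$, the identity $c(\alpha,\beta^{\alpha'}_{j'})=f_{\alpha'+1}(j')(\alpha)$ for $\alpha<\alpha'$ holds automatically — it is just unwinding what it means for $f_{\alpha'+1}(j')$ to be a restriction of $c(\cdot,\beta^{\alpha'}_{j'})$. The only thinning required is to make the blocks $a_\alpha:=\{\alpha\}\cup\{\beta^\alpha_j\mid j<\theta'\}$ pairwise disjoint, which is a single Fodor/closure-point pass using $\min(a_\alpha)=\alpha$; no recursion is needed, and building one obscures where the real work (the right choice of $i_\alpha$) happens.
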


\begin{proof}
  For each $\alpha<\kappa$, set $T_\alpha:=\mathcal T(c)\cap{}^\alpha\theta$.
  Suppose that $\theta'$ is an infinite cardinal less than $\chi$ and that $\langle f_\alpha:
  \theta'\rightarrow T_\alpha\mid \alpha<\kappa\rangle$ is a given sequence of functions.
  For each $\alpha<\kappa$, fix $a_\alpha\in[\kappa]^{<\chi}$ such that
  $$
    \{ c(\cdot,\alpha)\}\cup\{ f_{\alpha+1}(j)\mid j<\theta'\}=\{ c(\cdot,\beta)
    \restriction(\alpha+1)\mid \beta\in a_\alpha\}.
  $$
  As $\min(a_\alpha)=\alpha$, we may pick $A\in[\kappa]^\kappa$ for which
  $\{ a_\alpha\mid \alpha\in A\}$ are pairwise disjoint.

  (1) Suppose that $\theta'<\theta$. For each $\alpha\in A$, let $i_\alpha:=
  \sup\{ f_{\alpha+1}(j)(\alpha)\mid j<\theta'\}$. Fix $B\in[A]^\kappa$ for which
  $\{i_\alpha\mid\alpha\in B\}$ is a singleton, say $\{i\}$. Pick
  $(\alpha,\alpha')\in[B]^2$ with $a_\alpha<a_{\alpha'}$ such that $\min(c[a_\alpha\times a_{\alpha'}])>i$.

  Towards a contradiction, suppose that there exist $j,j'<\theta'$ such that
  $f_{\alpha+1}(j) \s f_{\alpha'+1}(j')$. Pick $\beta\in a_{\alpha'}$ such that
  $f_{\alpha'+1}(j')\s c(\cdot,\beta)$. Then $c(\alpha,\beta)=
  f_{\alpha'+1}(j')(\alpha)=f_{\alpha+1}(j)(\alpha)\le i$, contradicting the fact that
  $(\alpha,\beta)\in a_\alpha\times a_{\alpha'}$.

  (2)  Suppose that $\cf(\theta')\neq\theta$. If $\cf(\theta')<\theta$ and
  $\mathcal T(c)$ admits a $\theta'$-ascent path,
  then it also admits a $\cf(\theta')$-ascent path, and hence a $\cf(\theta')$-ascending
  path. This case is therefore covered by Clause~(1).

  Next, suppose that $\cf(\theta')>\theta$. For each $\alpha<\kappa$,
  pick $i_\alpha<\theta$ such that $\sup\{j<\theta'\mid f_{\alpha+1}(j)(\alpha)=i_\alpha\}=\theta'$.
  Fix $B\in[A]^\kappa$ for which $\{i_\alpha\mid\alpha\in B\}$ is a singleton, say $\{i\}$.
  Pick  $(\alpha,\alpha') \in [B]^2$ with $a_\alpha < a_{\alpha'}$
  such that $\min(c[a_\alpha\times a_{\alpha'}])>i$.

  Towards a contradiction, suppose that $f_{\alpha+1}(j) \s f_{\alpha' +  1}(j)$
  for all sufficiently large $j < \theta'$, and use this to find a $j$ for which
  $f_{\alpha+1}(j) \s f_{\alpha'+1}(j)$ and $f_{\alpha+1}(j)(\alpha)=i_\alpha$.
  Pick $\beta\in a_{\alpha'}$ such that $f_{\alpha'+1}(j)\s c(\cdot,\beta)$. Then $c(\alpha,\beta)=
  f_{\alpha'+1}(j)(\alpha)=f_{\alpha+1}(j)(\alpha)=i_\alpha=i$, contradicting the fact that
  $(\alpha,\beta)\in a_\alpha\times a_{\alpha'}$.
\end{proof}

\begin{remark}
  In \cite{lh_lucke}, Lambie-Hanson and L\"{u}cke prove that, if
  $\kappa$ is a weakly compact cardinal and $\theta'\in\reg(\kappa)$,
  then, in some cofinality-preserving forcing extension, $\kappa$ remains strongly inaccessible
  and every $\kappa$-tree has a $\theta'$-ascent path.
  It follows that, in their model, $\U(\kappa, 2, \theta, (\theta')^+)$ fails for all $\theta\in\reg(\kappa)\setminus\{\theta'\}$.
  In Part~II, we shall carry out a further analysis of this model, proving that it satisfies $\U(\kappa, \kappa, \theta', \kappa)$.
  In particular, the special case $\theta'=\omega$ will yield a model in which
  $\Pr_1(\kappa,\kappa,2,2)$ holds and $\pr_1(\kappa,\kappa,2,\omega_1)$ fails,
  thus showing that \cite[Conjecture~2]{paper18} is the most one can hope for.
\end{remark}

\begin{cor}\label{cor611} Suppose that $\kappa$ is a strongly inaccessible cardinal and $\U(\kappa,2,\omega_1,\omega_1)$ holds.
Then there exists a $\kappa$-Aronszajn tree with no $\omega$-ascending path.\qed
\end{cor}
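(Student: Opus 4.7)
The plan is to exhibit the associated tree $\mathcal{T}(c)$ of a witness $c$ to $\U(\kappa,2,\omega_1,\omega_1)$ as the desired $\kappa$-Aronszajn tree. All three required properties (being a $\kappa$-tree, having no cofinal branch, and admitting no $\omega$-ascending path) will be immediate consequences of results already established in this subsection, together with the strong inaccessibility of $\kappa$.

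First, I would fix a coloring $c:[\kappa]^2\rightarrow\omega_1$ witnessing $\U(\kappa,2,\omega_1,\omega_1)$, and consider $\mathcal{T}(c)$ under $\subseteq$. To see it is a $\kappa$-tree, note that its height is $\kappa$ since, for each $\beta<\kappa$, the function $c(\cdot,\beta)$ lies on level $\beta$. For the level widths, the $\beta^{\mathrm{th}}$ level is a subset of ${}^\beta\omega_1$, which has cardinality at most $\omega_1^{|\beta|}\le 2^{\max(\omega_1,|\beta|)}<\kappa$ by strong inaccessibility of $\kappa$ (which also gives $\omega_1<\kappa$, so that the hypotheses of Proposition~\ref{aronszajn} and Lemma~\ref{lemma610} apply).

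Next, by Proposition~\ref{obvious}(3), the coloring $c$ also witnesses $\U(\kappa,2,\omega_1,2)$, so Proposition~\ref{aronszajn} yields that $\mathcal{T}(c)$ has no cofinal branch; hence $\mathcal{T}(c)$ is $\kappa$-Aronszajn. Finally, the parameters $\theta=\omega_1\in\reg(\kappa)$, $\chi=\omega_1<\kappa$, and $\theta'=\omega<\omega_1=\theta$ satisfy the hypotheses of Lemma~\ref{lemma610}(1), which gives that $\mathcal{T}(c)$ admits no $\omega$-ascending path.

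Honestly, there is no real obstacle here: this is a straightforward assembly of Proposition~\ref{aronszajn} and Lemma~\ref{lemma610}(1), with strong inaccessibility used only to ensure the level widths of $\mathcal{T}(c)$ stay below $\kappa$. The one thing to double-check is the choice of parameters when invoking Lemma~\ref{lemma610}(1), where we need $\theta'<\theta$ and $\theta'<\chi$ simultaneously; taking $\theta'=\omega$ and $\theta=\chi=\omega_1$ satisfies both strict inequalities, which is precisely why the corollary is stated for $\omega$-ascending paths rather than $\omega_1$-ascending paths.
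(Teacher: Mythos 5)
Your proposal is correct and is exactly the argument the paper intends (the corollary is stated with a bare \qed precisely because it is this immediate assembly of Proposition~\ref{aronszajn}, Lemma~\ref{lemma610}(1) with $\theta'=\omega<\theta=\chi=\omega_1$, and strong inaccessibility to bound the levels of $\mathcal T(c)\subseteq{}^{<\kappa}\omega_1$). Nothing to add.
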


\subsection{Large cardinals}

In this subsection, we indicate how large cardinals can imply nontrivial failures
of $\U(\ldots)$ at inaccessible cardinals and successors of singular cardinals.
First, recall that a cardinal $\kappa$ is \emph{weakly compact} if
it is strongly inaccessible and there are no $\kappa$-Aronszajn trees.
The following fact is now an immediate consequence of Proposition~\ref{aronszajn}.
\begin{fact}\label{fact29}
  If $\kappa$ is weakly compact, then $\U(\kappa, 2, \theta, 2)$ fails for all
  $\theta\in\reg(\kappa)$.
\end{fact}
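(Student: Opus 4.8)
The plan is to derive this immediately from Proposition~\ref{aronszajn}. Recall that a weakly compact cardinal $\kappa$ is, by definition, strongly inaccessible and carries no $\kappa$-Aronszajn trees; so it suffices to produce, from a putative witness to $\U(\kappa,2,\theta,2)$ with $\theta\in\reg(\kappa)$, a $\kappa$-Aronszajn tree, thereby reaching a contradiction. The natural candidate is the associated tree $\mathcal T(c)$ defined just above, and the work reduces to checking that $\mathcal T(c)$ is genuinely a $\kappa$-Aronszajn tree.

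First I would observe that $\mathcal T(c) \subseteq {}^{<\kappa}\theta$, and since $\kappa$ is inaccessible and $\theta < \kappa$, each level $T_\alpha = \mathcal T(c)\cap{}^\alpha\theta$ has size at most $\theta^{|\alpha|} < \kappa$; thus $\mathcal T(c)$ is a tree of height $\kappa$ with levels of size $<\kappa$, i.e.\ a $\kappa$-tree. (One should note the height is exactly $\kappa$: for each $\beta<\kappa$ the node $c(\cdot,\gamma)\restriction\beta$ lies on level $\beta$ for any $\gamma\in[\beta,\kappa)$, so every level is nonempty.) Then I would invoke Proposition~\ref{aronszajn}, whose hypotheses $\theta<\kappa$ and ``$c$ witnesses $\U(\kappa,2,\theta,2)$'' are exactly what we have assumed, to conclude that $\mathcal T(c)$ has no cofinal branch. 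A $\kappa$-tree with no cofinal branch is precisely a $\kappa$-Aronszajn tree, contradicting weak compactness of $\kappa$.

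There is essentially no obstacle here; the content has already been extracted into Proposition~\ref{aronszajn}, and the only remaining point is the bookkeeping that $\mathcal T(c)$ really is a $\kappa$-tree under the standing hypothesis that $\kappa$ is inaccessible (which is built into weak compactness). The one place to be slightly careful is the level-size computation: it uses $\theta<\kappa$ together with $\kappa$ being a strong limit, both of which are available. So the proof is a one-line deduction, and I would present it as such: assume $\U(\kappa,2,\theta,2)$ holds as witnessed by some $c\colon[\kappa]^2\to\theta$ with $\theta\in\reg(\kappa)$, note $\mathcal T(c)$ is a $\kappa$-tree, apply Proposition~\ref{aronszajn} to see it is Aronszajn, and contradict the weak compactness of $\kappa$.
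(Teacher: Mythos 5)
Your proof is correct and is precisely the paper's intended argument: the paper introduces Fact~\ref{fact29} immediately after recalling that weak compactness means strongly inaccessible with no $\kappa$-Aronszajn trees, and declares it an immediate consequence of Proposition~\ref{aronszajn}. Your write-up simply supplies the routine bookkeeping (strong inaccessibility bounds the levels of $\mathcal T(c)$, so it is a $\kappa$-tree) that the paper leaves implicit.
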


To obtain a similar result at successors of singular cardinals, we employ
strongly compact cardinals. Recall that a cardinal $\nu$ is \emph{strongly
compact} if it is uncountable and every $\nu$-complete filter can be extended
to a $\nu$-complete ultrafilter.

\begin{thm}\label{t310}
  Suppose that $\lambda$ is a singular limit of strongly compact cardinals.
  Then $\U(\lambda^+, 2, \theta, \cf(\lambda)^+)$ fails for all
  $\theta\in\reg(\lambda)\setminus\{\cf(\lambda)\}$.
\end{thm}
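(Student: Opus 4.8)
The plan is to derive a contradiction from a putative witness $c : [\lambda^+]^2 \to \theta$ to $\U(\lambda^+, 2, \theta, \cf(\lambda)^+)$, using the structure imposed by the strongly compact cardinals below $\lambda$. First I would fix $\theta \in \reg(\lambda) \setminus \{\cf(\lambda)\}$ and write $\kappa_0 := \cf(\lambda)$, so that $\theta \ne \kappa_0$. Since $\lambda$ is a limit of strongly compact cardinals, I can choose a strongly compact cardinal $\nu$ with $\max\{\theta, \kappa_0, \cf(\lambda)\} < \nu < \lambda$; the point of taking $\nu$ strictly above $\theta$ is that $\nu$-complete ultrafilters will be "oblivious" to colorings into $\theta$, and the point of $\nu > \cf(\lambda)$ is that we can still find $\lambda^+$-many pairwise disjoint subsets of $\lambda^+$ of size $\cf(\lambda)$ (or size $<\cf(\lambda)^+$) to feed into the definition of $\U$.

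The core idea is a standard strong-compactness "indiscernibility" argument applied to the tree $\mathcal T(c)$ or directly to the functions $c(\cdot, \gamma)$. Using the $\nu$-complete fine ultrafilter on $\mathcal P_\nu(\lambda^+)$ (equivalently, a $\lambda^+$-complete uniform ultrafilter after a suitable derivation, exploiting that $\nu$-completeness plus the regularity of $\lambda^+ > \nu$ lets us push up), I would extract a large homogeneous set. Concretely: for each $\gamma < \lambda^+$ and each candidate "small" set $a \in [\lambda^+]^{<\nu}$ with $\max(a) < \gamma$, the function $a \mapsto c(\cdot, \gamma) \restriction a$ takes fewer than $\nu$ (indeed $\le \theta^{<\nu} $, but really $\theta$-many per coordinate) values; $\nu$-completeness of the ultrafilter lets me stabilize these, producing a single function $b$ on a cofinal subset $A \subseteq \lambda^+$ with $c(\alpha,\gamma) = b(\alpha)$ whenever $\alpha < \gamma$ are both in $A$. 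This is exactly the configuration ruled out in the proof of Proposition~\ref{aronszajn}: thinning $A$ to a set $B$ of size $\lambda^+$ on which $b$ is constant with value $i$ (possible since $\theta < \lambda^+$), we get $\sup(c``[B]^2) \le i$, and then the singleton family $\mathcal A := \{\{\beta\} \mid \beta \in B\}$ — or rather a family of pairwise disjoint sets of size $\cf(\lambda)$ drawn from $B$ — contradicts $\U(\lambda^+, 2, \theta, \cf(\lambda)^+)$.

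The reason the hypothesis $\theta \ne \cf(\lambda)$ is needed, and where I expect the main obstacle to lie, is in making the stabilization work with the correct parameter $\cf(\lambda)^+$ rather than just $2$: a naive argument gives only that $\mathcal T(c)$ has a cofinal branch, contradicting $\U(\lambda^+, 2, \theta, 2)$, which is already known to be consistent-true in $\zfc$ at many successors of singulars, so it can't be contradicted outright. The subtlety is that the "small sets" being fed in have size up to $\cf(\lambda)$, and when we stabilize the restrictions $c(\cdot,\gamma) \restriction a_\alpha$ for $|a_\alpha| = \cf(\lambda)$ using $\nu$-completeness (legitimate since $\cf(\lambda) < \nu$), we need the resulting branch-like object to still produce a genuine violation; the cofinality hypothesis $\cf(\theta) = \theta \ne \cf(\lambda)$ is what prevents the $\cf(\lambda)$-many coordinate values from conspiring to stay below a bound only in the degenerate way allowed by monotonicity. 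I would therefore run the stabilization coordinatewise: at each level $\alpha \in A$, the $\cf(\lambda)$-many values $\{c(\cdot,\gamma)(\alpha) : \gamma\}$ attached to the elements of $a_\alpha$ live in $\theta$, and since $\theta$ is regular and $\ne \cf(\lambda)$, either $\theta > \cf(\lambda)$ (so these $\cf(\lambda)$-many values are bounded below $\theta$, letting us stabilize a uniform bound $i < \theta$) or $\theta < \cf(\lambda)$ (so by the pigeonhole on $\theta$ a single value $i$ recurs cofinally in each $a_\alpha$). In either case, combining across $A$ via the $\nu$-complete ultrafilter yields a cofinal $B \subseteq \lambda^+$ and a color $i < \theta$ such that for the pairwise disjoint family $\mathcal A = \{a_\alpha : \alpha \in B'\}$ (for suitable $B' \in [B]^{\lambda^+}$) we have $\min(c[a\times b]) \le i$ for all $(a,b) \in [\mathcal A]^2$, contradicting $\U(\lambda^+, 2, \theta, \cf(\lambda)^+)$. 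Writing out the ultrafilter manipulation cleanly — in particular verifying that a $\nu$-complete fine ultrafilter on $\mathcal P_\nu(\lambda^+)$ delivers the coordinatewise stabilization uniformly over a cofinal set of levels — is the part I expect to require the most care.
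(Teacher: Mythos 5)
Your proposal has a fatal error at its foundation: the parenthetical claim that a $\nu$-complete fine ultrafilter on $\mathcal P_\nu(\lambda^+)$ is ``equivalently, a $\lambda^+$-complete uniform ultrafilter after a suitable derivation'' is false. Projecting an ultrafilter can never raise its completeness degree, so from a strongly compact $\nu < \lambda$ one can only obtain $\nu$-complete uniform ultrafilters on $\lambda^+$, not $\lambda^+$-complete ones. Without $\lambda^+$-completeness there is no way to extract a cofinal $A \subseteq \lambda^+$ on which $c(\alpha,\gamma)$ depends only on $\alpha$: to build such an $A$ (equivalently, a cofinal branch of $\mathcal T(c)$) one must simultaneously honor $\lambda^+$-many constraints of the form ``$\gamma \in X_\alpha$,'' and a single $\nu$-complete ultrafilter can only intersect fewer than $\nu$ of them at a time. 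You yourself observe that this ``naive'' conclusion would refute $\U(\lambda^+,2,\theta,2)$, which is provable in $\zfc$ for many singular $\lambda$; the explanation for the absurdity is not a subtlety to be fixed but that the homogeneous set simply does not exist. Your subsequent ``fix'' never produces a concrete pairwise-disjoint family $\mathcal A \subseteq [\lambda^+]^{\le \cf(\lambda)}$ of size $\lambda^+$ together with a bound $i$, and in particular it gives no mechanism for ensuring that, for every pair $\gamma < \delta < \lambda^+$, some coordinate pair from $a_\gamma \times a_\delta$ receives a small color.

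The missing idea is to use not one but $\cf(\lambda)$-many ultrafilters of \emph{varying} completeness. Fix an increasing sequence of strongly compacts $\langle \lambda_j \mid j < \cf(\lambda)\rangle$ cofinal in $\lambda$ with $\lambda_0 > \theta$, and for each $j$ a uniform $\lambda_j$-complete ultrafilter $U_j$ on $\lambda^+$. The $U_j$'s stabilize a color $i^j_\alpha < \theta$ for each $\alpha$ (on a $U_j$-large set $X^j_\alpha$), then a uniform color $i^j$ (on a $U_j$-large set $Y^j$). The hypothesis that $\theta$ is regular and $\theta \ne \cf(\lambda)$ is used precisely here: it lets you find a single $i < \theta$ and an unbounded $J \subseteq \cf(\lambda)$ with $i^j \le i$ for all $j \in J$. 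The recursion that builds $\mathcal A = \{a_\gamma \mid \gamma < \lambda^+\}$ then places, in $a_\delta$, one element $\beta^j$ per $j \in J$, chosen inside $Y^j \cap \bigcap_{\gamma \in Z^j_\delta} X^j_{\alpha^j_\gamma}$ where $\langle Z^j_\delta \mid j \in J\rangle$ is a covering of $\delta$ by sets of size $< \lambda_j$. The point is exactly that the $\lambda$-many earlier indices $\gamma < \delta$ are too many for any one ultrafilter, but distributing them among the $\lambda_j$-complete ultrafilters --- fewer than $\lambda_j$ requirements to $U_j$ --- makes the intersection nonempty. This combinatorial distribution is the heart of the theorem and is entirely absent from your proposal.
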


\begin{proof}
  Fix an arbitrary $\theta\in\reg(\lambda)\setminus\{\cf(\lambda)\}$ and a coloring
  $c : [\lambda^+]^2 \rightarrow \theta$. In order to show that $c$ does not witness
  $\U(\lambda^+, 2, \theta, \cf(\lambda)^+)$, we will find an $i < \theta$ and a family $\mathcal{A}
  \subseteq [\lambda^+]^{\leq \cf(\lambda)}$ consisting of $\lambda^+$-many pairwise disjoint sets
  such that, for all $(a,b) \in [\mathcal{A}]^2$, we have $\min(c[a \times b]) \leq i$.

  Let $\langle \lambda_j \mid j < \cf(\lambda) \rangle$ be an increasing sequence of
  strongly compact cardinals that is cofinal in $\lambda$, with $\lambda_0>\theta$.
  For a fixed $j < \cf(\lambda)$, use the strong compactness of $\lambda_j$ to pick a
  uniform, $\lambda_j$-complete ultrafilter $U_j$ on $\lambda^+$. Then,
  for each $\alpha < \lambda^+$, use the $\lambda_j$-completeness
  of $U_j$ to find an $i^j_\alpha < \theta$ and an $X^j_\alpha \in
  U_j$ such that for all $\beta \in
  X^j_\alpha$, we have $\alpha<\beta$ and $c(\alpha, \beta) = i^j_\alpha$.
  Then, again use the completeness of $U_j$ to
  find an $i^j < \theta$ and a $Y^j \in U_j$ such that, for all
  $\alpha \in Y^j$, we have $i^j_\alpha = i^j$.

  Now, since $\theta$
  is regular and $\theta \neq \cf(\lambda)$, we may find an $i < \theta$ and an
  unbounded $J \subseteq \cf(\lambda)$ such that, for all $j \in J$, we have $i^j \leq i$.
  For every nonzero $\delta<\lambda^+$, fix a sequence $\langle Z_\delta^j \mid j \in J\rangle$ such that
  \begin{itemize}
    \item $\bigcup_{j \in J} Z_\delta^j = \delta$;
    \item for all $j \in J$, $0 < |Z_\delta^j| < \lambda_j$.
  \end{itemize}

  We now construct our family $\mathcal{A} := \{a_\gamma \mid \gamma < \lambda^+\}$
  by recursion on $\gamma < \lambda^+$. We will arrange so that, for all $\gamma <
  \delta < \lambda^+$,
  \begin{itemize}
    \item for all $j \in J$, we have $a_\gamma \cap Y^j \neq \emptyset$;
    \item $a_\gamma < a_\delta$;
    \item $\min(c[a_\gamma \times a_\delta]) \leq i$.
  \end{itemize}
  This will clearly suffice to prove the theorem.

  Begin by letting $a_0 := \{\min(Y^j) \mid j \in J\}$. Next, suppose that
  $\delta\in\lambda^+\setminus\{0\}$ and we have already constructed $\{a_\gamma \mid \gamma < \delta\}$.
  Let $\epsilon_\delta := \sup(\bigcup_{\gamma < \delta}a_\gamma)$.
  For each $j \in J$ and each $\gamma \in Z_\delta^j$, fix $\alpha^j_\gamma \in a_\gamma \cap Y^j$.
  For each $j \in J$, use the $\lambda_j$-completeness
  of $U_j$ to find $\beta^j \in Y^j \cap \bigcap_{\gamma \in Z_\delta^j} X^j_{\alpha^j_\gamma}$
  with $\beta^j > \epsilon_\delta$. Let $a_\delta := \{\beta^j \mid j \in J\}$.

  It remains to check that we have maintained the recursion hypotheses. We clearly
  have both $a_\delta \cap Y^j \neq \emptyset$ for all $j \in J$, and also
  $a_\gamma < a_\delta$ for all $\gamma < \delta$. To see that
  $\min(c[a_\gamma \times a_\delta]) \leq i$ for all $\gamma < \delta$, fix
  such a $\gamma$ and fix $j \in J$ such that $\gamma \in Z_\delta^j$.
  Then $\alpha^j_\gamma \in a_\gamma \cap Y^j$ and $\beta^j \in a_\gamma
  \cap X^j_{\alpha^j_\gamma}$, so $c(\alpha^j_\gamma, \beta^j)
  = i^j \leq i$, and we are done.
\end{proof}

\begin{remark}
  Let us note here two ways in which the preceding result is optimal. First,
  by Corollary~\ref{predecessor} below, $\U(\lambda^+, \lambda^+, \cf(\lambda),
  \lambda)$ holds for every singular cardinal $\lambda$, so the requirement
  ``$\theta \neq \cf(\lambda)$" cannot be waived. Second, recall that $\sch$
  holds above a strongly compact cardinal. In particular, in the setting of
  the preceding result, we have $2^\lambda = \lambda^+$. It then follows from
  Theorem~\ref{ch} below that $\U(\lambda^+, \lambda^+, \theta, \cf(\lambda))$
  holds for all $\theta < \lambda$, so the fourth parameter cannot be reduced
  from $\cf(\lambda)^+$.
\end{remark}

In Part~II, we will force over models with large cardinals to obtain a finer separation
between instances of $\U(\ldots)$. In particular, we will obtain the following consistency
results.
\begin{itemize}
  \item We will force with a cofinality-preserving forcing notion over a model
    in which $\kappa$ is weakly compact and $\theta < \kappa$ is regular to obtain
    a model in which $\U(\kappa, \kappa, \theta, \kappa)$ holds but
    $\U(\kappa, 2, \theta', \theta^+)$ fails for every $\theta' \in
    \reg(\kappa) \setminus \{\theta\}$.
  \item We will force with a cofinality-preserving forcing notion over a model
    in which $\lambda$ is a singular limit of supercompact cardinals and
    $\theta$ is a regular cardinal with $\cf(\lambda) \leq \theta < \lambda$
    to obtain a model in which $\U(\lambda^+, \lambda^+, \theta, \lambda)$
    holds but $\U(\lambda^+, 2, \theta', \theta^+)$ fails for all
    $\theta' \in \reg(\lambda) \setminus \{\cf(\lambda), \theta\}$.
\end{itemize}

\section{Strong chain conditions and forcing axioms} \label{application_section}

The work in this paper arose in part from questions regarding the infinite productivity
of chain conditions and possible generalizations of Martin's Axiom to higher cardinals.
In this section, we present these questions and indicate how the property
$\U(\kappa, \mu, \theta, \chi)$ comes to bear on them.

\subsection{Infinite productivity of strong chain conditions} \label{productivity_subsection}

We start this section by recalling some relevant properties of posets, starting
with closure properties.

\begin{defn}
  Let $\mathbb{P}$ be a poset and $\lambda$ be a regular, uncountable cardinal.
  \begin{enumerate}
    \item $\mathbb{P}$ is \emph{well-met} if, whenever $p,q \in \mathbb{P}$
      are compatible, they have a greatest lower bound in $\mathbb{P}$.
    \item $\mathbb{P}$ is \emph{$\lambda$-closed (resp.\ $\lambda$-directed
    closed) with greatest lower bounds} if, whenever $\tau < \lambda$
    and $\langle q_\eta \mid \eta < \tau \rangle$ is $\leq_{\mathbb{P}}$-decreasing
    (resp.\ $\leq_{\mathbb{P}}$-directed), it has a greatest lower bound in $\mathbb{P}$.
  \end{enumerate}
\end{defn}

We next recall some strengthenings of the $\kappa$-chain condition.

\begin{defn}
  Let $\mathbb{P}$ be a poset.
  \begin{enumerate}
    \item A subset $B \subseteq \mathbb{P}$ is \emph{linked} if it consists of
      pairwise compatible conditions. $B$ is \emph{centered} if every finite
      subset of $B$ has a lower bound in $\mathbb{P}$.
    \item $\mathbb{P}$ is \emph{$\kappa$-Knaster} if, whenever $A \in [\mathbb{P}]^\kappa$,
      there is $B \in [A]^\kappa$ that is linked.
    \item $\mathbb{P}$ has \emph{precaliber $\kappa$} if, whenever $A \in
      [\mathbb{P}]^\kappa$, there is $B \in [A]^\kappa$ that is centered.
    \item For a cardinal $\lambda$, $\mathbb{P}$ is \emph{$\lambda$-centered}
      (resp.\ \emph{$\lambda$}-linked) if
      there is a collection of $\lambda$-many centered (resp.\ linked) subsets of $\mathbb{P}$
      that covers $\mathbb{P}$. Note that, if $\lambda < \kappa$ and $\mathbb{P}$
      is $\lambda$-centered or $\lambda$-linked, then $\mathbb{P}$ has the $\kappa$-cc.
  \end{enumerate}
\end{defn}

One nice feature of these strong chain conditions is the fact that they are
\emph{productive}, i.e., if $\mathbb{P}$ and $\mathbb{Q}$ are $\kappa$-Knaster
(or have precaliber $\kappa$ or are $\lambda$-linked or $\lambda$-centered),
then $\mathbb{P} \times \mathbb{Q}$ is $\kappa$-Knaster (or has precaliber
$\kappa$ or is $\lambda$-linked or $\lambda$-centered, respectively). This is
in contrast to the $\kappa$-cc, which is not in general productive.

It is natural to investigate the extent to which these chain conditions can
be more than finitely productive. Note that, if $\theta < \kappa$ and
$\mathbb{P}_\eta$ has the $\kappa$-cc (is $\kappa$-Knaster or has precaliber $\kappa$, resp.)
for all $\eta < \theta$, then the lottery sum $\mathbb{P} := \bigoplus_{\eta < \theta}
\mathbb{P}_\eta$ also has the $\kappa$-cc (is $\kappa$-Knaster or has precaliber $\kappa$, resp.),
so questions about the productivity of these conditions reduce to questions about
powers of forcing posets. In particular, for regular cardinals $\theta
< \kappa$, we are interested in the following question: If $\mathbb{P}$ is a poset
such that $\mathbb{P}^\tau$ is $\kappa$-Knaster (resp.\ has precaliber $\kappa$)
for all $\tau < \theta$, does it follow that $\mathbb{P}^\theta$ is
$\kappa$-Knaster (resp.\ has precaliber $\kappa$). Note that, if $\theta = \aleph_0$,
this is simply asking whether being $\kappa$-Knaster (resp.\ having precaliber $\kappa$)
is countably productive. One can also ask for weaker conclusions, e.g., if
$\mathbb{P}^\tau$ is $\kappa$-Knaster for all $\tau < \theta$, does it follow
that $\mathbb{P}^\theta$ has the $\kappa$-cc?

If $\kappa$ is a weakly compact cardinal, then a poset has the $\kappa$-cc if
and only if it is $\kappa$-Knaster, and both of these properties are $\theta$-productive
for all $\theta < \kappa$ (i.e., if $\mathbb{P}$ is $\kappa$-Knaster, then
$\mathbb{P}^\theta$ is also $\kappa$-Knaster). In \cite{MR3620068}, Cox and L\"{u}cke
show that, relative to the consistency of a weakly compact cardinal, it is consistent
that there is a strongly inaccessible, non-weakly-compact cardinal $\kappa$ such that
the $\kappa$-Knaster property is
$\theta$-productive for all $\theta < \kappa$. On the other hand, the first author
and L\"{u}cke show in \cite{lh_lucke} that, if $\kappa$ is a regular uncountable
cardinal and the $\kappa$-Knaster property is infinitely productive, then
$\kappa$ is weakly compact in $\mathrm{L}$. The question as to whether the
$\kappa$-Knaster property can consistently be infinitely productive when
$\kappa$ is a successor cardinal (in particular, when $\kappa = \aleph_2$) is
raised but left unanswered in \cite{lh_lucke}. It is resolved negatively by
the following lemma, together with the fact (see Corollary~\ref{omegacolors})
that $\U(\lambda^+, \lambda^+, \omega, \omega)$ holds for every infinite cardinal $\lambda$.

\begin{lemma}\label{knasterlemma}
  Suppose that $\chi, \theta\in\reg(\kappa)$ and that $\kappa$ is $({<}\chi)$-inaccessible.
  For every coloring $c:[\kappa]^2\rightarrow\theta$ witnessing $\U(\kappa,\mu,\theta,\chi)$,
  there exists a corresponding poset $\mathbb P$ such that
  \begin{enumerate}
    \item $\mathbb P$ is well-met and $\chi$-directed closed with greatest lower bounds;
    \item if $\mu=2$, then $\mathbb P^\tau$ is $\kappa$-cc for all $\tau<\min(\{\chi, \theta\})$;
    \item if $\mu=\kappa$, then $\mathbb P^\tau$ has precaliber $\kappa$ for all $\tau<\min(\{\chi, \theta\})$;
    \item $\mathbb P^\theta$ is not $\kappa$-cc.
  \end{enumerate}
\end{lemma}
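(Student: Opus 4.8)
The plan is to define $\mathbb{P}$ directly from the coloring $c$, following the standard template that turns a coloring with an unboundedness property into a poset whose chain conditions reflect that property. The natural choice is to let conditions be finite approximations to a partition of $\kappa$ into pieces, each ``colored'' by an ordinal below $\theta$; concretely, a condition $p$ should be a finite partial function $p : \kappa \rightharpoonup \theta$ (or, in the $\mu = \kappa$ case, a finite set of ordinals together with such a labeling), ordered by reverse inclusion, but with the extra requirement that $p$ is \emph{$c$-coherent}: for all $\alpha < \beta$ in $\dom(p)$, $c(\alpha, \beta) \geq \max(\{p(\alpha), p(\beta)\})$ (or perhaps just $\geq p(\alpha)$; the precise inequality is chosen to make the closure and chain-condition arguments both go through). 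To get $\chi$-directed closure one instead takes conditions whose domains have size $<\chi$ and whose labels are bounded below $\theta$; here the hypothesis that $\kappa$ is $({<}\chi)$-inaccessible is what keeps $\mathbb{P}$ of size $\kappa$ and, more importantly, is what will be used in the chain-condition arguments to run a $\Delta$-system / pigeonhole step over fewer than $\chi$-sized domains. Well-metness and $\chi$-directed closure with greatest lower bounds will be essentially by construction: the union of a directed family of fewer than $\chi$ coherent conditions is again a coherent condition, using that $\chi$ is regular (so domains stay small) and that the labels, being boundedly many ordinals below $\theta$, have a bound below $\theta$.

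For clause (4), the failure of the $\kappa$-cc in the $\theta^{\text{th}}$ power, I would exhibit an explicit antichain of size $\kappa$ in $\mathbb{P}^\theta$: for each $\alpha < \kappa$ take the sequence $\bar{p}^\alpha = \langle p^\alpha_i \mid i < \theta\rangle$ where $p^\alpha_i$ is the condition with domain $\{\alpha\}$ and label $i$ (so $p^\alpha_i = \{(\alpha, i)\}$). Given $\alpha < \beta$, the components $p^\alpha_i$ and $p^\beta_i$ are incompatible for every $i$ with $c(\alpha,\beta) < i$ — because their common extension would have to be $c$-coherent at the pair $\{\alpha,\beta\}$, forcing $c(\alpha,\beta) \geq i$ — and such an $i < \theta$ exists since $c(\alpha,\beta) < \theta$. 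Hence $\bar p^\alpha \perp \bar p^\beta$ in the full-support product $\mathbb{P}^\theta$, and $\{\bar p^\alpha \mid \alpha < \kappa\}$ is the desired antichain. (This only uses that $c$ is a coloring into $\theta$, not $\U(\ldots)$ itself.)

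The substantive part is clauses (2) and (3): using $\U(\kappa,\mu,\theta,\chi)$ to show $\mathbb{P}^\tau$ is $\kappa$-cc (when $\mu = 2$) or has precaliber $\kappa$ (when $\mu = \kappa$) for every $\tau < \min(\{\chi,\theta\})$. Fix $\tau < \min(\{\chi,\theta\})$ and a $\kappa$-sized subset $\{\bar p^\alpha \mid \alpha < \kappa\}$ of $\mathbb{P}^\tau$, each $\bar p^\alpha = \langle p^\alpha_i \mid i < \tau\rangle$. First apply a $\Delta$-system argument: since $\kappa$ is $({<}\chi)$-inaccessible and each $\bigcup_{i<\tau}\dom(p^\alpha_i)$ has size $<\chi$, we may thin out to a $\kappa$-sized set on which the domains $a_\alpha := \bigcup_{i<\tau}\dom(p^\alpha_i)$ form a $\Delta$-system with root $r$, all with the same order type and the same labeling pattern on $r$, and with the $a_\alpha \setminus r$ pairwise disjoint and, say, increasing; absorbing $r$ into a common lower part (it contributes only boundedly-below-$\theta$-many constraints), we reduce to the case $r = \emptyset$, so $\mathcal{A} := \{a_\alpha \mid \alpha < \kappa\}$ is a family of $\kappa$-many pairwise disjoint sets in $[\kappa]^{\chi'}$ for the fixed $\chi' := |a_\alpha| < \chi$. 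Let $i := \sup\{p^\alpha_j(\gamma) \mid \alpha < \kappa,\ j < \tau,\ \gamma \in a_\alpha\} < \theta$ (finite, or $<\chi \leq \theta$, sup of labels — here $\tau < \theta$ and $\chi' < \chi \le$ ... is what makes this a bounded set of ordinals below $\theta$, possibly after one more pigeonhole thinning so that the labeling pattern is literally constant along the $\Delta$-system). Now invoke $\U(\kappa,\mu,\theta,\chi)$ with this $\mathcal{A}$ and this $i$: there is $\mathcal{B} \in [\mathcal{A}]^\mu$ with $\min(c[a_\alpha \times a_\beta]) > i$ for all $\{a_\alpha, a_\beta\} \in [\mathcal{B}]^2$. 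I claim the corresponding $\mu$-sized subfamily of conditions is linked (indeed centered when $\mu = \kappa$): given finitely many indices $\alpha_0 < \dots < \alpha_{n-1}$ from $\mathcal B$, the union $q := \bigcup_{k<n}\bigcup_{i<\tau} p^{\alpha_k}_i$, coordinatewise, is a function (domains disjoint) whose every cross-pair $\{\gamma,\delta\}$ with $\gamma \in a_{\alpha_k}$, $\delta \in a_{\alpha_\ell}$, $k \neq \ell$, satisfies $c(\gamma,\delta) > i \geq \max(\{q(\gamma), q(\delta)\})$, and whose within-block pairs are coherent because each $\bar p^{\alpha_k}$ was; hence $\langle q \restriction \text{(coordinate } i) \mid i < \tau\rangle$ is a condition in $\mathbb{P}^\tau$ below all of $\bar p^{\alpha_0}, \dots, \bar p^{\alpha_{n-1}}$. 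For $\mu = 2$ this gives pairwise compatibility, hence the $\kappa$-cc; for $\mu = \kappa$ it gives a centered $\kappa$-sized subfamily, hence precaliber $\kappa$.

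I expect the main obstacle to be bookkeeping in the $\Delta$-system reduction — in particular making sure that, after absorbing the root and fixing the labeling pattern, the bound $i$ on all relevant labels genuinely lies below $\theta$ (this is where $\tau < \theta$ and the boundedness of labels in conditions are both essential), and getting the coherence inequality in the definition of $\mathbb{P}$ calibrated so that it is simultaneously (a) preserved under unions of $({<}\chi)$-directed families, (b) violated by the antichain in clause (4), and (c) satisfied by the amalgamated condition $q$ above once $c$-values exceed $i$. Once the right inequality (most likely $c(\alpha,\beta) \geq p(\alpha)$ for $\alpha < \beta$ in $\dom(p)$, so that a label $j$ at $\alpha$ forces all later $c$-values from $\alpha$ to be $\geq j$) is in place, each of these three checks is short.
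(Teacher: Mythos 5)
Your overall strategy -- build a poset out of $c$-coherent partial conditions, run a $\Delta$-system argument to line up domains, invoke $\U(\kappa,\mu,\theta,\chi)$ to push past the relevant color threshold, and exhibit an explicit antichain in $\mathbb P^\theta$ keyed to $c(\alpha,\beta)$ -- is the same as the paper's. But there is a real design flaw in the poset you propose, and it bites precisely in the case the lemma most needs to handle.

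You take conditions to be $(<\chi)$-sized partial functions $p\colon\kappa\rightharpoonup\theta$ with $c$-coherence and \emph{bounded range}, and you assert that the union of a directed family of size $<\chi$ is again such a condition ``using that $\chi$ is regular ... and that the labels, being boundedly many ordinals below $\theta$, have a bound below $\theta$.'' This last step is not justified when $\chi>\theta$: a directed family of size $\nu$ with $\theta\le\nu<\chi$ has $\nu$-many bounds below $\theta$, and since $\theta$ is regular their supremum need not stay below $\theta$. Concretely, with $\theta=\omega$ and $\chi=\omega_1$ (the situation used to derive Theorem~D), a countable directed chain $\{p_n\mid n<\omega\}$ in which $p_n$ introduces a label $\ge n$ has no lower bound at all: any $q\le p_n$ for all $n$ must contain $\bigcup_n p_n$, whose range is unbounded in $\omega$, so $q$ violates the boundedness requirement. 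Hence $\mathbb P$ is not $\chi$-directed closed. (Dropping the boundedness requirement does not help either: then a single condition, or a $\tau$-tuple with $\tau<\theta<\chi$, can already use unboundedly many colors, and the supremum $i$ you feed into $\U(\ldots)$ need not be below $\theta$, so the chain-condition argument for clauses (2)--(3) collapses.)

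The paper avoids this by giving each condition a \emph{single} color threshold rather than a per-element label: a condition is a pair $(i,x)$ with $i<\theta$, $x\in[\kappa]^{<\chi}$, and $c``[x]^2\cap i=\emptyset$, ordered by $(i,x)\le(j,y)$ iff $i=j$ and $x\supseteq y$ (together with a top element $\one$). Two conditions with different first coordinates are incompatible, so every directed subfamily carries a fixed $i<\theta$; the union of the second coordinates then does the job, and well-metness and $\chi$-directed closure with greatest lower bounds are immediate for every $\chi\in\reg(\kappa)$, including $\chi>\theta$. Your $\Delta$-system and $\U$-invocation arguments, and your antichain, all port over to that poset essentially unchanged (the antichain takes $q_\alpha(j):=(j,\{\alpha\})$ and the incompatibility at $j>c(\alpha,\beta)$). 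So the fix is local -- replace per-element labels by a per-condition threshold -- but as written your proof of clause (1) does not go through in the very case that yields the $\kappa$-Knaster poset with non-$\kappa$-cc $\omega^{\mathrm{th}}$ power.
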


\begin{proof} This is a straightforward variation of the proof that $\Pr_1(\ldots)$ entails counterexamples to productivity of the chain condition.
  Let $c:[\kappa]^2 \rightarrow \theta$ witness $\U(\kappa, \mu, \theta, \chi)$, and
  let
  \[
  \mathbb P:=\{ (i,x)\mid i<\theta, ~ x\in[\kappa]^{<\chi}, ~ (c``[x]^2)\cap i=\emptyset\}\cup\{\one\},
  \]
  with $(i,x) \le_{\mathbb P} (j,y)$ iff $i=j$ and $x\supseteq y$, and with $(i,x)\le_{\mathbb P}\one$ for all $(i,x)$.
  For ease of notation, for each $p = (i,x) \in \mathbb{P}$, we let $i_p$ denote $i$ and $x_p$ denote $x$.
  Clearly, $\mathbb P$ is well-met and $\chi$-directed closed with greatest lower bounds.

\begin{claim}  $\mathbb P^\theta$ has an antichain of size $\kappa$.
\end{claim}
\begin{proof}
  We shall prove a slightly stronger result.
  Define the support, $\supp(q)$, of a condition $q\in\mathbb P^\theta$, by letting
  $\supp(q):=\{ j<\theta\mid q(j)\neq\one\}$.
  Let $J\in[\theta]^\theta$ be arbitrary.
  We now prove that $\{ q\in\mathbb P^\theta\mid \supp(q)=J\}$ has an antichain of size $\kappa$.
  For each $\alpha<\kappa$, define an element $q_\alpha$ in $\mathbb P^\theta$ by
  letting for all $j < \theta$:
  $$q_\alpha(j):=\begin{cases}(j,\{\alpha\})&\text{if }j\in J;\\\one&\text{if }j\notin J.\end{cases}$$
  Then, for any pair $(\alpha,\beta)\in[\kappa]^2$, we get that $q_\alpha(j)$ and
  $q_\beta(j)$ are incompatible in $\mathbb{P}$ for $j:=\min(J\setminus c(\alpha,\beta))$.
  Consequently, $\{q_\alpha\mid \alpha<\kappa\}$ is an antichain in $\{ q\in\mathbb P^\theta\mid \supp(q)=J\}$.
\end{proof}

Next, to prove Clauses (2) and (3), let $\tau<\min(\{\chi, \theta\})$,
and let  $A$ be an arbitrary $\kappa$-sized subset of $\mathbb P^\tau$.
Without loss of generality, $A\s (\mathbb P\setminus\{\one\})^\tau$.

    For each $q:\tau\rightarrow\mathbb P$ in $A$,
    let $i^q:=\langle i_{q(j)}\mid j<\tau\rangle$ and $x^q:=\bigcup\{ x_{q(j)} \mid j < \tau \}$.
    Since $\tau<\min(\{\chi, \theta\})$, and by the regularity of $\chi$ and $\theta$,
    we have $\sup(\im(i^q)) < \theta$ and $x^q\in[\kappa]^{<\chi}$ for all $q \in A$.
    Using the fact that $\kappa$ is $({<}\chi)$-inaccessible, fix $A'\in[A]^\kappa$ such that
    \begin{itemize}
      \item $\{ i^q\mid q\in A'\}$ is a singleton, say, $\{i^*\}$;
      \item $\{ x^q \mid q \in A' \}$ forms a head-tail-tail $\Delta$-system, with root, say, $r$;
      \item $q\mapsto \langle x_{q(j)}\cap r\mid j<\tau\rangle$ is constant over $A'$.
    \end{itemize}

    Let $\mathcal A:=\{ x^q \setminus r \mid q \in A' \}$. Since $c$ witnesses
    $\U(\kappa, \mu, \theta, \chi)$, we may find $\mathcal B\in[\mathcal A]^\mu$
    such that, for all $(b,b')\in[\mathcal B]^2$,
    we have $\min(c[b\times b'])>\sup(i^*)$. Fix $B\in[A']^\mu$ such that $\{ x^q\setminus r\mid q\in B\}=\mathcal B$.
    We claim that $B$ is centered.

    Fix a finite subset $\{q_m \mid m < n\}$ of $B$. Define a function
    $q$ with domain $\tau$ by letting $q(j) := (i^*(j),
    \bigcup_{m < n} x_{q_m(j)})$ for all $j < \tau$. To prove
    that $q$ is a lower bound for $\{q_m \mid m < n\}$, it suffices to
    verify that $q \in \mathbb{P}^\tau$. If not, then there are $m, m' < n$,
    $j < \tau$, and $\alpha < \alpha' < \kappa$ such that
    \begin{itemize}
      \item $\alpha \in x_{q_m(j)} \setminus x_{q_{m'}(j)}$;
      \item $\alpha' \in x_{q_{m'}(j)} \setminus x_{q_m(j)}$;
      \item $c(\alpha, \alpha') < i^*(j)$.
    \end{itemize}
    Since $x_{q_m(j)} \cap r = x_{q_{m'}(j)} \cap r$, it follows
    that $\alpha \in x_{q_m(j)} \setminus r$ and $\alpha' \in
    x_{q_{m'}(j)} \setminus r$. But $x_{q_m(j)} \setminus r,
    x_{q_{m'}(j)} \setminus r \in \mathcal{B}$, so we have
    $c(\alpha, \alpha') > \sup(\im(i^*)) \geq i^*(j)$.
    Thus, $B$ is centered, as desired.
\end{proof}

\begin{cor}
  If $\kappa$ is a regular uncountable cardinal and $\U(\kappa,\kappa,\omega,\omega)$
  holds, then there exists a $\kappa$-Knaster poset $\mathbb P$ such that $\mathbb P^\omega$
  is not $\kappa$-cc.\qed
\end{cor}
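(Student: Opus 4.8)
The statement is an immediate specialization of Lemma~\ref{knasterlemma}, so the plan is simply to feed in the right parameters. Concretely, I would apply that lemma with $\mu := \kappa$ and $\chi := \theta := \omega$. The first task is to check the two standing hypotheses of the lemma. The requirement $\chi,\theta\in\reg(\kappa)$ reduces to $\omega\in\reg(\kappa)$, which holds because $\kappa$ is uncountable. The requirement that $\kappa$ be $({<}\chi)$-inaccessible, i.e.\ $({<}\omega)$-inaccessible, is automatic: for every $\lambda<\kappa$ and every $\nu<\omega$ we have $\lambda^\nu<\kappa$, since a finite power of an infinite cardinal equals that cardinal, and a finite power of a finite cardinal is finite. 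Hence, given a coloring witnessing $\U(\kappa,\kappa,\omega,\omega)$, Lemma~\ref{knasterlemma} produces a poset $\mathbb P$.

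It then remains to read off the conclusion. Here $\min(\{\chi,\theta\})=\omega$. Since $\mu=\kappa$, clause~(3) of Lemma~\ref{knasterlemma} gives that $\mathbb P^\tau$ has precaliber $\kappa$ for every $\tau<\omega$; in particular, taking $\tau=1$, the poset $\mathbb P$ itself has precaliber $\kappa$. As every centered subset of a poset is in particular linked (any two of its elements form a finite subset, hence have a common lower bound, hence are compatible), having precaliber $\kappa$ implies being $\kappa$-Knaster, so $\mathbb P$ is $\kappa$-Knaster. Finally, clause~(4) of the lemma asserts that $\mathbb P^\theta$ is not $\kappa$-cc, and $\theta=\omega$, so $\mathbb P^\omega$ is not $\kappa$-cc, which is exactly the desired conclusion.

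There is no substantive obstacle here, since all of the real work has been done in Lemma~\ref{knasterlemma}. The only points requiring a moment's thought are the two bookkeeping observations that $\omega$ is a regular cardinal below every uncountable $\kappa$ and that finite exponentiation is trivial, so that the $({<}\omega)$-inaccessibility hypothesis imposes no restriction, together with the elementary implication that precaliber $\kappa$ entails the $\kappa$-Knaster property.
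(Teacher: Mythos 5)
Your proof is correct and is precisely the argument the paper intends: the corollary is marked as immediate from Lemma~\ref{knasterlemma}, and your instantiation with $\mu=\kappa$ and $\chi=\theta=\omega$, together with the observations that $({<}\omega)$-inaccessibility is vacuous and that precaliber $\kappa$ implies $\kappa$-Knaster, is exactly the intended specialization.
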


\subsection{Forcing axioms} \label{forcing_axioms_subsection}

Beginning in the 1970s, much work has been done attempting to generalize Martin's
Axiom to higher cardinals, and to $\aleph_2$ in particular. Versions of such a
generalization were obtained in unpublished work of both Laver and Baumgartner,
and a stronger version was obtained by Shelah in \cite{shelah_gma}. We state here
the version due to Baumgartner. We denote the axiom by $\BA$; more information
regarding $\BA$ can be found in \cite{tall_gma}.

\begin{defn}
  Let $\mathbb{P}$ be a forcing poset, and let $\nu$ be a cardinal. $\FA_\nu(\mathbb{P})$
  is the assertion that, whenever $\{D_\alpha \mid \alpha < \nu\}$ is a collection
  of dense subsets of $\mathbb{P}$, then there is a filter $G \subseteq \mathbb{P}$
  such that, for all $\alpha < \nu$, $G \cap D_\alpha \neq \emptyset$.
\end{defn}

\begin{defn}[Baumgartner's Axiom]
  $\BA$ is the statement that, if $\mathbb{P}$ is a poset that is well-met,
  countably closed, and $\aleph_1$-linked, then $\FA_\nu(\mathbb{P})$ holds
  for all $\nu < 2^{\aleph_1}$.
\end{defn}

\begin{fact}[Baumgartner] \label{ba_thm}
  Suppose that $\ch$ holds and $\kappa \geq \aleph_2$ is regular. Then there is a
  cofinality-preserving forcing extension in which $\BA + \ch + 2^{\aleph_1} = \kappa$
  holds.
\end{fact}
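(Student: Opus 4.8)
The plan is to adapt the Solovay--Tennenbaum proof of the consistency of Martin's Axiom, replacing its finite-support iteration of $\mathrm{ccc}$ posets by a \emph{countable-support} iteration of well-met, countably closed, $\aleph_1$-linked posets. I would work over a ground model of $\ch$ with $\kappa\ge\aleph_2$ regular, prefacing the construction with an auxiliary $\aleph_2$-closed, cofinality-preserving forcing to arrange $2^{<\kappa}=\kappa$ (whence $\kappa^{\aleph_1}=\kappa$, since $\kappa$ is regular above $\aleph_1$); this adds no reals, so $\ch$ persists, and it is harmless for the conclusion. The one structural fact driving everything is that an $\aleph_1$-linked poset is $\aleph_2$-Knaster: of any $\aleph_2$ of its conditions, some $\aleph_2$ lie in a common piece of the linked cover and hence form a linked set. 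Thus every poset that we iterate has the $\aleph_2$-cc, and --- in contrast with the $\mathrm{ccc}$, which survives finite-support iteration essentially for free --- it is this chain condition, propagated through the iteration, that will prevent cardinal collapse.

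First I would record the standard reduction: it suffices to secure $\FA_\nu(\mathbb Q)$ for every $\nu<2^{\aleph_1}$ and every well-met, countably closed, $\aleph_1$-linked $\mathbb Q$ of size ${<}\kappa$. Given an arbitrary such $\mathbb P$ together with dense sets $\langle D_\alpha\mid\alpha<\nu\rangle$, one builds a $\s$-increasing, continuous chain of suborders of $\mathbb P$, each of size $\le|\nu|+\aleph_1$, closed under greatest lower bounds of finite compatible subsets and of countable descending chains, and containing for each of its conditions and each $\alpha<\nu$ an extension inside $D_\alpha$. Its union $\mathbb Q$ is then well-met (using well-metness of $\mathbb P$ and glb-closure, which forces compatibility in $\mathbb Q$ to agree with compatibility in $\mathbb P$), countably closed, and $\aleph_1$-linked (the traces on $\mathbb Q$ of a linked cover of $\mathbb P$ stay linked in $\mathbb Q$), each $D_\alpha\cap\mathbb Q$ is dense in $\mathbb Q$, and any filter on $\mathbb Q$ meeting the $D_\alpha\cap\mathbb Q$ generates a filter on $\mathbb P$ meeting the $D_\alpha$. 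With this in hand I would run a countable-support iteration $\langle\mathbb P_\alpha,\dot{\mathbb Q}_\alpha\mid\alpha\le\kappa\rangle$ of length $\kappa$ in which a bookkeeping function feeds, at stage $\alpha$, a $\mathbb P_\alpha$-name for a well-met, countably closed, $\aleph_1$-linked poset of size ${<}\kappa$, arranged --- using $2^{<\kappa}=\kappa$ and $|\mathbb P_\alpha|\le\kappa$ --- so that every such name occurring in any $V^{\mathbb P_\beta}$ is served cofinally often below $\kappa$.

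The heart of the matter, and the step I expect to be the main obstacle, is proving the iteration tame: (i) a countable-support iteration of countably closed forcings is countably closed, by the usual simultaneous induction that each $\mathbb P_\alpha$ is countably closed and adds no new $\omega$-sequence of ordinals; and (ii) under $\ch$, each $\mathbb P_\alpha$, and hence $\mathbb P_\kappa$, has the $\aleph_2$-cc. Item (ii) is the delicate one: given an alleged antichain $\langle p_\xi\mid\xi<\aleph_2\rangle$ in $\mathbb P_\kappa$, the supports $\supp(p_\xi)$ are countable, so $\ch$ (which yields $\aleph_1^{\aleph_0}=\aleph_1$) supplies via the $\Delta$-system lemma an $\aleph_2$-sized subfamily whose supports form a $\Delta$-system with countable root $r$; refining once more with $\ch$ so that the conditions are sufficiently coherent on $r$, one invokes the inductively available $\aleph_2$-cc of the iteration restricted to the coordinates in $r$, which is exactly where the $\aleph_2$-Knasterness of the individual iterands gets consumed --- two-step and direct-limit stages propagate it routinely, whereas the limits of cofinality $\omega$, where countable supports can be cofinal and $\mathbb P_\kappa$ behaves like an inverse limit, are precisely where $\ch$ and this $\Delta$-system bookkeeping must be combined with care. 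Once (i) and (ii) hold, the remainder is standard accounting: no reals are added (so $\ch$ and cardinals/cofinalities below $\aleph_2$ are preserved), the $\aleph_2$-cc preserves cardinals and cofinalities $\ge\aleph_2$ (so the extension is cofinality-preserving and $\kappa$ stays regular), the length-$\kappa$ iteration of nontrivial forcings makes $2^{\aleph_1}\ge\kappa$ while a nice-name count against the $\aleph_2$-cc together with $|\mathbb P_\kappa|=\kappa$ makes $2^{\aleph_1}\le\kappa$, and for any well-met, countably closed, $\aleph_1$-linked $\mathbb P$ and any ${<}\kappa$-sized family of dense subsets in $V^{\mathbb P_\kappa}$ both the family and, by the reduction, a size-${<}\kappa$ suborder $\mathbb Q$ capturing it already lie in some $V^{\mathbb P_\alpha}$ with $\alpha<\kappa$, so the bookkeeping forces $\mathbb Q$ --- hence produces a filter meeting those dense sets --- at a later stage below $\kappa$, and that filter survives to $V^{\mathbb P_\kappa}$. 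Hence $\BA+\ch+2^{\aleph_1}=\kappa$ holds there.
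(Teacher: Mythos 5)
The paper states this as a \emph{Fact} attributed to Baumgartner without giving a proof (it refers the reader to Tall \cite{tall_gma} for background on $\BA$), so there is no proof of the paper's own to compare against; I can only evaluate your proposal on its merits.

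Your overall shape is correct --- a Solovay--Tennenbaum-style bookkeeping iteration with countable supports, the preliminary cardinal-arithmetic preparation to get $2^{<\kappa}=\kappa$, the L\"owenheim--Skolem reduction to posets of size ${<}\kappa$ (and your care in closing the suborder under finite greatest lower bounds and countable descending limits so that compatibility and linkedness are preserved in the passage to $\mathbb Q$ is exactly right), and the closing nice-name count. But there is a genuine gap precisely at the step you flag as the main obstacle, and the remedy you offer is not a well-formed argument. After the $\Delta$-system refinement with countable root $r$, you propose to ``invoke the inductively available $\aleph_2$-cc of the iteration restricted to the coordinates in $r$.'' There is no such object: $r$ is an arbitrary countable set of stages, possibly cofinal in a limit $\delta$ of cofinality $\omega$, and it is not an initial segment, so there is no iteration ``restricted to $r$'' whose chain condition you could appeal to. Moreover, even if there were, a bare $\aleph_2$-cc would not hand you the needed conclusion, which is that \emph{two specific} conditions with $\Delta$-system supports have a common lower bound.

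What is actually required is a fusion-type pre-processing argument, and this is where the three hypotheses on the iterands are each consumed at a distinct point. Using countable closure of the tails, one first extends each $p_\xi$ so that for every $\gamma\in\supp(p_\xi)$ the restriction $p_\xi\restriction\gamma$ already decides into which piece of the (named) $\aleph_1$-linked cover of $\dot{\mathbb Q}_\gamma$ the coordinate $p_\xi(\gamma)$ falls; this is an $\omega$-long extension whose limit exists by countable closure and does not inflate the support beyond countable. Only now does $\ch$ help: the decided colour-sequence on $r$ lives in a set of size $\aleph_1^{\aleph_0}=\aleph_1$, so $\aleph_2$-many $p_\xi$ can be refined to carry the same colour-sequence on the root. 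For two such conditions the amalgamation is then built coordinate by coordinate along an increasing enumeration of $r$: at each $\gamma\in r$, matched colours guarantee (forcibly) compatibility of $p_\xi(\gamma)$ and $p_\eta(\gamma)$, well-metness of $\dot{\mathbb Q}_\gamma$ furnishes a \emph{canonical} greatest lower bound to use as the new coordinate, off-root coordinates are copied disjointly, and countable closure lets you pass through limits of $r$ and take the final union. It is this interlocking use of linkedness (for forced compatibility at a coordinate), well-metness (for a canonical amalgam), and countable closure (for the fusion and for the limit of the amalgamation), not the $\aleph_2$-Knasterness of individual iterands, that drives the $\aleph_2$-cc at limits of cofinality $\omega$; your sketch does not supply this mechanism, and as written the step would not go through.
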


Shelah and Stanley, in \cite{shelah_stanley_gma}, prove that Fact~\ref{ba_thm}
fails if $\BA$ is weakened by omitting the requirement that $\mathbb{P}$ be well-met.
In particular, they prove the following result. (They prove the result for
$\lambda = \aleph_1$, but their proof generalizes.)

\begin{fact}[Shelah-Stanley, \cite{shelah_stanley_gma}]
  Suppose that $\lambda$ is an uncountable cardinal and $\lambda^{<\lambda} = \lambda$.
  Then there is a poset $\mathbb{P}$ of size $\lambda^+$ that is $\lambda$-closed
  and $\lambda$-linked but for which $\FA_{\lambda^+}(\mathbb{P})$ fails.
\end{fact}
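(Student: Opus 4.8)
\emph{The strategy.} Since $\lambda^{<\lambda}=\lambda$, the cardinal $\lambda$ is regular and $2^{<\lambda}=\lambda$. I would build $\mathbb P$ so that a sufficiently generic filter assembles a combinatorial object that $\zfc$, together with $\lambda^{<\lambda}=\lambda$, refutes outright, while arranging $\mathbb P$ to be $\lambda$-directed closed (in particular $\lambda$-closed) and $\lambda$-linked; the failure of well-met-ness that the statement sets against Baumgartner's hypothesis should then be forced on us by the $\lambda$-linking, and the failure of $\FA_{\lambda^+}(\mathbb P)$ should come from the $\lambda^+$ dense sets that push the generic approximation of the forbidden object further and further along. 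The cleanest candidate for the forbidden object is a cofinal branch through a \emph{special} $\lambda^+$-Aronszajn tree. It is classical that $\lambda^{<\lambda}=\lambda$ furnishes such a tree $T$, which one may present as a downward-closed subtree of ${}^{<\lambda^+}\lambda$ that is closed under unions of $\s$-increasing chains of length $<\lambda$, in which every node has extensions on every higher level, and which carries a \emph{specializing map} $e\colon T\to\lambda$ that is injective along chains; such a $T$ has no chain of size $\lambda^+$, so the upside-down $T$ admits no filter meeting all of $D_\alpha:=\{t\in T\mid\dom(t)\ge\alpha\}$ for $\alpha<\lambda^+$.

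\emph{The poset.} The naive choice fails badly: the upside-down $T$ is nowhere near $\lambda$-linked, since a special $\lambda^+$-Aronszajn tree has an antichain of size $\lambda^+$. So $\mathbb P$ must be richer — one enlarges conditions by a $<\lambda$-sized \emph{promise}, recording (say) a $<\lambda$-sized set of levels above the current approximation together with nodes of $T$ on those levels that, with the current approximation, form a chain, a stronger condition being required to honour all outstanding promises; and, crucially, the ordering must be arranged so that whether two conditions are compatible depends only on a $<\lambda$-sized summary of each, rather than on the full branch-approximations they carry. One would then verify: $|\mathbb P|=\lambda^+$, since $T$ has $\lambda^+$ nodes and each admits at most $(\lambda^+)^{<\lambda}=\lambda^+$ promises; $\mathbb P$ is $\lambda$-directed closed, because $<\lambda$-many directed conditions have approximations forming a chain in $T$ of length $<\lambda$, which by closure of $T$ has an upper bound over which the promises coalesce (regularity of $\lambda$ keeping the domain small); each $D_\alpha:=\{p\mid\dom(\text{approximation of }p)\ge\alpha\}$ is dense, by taking the union in $T$ of the approximation with the promised nodes and extending it past level $\alpha$ within $T$, discharging the now-due promises — so a filter meeting every $D_\alpha$ yields a cofinal branch of $T$, impossible; and $\mathbb P$ is $\lambda$-linked, decomposing it by the \emph{type} of a condition — a Cohen-style $<\lambda$-sized code of its approximation and promise-domain (via a fixed injection $\alpha\mapsto r_\alpha\in{}^{\lambda}2$) together with the sequence of $e$-values along the promise-chain — of which there are $\lambda^{<\lambda}=\lambda$, so that two conditions of a common type have a common lower bound obtained by amalgamating their approximations and \emph{merging} their promise-chains along the matched $e$-pattern. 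Since this merge entails a genuine choice of how to interleave the two promise-chains, $\mathbb P$ fails to be well-met.

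\emph{Where the work lies.} The main obstacle is precisely the mutual tension among the last three points: making $\mathbb P$ genuinely $\lambda$-linked while the $D_\alpha$ stay dense and $T$ stays branchless. Loosening compatibility toward the Cohen-like end is what buys $\lambda$-linkedness, but it threatens to render the assembled object non-paradoxical; keeping the conditions tree-like enough that a generic filter really does reconstruct a branch of $T$ threatens to collapse compatibility onto tree-comparability, which a node of $T$ encodes with $\ge\lambda$ bits of information, wrecking $\lambda$-linkedness. Resolving this is exactly where the specializing structure of the Specker tree — itself a by-product of $\lambda^{<\lambda}=\lambda$ — must be deployed, and calibrated so that only a $<\lambda$-sized summary of each condition governs compatibility; this is the delicate heart of the Shelah–Stanley argument, and it is the same step that costs us well-met-ness, which is why Baumgartner's hypothesis in $\BA$ cannot be omitted. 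I note in passing the kinship with this paper's theme: along chains of $T$ the map $e$ behaves like a strongly unbounded coloring of the sort measured by $\U(\ldots)$, and the analysis of the $\lambda$-many amalgamable types runs parallel to the argument of Lemma~\ref{knasterlemma} turning a witness to $\U(\lambda^+,\lambda^+,\theta,\lambda)$ into a poset with strong chain conditions.
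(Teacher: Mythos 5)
The paper records this result only as a citation to Shelah--Stanley, without giving a proof, so there is nothing in the text to compare against directly; but your sketch has a concrete gap that no amount of ``calibration'' of the promise structure can repair.

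A $\lambda$-linked poset cannot generically assemble a cofinal branch through a $\lambda^+$-Aronszajn tree in the way you describe. For your extraction argument to work --- for a filter meeting every $D_\alpha$ to really produce a cofinal branch of $T$ --- the ordering must have the feature that $r\le_{\mathbb P} p$ forces the approximation carried by $r$ to tree-extend the approximation carried by $p$; otherwise a filter's approximations need not cohere into a branch at all. But given that feature, any linked subset of $\mathbb P$ has pairwise $\le_T$-comparable approximations (two compatible conditions have a common lower bound whose approximation tree-extends both), so the approximations appearing in a single linked piece lie on a single chain of $T$ and hence form a set of size at most $\lambda$. A union of $\lambda$-many linked pieces then uses at most $\lambda\cdot\lambda=\lambda$ nodes of $T$ as approximations, so these are all below some fixed level $\alpha^*<\lambda^+$, and $D_{\alpha^*}$ is not merely non-dense but empty. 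This is exactly the tension you flag as ``the delicate heart,'' and the specializing map $e$ does not resolve it: a promise-chain carries $<\lambda$ many nodes, $e$ records $<\lambda$ many ordinals below $\lambda$ along it, and that $<\lambda$-sized summary cannot separate the $\lambda^+$-many pairwise $\le_T$-incomparable nodes that might serve as the approximation. Correspondingly, ``amalgamating their approximations'' is simply undefined when those approximations are $\le_T$-incomparable --- no node of $T$ extends both --- so the common-lower-bound step in your $\lambda$-linkedness verification fails. The conclusion is that the forbidden object cannot be a cofinal branch of the Specker tree; the Shelah--Stanley poset must encode a different paradoxical object, one whose compatibility relation genuinely depends only on a $<\lambda$-sized trace of each condition without forcing the generic data to lie on a single chain. (Compare with Lemma~\ref{knasterlemma}: there the troublesome coherence is sidestepped because conditions with distinct first coordinates $i\ne j$ are declared outright incompatible, so no branch-like object is ever assembled; the Shelah--Stanley poset needs a comparable device, not promise-chains through $T$.)
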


The work in this paper was partially motivated by the following unpublished result
of Inamdar, which indicates another way in which $\BA$ cannot consistently be changed.
In particular, the requirement that $\mathbb{P}$ be $\aleph_1$-linked cannot
be replaced by the requirement that $\mathbb{P}$ has precaliber $\aleph_2$.
We would like to thank Inamdar for allowing us to include this theorem.

\begin{thm}[Inamdar, \cite{tanmay_note}]
  Suppose that $\lambda=\lambda^{<\lambda}$ is a regular uncountable cardinal.
  Then there is a forcing poset $\mathbb{Q}$ of size $\lambda^+$ such that
  \begin{enumerate}
    \item $\mathbb{Q}$ is well-met and $\lambda$-directed closed with greatest lower bounds;
    \item $\mathbb{Q}$ has precaliber $\lambda^+$;
    \item $\FA_{\lambda^+}(\mathbb{Q})$ fails.
  \end{enumerate}
\end{thm}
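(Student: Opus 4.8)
The plan is to build $\mathbb{Q}$ from a witness $c : [\lambda^+]^2 \to \omega$ to the principle $\U(\lambda^+, \lambda^+, \omega, \lambda)$, which holds by Theorem~A since $\lambda$ is a regular uncountable cardinal (so $\lambda^+$ is the successor of a regular cardinal and all instances of $\U(\ldots)$ hold there). Since $\lambda = \lambda^{<\lambda}$, the cardinal $\lambda^+$ is $({<}\lambda)$-inaccessible, so Lemma~\ref{knasterlemma} applies with $\kappa = \lambda^+$, $\mu = \kappa$, $\theta = \omega$, $\chi = \lambda$. This produces a poset $\mathbb{Q}$ consisting of pairs $(i, x)$ with $i < \omega$ and $x \in [\lambda^+]^{<\lambda}$ satisfying $(c``[x]^2) \cap i = \emptyset$, together with a top element. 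By the lemma, $\mathbb{Q}$ is well-met and $\lambda$-directed closed with greatest lower bounds, giving clause~(1); and since $\mu = \kappa$, $\mathbb{Q}^\tau$ has precaliber $\lambda^+$ for all $\tau < \min(\{\lambda, \omega\}) = \omega$, and in particular $\mathbb{Q} = \mathbb{Q}^1$ has precaliber $\lambda^+$, giving clause~(2). The size of $\mathbb{Q}$ is $(\lambda^+)^{<\lambda} \cdot \omega = \lambda^+$, again using $\lambda^{<\lambda} = \lambda$ and a standard counting of bounded subsets.

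The remaining and genuinely new content is clause~(3): $\FA_{\lambda^+}(\mathbb{Q})$ fails. Here I would exhibit a family $\{D_\alpha \mid \alpha < \lambda^+\}$ of dense subsets of $\mathbb{Q}$ admitting no filter meeting all of them. The natural choice is, for each $\alpha < \lambda^+$, to let $D_\alpha := \{(i,x) \in \mathbb{Q} \mid \alpha \in x\}$; each $D_\alpha$ is dense because, given any $(i,x) \in \mathbb{Q}$, the condition $(i, x \cup \{\alpha\})$ still lies in $\mathbb{Q}$ (adding one point to $x$ cannot create a pair with color below $i$ that was not already there — wait, it can; one must be slightly more careful). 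To handle this I would instead take $D_\alpha$ to consist of those $(i,x)$ with $\alpha \in x$ \emph{or} $i$ large enough that $\{(i,x)\}$ forbids $\alpha$; more cleanly, use the density sets $D_\alpha := \{(i,x) \in \mathbb{Q} \mid \sup(x) > \alpha\}$, which are genuinely dense since from $(i,x)$ one can pass to $(i, x \cup \{\beta\})$ for any single $\beta > \sup(x)$ with $\min(c``(\{\gamma\}\times\{\beta\})) \geq i$ for all $\gamma \in x$ — and such $\beta$ exists because $\U(\ldots)$-type colorings, or even just any $c$, allow only boundedly many $\beta$ with a small color against a given $\gamma$... Actually the cleanest route, and the one I expect the authors take, is to also throw in the sets $E_i := \{(j,x) \in \mathbb{Q} \mid j \geq i\}$ for $i < \omega$ together with the $D_\alpha$'s, and argue that a filter $G$ meeting all $E_i$ and cofinally many $D_\alpha$ would yield, via the first coordinates being eventually unbounded in $\omega$ and the $x$-parts union to an unbounded set $X \subseteq \lambda^+$, that $c``[X]^2$ is bounded below every $i < \omega$ on suitable subsets — contradicting that $c$ witnesses $\U(\lambda^+, 2, \omega, 2)$, hence $\mathcal{T}(c)$ and $c$ behave as in Proposition~\ref{aronszajn}.

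The main obstacle, and where I would spend the most care, is making the failure of $\FA_{\lambda^+}(\mathbb{Q})$ \emph{quantitatively precise}: one needs exactly $\lambda^+$-many dense sets, no fewer, and the contradiction must be extracted purely from a filter (a directed, upward-closed family) rather than a generic. The key observation is that a filter $G$ meeting $D_\alpha$ for all $\alpha < \lambda^+$ forces $\bigcup \{x \mid (i,x) \in G\}$ to be an unbounded (indeed size-$\lambda^+$) subset $X$ of $\lambda^+$, while meeting $E_i$ for all $i < \omega$ forces the first coordinates appearing in $G$ to be unbounded in $\omega$; by directedness, for every $i < \omega$ there is $(i', x) \in G$ with $i' \geq i$, and then \emph{every} pair from $x$ has $c$-color $\geq i$. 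Letting $i \to \omega$ and using that $X$ has size $\lambda^+$, one builds an increasing sequence of conditions whose $x$-parts exhaust a set $A \in [\lambda^+]^{\lambda^+}$ with the property that for each $i$, all but boundedly many pairs from $A$ have color $\geq i$ — and this contradicts $\U(\lambda^+, \lambda^+, \omega, 2)$ (which holds since $\lambda^+$ is successor of a regular) applied with the singleton family $\{\{\gamma\} \mid \gamma \in A\}$: that principle guarantees, for each $i$, a subfamily of size $\lambda^+$ on which $\min(c[a \times b]) > i$, so no single bounded color can work, yet a filter would have to pin down the behavior too tightly. I would write this last step out carefully, as it is the only place where the combinatorics of $\U(\ldots)$ is actually used, the rest being bookkeeping about $\mathbb{Q}$ that is already packaged in Lemma~\ref{knasterlemma}.
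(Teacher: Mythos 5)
Your proof has a fatal gap in clause~(3), and the poset you build is in fact not a counterexample to $\FA_{\lambda^+}$ at all. Consider the conditions with first coordinate $0$. Since $c``[x]^2 \cap 0 = \emptyset$ is vacuous, the set $G := \{\one\} \cup \{(0,x) \mid x \in [\lambda^+]^{<\lambda}\}$ consists of \emph{all} pairs $(0,x)$, and it is a filter: it is upward closed, and any two conditions $(0,x), (0,y)$ have the common extension $(0, x\cup y) \in G$ because $|x\cup y| < \lambda$ by regularity of $\lambda$. This filter meets every dense subset of $\mathbb{Q}$ (density below $(0,\emptyset)$ forces a member into $G$), so $\FA_{\nu}(\mathbb{Q})$ holds for every $\nu$ whatsoever. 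The poset of Lemma~\ref{knasterlemma} is designed to make a power fail the chain condition, not to refute a forcing axiom, and the $i=0$ stratum kills your plan outright. Your auxiliary dense sets $E_i := \{(j,x) \mid j \geq i\}$ are also not dense for $i\geq 1$ (no condition below $(0,\emptyset)$ ever has first coordinate $\geq 1$), and even setting that aside, your would-be contradiction — an $X \in [\lambda^+]^{\lambda^+}$ with $c``[X]^2$ eventually bounded below by $i$ for each $i$ — is not a contradiction: $\U(\lambda^+,\lambda^+,\omega,\lambda)$ applied to singletons \emph{asserts} exactly that such large monochromatically-high sets exist.

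The actual proof cannot be made constructive in this naive way and proceeds quite differently. For each $i<\omega$ one sets $\mathbb{P}_i := \{x \in [\lambda^+]^{<\lambda} \mid c``[x]^2 \cap i = \emptyset\}$ under reverse inclusion and lets $\mathbb{Q}_i$ be the ${<}\lambda$-support product of $\lambda$ copies of $\mathbb{P}_i$. One then argues by contradiction: if $\FA_{\lambda^+}(\mathbb{Q}_i)$ held for every $i$, then hitting the dense sets $D_p := \{q \in \mathbb{Q}_i \mid \exists\eta\in\dom(q),\ q(\eta)\leq_{\mathbb{P}_i} p\}$ for each $p\in\mathbb{P}_i$ would show $\mathbb{P}_i$ is $\lambda$-centered; using $\lambda^{\aleph_0}=\lambda$, the full support product $\prod_{i<\omega}\mathbb{P}_i$ would then be $\lambda$-centered and hence $\lambda^+$-cc; but the constant functions $p_\alpha(i) := \{\alpha\}$ form a $\lambda^+$-antichain in $\prod_{i<\omega}\mathbb{P}_i$ exactly as in the proof of Lemma~\ref{knasterlemma}. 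This gives some (unspecified, necessarily positive) $i$ with $\FA_{\lambda^+}(\mathbb{Q}_i)$ failing, and that $\mathbb{Q}_i$ is the desired $\mathbb{Q}$; clauses~(1) and~(2) for $\mathbb{Q}_i$ are then checked as you indicate, via essentially the same head-tail-tail $\Delta$-system argument. The product over $\lambda$-many copies is what makes the $\lambda$-centeredness deduction possible and is the genuinely new idea you would need — a single copy of $\mathbb{P}_i$, let alone the lottery-like poset of Lemma~\ref{knasterlemma}, does not suffice.
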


\begin{proof}
  By Corollary~\ref{omegacolors} (see also Corollary~\ref{l23}), we can fix a function $c:[\lambda^+]^2
  \rightarrow \omega$ witnessing $\U(\lambda^+, \lambda^+, \omega, \lambda)$.
  For each $i < \omega$, define a poset $\mathbb{P}_i := \{x \in [\lambda^+]^{<\lambda}
  \mid c``[x]^2 \cap i = \emptyset\}$, ordered by reverse inclusion, and let
  $\mathbb{Q}_i$ be the ${<}\lambda$-support product of $\lambda$ copies of
  $\mathbb{P}_i$. It is immediate that, for all $i < \omega$, $\mathbb{Q}_i$
  has size $\lambda^+$ and is well-met and $\lambda$-directed closed with
  greatest lower bounds.

  \begin{claim}
    For all $i < \omega$, $\mathbb{Q}_i$ has precaliber $\lambda^+$.
  \end{claim}

  \begin{proof}
    The proof is essentially the same as that of Clause~(3) of Lemma~\ref{knasterlemma}.
  \end{proof}

  \begin{claim}
    Suppose that $i < \omega$ and $\FA_{\lambda^+}(\mathbb{Q}_i)$ holds. Then
    $\mathbb{P}_i$ is $\lambda$-centered.
  \end{claim}

  \begin{proof}
    For each $p \in \mathbb{P}_i$, the set $D_p := \{q \in \mathbb{Q}_i \mid \text{for some }
    \eta \in \dom(q), ~ q(\eta) \leq_{\mathbb{P}_i} p\}$ is dense in
    $\mathbb{Q}_i$. As $|\mathbb{P}_i| = \lambda^+$, there is a filter
    $G \subseteq \mathbb{Q}_i$ such that $G \cap D_p \neq \emptyset$ for all
    $p \in \mathbb{P}_i$. For $\eta < \lambda$, let $G_\eta$ be the upward
    closure of $\{q(\eta) \mid q \in G\}$ in $\mathbb{P}_i$. Then each $G_\eta$
    is a centered subset of $\mathbb{P}_i$, and $\bigcup_{\eta < \lambda} G_\eta
    = \mathbb{P}_i$, so $\mathbb{P}_i$ is $\lambda$-centered.
  \end{proof}

  \begin{claim}
    Suppose that $\mathbb{P}_i$ is $\lambda$-centered for all $i < \omega$.
    Then $\prod_{i < \omega} \mathbb{P}_i$ is $\lambda$-centered.
  \end{claim}

  \begin{proof}
    For each $i < \omega$, let $\{G^i_\eta \mid \eta < \lambda\}$ be a collection
    of centered subsets that covers $\mathbb{P}_i$. For each $h \in {^\omega}\lambda$, let
    \[
      G_h := \left\{p \in \prod_{i < \omega} \mathbb{P}_i \Mid \text{for all }
      i < \omega, ~ p(i) \in G^i_{h(i)}\right\}.
    \]
    Then $\{G_h \mid h \in {^\omega}\lambda\}$ is a collection of centered subsets
    that covers $\mathbb{P}_i$. Since $\lambda^{\aleph_0} = \lambda$, the collection
    has size $\lambda$ and hence witnesses that $\prod_{i < \omega} \mathbb{P}_i$
    is $\lambda$-centered.
  \end{proof}

  Therefore, if $\FA_{\lambda^+}(\mathbb{Q}_i)$ holds for all $i < \omega$,
  then $\prod_{i < \omega} \mathbb{P}_i$ is $\lambda$-centered and hence
  has the $\lambda^+$-cc. However, if, for all $\alpha < \lambda^+$, we let
  $p_\alpha \in \prod_{i < \omega} \mathbb{P}_i$ be the constant function taking
  value $\{\alpha\}$, then, as in the proof of Lemma~\ref{knasterlemma}, $\{p_\alpha \mid \alpha
  < \lambda^+\}$ is an antichain of size $\lambda^+$ in $\prod_{i < \omega}
  \mathbb{P}_i$. It follows that there is $i < \omega$ for which $\FA_{\lambda^+}(\mathbb{Q}_i)$
  fails.
\end{proof}

\section{Closed colorings} \label{closed_section}

\subsection{Preliminaries} \label{closed_subsection}

In this section, we undertake a thorough analysis of witnesses to
$\U(\kappa, \mu, \theta, \chi)$ that satisfy certain \emph{closure}
conditions isolated by the following definition, and the circumstances
under which such colorings must exist. Our reasons for focusing on
closed colorings are twofold. Firstly, closed colorings behave more
nicely than general colorings. For example, as Lemma~\ref{pumpclosed}
will make clear, a closed witness to $\U(\kappa, 2, \theta, \chi)$
is actually a witness to $\U(\kappa, \kappa, \theta, \chi)$. Secondly,
closed colorings seem to arise naturally. Our primary methods for constructing
witnesses to $\U(\kappa, \mu, \theta, \chi)$ come from the techniques of
\emph{walks on ordinals}, and, as we shall see in this section, the colorings
that arise from these constructions tend to be closed.

\begin{defn}
  For a subset $\Sigma\s\kappa$, we say that $c:[\kappa]^2\rightarrow\theta$ is
  \emph{$\Sigma$-closed} if, for all $\beta<\kappa$ and $i\le \theta$,
  the set $D^c_{\le i}(\beta):=\{\alpha<\beta\mid c(\alpha,\beta)\le i\}$ satisfies
  $\acc^+(D^c_{\le i}(\beta))\cap\Sigma\s D^c_{\le i}(\beta)$.
  We say that $c$ is \emph{somewhere-closed} iff it is $\Sigma$-closed for some stationary $\Sigma\s\kappa$,
  that $c$ is \emph{tail-closed} iff it is $E^\kappa_{\ge\sigma}$-closed for some $\sigma\in\reg(\kappa)$,
  and that $c$ is \emph{closed} iff it is $\kappa$-closed.
\end{defn}

\begin{lemma}\label{pumpclosed}
  Suppose that $c:[\kappa]^2\rightarrow\theta$ is a coloring and $\omega\le\chi<\kappa$.
  Then $(1)\implies(2)\implies(3)$:
  \begin{enumerate}
    \item For some stationary $\Sigma\s E^\kappa_{\ge\chi}$, $c$ is a $\Sigma$-closed
      witness to $\U(\kappa,2,\theta,\chi)$.
    \item For every family $\mathcal A\s[\kappa]^{<\chi}$ consisting of $\kappa$-many
      pairwise disjoint sets, for every club $D\s\kappa$, and for every $i<\theta$,
      there exist $\gamma\in D$, $a\in\mathcal A$, and $\epsilon < \gamma$ such that:
      \begin{itemize}
        \item $\gamma < a$;
        \item for all $\alpha \in (\epsilon, \gamma)$ and all $\beta\in a$,
        we have $c(\alpha,\beta)>i$.
      \end{itemize}
      \item $c$ witnesses $\U(\kappa,\kappa,\theta,\chi)$.
  \end{enumerate}
\end{lemma}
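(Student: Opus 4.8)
The plan is to prove the two implications in turn, with the bulk of the work landing on $(1)\implies(2)$; the implication $(2)\implies(3)$ will then be a relatively soft argument exploiting the closure hypothesis only indirectly, via the ``accumulation point'' conclusion of~(2).

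For $(1)\implies(2)$, fix a $\Sigma$-closed witness $c$ to $\U(\kappa,2,\theta,\chi)$ with $\Sigma\s E^\kappa_{\ge\chi}$ stationary, and fix a family $\mathcal A\s[\kappa]^{<\chi}$ of $\kappa$-many pairwise disjoint sets, a club $D\s\kappa$, and a color $i<\theta$. The first step is to reindex: using that $\Sigma$ is stationary and $\mathcal A$ consists of pairwise disjoint small sets, enumerate $\mathcal A$ as $\{a_\gamma\mid\gamma<\kappa\}$ in a way compatible with the ordering (say $\gamma<a_\gamma$ and $\gamma\mapsto\min(a_\gamma)$ strictly increasing), and let $E\s\kappa$ be the club of $\gamma$ such that $a_\delta<\gamma$ whenever $\delta<\gamma$ and $\bigcup_{\delta<\gamma}a_\delta\s\gamma$. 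For each $\gamma\in\Sigma\cap\acc(E)\cap\acc(D)$, we want to produce some $a\in\mathcal A$ sitting above $\gamma$ together with a tail $(\epsilon,\gamma)$ on which all colors into $a$ exceed $i$. Apply $\U(\kappa,2,\theta,\chi)$ to the family $\{a_\gamma\mid\gamma\in\Sigma\cap\acc(E)\cap\acc(D)\}$ (or rather to a suitable restriction) and the color $i$: this yields two of these sets, but to get the accumulation behavior we instead argue by contradiction. Suppose~(2) fails for this $\mathcal A, D, i$. Then for every $\gamma\in D$ and every $a\in\mathcal A$ with $\gamma<a$, and every $\epsilon<\gamma$, there is $\alpha\in(\epsilon,\gamma)$ and $\beta\in a$ with $c(\alpha,\beta)\le i$; equivalently, for every $\beta$ lying in some $a\in\mathcal A$ above some $\gamma\in D$, the set $D^c_{\le i}(\beta)$ is unbounded in $\gamma$. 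Now the $\Sigma$-closure of $c$ kicks in: if $\gamma\in\Sigma$ and $D^c_{\le i}(\beta)$ is unbounded in $\gamma$ then $\gamma\in\acc^+(D^c_{\le i}(\beta))\cap\Sigma\s D^c_{\le i}(\beta)$, so $c(\gamma,\beta)\le i$. Iterating this observation: for any $\gamma<\gamma'$ both in $\Sigma\cap\acc(E)\cap\acc(D)$ and any $\beta\in a_{\gamma'}$, we first get $c(\gamma,\beta)\le i$ for all $\beta\in a_{\gamma'}$ (since $a_{\gamma'}$ lies above $\gamma'>\gamma$, both in $D$), and hence in fact $\min(c[a_\gamma\times a_{\gamma'}])\le i$ after pushing down once more — more carefully, fixing $\gamma\in\Sigma$ we show that $D^c_{\le i}(\beta)\supseteq\Sigma\cap\acc(E)\cap\acc(D)\cap\gamma$ for every such $\beta$, whence every $\alpha\in a_\gamma$ with $\alpha<\gamma'$... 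Actually the cleanest route: let $S:=\Sigma\cap\acc(E)\cap\acc(D)$, a stationary set, and show by the closure property that for any $(\gamma,\gamma')\in[S]^2$ and any $\beta\in a_{\gamma'}$ we have $c(\gamma,\beta)\le i$; then pick any $\gamma''\in S$ with $\gamma''>\gamma'$ and deduce $c(\alpha,\beta)\le i$ for every $\alpha\in a_\gamma$ and $\beta\in a_{\gamma''}$, because $\min(a_{\gamma''})>\gamma''>\sup(a_\gamma)$ and $\alpha\in\acc^+(D^c_{\le i}(\beta))$ once we know $S\cap\gamma''\s D^c_{\le i}(\beta)$ and $\alpha=\sup(S\cap\alpha)$... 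This gives $\mathcal A':=\{a_\gamma\mid\gamma\in S\}$ of size $\kappa$ with $\min(c[a\times b])\le i$ for all $(a,b)\in[\mathcal A']^2$, contradicting that $c$ witnesses $\U(\kappa,2,\theta,\chi)$.

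For $(2)\implies(3)$: assume~(2) and fix $\mathcal A\s[\kappa]^{<\chi}$ of $\kappa$-many pairwise disjoint sets and $i<\theta$. We must find $\mathcal B\in[\mathcal A]^\kappa$ with $\min(c[a\times b])>i$ for all $(a,b)\in[\mathcal B]^2$. Build $\mathcal B$ by recursion of length $\kappa$. At stage $\xi$, having chosen $\{b_\eta\mid\eta<\xi\}$, apply~(2) with the same $\mathcal A$ (thinned to exclude the $<\xi$-many sets already handled and those meeting $\bigcup_{\eta<\xi}b_\eta$, still $\kappa$-many), with $i$, and with the club $D:=\acc^+(\bigcup_{\eta<\xi}b_\eta)\cup(\ldots)$ — more precisely with a club $D$ guaranteeing that any $\gamma\in D$ exceeds every point mentioned so far and that the "tail'' interval $(\epsilon,\gamma)$ swallows all of $\bigcup_{\eta<\xi}b_\eta$. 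Then~(2) returns $\gamma\in D$, $a=:b_\xi\in\mathcal A$, and $\epsilon<\gamma$ with $b_\xi$ above $\gamma$ and $c(\alpha,\beta)>i$ for all $\alpha\in(\epsilon,\gamma)$, $\beta\in b_\xi$; since $\bigcup_{\eta<\xi}b_\eta\s(\epsilon,\gamma)$, this gives $\min(c[b_\eta\times b_\xi])>i$ for all $\eta<\xi$. The resulting $\mathcal B=\{b_\xi\mid\xi<\kappa\}$ works.

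The main obstacle is the first implication, and specifically getting the closure hypothesis to "propagate'' the single bad color $\le i$ found on one pair $\beta$ across a stationary set of $\gamma$'s simultaneously: one has to be careful that the set $S$ on which we run the argument lies inside $\Sigma$ and is simultaneously a set of accumulation points of the relevant clubs, and that the disjointness plus the $\gamma<a_\gamma$ indexing forces the accumulation point $\gamma$ of $D^c_{\le i}(\beta)$ to actually lie below $\beta$ so that $c(\gamma,\beta)$ is defined. Handling the quantifier "for every $\epsilon<\gamma$'' in the negation of~(2) — turning it into genuine unboundedness of $D^c_{\le i}(\beta)$ in $\gamma$ — and then chaining two applications of $\acc^+$-closure (once to move from $\alpha\in(\epsilon,\gamma)$ up to $\gamma$, once to move from points of $a_\gamma$ up to their sup) is where all the bookkeeping concentrates.
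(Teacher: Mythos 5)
Both implications have genuine problems, and in both cases the paper uses a different idea that the proposal is missing.

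For $(1)\implies(2)$, you argue by contradiction and try to propagate the failure of $(2)$ via repeated $\Sigma$-closure. The first misstep is the line ``equivalently, for every $\beta$ lying in some $a\in\mathcal A$ above some $\gamma\in D$, the set $D^c_{\le i}(\beta)$ is unbounded in $\gamma$.'' The negation of $(2)$ only says that for each $\epsilon<\gamma$ there is \emph{some} $(\alpha,\beta)\in(\epsilon,\gamma)\times a$ with $c(\alpha,\beta)\le i$; after a pigeonhole over $a$ (valid because $|a|<\chi\le\cf(\gamma)$) this yields one $\beta^*\in a$ with $D^c_{\le i}(\beta^*)$ unbounded in $\gamma$, not all $\beta\in a$. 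The rest of your argument (the part where you later write ``for any $(\gamma,\gamma')\in[S]^2$ and any $\beta\in a_{\gamma'}$ we have $c(\gamma,\beta)\le i$'') leans on the ``for every $\beta$'' version, and you yourself backtrack twice before trailing off. There does not appear to be a clean way to chain closures from a single $\beta^*$ per set into a $\kappa$-sized family witnessing $\min(c[a\times b])\le i$ on every pair. The paper avoids this entirely by running $(1)$ forward: enlarge each $a\in\mathcal A$ to $\{\gamma\}\cup a$ with $\gamma\in\Sigma\cap D$ sitting below $a$, apply $\U(\kappa,2,\theta,\chi)$ to this new family to get a single good pair, extract $\gamma$ and $a$, and then use $\Sigma$-closure \emph{once}, at that $\gamma$, to peel off a sub-$\gamma$ interval $(\epsilon(\gamma,\beta),\gamma)$ for each of the $<\chi\le\cf(\gamma)$ many $\beta\in a$ and take $\epsilon$ to be the sup.

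For $(2)\implies(3)$, the recursion has a gap you cannot close: at stage $\xi$ you want $(\epsilon,\gamma)\supseteq\bigcup_{\eta<\xi}b_\eta$, but $(2)$ only hands you \emph{some} $\epsilon<\gamma$, with no control over where $\epsilon$ sits. Choosing the club $D$ can push $\gamma$ as high as you like, but it cannot push $\epsilon$ below $\min(\bigcup_{\eta<\xi}b_\eta)$, so already at $\xi=1$ the recursion may stall. The missing idea is Fodor: $(2)$, quantified over clubs, says the set $\Gamma$ of $\gamma$ admitting a witnessing pair $(a_\gamma,\epsilon_\gamma)$ is stationary; since $\gamma\mapsto\epsilon_\gamma$ is regressive on $\Gamma$, one can stabilize $\epsilon_\gamma$ at a single value $\epsilon$ on a stationary $B\subseteq\Gamma$ while also demanding $\sup(a_{\gamma'})<\gamma$ for $\gamma'\in\Gamma\cap\gamma$. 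Then for $\gamma<\delta$ in $B$, any $\alpha\in a_\gamma$ lies in $(\epsilon,\delta)=(\epsilon_\delta,\delta)$, so the witnessing property at $\delta$ gives $c(\alpha,\beta)>i$ for all $\beta\in a_\delta$, and $\mathcal B:=\{a_\gamma\mid\gamma\in B\}$ works. The Fodor step is what allows all pairs of the family to be handled simultaneously; a stage-by-stage recursion cannot replicate it.
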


\begin{proof}
  $(1)\implies(2)$
  Fix a family $\mathcal A\s[\kappa]^{<\chi}$ consisting of $\kappa$-many pairwise disjoint sets,
  a club $D \subseteq \kappa$, and a color $i<\theta$.
  Find $\mathcal X\s[\kappa]^{<\chi}$ consisting of $\kappa$-many pairwise disjoint sets
  such that every $x\in\mathcal X$ is of the form $\{\gamma\} \cup a$ for some
  $\gamma \in \Sigma \cap D$ and $a \in \mathcal{A}$. As $c$ witnesses $\U(\kappa,2,\theta,\chi)$,
  we may pick $(x,y)\in[\mathcal X]^2$ such that $\min(c[x\times y])>i$.
  Fix $\gamma\in(\Sigma\cap D)\cap x$ and $a\in\mathcal A\cap\mathcal P(y)$.
  Clearly, $\gamma < a$ and $|a|<\chi\le\cf(\gamma)$.
  Now, let $\beta\in a$ be arbitrary. Since $(\gamma,\beta)\in x\times y$, we have $c(\gamma,\beta)>i$,
  and, since $\gamma\in\Sigma$, there must exist $\epsilon(\gamma,\beta)<\gamma$ such that,
  for all $\alpha\in(\epsilon(\gamma,\beta),\gamma)$, $c(\alpha,\beta)>i$. Since
  $\cf(\gamma) \geq \chi > |a|$, we have that $\epsilon := \sup\{\epsilon(\gamma, \beta)
  \mid \beta \in a\}$ is less than $\gamma$. Then $\gamma$, $a$, and $\epsilon$
  are as sought.

  $(2)\implies(3)$
  Fix a family $\mathcal A\s[\kappa]^{<\chi}$ consisting of $\kappa$-many pairwise disjoint
  sets and a color $i<\theta$. Let $\Gamma$ denote the collection of all $\gamma<\kappa$
  such that for some $a_\gamma\in\mathcal A$ and some $\epsilon_\gamma < \gamma$, we have
  \begin{itemize}
    \item $\gamma < a_\gamma$; and
    \item for all $\alpha \in (\epsilon_\gamma, \gamma)$ and all $\beta\in a_\gamma$,
      we have $c(\alpha,\beta)>i$.
  \end{itemize}
  By the hypothesis, $\Gamma$ is stationary.
  Define $f:\Gamma\rightarrow\kappa$ and $g:\Gamma\rightarrow\kappa$ by letting, for all $\gamma\in\Gamma$,
  $f(\gamma):=\epsilon_\gamma$ and $g(\gamma):=\sup(a_\gamma)$.
  By Fodor's Lemma, we now pick $\epsilon<\kappa$ for which
  $B:=\{ \gamma\in \Gamma\mid f(\gamma)=\epsilon\ \&\ g[\gamma]\s\gamma\}$ is stationary,
  and then let $\mathcal B:=\{ a_\gamma\mid \gamma\in B\}$.
  We claim that $\min(c[a \times b]) > i$ for all $(a,b) \in [\mathcal{B}]^2$.
  To this end, fix $(a,b)\in[\mathcal B]^2$, and let $\gamma, \delta\in B$
  be such that $a=a_\gamma$ and $b=a_\delta$. Then we have
  $$
    \epsilon_\delta = \epsilon<\gamma<\alpha<\delta<\beta,
  $$
  and hence $c(\alpha,\beta)>i$.
\end{proof}

\subsection{Walks on ordinals} \label{walks_subsection}

We now introduce some of the machinery we will need to construct witnesses to
$\U(\kappa, \mu, \theta, \chi)$ using the techniques of walks on ordinals.

\begin{defn}
  A \emph{$C$-sequence} over $\kappa$ is a sequence $\langle C_\alpha \mid \alpha
  < \kappa \rangle$ such that, for all $\alpha < \kappa$,
  \begin{itemize}
    \item $C_0 = \emptyset$;
    \item $C_{\alpha + 1} = \{\alpha\}$;
    \item if $\alpha \in \acc(\kappa)$, then $C_\alpha$ is a club in $\alpha$.
  \end{itemize}
\end{defn}

\begin{defn}[\cite{MR908147}]\label{walks}
  Given a $C$-sequence $\langle C_\alpha\mid\alpha<\kappa\rangle$, we derive various functions as follows.
  For all $\alpha<\beta<\kappa$,
  \begin{itemize}
    \item $\Tr(\alpha,\beta)\in{}^\omega\kappa$ is defined recursively by letting, for all $n<\omega$,
      $$\Tr(\alpha,\beta)(n):=\begin{cases}
      \beta & \text{if } n=0;\\
      \min(C_{\Tr(\alpha,\beta)(n-1)}\setminus\alpha) & \text{if } n>0\ \&\ \Tr(\alpha,\beta)(n-1)>\alpha;\\
      \alpha & \text{otherwise};
      \end{cases}
      $$
    \item (Number of steps) $\rho_2(\alpha,\beta):=\min\{n<\omega\mid \Tr(\alpha,\beta)(n)=\alpha\}$;
    \item (Upper trace) $\tr(\alpha,\beta):=\Tr(\alpha,\beta)\restriction \rho_2(\alpha,\beta)$.
  \end{itemize}
\end{defn}
\begin{remark}
  To avoid notational confusion, note that there is no relationship between the two-place instance $\Tr(\alpha,\beta)$
  and the one-place instance $\Tr(S)$.
\end{remark}

\begin{defn}[\cite{paper18}]
  For $\alpha<\beta<\kappa$, let
  $$\lambda_2(\alpha,\beta):=\sup(\alpha\cap\{ \sup(C_\delta\cap\alpha)
  \mid \delta \in \im(\tr(\alpha, \beta))\}).$$
\end{defn}

Note that $\lambda_2(\alpha, \beta) < \alpha$ whenever $0<\alpha < \beta < \kappa$.
To motivate the preceding definition, let us point out the following lemma.

\begin{lemma}\label{lambda2}
  Suppose that $\lambda_2(\gamma,\beta)<\alpha<\gamma<\beta<\kappa$.
  Then $\tr(\gamma,\beta)\sq \tr(\alpha,\beta)$ and one of the following cases holds:
  \begin{enumerate}
    \item $\gamma\in\im(\tr(\alpha,\beta))$; or
    \item $\gamma\in\acc(C_\delta)$ for $\delta:=\min(\im(\tr(\gamma,\beta)))$.
      In particular, $\gamma\in\acc(C_\delta)$ for some  $\delta\in\im(\tr(\alpha,\beta))$.
  \end{enumerate}
\end{lemma}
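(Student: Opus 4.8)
The plan is to analyze the walk from $\alpha$ to $\beta$ and compare it with the walk from $\gamma$ to $\beta$, exploiting the hypothesis $\lambda_2(\gamma,\beta) < \alpha < \gamma$. First I would recall the standard "prefix" behavior of traces under the walk: if one performs the walk from $\beta$ down toward $\gamma$, the sequence of ordinals visited is $\tr(\gamma,\beta)$, and these same ordinals are the first ordinals visited when walking from $\beta$ toward any smaller ordinal, up until the point where the walk must "drop below" $\gamma$. Concretely, let $\tr(\gamma,\beta) = \langle \delta_0, \delta_1, \ldots, \delta_{k}\rangle$ where $\delta_0 = \beta$ and $\delta_k$ is the last ordinal with $\delta_k > \gamma$ before the step reaching $\gamma$ (so $\gamma = \min(C_{\delta_k}\setminus\gamma) \in C_{\delta_k}$). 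I would show by induction on $n \le k$ that $\Tr(\alpha,\beta)(n) = \delta_n$: this holds since at each step $n < k$ we have $\delta_{n+1} = \min(C_{\delta_n}\setminus \gamma)$, and I must check that $\min(C_{\delta_n}\setminus\alpha) = \min(C_{\delta_n}\setminus\gamma)$, i.e. that $C_{\delta_n}\cap[\alpha,\gamma) = \emptyset$. This is exactly where $\lambda_2$ enters: since $\delta_n \in \im(\tr(\gamma,\beta))$, the definition of $\lambda_2(\gamma,\beta)$ gives $\sup(C_{\delta_n}\cap\gamma) \le \lambda_2(\gamma,\beta) < \alpha$, so indeed $C_{\delta_n}$ has no members in $[\alpha,\gamma)$. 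This establishes $\tr(\gamma,\beta) \sq \tr(\alpha,\beta)$.

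Next I would determine what the walk from $\beta$ to $\alpha$ does at step $k$, having reached $\delta_k$ with $C_{\delta_k} \cap [\alpha,\gamma) = \emptyset$ (same argument as above) and $\gamma \in C_{\delta_k}$. Set $\gamma' := \min(C_{\delta_k}\setminus\alpha)$, the $(k{+}1)$-st ordinal on the walk toward $\alpha$. Since $\gamma \in C_{\delta_k}\setminus\alpha$ we have $\gamma' \le \gamma$, and since $C_{\delta_k}\cap[\alpha,\gamma) = \emptyset$ we have $\gamma' \notin [\alpha,\gamma)$, hence either $\gamma' = \gamma$ or $\gamma' < \alpha$. In the first case $\gamma = \gamma' = \Tr(\alpha,\beta)(k+1) \in \im(\tr(\alpha,\beta))$ — note $\gamma > \alpha$ so the walk has not yet terminated — giving conclusion (1), with $\delta := \delta_k = \min(\im(\tr(\gamma,\beta)))$ witnessing the parenthetical remark in (2)... actually I should be careful: $\delta_k$ need not be $\min(\im(\tr(\gamma,\beta)))$. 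Let me instead set $\delta := \min(\im(\tr(\gamma,\beta))) = \delta_j$ for the appropriate last index; but $\gamma = \min(C_{\delta}\setminus\gamma)$ forces $\delta = \delta_k$, so in fact $\delta_k = \min(\im(\tr(\gamma,\beta)))$ after all, since the walk from $\beta$ to $\gamma$ terminates at $\gamma$ precisely at the step from $\delta_k$.

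In the remaining case $\gamma' < \alpha$: here the walk from $\beta$ toward $\alpha$ jumps from $\delta_k$ to an ordinal below $\alpha$, hence below $\gamma$. But $\gamma \in C_{\delta_k}$ and $\gamma \le \sup(C_{\delta_k}\cap\gamma^+)$; more to the point, $\gamma' = \min(C_{\delta_k}\setminus\alpha) < \alpha \le \gamma$, while $\gamma \in C_{\delta_k}$, so $\gamma$ is a limit point of $C_{\delta_k}$ from below — indeed $C_{\delta_k}\cap\gamma$ is cofinal in $\gamma$ because $\min(C_{\delta_k}\setminus\alpha') < \alpha'$ for every $\alpha' \in (\gamma', \gamma]$, in particular cofinally many elements of $C_{\delta_k}$ lie below $\gamma$. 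Wait, I need $\gamma \in \acc(C_{\delta_k})$, i.e. $\gamma \in C_{\delta_k}$ and $\sup(C_{\delta_k}\cap\gamma) = \gamma$: the first holds by construction, and if $\sup(C_{\delta_k}\cap\gamma) = \gamma^* < \gamma$ then $\min(C_{\delta_k}\setminus\alpha) \ge \gamma$ for $\alpha \in (\gamma^*,\gamma)$, contradicting $\lambda_2(\gamma,\beta) < \alpha$ only if such $\alpha$ exists in our range — but $\alpha > \lambda_2(\gamma,\beta) \ge \sup(C_{\delta_k}\cap\gamma) = \gamma^*$, so $\alpha \in (\gamma^*,\gamma)$ and $\gamma' = \min(C_{\delta_k}\setminus\alpha) \ge \gamma$, contradicting $\gamma' < \alpha < \gamma$. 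Hence $\gamma \in \acc(C_{\delta_k})$ with $\delta_k = \min(\im(\tr(\gamma,\beta)))$, giving conclusion (2); and since $\delta_k \in \im(\tr(\alpha,\beta))$ by the prefix fact, the final sentence of (2) holds as well.

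The main obstacle, and the step requiring the most care, is the bookkeeping around step $k$ of the walk: correctly identifying that $\delta_k := \min(\im(\tr(\gamma,\beta)))$, justifying that the walk toward $\alpha$ agrees with the walk toward $\gamma$ up to and including reaching $\delta_k$, and then cleanly splitting into the two cases $\gamma' = \gamma$ versus $\gamma' < \alpha$ using only the inequality $\lambda_2(\gamma,\beta) < \alpha$ to rule out $\gamma' \in [\alpha,\gamma)$ and to force $\gamma \in \acc(C_{\delta_k})$ in the second case. Everything else is a routine induction on the number of walk steps, invoking the definition of $\lambda_2$ to guarantee $C_{\delta_n}\cap[\alpha,\gamma) = \emptyset$ at each visited node $\delta_n$.
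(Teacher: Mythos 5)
Your argument for the prefix property $\tr(\gamma,\beta)\sq\tr(\alpha,\beta)$ is correct and matches the paper: for $n < k$ you have $\gamma\notin C_{\delta_n}$, hence (since $C_{\delta_n}$ is closed) $\sup(C_{\delta_n}\cap\gamma)<\gamma$, and so this supremum is bounded by $\lambda_2(\gamma,\beta)<\alpha$, giving $C_{\delta_n}\cap[\alpha,\gamma)=\emptyset$.

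The error is at step $k$. You assert $C_{\delta_k}\cap[\alpha,\gamma)=\emptyset$ \emph{``(same argument as above)''}, but the argument does not transfer: here $\gamma\in C_{\delta_k}$, so $\gamma$ may be an accumulation point of $C_{\delta_k}$, in which case $\sup(C_{\delta_k}\cap\gamma)=\gamma$. The definition of $\lambda_2(\gamma,\beta)$ intersects with $\gamma$ precisely to exclude this value from the supremum, so $\sup(C_{\delta_k}\cap\gamma)\le\lambda_2(\gamma,\beta)$ is simply false when $\gamma\in\acc(C_{\delta_k})$ --- and indeed in that case $C_{\delta_k}\cap[\alpha,\gamma)$ is cofinal in $\gamma$, hence nonempty. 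Consequently, the dichotomy you derive, ``$\gamma'=\gamma$ or $\gamma'<\alpha$,'' is broken in two ways: the derivation rests on the false emptiness claim, and the alternative $\gamma'<\alpha$ is vacuous anyway, since $\gamma'=\min(C_{\delta_k}\setminus\alpha)\ge\alpha$ by definition. Your subsequent analysis of ``the remaining case $\gamma'<\alpha$'' therefore never addresses the actual remaining possibility, which is $\alpha\le\gamma'<\gamma$.

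The correct split, which is what the paper does, conditions directly on whether $\gamma\in\nacc(C_{\delta_k})$ or $\gamma\in\acc(C_{\delta_k})$. In the first case $\sup(C_{\delta_k}\cap\gamma)<\gamma$, so it \emph{is} bounded by $\lambda_2(\gamma,\beta)<\alpha$, giving $\gamma'=\gamma$ and conclusion~(1). In the second case conclusion~(2) holds immediately, since $\delta_k=\min(\im(\tr(\gamma,\beta)))$ belongs to $\im(\tr(\alpha,\beta))$ by the prefix fact you already established. (You eventually do derive ``$\gamma'\ne\gamma\Rightarrow\gamma\in\acc(C_{\delta_k})$'' inside the vacuous case, which shows you have the right pieces, but the organization of the case split needs to be redone around $\nacc$ versus $\acc$, not around an impossible inequality $\gamma'<\alpha$.) Also, a minor point: the lemma asserts that one of (1), (2) holds, not both; your remark that case~(1) also witnesses the parenthetical of~(2) misreads the statement.
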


\begin{proof}
  We first show, by induction on $i$, that $\Tr(\alpha, \beta)(i) = \Tr(\gamma, \beta)(i)$
  for all $i < \rho_2(\gamma, \beta)$, i.e., that $\tr(\gamma,\beta)\sq \tr(\alpha,\beta)$.
  Clearly, $\Tr(\alpha,\beta)(0)=\beta=\Tr(\gamma,\beta)(0)$. Next, suppose that
  $i + 1 < \rho_2(\gamma, \beta)$ and $\Tr(\alpha, \beta)(i) = \Tr(\gamma, \beta)(i)$.
  Since $i + 1 < \rho_2(\gamma, \beta)$, it must be the case that $\gamma
  \notin C_{\Tr(\gamma, \beta)(i)}$, and therefore $\sup(C_{\Tr(\gamma, \beta)(i)}
  \cap \gamma) \leq \lambda_2(\gamma, \beta) < \alpha$. It follows that
  \[
    \min(C_{\Tr(\alpha,\beta)(i)}\setminus\alpha)=\min(C_{\Tr(\gamma,\beta)(i)}\setminus\alpha)=
    \min(C_{\Tr(\gamma,\beta)(i)}\setminus\gamma),
  \]
  and hence $\Tr(\alpha,\beta)(i+1)=\Tr(\gamma,\beta)(i+1)$.

  To prove the second part of the lemma, set $n:=\rho_2(\gamma,\beta)-1$
  and $\delta := \tr(\gamma, \beta)(n) = \tr(\alpha, \beta)(n)$,
  note that $\gamma \in C_\delta$, and consider the following two cases.

  $\br$ If $\gamma\in\nacc(C_{\delta})$, then $\sup(C_\delta \cap \gamma)
  \leq \lambda_2(\gamma, \beta) < \alpha$, so $\gamma=\tr(\alpha,\beta)(n+1)$
  and we are in Case (1) of the statement of the lemma.

  $\br$ If $\gamma\in\acc(C_\delta)$, then we are in Case (2) of the statement
  of the lemma.
\end{proof}

\begin{cor}\label{rho2_closed}
  $\rho_2:[\kappa]^2\rightarrow\omega$ is closed.
\end{cor}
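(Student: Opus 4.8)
The plan is to show directly that $\rho_2$ is $\kappa$-closed, i.e., that for every $\beta<\kappa$ and every $i\le\omega$, the set $D^{\rho_2}_{\le i}(\beta)=\{\alpha<\beta\mid \rho_2(\alpha,\beta)\le i\}$ satisfies $\acc^+(D^{\rho_2}_{\le i}(\beta))\s D^{\rho_2}_{\le i}(\beta)$. Fix such $\beta$ and $i$, and fix $\gamma\in\acc^+(D^{\rho_2}_{\le i}(\beta))$; I must show $\rho_2(\gamma,\beta)\le i$. The first step is to observe that $\gamma$ is a limit point of $D^{\rho_2}_{\le i}(\beta)$, so in particular $\gamma<\beta$ and $0<\gamma$; hence $\lambda_2(\gamma,\beta)<\gamma$, and I can pick some $\alpha\in D^{\rho_2}_{\le i}(\beta)$ with $\lambda_2(\gamma,\beta)<\alpha<\gamma$. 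Now Lemma~\ref{lambda2} applies to this configuration $\lambda_2(\gamma,\beta)<\alpha<\gamma<\beta$, giving $\tr(\gamma,\beta)\sq\tr(\alpha,\beta)$.

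The key point is then that $\rho_2(\gamma,\beta)\le\rho_2(\alpha,\beta)$. Indeed, from $\tr(\gamma,\beta)\sq\tr(\alpha,\beta)$ we get that the length of $\tr(\gamma,\beta)$, which is $\rho_2(\gamma,\beta)$, is at most the length of $\tr(\alpha,\beta)$, which is $\rho_2(\alpha,\beta)$. Since $\alpha\in D^{\rho_2}_{\le i}(\beta)$ means $\rho_2(\alpha,\beta)\le i$, we conclude $\rho_2(\gamma,\beta)\le\rho_2(\alpha,\beta)\le i$, so $\gamma\in D^{\rho_2}_{\le i}(\beta)$, as desired. (The case $i=\omega$ is trivial since then $D^{\rho_2}_{\le\omega}(\beta)=\beta$.)

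I expect no real obstacle here; the corollary is essentially a repackaging of the first conclusion of Lemma~\ref{lambda2}. The only point requiring a moment's care is the bookkeeping around edge cases: ensuring $\gamma>0$ so that $\lambda_2(\gamma,\beta)$ is defined and strictly below $\gamma$, and noting that $\acc^+$ of a subset of $\beta$ consists of limit-of-the-set points below $\ssup$, all of which are genuine limits, so the hypothesis of Lemma~\ref{lambda2} is met by choosing $\alpha$ in the nonempty interval $(\lambda_2(\gamma,\beta),\gamma)\cap D^{\rho_2}_{\le i}(\beta)$. Everything else is immediate from the definitions of $\rho_2$ and $\tr$ and the $\sq$-comparability already established.
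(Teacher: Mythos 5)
Your proof is correct and takes essentially the same approach as the paper: both fix a limit point $\gamma$ of $D^{\rho_2}_{\le i}(\beta)$, choose $\alpha$ in that set above $\lambda_2(\gamma,\beta)$, and invoke the $\sq$-comparability from Lemma~\ref{lambda2} to conclude $\rho_2(\gamma,\beta)\le\rho_2(\alpha,\beta)\le i$.
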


\begin{proof}
  Fix $\beta<\kappa$, $i<\omega$, and $A\s D^{\rho_2}_{\le i}(\beta)$ with $\gamma := \sup(A)$ smaller than $\beta$.
  Fix $\alpha\in A$ above $\lambda_2(\gamma,\beta)$. Then, by Lemma~\ref{lambda2},
  $\rho_2(\gamma,\beta)\le \rho_2(\alpha,\beta) \le i$, so $\gamma\in D^{\rho_2}_{\le i}(\beta)$.
\end{proof}

The following corollary now follows from \cite[Theorem~6.3.6]{todorcevic_book}.

\begin{cor}[Todorcevic]\label{omegacolors} For every infinite cardinal $\lambda$, there is a closed witness to $\U(\lambda^+,\lambda^+,\omega,\cf(\lambda))$.\qed
\end{cor}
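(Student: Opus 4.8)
The plan is to deduce the statement from \cite[Theorem~6.3.6]{todorcevic_book} together with Corollary~\ref{rho2_closed}, much in the spirit of the remark in the Introduction that ``implicit in \cite[Theorem~6.3.6]{todorcevic_book} is the statement that $\U(\lambda^+,\lambda^+,\omega,\cf(\lambda))$ holds.'' The key point is that Todorcevic's argument, for a suitably chosen $C$-sequence over $\lambda^+$ (one for which the function $\rho_2$ has ``full oscillation'' behavior on $\lambda^+$), shows that $\rho_2:[\lambda^+]^2\rightarrow\omega$ witnesses $\U(\lambda^+,\lambda^+,\omega,\cf(\lambda))$: given $\chi'<\cf(\lambda)$, a family $\mathcal A\s[\lambda^+]^{\chi'}$ of $\lambda^+$-many pairwise disjoint sets, and $i<\omega$, there is $\mathcal B\in[\mathcal A]^{\lambda^+}$ with $\min(\rho_2[a\times b])>i$ for all $(a,b)\in[\mathcal B]^2$. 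Since $\chi'<\cf(\lambda)$, each $a\in\mathcal A$ has size $<\cf(\lambda)$, so finitely-many-steps estimates and a $\Delta$-system argument of the kind carried out in \cite[Section~6.3]{todorcevic_book} apply uniformly; this is precisely what that theorem provides.

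Concretely, I would first recall (or cite) that one can fix a $C$-sequence $\langle C_\alpha\mid\alpha<\lambda^+\rangle$ such that the derived $\rho_2$ satisfies the conclusion of \cite[Theorem~6.3.6]{todorcevic_book}. Then I would observe that Corollary~\ref{rho2_closed}, applied to this $C$-sequence, tells us that $\rho_2:[\lambda^+]^2\rightarrow\omega$ is closed, i.e., $\kappa$-closed with $\kappa=\lambda^+$. Finally, I would verify that $\rho_2$ indeed witnesses $\U(\lambda^+,\lambda^+,\omega,\cf(\lambda))$: unwinding Definition~\ref{def_U}, this is exactly the statement extracted from \cite[Theorem~6.3.6]{todorcevic_book} in the bullet point in the Introduction, so nothing further is needed beyond matching up the notation and the quantifiers. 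Putting these two facts together yields a closed witness to $\U(\lambda^+,\lambda^+,\omega,\cf(\lambda))$, as desired.

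The only genuine obstacle is bookkeeping: one must make sure that the $C$-sequence used to extract the $\U(\ldots)$ conclusion from \cite[Theorem~6.3.6]{todorcevic_book} is the same $C$-sequence whose $\rho_2$ is shown to be closed in Corollary~\ref{rho2_closed}. But Corollary~\ref{rho2_closed}, via Lemma~\ref{lambda2}, applies to the function $\rho_2$ derived from \emph{any} $C$-sequence over $\kappa$, so there is no conflict: we simply fix one $C$-sequence witnessing Todorcevic's theorem and note that its $\rho_2$ is automatically closed. Since $\cf(\lambda)$ is a regular cardinal $\le\lambda<\lambda^+$ and $\omega\le\cf(\lambda)$, all the parameters are in the intended ranges, so the statement follows. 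Hence the proof is essentially a two-line citation, which is why the excerpt marks it with \qed.
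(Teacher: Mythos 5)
Your proposal is correct and takes essentially the same approach as the paper: the witness is $\rho_2$ for a suitable $C$-sequence over $\lambda^+$, with Todorcevic's Theorem~6.3.6 supplying the $\U(\lambda^+,\lambda^+,\omega,\cf(\lambda))$ property and Corollary~\ref{rho2_closed} (which, via Lemma~\ref{lambda2}, applies to $\rho_2$ derived from any $C$-sequence) supplying closedness. This matches the paper's reasoning exactly, including the observation that there is no conflict between the two citations.
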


\begin{remark}The statement of \cite[Theorem~6.3.6]{todorcevic_book} in that source has a typing error,
where ``of size $<\kappa$'' should have been ``of size ${<}\cf(\kappa)$''.
For example, by Theorem~\ref{t310} above, if $\lambda$ is a singular limit
of strongly compact cardinals and $\lambda$ has uncountable cofinality, then
$\U(\lambda^+, \lambda^+, \omega, \cf(\lambda)^+)$ fails.
\end{remark}

\subsection{The first construction} \label{first_construction_subsection}

We are now ready to begin constructing closed witnesses to $\U(\kappa, \mu, \theta, \chi)$
using walks on ordinals. Our first result shows that the existence of closed witnesses to
$\U(\kappa, \kappa, \theta, \chi)$ follows from the existence of
certain strong counterexamples to $\refl(\theta, E^\kappa_{\geq \chi})$.

\begin{thm}\label{simu}
  Suppose that $\theta\in\reg(\kappa)$ and there exist a sequence
  $\langle H_i\mid i<\theta\rangle$ of pairwise disjoint subsets of $\kappa$
  and a $C$-sequence $\vec C=\langle C_\alpha\mid\alpha<\kappa\rangle$ such that,
  for every $\alpha\in\acc(\kappa)$,
  $$\sup\{ i<\theta\mid \acc(C_\alpha)\cap H_i\neq\emptyset\}<\theta.$$
  Then there exists a closed coloring $c:[\kappa]^2\rightarrow\theta$ such that
  \begin{enumerate}
    \item for every $\chi\in\reg(\kappa)$ for which
      $$
        \sup\{ i<\theta\mid E^\kappa_{\ge\chi}\cap H_i\text{ is stationary}\}=\theta,
      $$
      $c$ witnesses $\U(\kappa,\kappa,\theta,\chi)$;
    \item if $|\{ C_\alpha\cap\beta\mid \alpha<\kappa\}|<\kappa$ for all $\beta<\kappa$,
      then  $\mathcal T(c)$ is a $\kappa$-tree.
  \end{enumerate}
\end{thm}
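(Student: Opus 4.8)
The plan is to define the coloring $c$ directly from the walks on ordinals associated to $\vec C$, together with the partition $\langle H_i\mid i<\theta\rangle$, in such a way that $c$ inherits closedness from $\rho_2$ (Corollary~\ref{rho2_closed}) and such that each color $i$ can be "forced" to appear on a walk by routing the walk through an element of $H_{i'}$ for $i'\geq i$. Concretely, for $\alpha<\beta<\kappa$ I would set $c(\alpha,\beta)$ to be something like $\sup\{\, i<\theta \mid \acc(C_\delta)\cap H_i\neq\emptyset$ for some $\delta\in\im(\tr(\alpha,\beta))\,\}$, or a minor variant thereof; the displayed hypothesis on $\acc(C_\alpha)$ guarantees this supremum is below $\theta$, so $c$ is well-defined into $\theta$. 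Closedness of $c$ would then follow from Lemma~\ref{lambda2}: for $\gamma=\sup(A)$ with $A\subseteq D^c_{\leq i}(\beta)$ and $\gamma<\beta$, choosing $\alpha\in A$ above $\lambda_2(\gamma,\beta)$ gives $\tr(\gamma,\beta)\sqsubseteq\tr(\alpha,\beta)$ and control of the extra steps, which should push $c(\gamma,\beta)\leq i$ as well; one has to be slightly careful with Case~(2) of that lemma, where $\gamma\in\acc(C_\delta)$, but this case only adds colors $i$ with $\acc(C_\delta)\cap H_i\neq\emptyset$ in a controlled way.

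For Clause~(1), fix $\chi\in\reg(\kappa)$ with $\sup\{i<\theta\mid E^\kappa_{\geq\chi}\cap H_i$ stationary$\}=\theta$. By Lemma~\ref{pumpclosed} it suffices to verify the $\mu=2$-style condition, i.e.\ to show $c$ is a $\Sigma$-closed witness to $\U(\kappa,2,\theta,\chi)$ for $\Sigma=E^\kappa_{\geq\chi}$ (closedness on $\Sigma$ being immediate from full closedness). So fix a family $\mathcal A\subseteq[\kappa]^{<\chi}$ of $\kappa$-many pairwise disjoint sets and a color $i<\theta$; pick $i'\in(i,\theta)$ with $S:=E^\kappa_{\geq\chi}\cap H_{i'}$ stationary. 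The idea is then to run the standard walks argument: using elementarity/pressing-down, find a large set of ordinals $\gamma\in S$ each sitting above $\max(a)$ for suitable $a\in\mathcal A$ and such that the walk from any $\beta\in a$ down to (a point just below) $\gamma$ passes through $\gamma$ itself — more precisely one wants $\gamma\in\acc(C_\delta)$ for $\delta\in\im(\tr(\alpha,\beta))$ whenever $\alpha$ is just below $\gamma$ — which forces $c(\alpha,\beta)\geq i'>i$ because $\gamma\in H_{i'}$ lies on the trace. Running this for $\kappa$-many such $\gamma$ and intersecting with a club produces, for cofinally many $\gamma$, witnesses to the second clause of Lemma~\ref{pumpclosed}, hence $\U(\kappa,\kappa,\theta,\chi)$. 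Clause~(2) is a routine counting argument: if $|\{C_\alpha\cap\beta\mid\alpha<\kappa\}|<\kappa$ for all $\beta$, then each $c(\cdot,\gamma)\restriction\beta$ is determined by finitely much data about finitely many $C_\delta\cap\beta$, so each level of $\mathcal T(c)$ has size $<\kappa$, and Proposition~\ref{aronszajn} (once Clause~(1) is available for some $\chi$, or directly from $\U(\kappa,2,\theta,2)$) rules out a cofinal branch, making $\mathcal T(c)$ a genuine $\kappa$-tree.

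The main obstacle I expect is the "trace capture" step in Clause~(1): arranging that, for a stationary-or-unbounded set of $\gamma\in S$, every walk from a point of the relevant $a\in\mathcal A$ down past $\gamma$ actually visits a $\delta$ with $\gamma\in\acc(C_\delta)$. This is exactly where the interaction between $\lambda_2$, the elementary-submodel bookkeeping, and the fact that $\gamma\in E^\kappa_{\geq\chi}$ (so that $|a|<\chi\leq\cf(\gamma)$ and one can absorb finitely/boundedly many error terms below $\gamma$) all has to be combined correctly; getting the quantifier order right — first stabilize the color $i'$ and the relevant club/stationary set, then choose $\gamma$, then choose $a$ and the cutoff $\epsilon<\gamma$ — is the delicate part, and it is essentially a repackaging of Todorcevic's minimal-walks arguments (cf.\ \cite[Section~6.3]{todorcevic_book}) adapted to output into $\theta$ rather than $\omega$. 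The remaining verifications (well-definedness, closedness, the tree-counting) should be routine given Lemmas~\ref{lambda2} and~\ref{pumpclosed} and Corollary~\ref{rho2_closed}.
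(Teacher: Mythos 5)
Your overall strategy — define $c$ from walks along $\vec C$ so that it lands in $\theta$ and inherits closure from Lemma~\ref{lambda2}, then verify Clause~(1) via a Fodor-type argument and Clause~(2) by counting — is the same as the paper's. However, your proposed definition of $c$ has a concrete gap. You set $c(\alpha,\beta):=\sup\{\,i<\theta\mid \acc(C_\delta)\cap H_i\neq\emptyset$ for some $\delta\in\im(\tr(\alpha,\beta))\,\}$, so $c$ only "sees" a color $i$ when some trace ordinal $\delta$ has a point of $H_i$ as an \emph{accumulation point} of $C_\delta$. But Lemma~\ref{lambda2} gives a genuine dichotomy: when $\lambda_2(\gamma',\beta)<\alpha<\gamma'<\beta$, either $\gamma'\in\acc(C_\delta)$ for some $\delta\in\im(\tr(\alpha,\beta))$ (your case), \emph{or} $\gamma'$ appears directly in $\im(\tr(\alpha,\beta))$ because $\gamma'\in\nacc(C_\delta)$ for the preceding step $\delta$. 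In the second case your $c$ sees nothing special about $\gamma'$: the fact that $\gamma'\in H_{i'}$ is invisible because $\gamma'\notin\acc(C_\delta)$ for any $\delta$ on the trace. Your prose glosses this by saying "$\gamma\in H_{i'}$ lies on the trace," but that observation is not encoded in your $c$. The paper's fix is to define an auxiliary $h:\kappa\to\theta$ by $h(\gamma):=\sup\{i<\theta\mid(\acc(C_\gamma)\cup\{\gamma\})\cap H_i\neq\emptyset\}$ — note the additional singleton $\{\gamma\}$ — and then set $c(\alpha,\beta):=\max\{h(\tau)\mid\tau\in\im(\tr(\alpha,\beta))\}$. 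The $\{\gamma\}$ term is exactly what handles Case~(1) of Lemma~\ref{lambda2}. Without it, the argument for Clause~(1) breaks.

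Two smaller points. First, your route through Lemma~\ref{pumpclosed} (show $\Sigma$-closedness for $\Sigma=E^\kappa_{\geq\chi}$ and invoke $(1)\Rightarrow(3)$) would work, but the paper instead gives a direct Fodor argument — pressing down $\gamma\mapsto\sup\{\lambda_2(\gamma,\beta)\mid\beta\in a_\gamma\setminus\{\gamma\}\}$ on $\Gamma:=H_j\cap E^\kappa_{\geq\chi}$ — which proves the stronger statement that the set $\mathcal B$ may be taken stationary, and sidesteps the "trace capture" subtlety you flag as the main obstacle (the $\lambda_2$ bound already guarantees the dichotomy of Lemma~\ref{lambda2}, so no further trace engineering is needed). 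Second, for Clause~(2) the counting argument is indeed finitary in spirit, but the actual verification requires a case split on whether the relevant $C$-clubs meet the interval $[\alpha,\beta)$ or not; your sketch is in the right direction but would need that case analysis to be made precise.
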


\begin{proof}
  Let $\tr:[\kappa]^2\rightarrow{}^{<\omega}\kappa$ denote the upper trace function
  along $\vec C$ (recall Definition~\ref{walks}).
  Define a function $h:\kappa\rightarrow\theta$ by letting, for all $\gamma < \kappa$,
  $$
    h(\gamma):=\sup\{ i<\theta\mid (\acc(C_{\gamma})\cup\{\gamma\})\cap H_i\neq\emptyset\}.
  $$
  Then, define a coloring $c:[\kappa]^2\rightarrow\theta$ by letting, for all $\alpha<\beta<\kappa$,
  $$
    c(\alpha,\beta):=\max\{ h(\tau)\mid \tau\in\im(\tr(\alpha,\beta))\}.
  $$

  \begin{claim}
    $c$ is closed.
  \end{claim}

  \begin{proof}
    Suppose that $\beta < \kappa$, $i < \theta$, and $A \subseteq D^c_{\leq i}(\beta)$,
    with $\gamma := \sup(A)$ smaller than $\beta$. Fix $\alpha \in A$ above $\lambda_2(\gamma, \beta)$.
    By Lemma~\ref{lambda2}, $\im(\tr(\gamma,\beta))\s\im(\tr(\alpha,\beta))$,
    and hence, by the definition of $c$, we have $c(\gamma, \beta) \leq c(\alpha, \beta) \leq i$,
    so $\gamma \in D^c_{\leq i}(\beta)$.
  \end{proof}

  \begin{claim} Suppose that $\chi\in\reg(\kappa)$ and
    $$
      \sup\{ i<\theta\mid E^\kappa_{\ge\chi}\cap H_i\text{ is stationary}\}=\theta.
    $$
    Then $c$ witnesses $\U(\kappa,\kappa,\theta,\chi)$.
    Furthermore, for every $i<\theta$,  every $\chi'<\chi$
    and every sequence  $\langle a_\gamma\mid\gamma<\kappa\rangle$ with
    $a_\gamma\in[\kappa\setminus\gamma]^{\chi'}$ for each $\gamma<\kappa$,
    there exists a \emph{stationary} $S\s\kappa$ such that
    $\min(c[a_\gamma\times a_{\gamma'}])>i$ for all $(\gamma,\gamma')\in[S]^2$.
  \end{claim}

  \begin{proof}
    Let $i<\theta$ and $\langle a_\gamma\mid\gamma<\kappa\rangle$ be as above.
    Find $j>i$ such that $H_j\cap E^\kappa_{\ge\chi}$ is stationary, and let
    $\Gamma:=H_j\cap E^\kappa_{\ge\chi}$.
    Define $f:\Gamma\rightarrow\kappa$ and $g:\Gamma\rightarrow\kappa$ by letting, for all $\gamma\in\Gamma$,
    \begin{itemize}
    \item $f(\gamma):=\sup\{ \lambda_2(\gamma,\beta)\mid \beta\in a_\gamma\setminus\{\gamma\}\}$;
    \item $g(\gamma):=\sup(a_\gamma)$.
    \end{itemize}
    For each $\gamma\in\Gamma$, $\cf(\gamma) \geq \chi > |a_\gamma|$, so $f$ is regressive.
    By Fodor's Lemma, we now pick $\epsilon<\kappa$ for which
    $S:=\{ \gamma\in \Gamma\mid f(\gamma)=\epsilon\ \&\ g[\gamma]\s\gamma\}$ is stationary.
    To see that $S$ is as sought, fix an arbitrary pair $(\gamma,{\gamma'})\in[S]^2$
    and an arbitrary pair $(\alpha,\beta)\in a_\gamma\times a_{\gamma'}$.
    There are two cases to consider.

    $\br$ If $\gamma'\in\im(\tr(\alpha,\beta))$, then since $\gamma'\in H_j$, we have $c(\alpha,\beta)\ge h({\gamma'})\ge j>i$.

    $\br$ Otherwise, we have
    $$
      \lambda_2({\gamma'},\beta)\le f({\gamma'}) = \epsilon<\gamma\le\alpha<{\gamma'}<\beta,
    $$
    so by Lemma~\ref{lambda2}, it must be the case that
    there exists $\delta\in\im(\tr(\alpha,\beta))$ such that
    $\gamma'\in\acc(C_\delta)$. Altogether, $\gamma'\in \acc(C_\delta)\cap H_j$, and hence
    $c(\alpha,\beta)\ge h(\delta) \ge j >i$.
  \end{proof}

  The following claim will now complete the proof of the theorem.

  \begin{claim} \label{claim493}
    Suppose that $|\{ C_\alpha\cap\beta\mid \alpha<\kappa\}|<\kappa$ for all $\beta<\kappa$.
    Then $\mathcal T(c)$ is a $\kappa$-tree.
  \end{claim}

  \begin{proof} \renewcommand{\qedsymbol}{\ensuremath{\square\ \square}}
    Suppose not. Pick $\beta<\kappa$ for which $\{ c(\cdot,\gamma)\restriction\beta\mid
    \gamma<\kappa\}$ has size $\kappa$. Pick $\Gamma\in[\kappa]^\kappa$ on which the map
    $\gamma\mapsto c(\cdot,\gamma)\restriction\beta$ is one-to-one. Pick a pair
    $(\gamma_0,\gamma_1)\in[\Gamma]^2$ for which $\rho_2(\beta, \gamma_0) = \rho_2(\beta, \gamma_1)$
    and the following equalities hold:
    \begin{itemize}
      \item $\langle h(\tr(\beta,\gamma_0)(n))\mid n<\rho_2(\beta,\gamma_0)\rangle=\langle
        h(\tr(\beta,\gamma_1)(n))\mid n<\rho_2(\beta,\gamma_1)\rangle$;
      \item $\langle C_{\tr(\beta,\gamma_0)(n)}\cap\beta\mid n\le\rho_2(\beta,\gamma_0)\rangle=
        \langle C_{\tr(\beta,\gamma_1)(n)}\cap\beta\mid n\le\rho_2(\beta,\gamma_1)\rangle$.
    \end{itemize}

    For notational simplicity, write $\langle i^n\mid n<m\rangle$ and
    $\langle C^n\mid n\le m\rangle$ for the above common sequences.
    We shall meet a contradiction by showing that for each $\alpha<\beta$,
    $c(\alpha,\gamma_0)=c(\alpha,\gamma_1)$. Let $\alpha<\beta$ be arbitrary.
    The analysis splits into two cases.

    $\br$ If $C^n\cap[\alpha,\beta)=\emptyset$ for all $n\le m$, then
    $\min(C^n\setminus\alpha)=\min(C^n\setminus\beta)$ for all $n\le m$.
    But then, for each $j<2$, $\Tr(\alpha,\gamma_j)(n)=\Tr(\beta,\gamma_j)(n)$
    for all $n\le m$. It follows that $\Tr(\alpha,\gamma_j)(m)=\beta$,
    $\tr(\alpha,\gamma_j)=\tr(\beta,\gamma_j){}^\smallfrown\tr(\alpha,\beta)$, and
    $$
      c(\alpha,\gamma_j)=\max\{c(\beta,\gamma_j),c(\alpha,\beta)\}=\max\{i^n,c(\alpha,\beta)\mid n<m\}.
    $$
    As the preceding expression does not depend on $j$, we infer that $c(\alpha,\gamma_0)=c(\alpha,\gamma_1)$.

    $\br$ Otherwise, let $n\le m$ be least
    such that $C^n\cap[\alpha,\beta)\neq\emptyset$.
    Then, as in the previous analysis, for each $j<2$, we have $\Tr(\alpha,\gamma_j)\restriction n+1=
    \Tr(\beta,\gamma_j)\restriction n+1$ and $\Tr(\alpha,\gamma_j)(n+1)=\min(C^n\setminus\alpha)$.
    Let $\eta:=\min(C^n\setminus\alpha)$. Then, for each $j < 2$, we have
    $\tr(\alpha, \gamma_j) = \tr(\eta, \gamma_j){}^\smallfrown \tr(\alpha, \eta)$, so
    \[
      c(\alpha,\gamma_0)= \max\{i^k,c(\alpha,\eta)\mid k\le n\}=c(\alpha,\gamma_1).\qedhere
    \]
  \end{proof}
  \let\qed\relax
\end{proof}

We can immediately derive a number of corollaries from Theorem~\ref{simu}.
The first provides, among other things, a complete answer to the question of
the existence of closed witnesses to $\U(\ldots)$ at successors of regular cardinals.

\begin{cor}\label{l23}
  Suppose that $\theta, \chi\in\reg(\kappa)$. Then $(1)\implies(2)\implies(3)\implies(4)$:
  \begin{enumerate}
    \item $\kappa$ is the successor of a regular cardinal.
    \item $E^\kappa_{\ge\chi}$ admits a non-reflecting stationary set.
    \item There exists a stationary $S\s E^\kappa_{\ge\chi}$ such that
      $\Tr(S)\cap E^\kappa_{\ge\theta}$ is nonstationary.
    \item There exists a closed witness to $\U(\kappa,\kappa,\theta,\chi)$.
  \end{enumerate}
\end{cor}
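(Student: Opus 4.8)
The plan is to establish the chain of implications one link at a time, leaning on Theorem~\ref{simu} for the crucial step $(3)\implies(4)$ and on standard facts about stationary reflection for $(1)\implies(2)$ and $(2)\implies(3)$.

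\textbf{$(1)\implies(2)$.} Suppose $\kappa=\lambda^+$ with $\lambda$ regular. Since $\chi\le\lambda$ (as $\chi\in\reg(\kappa)$), we have $E^\kappa_\lambda\s E^\kappa_{\ge\chi}$, and $E^{\lambda^+}_\lambda$ is a non-reflecting stationary subset of $\lambda^+$: any $\alpha<\lambda^+$ with $\cf(\alpha)>\lambda$ is impossible, so $\Tr(E^{\lambda^+}_\lambda)\s E^{\lambda^+}_{>\omega}$ consists of ordinals of cofinality $<\lambda$, none of which can have $E^{\lambda^+}_\lambda\cap\alpha$ stationary in $\alpha$ (a set of points of cofinality $\lambda$ cannot be stationary in an ordinal of cofinality below $\lambda$). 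Hence $\Tr(E^{\lambda^+}_\lambda)=\emptyset$, giving (2).

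\textbf{$(2)\implies(3)$.} If $S\s E^\kappa_{\ge\chi}$ is stationary with $\Tr(S)=\emptyset$, then a fortiori $\Tr(S)\cap E^\kappa_{\ge\theta}=\emptyset$, which is certainly nonstationary, so (3) holds with the same $S$.

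\textbf{$(3)\implies(4)$.} This is the heart of the matter and where I expect the real work. Fix stationary $S\s E^\kappa_{\ge\chi}$ with $\Tr(S)\cap E^\kappa_{\ge\theta}$ nonstationary; I want to produce the ingredients of Theorem~\ref{simu}, namely a sequence $\langle H_i\mid i<\theta\rangle$ of pairwise disjoint subsets of $\kappa$ and a $C$-sequence $\vec C$ so that for every $\alpha\in\acc(\kappa)$, $\sup\{i<\theta\mid \acc(C_\alpha)\cap H_i\neq\emptyset\}<\theta$, while arranging that $\sup\{i<\theta\mid E^\kappa_{\ge\chi}\cap H_i\text{ is stationary}\}=\theta$ (so that Clause~(1) of Theorem~\ref{simu} applies at our given $\chi$). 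The natural move is to split $S$ into $\theta$ many pairwise disjoint stationary pieces $\langle S_i\mid i<\theta\rangle$ (possible since $S$, living inside $E^\kappa_{\ge\chi}$, is a stationary subset of a regular cardinal $\kappa$, and one may use, e.g., Ulam/Solovay-style splitting; each $S_i\s E^\kappa_{\ge\chi}$, so each is stationary in the relevant sense) and to set $H_i:=S_i$. Then for each $i$, $E^\kappa_{\ge\chi}\cap H_i=S_i$ is stationary, so the displayed hypothesis in Clause~(1) of Theorem~\ref{simu} holds trivially — the supremum is $\theta$. It remains to choose $\vec C$ witnessing the non-accumulation condition. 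Fix a club $D\s\kappa$ disjoint from $\Tr(S)\cap E^\kappa_{\ge\theta}$; shrinking, we may assume $D\cap\Tr(S)\s E^\kappa_{<\theta}$. For $\alpha\in\acc(\kappa)$, I want $C_\alpha$ chosen so that $\acc(C_\alpha)\cap S$ is ``small'' in the $\theta$-indexing sense. If $\alpha\notin\Tr(S)$ then $S\cap\alpha$ is nonstationary in $\alpha$, so we may take $C_\alpha$ to be a club in $\alpha$ with $\acc(C_\alpha)\cap S=\emptyset$, whence $\{i<\theta\mid\acc(C_\alpha)\cap H_i\neq\emptyset\}=\emptyset$. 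If $\alpha\in\Tr(S)$, then, using that $D$ separates $\Tr(S)$ into the $<\theta$-cofinality part, we have $\cf(\alpha)<\theta$ for $\alpha\in D$; choosing $C_\alpha$ of order type $\cf(\alpha)<\theta$, the set $\acc(C_\alpha)$ has size $<\theta$, so $\{i<\theta\mid\acc(C_\alpha)\cap H_i\neq\emptyset\}$ has size $<\theta$, and since $\theta$ is regular its supremum is $<\theta$. The remaining $\alpha\in\Tr(S)\setminus D$ need separate handling; here one uses that $D$ is club to reflect the good behavior down — the cleanest route is to first fix a $C$-sequence with $C_\alpha\s D$ whenever possible, or, following the standard trick, to work with a $C$-sequence whose accumulation points avoid the troublesome set except on a small cofinal tail; in any case one can arrange $|\acc(C_\alpha)\cap\Tr(S)|<\theta$ for all $\alpha$, and then intersect with the $H_i$'s as above. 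Granting such $\vec C$ and $\langle H_i\mid i<\theta\rangle$, Theorem~\ref{simu}(1) yields a closed coloring $c:[\kappa]^2\to\theta$ witnessing $\U(\kappa,\kappa,\theta,\chi)$, which is exactly (4).

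\textbf{Main obstacle.} The delicate point is engineering the $C$-sequence so that \emph{every} $\alpha\in\acc(\kappa)$ satisfies $\sup\{i<\theta\mid\acc(C_\alpha)\cap H_i\neq\emptyset\}<\theta$, not merely those in a club. The non-reflecting-type hypothesis (3) gives us control of $S\cap\alpha$ only off $\Tr(S)$, and reduces the cofinality of problematic $\alpha$ below $\theta$ only on a club $D$; patching the behavior at points of $\Tr(S)\setminus D$ (and more generally threading the clubs $C_\alpha$ coherently enough that their accumulation points stay in $D$ where it matters) is the technical crux. I would handle it by first replacing $S$ with $S\cap D'$ for a suitable club $D'$ so that $\Tr(S)$ itself lands inside the club where cofinalities are below $\theta$, and then building $\vec C$ by recursion, at each limit $\alpha$ choosing $C_\alpha$ to both have order type $\cf(\alpha)$ and avoid $S$ on its accumulation points whenever $\alpha\notin\Tr(S)$ — a bookkeeping argument that is routine once the reflection hypothesis has been massaged into the right shape.
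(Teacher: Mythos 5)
Your proposal follows the same strategy as the paper: (1)$\implies$(2)$\implies$(3) are routine, and (3)$\implies$(4) is handled by partitioning a stationary set into $\theta$-many pieces $H_i$, choosing a $C$-sequence whose clubs avoid the $H_i$'s on accumulation points, and invoking Theorem~\ref{simu}. The ``main obstacle'' you identify at $\alpha\in\Tr(S)\setminus D$ is real, and the repair you sketch --- replace $S$ by $S\cap D$ from the outset --- is exactly the paper's move: it partitions $S\cap D$ (not $S$) into the $H_i$'s and requires $\otp(C_\alpha)=\cf(\alpha)$ with $S\cap D\cap\acc(C_\alpha)=\emptyset$ whenever $S\cap D\cap\alpha$ is nonstationary. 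The point you gesture at but do not quite nail down is that this makes the troublesome case self-resolving: if $S\cap D\cap\alpha$ \emph{is} stationary in $\alpha$, then $\alpha\in\acc^+(D)=D$ automatically (as $D$ is club) and $\alpha\in\Tr(S)$, so $\alpha\notin\Tr(S)\cap E^\kappa_{\ge\theta}$ forces $\cf(\alpha)<\theta$, whence $|\acc(C_\alpha)|<\theta$ and the regularity of $\theta$ finishes. No separate ``threading'' or coherence of the $C_\alpha$'s is needed. With that one detail made explicit, your argument is correct and coincides with the paper's.
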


\begin{proof}
  $(1)\implies(2)\implies(3)$ Trivial.

  $(3)\implies(4)$ Let $S$ be as in (3). Fix a club $D$ in $\kappa$
  disjoint from $\Tr(S)\cap E^\kappa_{\ge\theta}$ and a $C$-sequence
  $\vec C=\langle C_\alpha\mid\alpha<\kappa\rangle$ such that for all $\alpha\in\acc(\kappa)$,
  \begin{itemize}
    \item $\otp(C_\alpha)=\cf(\alpha)$, and
    \item if $S\cap D\cap \alpha$ is nonstationary in $\alpha$, then $S\cap D\cap\acc(C_\alpha)=\emptyset$.
  \end{itemize}
  Let $\langle H_i\mid i<\theta\rangle$ be an arbitrary partition of $S\cap D$ into stationary sets.
  Now appeal to Theorem~\ref{simu}.
\end{proof}

\begin{remark}
  If $\kappa$ is the successor of a regular cardinal $\lambda$,  $\square^*_\lambda$ holds,
  and $\theta \in \reg(\lambda^+)$, then we in fact obtain the existence of a closed witness $c$ to
  $\U(\kappa, \kappa, \theta, \lambda)$ for which $\mathcal{T}(c)$ is a $\kappa$-Aronszajn tree.
  Likewise, if $\kappa$ is the successor of a singular cardinal $\lambda=2^{<\lambda}$,
  $\square^*_\lambda$ holds, and $E^\kappa_{\ge\chi}$ admits a non-reflecting stationary set,
  then, for each $\theta \in \reg(\kappa)$, we obtain the existence of a closed witness $c$ to
  $\U(\kappa, \kappa, \theta, \chi)$ for which $\mathcal{T}(c)$ is a $\kappa$-Aronszajn tree.
  The proof of these facts comes from a straightforward combination of the proofs of
  Theorem~\ref{simu}, Lemma~\ref{lemma610} and Corollary~\ref{l23}, so we omit it.
\end{remark}

It follows that, in the Laver-Shelah model from \cite{laver_shelah} in which $\ch$
(and hence $\square^*_{\aleph_1}$) holds and in which every $\aleph_2$-Aronszajn tree
is special, for each $n<2$, there is a closed witness $c$ to
$\U(\aleph_2,\aleph_2,\aleph_n,\aleph_1)$ for which $\mathcal T(c)$ is a special
$\aleph_2$-Aronszajn tree. Let us show that the other extreme is consistent as well.
Recall that a $\lambda^+$-tree $\mathcal{T}$ is \emph{almost Souslin} if, for
every antichain $A$ of $\mathcal{T}$, the set of $\alpha \in E^{\lambda^+}_{\cf(\lambda)}$ such that $A$ has
non-empty intersection with the $\alpha^{\mathrm{th}}$ level of $\mathcal{T}$ is
nonstationary.

\begin{cor}\label{vel}
  If $\mathrm{V}=\mathrm{L}$, then for every infinite regular cardinal $\lambda$ and every
  $\theta\in\reg(\lambda^+)$, there exists a closed witness $c$ to
  $\U(\lambda^+,\lambda^+,\theta,\lambda)$ for which $\mathcal T(c)$ is an
  almost Souslin $\lambda^+$-Aronszajn tree.
\end{cor}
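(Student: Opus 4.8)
The plan is to follow the standard template for building an almost Souslin $\lambda^+$-tree — walks on ordinals, $\square_\lambda$, and strong diamond guessing — by engineering the $C$-sequence that feeds Theorem~\ref{simu} so that, in addition to everything that theorem provides, the associated tree has its antichains ``sealed'' at $E^{\lambda^+}_\lambda$-levels. Fix an infinite regular cardinal $\lambda$, put $\kappa:=\lambda^+$, and fix $\theta\in\reg(\kappa)$ (so $\theta\le\lambda$). Since $\lambda$ is regular, $E^\kappa_{\ge\lambda}=E^\kappa_\lambda$ and $\Tr(E^\kappa_\lambda)=\emptyset$, which is exactly what makes Corollary~\ref{l23} applicable with $S:=E^\kappa_\lambda$. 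Working in $\mathrm L$, we moreover have $\square_\lambda$ and a $\diamondsuit^*(E^\kappa_\lambda)$-sequence. Fix a partition $\langle H_i\mid i<\theta\rangle$ of $E^\kappa_\lambda$ into $\theta$-many stationary sets, and fix such a diamond sequence $\langle Z_\delta\mid\delta\in E^\kappa_\lambda\rangle$.

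First I would construct, by recursion on $\alpha<\kappa$, a $C$-sequence $\vec C=\langle C_\alpha\mid\alpha<\kappa\rangle$ meeting three demands. (a) $\otp(C_\alpha)=\cf(\alpha)$ for every $\alpha\in\acc(\kappa)$; as $\lambda$ is regular, every accumulation point of every $C_\alpha$ then has cofinality $<\lambda$, so $\acc(C_\alpha)\cap H_i=\emptyset$ for all $i<\theta$ and the hypothesis of Theorem~\ref{simu} holds trivially. (b) Below its accumulation points, $\vec C$ coheres with a fixed $\square_\lambda$-sequence; since those accumulation points all have cofinality $<\lambda$, one obtains $|\{C_\alpha\cap\beta\mid\alpha<\kappa\}|\le\lambda$ for every $\beta<\kappa$, which is the hypothesis of clause~(2) of Theorem~\ref{simu}. (c) For each $\delta\in E^\kappa_\lambda$ the coloring $c\restriction[\delta]^2$, and hence the tree $\mathcal T(c\restriction[\delta]^2)=\{c(\cdot,\gamma)\restriction\beta\mid\beta\le\gamma<\delta\}$, is already determined by $\vec C\restriction\delta$; fixing along the recursion a canonical coding of $\mathcal T(c\restriction[\delta]^2)$ by a subset of $\delta$, if some member of $Z_\delta$ codes a maximal antichain of $\mathcal T(c\restriction[\delta]^2)$, then $C_\delta$ (a club in $\delta$ of order type $\lambda$) is chosen so as to \emph{seal} that antichain at level $\delta$. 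Note that demands~(a) and~(b) put no constraint on $C_\delta$ when $\delta\in E^\kappa_\lambda$ — precisely because such a $\delta$ is never an accumulation point of any $C_\alpha$ — so there is room to carry out~(c).

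Making the sealing in~(c) precise, and possible, is the heart of the matter, and the step I expect to be the main obstacle. The enabling fact is the locality of walks at $E^\kappa_\lambda$-points: whenever $\lambda_2(\delta,\gamma)<\alpha<\delta<\gamma$ with $\delta\in E^\kappa_\lambda$, Lemma~\ref{lambda2} gives $\delta\in\im(\tr(\alpha,\gamma))$, since its Case~(2) — $\delta\in\acc(C_\rho)$ for some $\rho\in\im(\tr(\alpha,\gamma))$ — is excluded by demand~(a); hence $c(\alpha,\gamma)=\max\{c(\delta,\gamma),c(\alpha,\delta)\}$ by the formula defining $c$ in Theorem~\ref{simu}. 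Thus, above $\lambda_2(\delta,\gamma)$, the level-$\delta$ node $c(\cdot,\gamma)\restriction\delta$ is a fixed ``lift'' of the single node $c(\cdot,\delta)$, and $c(\cdot,\delta)$ is itself governed by the choice of $C_\delta$; this is what lets us choose $C_\delta$ so that the guessed antichains become predense in $\mathcal T(c)$ on those levels of $\mathcal T(c)$ that lie in $E^\kappa_\lambda$. Carrying this out carefully, while respecting the closedness of $c$, is the delicate walks-based bookkeeping familiar from Souslin-tree constructions. I would then feed $\langle H_i\mid i<\theta\rangle$ and $\vec C$ into Theorem~\ref{simu}: the resulting $c:[\kappa]^2\to\theta$ is closed; by clause~(1) with $\chi:=\lambda$ — each $H_i\s E^\kappa_\lambda=E^\kappa_{\ge\lambda}$ is stationary, so the relevant supremum equals $\theta$ — it witnesses $\U(\lambda^+,\lambda^+,\theta,\lambda)$; and by clause~(2) together with~(b), $\mathcal T(c)$ is a $\kappa$-tree. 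Since $\U(\kappa,\kappa,\theta,\lambda)$ entails $\U(\kappa,2,\theta,2)$ by Proposition~\ref{obvious}(3), Proposition~\ref{aronszajn} shows $\mathcal T(c)$ has no cofinal branch, so it is a $\lambda^+$-Aronszajn tree.

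It remains to read off almost Souslinness and to see why only ``almost'' is obtained. If $A$ were an antichain of $\mathcal T(c)$ whose level set is stationary in $E^\kappa_\lambda$, one extends it to a maximal antichain $\bar A$; a routine elementary-submodel argument gives a club $E\s\kappa$ of $\delta$ for which $\bar A\cap\mathcal T(c\restriction[\delta]^2)$ is a maximal antichain of $\mathcal T(c\restriction[\delta]^2)$, and $\diamondsuit^*(E^\kappa_\lambda)$ provides $\delta\in E\cap E^\kappa_\lambda$ at which this antichain is among the sets coded by $Z_\delta$ and at which $\bar A$ meets level $\delta$. The sealing chosen at this $\delta$ then yields two comparable elements of $\bar A$ — contradicting that $\bar A$ is an antichain. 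Hence no antichain of $\mathcal T(c)$ has a level set that is stationary in $E^\kappa_\lambda$, i.e.\ $\mathcal T(c)$ is almost Souslin. One does \emph{not} get a Souslin tree this way: the localization above needs $\delta\in E^\kappa_\lambda$ in order to exclude Case~(2) of Lemma~\ref{lambda2}, and demand~(b) leaves no freedom in $C_\delta$ off $E^\kappa_\lambda$, so antichains concentrated off $E^\kappa_\lambda$ simply cannot be sealed — which is precisely the gap between ``almost Souslin'' and ``Souslin''.
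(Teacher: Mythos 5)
Your proposal takes a genuinely different route from the paper's, and you have correctly isolated the structural reason the construction yields an \emph{almost} Souslin (rather than Souslin) tree: the only ordinals at which one is free to choose $C_\delta$ without disturbing coherence are those in $E^{\lambda^+}_\lambda$, because such $\delta$ never arise as accumulation points of any $C_\alpha$. The paper, however, does not run a diamond-style antichain-sealing recursion. Instead, under $\mathrm{V}=\mathrm{L}$ it invokes the principle $\sd_\lambda^*$ (``square with built-in diamond-plus'', from \cite{paper21}) to produce in one stroke a $C$-sequence with $\otp(C_\alpha)\le\lambda$, full coherence, and a pairwise club-guessing feature: for every stationary $S\s E^{\lambda^+}_\lambda$ there are $\beta'\in S$ and $\beta\in\nacc(C_{\beta'})\cap S$ with $C_{\beta'}\cap\beta\sq C_\beta$. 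Almost Souslinness is then verified directly, with no maximal antichain, no coding of $\mathcal T(c\restriction[\delta]^2)$, and no elementary submodel: given level-$\beta$ nodes $t_\beta=c(\cdot,\gamma_\beta)\restriction\beta$ for $\beta$ in a stationary $B\s E^{\lambda^+}_\lambda$, one splits into the cases ``$\gamma_\beta=\beta$ stationarily often'' versus ``$\gamma_\beta>\beta$ club-often'', uniformizes a finite amount of data ($h(\beta)$ in the first case; $c(\beta,\gamma_\beta)$, $\lambda_2(\beta,\gamma_\beta)$ and a fixed low node in the second) over a stationary $S\s B$, and applies the pairwise guessing feature to produce $\beta<\beta'$ in $S$ with $t_\beta\s t_{\beta'}$, by a walk computation of exactly the kind you describe (the exclusion of Case~(2) of Lemma~\ref{lambda2} at $E^{\lambda^+}_\lambda$-points).

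By comparison, your route uses more familiar separate ingredients ($\square_\lambda$ and $\diamondsuit^*(E^{\lambda^+}_\lambda)$) and is conceptually closer to the classical Souslin-tree template, but at the cost of leaving the sealing step --- which you yourself flag as the main obstacle --- unargued. That step is not a routine adaptation: as you note, a level-$\delta$ node $c(\cdot,\gamma)\restriction\delta$ agrees with $\alpha\mapsto\max\{c(\delta,\gamma),c(\alpha,\delta)\}$ only above $\lambda_2(\delta,\gamma)$, and neither $\lambda_2(\delta,\gamma)$ nor the initial segment below it is controlled by $C_\delta$, so one must seal against a two-parameter family (a color and a low node) rather than against a single node, while simultaneously respecting demands (a) and (b) of the recursion; one should also be careful that demand (a), $\otp(C_\alpha)=\cf(\alpha)$, is strictly stronger than what a $\square_\lambda$-sequence provides and so cannot be inherited by simply taking $C_\alpha:=D_\alpha$. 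The paper's appeal to $\sd_\lambda^*$ black-boxes exactly these difficulties. Both routes are sound in spirit, but as written yours is an outline, whereas the paper's argument is self-contained modulo the citation, and its direct verification of almost Souslinness via pairwise guessing avoids the antichain bookkeeping entirely.
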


\begin{proof}
  Suppose that $\mathrm{V}=\mathrm{L}$ and that $\lambda$ is a infinite regular cardinal.
  Suppose also that $\lambda$ is uncountable (if $\lambda = \aleph_0$,
  then the proof is similar and slightly easier).
  By \cite[Theorem~3.1 and Lemma~2.6]{paper21}, $\sd_\lambda^*$ holds.
  Then, by \cite[Theorem~4.11]{paper21}, there exists a $C$-sequence
  $\vec C=\langle C_\alpha\mid\alpha<\lambda^+\rangle$ satisfying the following
  conditions:
  \begin{itemize}
    \item $\otp(C_\alpha)\le\lambda$ for all $\alpha<\lambda^+$;
    \item for every $\alpha<\lambda^+$ and $\bar\alpha\in\acc(C_\alpha)$,
      $C_{\bar\alpha}=C_\alpha\cap\bar\alpha$;
    \item for every stationary $S\s E^{\lambda^+}_\lambda$, there are
      $\beta'\in S$ and $\beta\in\nacc(C_{\beta'})\cap S$ such that
      $C_{\beta'}\cap\beta\sq C_\beta$ (i.e.,
      $C_\beta$ is an end-extension of $C_{\beta'}\cap\beta$).
  \end{itemize}

  Let $\kappa:=\lambda^+$, and fix $\theta\in\reg(\kappa)$. Let
  $\langle H_i\mid i<\theta\rangle$ be some partition of $E^{\kappa}_\lambda$
  into $\theta$-many stationary sets, and define functions $h$ and $c$ as in the
  proof of Theorem~\ref{simu}. Then $c$ witness $\U(\kappa,\kappa,\theta,\lambda)$,
  and $\mathcal T(c)$ is a $\kappa$-tree. By Proposition~\ref{aronszajn},
  $\mathcal T(c)$ is in fact a $\kappa$-Aronszajn tree.

  Finally, to see that $\mathcal T(c)$ is almost Souslin, suppose that
  $\langle t_\beta\mid \beta\in B\rangle$ is a sequence such that $B$ is a
  stationary subset of $E^{\lambda^+}_\lambda$ and $t_\beta\in \mathcal
  T(c)\cap{}^\beta\theta$ for each $\beta\in B$. We need to find
  $(\beta,\beta')\in[B]^2$ such that $t_\beta\s t_{\beta'}$.
  For each $\beta\in B$, pick $\gamma_\beta\in[\beta,\kappa)$ such that
  $t_\beta=c(\cdot,\gamma_\beta)\restriction\beta$. There are two cases
  (and a few subcases) to consider.

  $\br$ If $\gamma_\beta=\beta$ for stationarily many $\beta\in B$, then let us
  fix some $i<\theta$ for which the following set is stationary:
  $$
    S:=\{ \beta\in B\mid \beta=\gamma_\beta\ \&\ h(\beta)=i\}.
  $$
  By the choice of $\vec C$, we may pick $\beta'\in S$ and
  $\beta\in\nacc(C_{\beta'})\cap S$ such that $C_{\beta'}\cap\beta\sq C_\beta$.
  We claim that $t_\beta\s t_{\beta'}$. To show this, we now fix an arbitrary
  $\alpha<\beta$ and prove that $c(\alpha,\beta)=c(\alpha,\beta')$.
  Let $\tau:=\sup(C_{\beta'}\cap\beta)$, and note that $C_{\beta'}\cap(\tau+1)=C_\beta\cap(\tau+1)$.

  $\br\br$ If $\alpha\le\tau$, then $\tr(\alpha,\beta) = \tr(\alpha,\beta')$,
  so $c(\alpha,\beta)=c(\alpha,\beta')$.

  $\br\br$ If $\alpha > \tau$, then $\tr(\alpha,\beta')=\langle\beta'
  \rangle{}^\smallfrown\tr(\alpha,\beta)$, and thus $c(\alpha,\beta')=
  \max\{h(\beta'),c(\alpha,\beta)\}$. But $h(\beta')=h(\beta)\le c(\alpha,\beta)$,
  so $c(\alpha,\beta')=c(\alpha,\beta)$.

  $\br$ If $\gamma_\beta>\beta$ for club-many $\beta\in B$, then we may fix $i<\theta$,
  $\epsilon<\kappa$ and $t\in\mathcal T(c)\cap{}^{(\epsilon+1)}\theta$ for which the following set is stationary:
  $$S:=\left\{ \beta\in B\Mid \begin{gathered}\beta<\gamma_\beta\ \&\ c(\beta,\gamma_\beta)=i\ \&  \\\lambda_2(\beta,\gamma_\beta)=\epsilon\ \&\ t_\beta\restriction(\epsilon+1)=t\end{gathered}\right\}.$$
  By the choice of $\vec C$, we may pick $\beta'\in S$ and $\beta\in\nacc(C_{\beta'})\cap S$
  such that $C_{\beta'}\cap\beta\sq C_\beta$. We claim that $t_\beta\s t_{\beta'}$.
  To show this, we now fix an arbitrary $\alpha<\beta$ and prove that
  $c(\alpha,\gamma_\beta)=c(\alpha,\gamma_{\beta'})$.

  $\br\br$ If $\alpha\le\epsilon$, then $t_\beta(\alpha)=t(\alpha)=t_{\beta'}(\alpha)$.

  $\br\br$ If $\alpha > \epsilon$, then, since $\cf(\beta)=\cf(\beta')=\lambda$,
  neither $\beta$ nor $\beta'$ appears as an accumulation point of $C_\delta$ for any $\delta < \kappa$,
  so, by Lemma~\ref{lambda2},
  we have $\tr(\alpha,\gamma_\beta)=\tr(\beta,\gamma_\beta){}^\smallfrown\tr(\alpha,\beta)$
  and $\tr(\alpha,\gamma_{\beta'})=\tr(\beta',\gamma_{\beta'}){}^\smallfrown\tr(\alpha,\beta')$.
  Consequently, $c(\alpha,\gamma_\beta)=\max\{c(\beta,\gamma_\beta),c(\alpha,\beta)\}$ and
  $c(\alpha,\gamma_{\beta'})=\max\{c(\beta',\gamma_{\beta'}),c(\alpha,\beta')\}$.
  As $\beta\in\nacc(C_{\beta'})$ and $C_\beta'\cap\beta\sq C_\beta$, the previous analysis
  shows that $c(\alpha,\beta)=c(\alpha,\beta')$. By our choice of $S$, we also
  have $c(\beta, \gamma_\beta) = i = c(\beta', \gamma_{\beta'})$. Putting this together,
  we obtain $c(\alpha,\gamma_\beta)=c(\alpha,\gamma_{\beta'})$.
\end{proof}

\begin{cor}\label{l24}
  Suppose that $\sigma, \theta \in\reg(\kappa)$. If
  $\square(\kappa,\sq_\sigma)$ holds, then there exists a closed witness
  $c:[\kappa]^2\rightarrow\theta$ to $\U(\kappa,\kappa,\theta,\sup(\reg(\kappa)))$.
  Furthermore, if $\sigma=\aleph_0$ (i.e., if $\square(\kappa)$ holds), or if $\kappa$
  is ${<}\sigma$-inaccessible, then $\mathcal T(c)$ is a $\kappa$-Aronszajn tree.
\end{cor}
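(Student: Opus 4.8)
The plan is to derive this as a corollary of Theorem~\ref{simu}, following the template of Corollary~\ref{l23} but with the $\square(\kappa,\sq_\sigma)$-sequence taking over the role that a non-reflecting stationary set played there. The first task is therefore to extract from a $\square(\kappa,\sq_\sigma)$-sequence the two pieces of data that Theorem~\ref{simu} consumes: a $C$-sequence $\vec C=\langle C_\alpha\mid\alpha<\kappa\rangle$ and a sequence $\langle H_i\mid i<\theta\rangle$ of pairwise disjoint subsets of $\kappa$ satisfying (a) for every $\chi\in\reg(\kappa)$, $\sup\{i<\theta\mid E^\kappa_{\ge\chi}\cap H_i\text{ is stationary}\}=\theta$, and (b) for every $\alpha\in\acc(\kappa)$, $\sup\{i<\theta\mid\acc(C_\alpha)\cap H_i\neq\emptyset\}<\theta$. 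I would take $\vec C$ to be a suitably massaged form of the given $\square(\kappa,\sq_\sigma)$-sequence, retaining its $\sigma$-coherence and non-threadability, and I would choose the $H_i$'s so that cofinally many of them are stationary and concentrated on arbitrarily high cofinalities below $\kappa$ (securing (a)), while the pieces of large index avoid all of the accumulation sets $\acc(C_\alpha)$ (securing (b)).

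Condition (b) is the heart of the matter: one must organize the square sequence and the partition so that, for every $\alpha$, the accumulation set $\acc(C_\alpha)$ can meet only an initial segment of $\{H_i\mid i<\theta\}$. This is precisely where the genuine combinatorics of $\square(\kappa,\sq_\sigma)$ — as opposed to the mere existence of some $C$-sequence — must be used: its coherence propagates the shape of $C_\alpha$ down to the $C_{\bar\alpha}$ for $\bar\alpha\in\acc(C_\alpha)$, and its non-threadability is what prevents the family of accumulation sets from collectively exhausting a stationary set on which one needs to place the large-index pieces; the parameter $\sigma$ enters through the degree of coherence that is available. I expect this verification to be the main obstacle; everything else is bookkeeping built on top of Theorem~\ref{simu}.

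Granting (a) and (b), Theorem~\ref{simu} hands us a closed coloring $c:[\kappa]^2\to\theta$ which, by clause~(1) of that theorem together with (a), witnesses $\U(\kappa,\kappa,\theta,\chi)$ for every $\chi\in\reg(\kappa)$. If $\sup(\reg(\kappa))$ is itself regular — for instance when $\kappa$ is the successor of a regular cardinal — then $c$ already witnesses $\U(\kappa,\kappa,\theta,\sup(\reg(\kappa)))$; otherwise $\sup(\reg(\kappa))$ is a limit cardinal (as when $\kappa$ is the successor of a singular cardinal, or $\kappa$ is a limit cardinal and $\sup(\reg(\kappa))=\kappa$), and Proposition~\ref{obvious}(4) upgrades the family of instances just obtained to $\U(\kappa,\kappa,\theta,\sup(\reg(\kappa)))$. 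This proves the first assertion.

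For the ``furthermore'' clause, I would observe that when $\sigma=\aleph_0$ or $\kappa$ is ${<}\sigma$-inaccessible the square sequence can in addition be required to satisfy $|\{C_\alpha\cap\beta\mid\alpha<\kappa\}|<\kappa$ for all $\beta<\kappa$: in the first case this follows from the full coherence of $\square(\kappa)$ after the massaging above, and in the second a routine induction on $\beta$ bounds the number of possible values of $C_\alpha\cap\beta$ by essentially $|\beta|^{<\sigma}<\kappa$, using the $\sigma$-coherence to recover $C_\alpha\cap\beta$ from data of size ${<}\sigma$ over some $C_{\bar\alpha}\cap\beta$ with $\bar\alpha\le\beta$. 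Clause~(2) of Theorem~\ref{simu} then yields that $\mathcal T(c)$ is a $\kappa$-tree. Since $c$ witnesses $\U(\kappa,\kappa,\theta,\sup(\reg(\kappa)))$ and hence $\U(\kappa,2,\theta,2)$ (Proposition~\ref{obvious}(3), via the same coloring), and since $\theta<\kappa$, Proposition~\ref{aronszajn} shows $\mathcal T(c)$ admits no cofinal branch; thus $\mathcal T(c)$ is a $\kappa$-Aronszajn tree.
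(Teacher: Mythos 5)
Your template is the right one and coincides with the paper's: massage the $\square(\kappa,\sq_\sigma)$-sequence into a $C$-sequence $\vec C$ and a sequence $\langle H_i\mid i<\theta\rangle$ satisfying hypotheses (a) and (b) of Theorem~\ref{simu}, apply that theorem, then invoke its clause~(2) together with Proposition~\ref{aronszajn} for the Aronszajn claim; the final assembly of the various $\chi\in\reg(\kappa)$ into $\sup(\reg(\kappa))$ via Proposition~\ref{obvious}(4) is also as in the paper. But the proposal stops short of actually producing $\langle H_i\rangle$. You correctly identify hypothesis~(b) as the crux, you describe heuristically how coherence and non-threadability ought to interact, and then you concede that ``this verification'' is ``the main obstacle'' and leave it there. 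That concession is a genuine gap: without a concrete construction of the partition, the argument does not close.

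The paper closes it by citing Brodsky--Rinot \cite[Theorem~1.24]{paper29}: from $\square(\kappa,\sq_\sigma)$ one obtains a $C$-sequence $\langle C_\alpha\rangle$ together with a partition $\langle F_i\mid i<\kappa\rangle$ of $\kappa$ into \emph{fat} sets that is coherent with the $C$-sequence, in the sense that $\alpha\in F_i\cap E^\kappa_{\ge\sigma}$ and $\bar\alpha\in\acc(C_\alpha)$ force $\bar\alpha\in F_i$ and $C_{\bar\alpha}=C_\alpha\cap\bar\alpha$. One then sets $c_\alpha:=C_\alpha$ for $\alpha\in E^\kappa_{\ge\sigma}$, replaces $c_\alpha$ by an arbitrary club of order type $<\sigma$ for $\alpha\in E^\kappa_{<\sigma}$, and takes $H_i:=F_i\cap E^\kappa_{\ge\sigma}$. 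The resulting picture is cleaner and stronger than what your sketch envisions: for every $\alpha$, $\acc(c_\alpha)$ meets \emph{at most one} $H_i$ (the cell containing $\alpha$, when $\cf(\alpha)\ge\sigma$; none at all when $\cf(\alpha)<\sigma$), rather than ``large-index pieces avoiding all the accumulation sets''; and condition (a) holds uniformly for all $i<\theta$ at once, simply because each $F_i$ is fat and therefore meets $E^\kappa_\nu$ stationarily for every $\nu\in\reg(\kappa)$. Absent this coherent-partition theorem, or a from-scratch proof of it, your proof is incomplete; the ``furthermore'' discussion, by contrast, is in order once the preceding gap is filled.
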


\begin{proof}
  By \cite[Theorem~1.24]{paper29}, $\square(\kappa,\sq_\sigma)$ entails
  a $C$-sequence $\langle C_\alpha\mid\alpha<\kappa\rangle$
  and a partition $\langle F_i\mid i<\kappa\rangle$ of $\kappa$ into fat sets
  such that, for every $i<\kappa$, every $\alpha\in F_i\cap E^\kappa_{\ge\sigma}$,
  and every $\bar\alpha\in\acc(C_\alpha)$, we have $\bar\alpha\in F_i$ and
  $C_{\bar\alpha}=C_\alpha\cap\bar\alpha$. For all $\alpha\in E^\kappa_{\ge\sigma}$,
  let $c_\alpha:=C_\alpha$. For all $\alpha\in E^\kappa_{<\sigma}$, let $c_\alpha$
  be some club in $\alpha$ of order type less than $\sigma$. For all $i<\theta$, let
  $H_i:=F_i\cap E^\kappa_{\ge\sigma}$. Now appeal to Theorem~\ref{simu} with
  $\vec C=\langle c_\alpha\mid\alpha<\kappa\rangle$ and $\langle H_i\mid i<\theta\rangle$
  to obtain the desired coloring, $c:[\kappa]^2\rightarrow \theta$.

  Finally, if $\sigma = \aleph_0$, or if $\kappa$ is ${<}\sigma$-inaccessible,
  then the hypothesis of Clause~(2) of Lemma~\ref{simu} holds.
  So, by Proposition~\ref{aronszajn}, in these cases, $\mathcal{T}(c)$
  is a $\kappa$-Aronszajn tree.
\end{proof}
\begin{remark} By \cite{lh_lucke}, $\square(\kappa)$ entails the principle
  $\square^{\ind}(\kappa,\theta)$ for every $\theta\in\reg(\kappa)$.
  In Part~III, we shall see that $\square^{\ind}(\kappa,\theta)$ yields the
  existence of a closed witness to $\U(\kappa,\kappa,\theta,\sup(\reg(\kappa)))$
  which is, moreover, subadditive.
\end{remark}

Since Corollary~\ref{l23} fully answers the question about the existence of closed witnesses to
$\U(\ldots)$ at successors of regular cardinals, we spend the remainder of this
section investigating, in turn, successors of singular cardinals and inaccessible
cardinals.

\subsection{Successors of singular cardinals} \label{singular_subsection}

We begin this subsection with an immediate corollary to Theorem~\ref{simu} that
can be seen as a counterpart to Corollary~\ref{l23}.

\begin{cor}\label{predecessor}
  Suppose that $\lambda$ is a singular cardinal. Then there exists a closed witness to
  $\U(\lambda^+,\lambda^+,\cf(\lambda),\lambda)$. Furthermore, if $\square_\lambda^*$ holds,
  then there exists a closed witness $c$ to $\U(\lambda^+,\lambda^+,\cf(\lambda),\lambda)$,
  for which $\mathcal T(c)$ is a $\lambda^+$-Aronszajn tree.
\end{cor}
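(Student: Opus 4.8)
The plan is to apply Theorem~\ref{simu} with $\kappa=\lambda^+$ and $\theta=\cf(\lambda)$ (note $\cf(\lambda)\in\reg(\lambda^+)$), and then to raise the fourth parameter to $\lambda$ by hand. The subtlety is that $\lambda$ is singular, so $\lambda\notin\reg(\lambda^+)$ and Theorem~\ref{simu}(1) will only ever certify the fourth parameter when it is an \emph{infinite regular} cardinal below $\lambda$. So the actual goal is to produce a \emph{single} closed coloring $c\colon[\lambda^+]^2\to\cf(\lambda)$ witnessing $\U(\lambda^+,\lambda^+,\cf(\lambda),\chi)$ \emph{simultaneously} for every regular $\chi<\lambda$. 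Once that is in hand, Proposition~\ref{obvious}(3) extends this to all cardinals $\chi<\lambda$, and then Proposition~\ref{obvious}(4)---here using that $\lambda$, being singular, is a limit cardinal---yields that $c$ witnesses $\U(\lambda^+,\lambda^+,\cf(\lambda),\lambda)$, as desired.

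To feed Theorem~\ref{simu}, fix a strictly increasing sequence $\langle\lambda_i\mid i<\cf(\lambda)\rangle$ of infinite regular cardinals cofinal in $\lambda$, and set $H_i:=E^{\lambda^+}_{\lambda_i}$; these are pairwise disjoint and stationary in $\lambda^+$. For the first assertion, let $\vec C=\langle C_\alpha\mid\alpha<\lambda^+\rangle$ be any $C$-sequence with $\otp(C_\alpha)=\cf(\alpha)$ for all $\alpha\in\acc(\lambda^+)$. The key point is that, $\lambda$ being singular, $\cf(\alpha)<\lambda$ for every $\alpha\in\acc(\lambda^+)$; hence every $\bar\alpha\in\acc(C_\alpha)$ satisfies $\cf(\bar\alpha)=\cf(\otp(C_\alpha\cap\bar\alpha))\le\otp(C_\alpha\cap\bar\alpha)<\otp(C_\alpha)=\cf(\alpha)<\lambda$, so $\{i<\cf(\lambda)\mid\acc(C_\alpha)\cap H_i\neq\emptyset\}\subseteq\{i<\cf(\lambda)\mid\lambda_i<\cf(\alpha)\}$, a proper initial segment of $\cf(\lambda)$. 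This is exactly the hypothesis of Theorem~\ref{simu}. Moreover, for each regular $\chi<\lambda$ we have $\{i<\cf(\lambda)\mid E^{\lambda^+}_{\geq\chi}\cap H_i\text{ is stationary}\}=\{i<\cf(\lambda)\mid\lambda_i\geq\chi\}$, a tail of $\cf(\lambda)$, so it has supremum $\cf(\lambda)$. Therefore Theorem~\ref{simu}(1) delivers a closed coloring $c$ that witnesses $\U(\lambda^+,\lambda^+,\cf(\lambda),\chi)$ for every regular $\chi<\lambda$, and the first assertion follows by the first paragraph.

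For the ``Furthermore'' part, assume $\square^*_\lambda$. I would run the same argument, but now taking $\vec C$ to come from a $\square^*_\lambda$-sequence in such a way that, besides $\otp(C_\alpha)<\lambda$ for all $\alpha$, one also has $|\{C_\alpha\cap\beta\mid\alpha<\lambda^+\}|<\lambda^+$ for every $\beta<\lambda^+$. Granting this, Theorem~\ref{simu}(2) gives that $\mathcal T(c)$ is a $\lambda^+$-tree, and since $c$ witnesses $\U(\lambda^+,2,\cf(\lambda),2)$ (by Proposition~\ref{obvious}(3)) and $\cf(\lambda)<\lambda^+$, Proposition~\ref{aronszajn} shows $\mathcal T(c)$ admits no cofinal branch; so $\mathcal T(c)$ is a $\lambda^+$-Aronszajn tree. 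I expect the extraction of such a $\vec C$ to be the only point requiring care. A $\square^*_\lambda$-sequence $\langle\mathcal C_\alpha\mid\alpha<\lambda^+\rangle$ only provides clubs of order type $\leq\lambda$; this is harmless at $\alpha$ with $\cf(\alpha)\neq\cf(\lambda)$, where every member automatically has order type $<\lambda$, but at $\alpha$ with $\cf(\alpha)=\cf(\lambda)$ a member may have order type exactly $\lambda$, whose accumulation points realize every cofinality below $\lambda$---which would break the hypothesis of Theorem~\ref{simu}. The remedy is to thin such clubs along the fixed sequence $\langle\lambda_i\rangle$ and to verify that this does not inflate $|\{C_\alpha\cap\beta\mid\alpha<\lambda^+\}|$ past $\lambda$, since for a fixed $\beta$ the thinned proper initial segments are determined by the at most $\lambda$-many members of $\bigcup_{\delta\leq\beta}\mathcal C_\delta$ together with the fixed sequence; alternatively, one may cite the corresponding statement in \cite{paper29}.
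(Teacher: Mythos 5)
Your proposal is correct and follows essentially the same route as the paper's proof: both invoke Theorem~\ref{simu} with $H_i := E^{\lambda^+}_{\lambda_i}$ for a cofinal sequence of regular cardinals $\langle\lambda_i\mid i<\cf(\lambda)\rangle$ and a $C$-sequence all of whose clubs have order type below $\lambda$, and both add the requirement $|\{C_\alpha\cap\beta\mid\alpha<\lambda^+\}|\le\lambda$ under $\square^*_\lambda$ so that Theorem~\ref{simu}(2) together with Proposition~\ref{aronszajn} yields the Aronszajn tree. Your write-up is a bit more explicit than the paper's in two harmless spots: you spell out the bridge from ``for every regular $\chi<\lambda$'' to ``$\chi=\lambda$'' via Proposition~\ref{obvious}(3)--(4), which the paper leaves implicit, and you flag the extraction of a suitable thinned $C$-sequence from a $\square^*_\lambda$-sequence as the step needing care, which the paper simply asserts.
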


\begin{proof}
  Set $\kappa:=\lambda^+$ and $\theta:=\cf(\lambda)$. Fix a $C$-sequence
  $\vec C=\langle C_\alpha\mid\alpha<\kappa\rangle$ with $\otp(C_\alpha) < \lambda$
  for all $\alpha<\kappa$. If $\square_\lambda^*$ holds,
  then we moreover require that $|\{ C_\alpha\cap\beta\mid \alpha<\kappa\}|\le\lambda$
  for all $\beta<\kappa$. Together with Proposition~\ref{aronszajn},
  this will ensure that, under $\square^*_\lambda$, the associated tree $\mathcal T(c)$
  will be $\lambda^+$-Aronszajn.

  Let $\langle \lambda_i\mid i<\theta\rangle$ be
  a strictly increasing sequence of infinite regular cardinals that converges to $\lambda$.
  For all $i < \theta$, let $H_i := E^\kappa_{\lambda_i}$.
  It is clear that, for every $\chi \in \reg(\lambda)$, we have
  $\sup\{ i<\theta\mid E^\kappa_{\ge\chi}\cap H_i \text{ is stationary}\}=\theta$.
  Moreover, for each $\alpha \in \acc(\kappa)$, there is $j < \theta$ such that
  $\otp(C_\alpha) < \lambda_j$, and hence $\sup\{i < \theta \mid \acc(C_\alpha)
  \cap H_i \neq \emptyset\} \leq j < \theta$, so we may appeal to
  Theorem~\ref{simu} to obtain a coloring $c$ as desired.
\end{proof}
\begin{remark} If $\lambda$ is a singular cardinal, $\square_\lambda$ holds and $2^\lambda=\lambda^+$,
then for every $\theta\in\reg(\lambda)$, there exists a closed witness $c$ to $\U(\lambda^+,\lambda^+,\theta,\lambda)$
for which $\mathcal T(c)$ is a nonspecial $\lambda^+$-Aronszajn tree.
The proof follows the arguments of the proofs of Corollaries \ref{l24} and \ref{vel},
building on Theorems 5.1, 5.3 and 1.24 of \cite{paper29}.
\end{remark}

Note that Theorem~\ref{t310} provides some limits on the extent of positive
$\zfc$ results regarding the existence of closed witnesses to $\U(\ldots)$
at successors of singular cardinals.
The rest of this subsection is devoted to obtaining positive
results under additional assumptions about the cardinals under consideration.
Note first that, by Corollary~\ref{l23}, if $\lambda$ is a singular cardinal,
$\theta, \chi \in \reg(\lambda)$, and there is a stationary $S \subseteq
E^{\lambda^+}_{\geq \chi}$ such that $\Tr(S) \cap E^{\lambda^+}_{\geq \theta}$
is nonstationary, then there is a closed witness to $\U(\lambda^+, \lambda^+,
\theta, \chi)$. The following theorem provides an improvement to this observation
by weakening the hypotheses.

\begin{thm}\label{l23s}
  Suppose that $\lambda$ is a singular cardinal, $\theta, \chi \in \reg(\lambda)$,
  and there exists a stationary $S\s E^{\lambda^+}_{\ge\chi}$ such that
  $$
    \sup\{ \nu<\lambda\mid  \Tr(S)\cap E^{\lambda^+}_\nu\text{ is stationary}\}<\lambda.
  $$
  Then there exists a closed witness to $\U(\lambda^+,\lambda^+,\theta,\chi)$.
\end{thm}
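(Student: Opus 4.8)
The plan is to set $\kappa:=\lambda^+$ and reduce the statement to an application of Theorem~\ref{simu}. Write $\mu^*<\lambda$ for the supremum appearing in the hypothesis. Since $\lambda$ is singular, $E^\kappa_\lambda=\emptyset$, so for any cardinal $\nu$ the set $E^\kappa_{\ge\nu}$ is a union of at most $\lambda$-many sets of the form $E^\kappa_{\nu'}$; as $\ns_\kappa$ is $\lambda^+$-complete, a union of at most $\lambda$-many nonstationary subsets of $\kappa$ is nonstationary, so $\Tr(S)\cap E^\kappa_{>\mu^*}$ is nonstationary. In particular, if $\theta>\mu^*$ then $\Tr(S)\cap E^\kappa_{\ge\theta}$ is nonstationary, and Corollary~\ref{l23} (the implication $(3)\Rightarrow(4)$, applied to this very $S$) already yields a closed witness to $\U(\kappa,\kappa,\theta,\chi)$. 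So I would assume henceforth that $\theta\le\mu^*$.

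Next I would isolate a stationary set whose reflection is confined to cofinalities in $(\theta,\mu^*]$. Recall that $\Tr(T)\subseteq\Tr(S)$ whenever $T\subseteq\Tr(S)$: given a club $c$ in some $\alpha\in\Tr(T)$, pick $\gamma\in T\cap\acc(c)$; then $\gamma\in\Tr(S)$ and $c\cap\gamma$ is a club in $\gamma$, so $S\cap c\neq\emptyset$. Using this, put $H:=S$ if either $\chi\ge\theta$ or $\Tr(S)\cap E^\kappa_\theta$ is nonstationary, and otherwise (so $\chi<\theta$ and $\Tr(S)\cap E^\kappa_\theta$ is stationary) put $H:=\Tr(S)\cap E^\kappa_{[\theta,\mu^*]}$. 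In either case, $H$ is a stationary subset of $E^\kappa_{\ge\chi}$ and both $\Tr(H)\cap E^\kappa_\theta$ and $\Tr(H)\cap E^\kappa_{>\mu^*}$ are nonstationary: in the first case $\Tr(H)\subseteq\Tr(S)$, and if moreover $\chi\ge\theta$ then every member of $\Tr(H)$ has cofinality $>\chi\ge\theta$; in the second case $H$ contains the stationary set $\Tr(S)\cap E^\kappa_\theta$, while $\Tr(H)\subseteq\Tr(\Tr(S))\subseteq\Tr(S)$ consists of ordinals of cofinality $>\theta$ (as $H\subseteq E^\kappa_{\ge\theta}$) and $\le\mu^*$. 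Now fix a club $D\subseteq\kappa$ disjoint from the nonstationary set $(\Tr(H)\cap E^\kappa_\theta)\cup(\Tr(H)\cap E^\kappa_{>\mu^*})$, and set $E:=H\cap D$. Since $D$ is a club and $E=H\cap D$, each $\alpha\in\Tr(E)$ lies in $D$ (otherwise $E\cap\alpha$ would be bounded in $\alpha$) and in $\Tr(H)$; hence every member of $\Tr(E)$ has cofinality either $<\theta$ or in $(\theta,\mu^*]$.

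The heart of the matter is then to build a partition $\langle H_i\mid i<\theta\rangle$ of $E$ into stationary sets such that, for every $\alpha<\kappa$ with $\cf(\alpha)\in(\theta,\mu^*]$, the set $\{i<\theta\mid H_i\cap\alpha\text{ is stationary in }\alpha\}$ has size $<\theta$. Granting this, I would define an accompanying $C$-sequence $\vec C$ pointwise: for $\alpha$ with $\cf(\alpha)<\theta$, take $C_\alpha$ of order type $\cf(\alpha)$ (so $|\acc(C_\alpha)|<\theta$); for $\alpha$ with $\cf(\alpha)=\theta$ or $\cf(\alpha)>\mu^*$, note that $\alpha\notin\Tr(E)$, so $E\cap\alpha$ is nonstationary in $\alpha$ and we may take $C_\alpha$ disjoint from $E$; and for $\alpha$ with $\cf(\alpha)\in(\theta,\mu^*]$, use that $\cf(\alpha)>\theta$ — so any intersection of at most $\theta$-many clubs in $\alpha$ is again a club — to take $C_\alpha$ avoiding $H_i$ for each of the at most $\theta$-many indices $i$ with $H_i\cap\alpha$ nonstationary in $\alpha$. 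For every $\alpha\in\acc(\kappa)$ this gives $\sup\{i<\theta\mid\acc(C_\alpha)\cap H_i\neq\emptyset\}<\theta$: in the first case because $\acc(C_\alpha)$ is small, in the second because $\acc(C_\alpha)$ is disjoint from $E\supseteq\bigcup_{i<\theta}H_i$, and in the third because $\acc(C_\alpha)$ can meet only the fewer-than-$\theta$ many $H_i$ that are stationary in $\alpha$. Since $\bigcup_{i<\theta}H_i=E\subseteq E^\kappa_{\ge\chi}$ with each $H_i$ stationary, $\sup\{i<\theta\mid E^\kappa_{\ge\chi}\cap H_i\text{ is stationary}\}=\theta$, so Theorem~\ref{simu} applied to $\langle H_i\mid i<\theta\rangle$ and $\vec C$ delivers a closed $c:[\kappa]^2\to\theta$ witnessing $\U(\kappa,\kappa,\theta,\chi)$, as required.

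I expect the main difficulty to be the construction of the partition $\langle H_i\mid i<\theta\rangle$ with the stated bound on its local reflection. This is exactly where singularity of $\lambda$ is essential: the hypothesis — which fails for regular $\lambda$ — pins the cofinalities at which $E$ can reflect into a set bounded below $\lambda$, hence bounded in $\kappa$ by a regular $\mu$ with $\theta<\mu<\lambda$, and one wants to leverage this to split $E$ appropriately (for instance via an Ulam-type matrix tuned to $\mu$, so that each of the $\theta$ pieces reflects at fewer than $\theta$ ``slots'' at every point of cofinality below $\mu$). I would also check the degenerate cases directly — e.g.\ when $\theta\le\chi$ the second option for $H$ never occurs and $\Tr(E)$ contains no ordinal of cofinality $\le\theta$ at all — and confirm that each club $C_\alpha$ above can indeed be chosen with the stated properties.
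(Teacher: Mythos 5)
Your opening reduction is correct and parallels the paper's first move: if $\theta$ exceeds the bound $\mu^*$, then $\Tr(S)\cap E^\kappa_{\ge\theta}$ is nonstationary and Corollary~\ref{l23} applies directly. Your isolation of the set $E$ with $\Tr(E)\subseteq E^\kappa_{<\theta}\cup E^\kappa_{(\theta,\mu^*]}$ is also fine. The problem is in the heart of the argument, which you yourself flag as ``the main difficulty'': you \emph{assert} the existence of a partition $\langle H_i\mid i<\theta\rangle$ of $E$ into stationary pieces such that every $\alpha$ of cofinality in $(\theta,\mu^*]$ has $|\{i<\theta\mid H_i\cap\alpha\text{ is stationary in }\alpha\}|<\theta$, but you do not construct it, and it is not at all clear that it is provable in $\zfc$ under the hypotheses of the theorem. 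After the reduction via Corollary~\ref{l23}, one is in a situation where every stationary subset of $E^\kappa_{\ge\chi}$ reflects, and there is no obvious obstruction to a scenario in which, no matter how one partitions $E$ into $\theta$ stationary pieces, stationarily many $\alpha$ of an intermediate cofinality reflect $\theta$-many of the pieces at once. Your sketched appeal to an ``Ulam-type matrix'' is not a proof, and in fact such matrices live on a fixed cardinal $\mu$, in tension with the requirement that the $H_i$ partition $E$ (which lives across various cofinalities $\ge\chi$). As written, the key lemma of your proposal is an unproved statement on which the entire argument rests.

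The paper's proof takes a genuinely different route that avoids this partition altogether. It first extracts (by two or three applications of Fodor's lemma and $\Tr$) a stationary $\Delta\subseteq E^\kappa_\sigma$ with $\sigma\notin\{\aleph_0,\cf(\lambda)\}$ whose reflection cofinalities are bounded by some $\mu<\lambda$, and then places the color sets $H_i$ inside $E^\kappa_\mu$ for a \emph{single} regular $\mu$ above that bound. With this choice, the $H_i$ never reflect at any $\alpha$ of cofinality in $(\theta,\mu^*]$ — the requirement you were trying to arrange by partitioning is simply vacuous. The price is that Theorem~\ref{simu}'s hypothesis now fails at $\alpha$ with $\cf(\alpha)>\mu$: there $C_\alpha$ has uncountable order type above $\mu$ and its accumulation points can meet many of the $H_i$. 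This is precisely why the paper does \emph{not} invoke the conclusion of Theorem~\ref{simu}; it only borrows the definition of $c$ via walks, and then uses a $\clubsuit^-$-type club-guessing sequence along $\Delta$ to verify Clause~(2) of Lemma~\ref{pumpclosed} directly. The club-guessing supplies, for any club $D$ and any color $i$, a $\delta\in\Delta$ with $\sup(e_\delta\cap H_i\cap D)=\delta$, and a club-swallowing construction ensures that walks from below $\delta$ pass through points of $H_{i+1}$. That mechanism — not a controlled partition — is what makes the argument go through, and it is the missing ingredient in your proposal.
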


\begin{proof}
  By Corollary~\ref{l23}, we may assume that every stationary subset of
  $E^{\lambda^+}_{\ge\chi}$ reflects. We start by showing that we can find
  a stationary set as in the statement of the theorem that is slightly better-behaved.
  In particular, we find a stationary set that concentrates on a cofinality
  different from both $\aleph_0$ and $\cf(\lambda)$.

  \begin{claim}
    There exist $\sigma\in\reg(\lambda)\setminus\chi$ with
    $\sigma\notin\{\aleph_0,\cf(\lambda)\}$ and a stationary $\Delta\s E^{\lambda^+}_\sigma$,
    for which $\sup\{ \cf(\tau)\mid \tau<\lambda^+, \Delta\cap\tau\text{ is stationary}\}<\lambda$.
  \end{claim}

  \begin{proof}
    By the hypothesis of the theorem, we may fix a stationary $S_0\s E^{\lambda^+}_{\ge\chi}$
    and a club $D\s\lambda^+$ such that
    $$
      \sup\{ \cf(\tau)\mid \tau\in D\ \&\ S_0\cap\tau\text{ is stationary}\}<\lambda.
    $$
    Using Fodor's Lemma, and by shrinking $S_0$ if necessary, we may
    assume that $S_0 \subseteq D$ and there is $\sigma_0 \in \reg(\lambda)$
    such that $S_0 \subseteq E^{\lambda^+}_{\sigma_0}$.
    It is clear that $\sigma_0 \geq \chi$, so, by assumption, $\Tr(S_0)$
    is stationary. By another application of Fodor's Lemma, we may find
    a stationary set $S_1$ for which there is $\sigma_1 \in \reg(\lambda)$ such that
    $S_1 \subseteq \Tr(S_0) \cap E^{\lambda^+}_{\sigma_1}$. Clearly,
    $\Tr(S_1) \subseteq \Tr(S_0)$ and $\sigma_1 > \sigma_0$. Doing this again,
    find a stationary set $S_2$ for which there is $\sigma_2 \in \reg(\lambda) $
    such that $S_2 \subseteq \Tr(S_1) \cap E^{\lambda^+}_{\sigma_2}$.
    Altogether $\Tr(S_2) \subseteq \Tr(S_1)\s \Tr(S_0)\s D$ and $\sigma_2 > \sigma_1 > \sigma_0$. There must be
    $i < 3$ for which $\sigma_i \notin \{\aleph_0, \cf(\lambda)\}$. Choose such
    an $i$, and note that $\sigma := \sigma_i$ and $\Delta := S_i$ are as sought.
  \end{proof}

  Let $\sigma$ and $\Delta$ be given by the preceding claim, and fix
  $\mu\in\reg(\lambda)$ such that
  $$
    \max\{\sigma,\sup\{ \cf(\tau)\mid \tau<\lambda^+\ \&\ \Delta\cap\tau\text{ is stationary}\}\}<\mu.
  $$
  Fix a function $h:\lambda^+\rightarrow\theta$ such that, for all $i<\theta$,
  $H_i:=\{ \gamma\in E^{\lambda^+}_\mu\mid h(\gamma)=i\}$ is stationary.

  \begin{claim}
    There exists a $C$-sequence $\langle e_\delta\mid \delta<\lambda^+\rangle$ such that
    \begin{itemize}
      \item for all $\delta\in\acc(\lambda^+)$, $\otp(e_\delta)=\cf(\delta)$;
      \item for all $\delta\in E^{\lambda^+}_{\ge\mu}$, we have $e_\delta\cap\Delta=\emptyset$;
      \item for every club $D\s\lambda^+$ and every $i<\theta$, there exists $\delta\in\Delta$
        such that $\sup(e_\delta\cap H_i\cap D)=\delta$.
    \end{itemize}
  \end{claim}

  \begin{proof}
    It is clear how to obtain $e_\delta$ for each $\delta\in\lambda^+\setminus\Delta$.
    Now, to deal with $\delta\in\Delta$, proceed as follows. Let $\langle S_i\mid i<\theta\rangle$
    be some partition of $\Delta$ into stationary sets. For each $i < \theta$, since
    $S_i$ is a stationary subset of $E^{\lambda^+}_{\neq\cf(\lambda)}$,
    \cite[Proposition~1.4]{paper07} implies that $\clubsuit^-(S_i)$ holds. Thus,
    as pointed out on top of page 145 of \cite{paper_s01}, there exists a sequence
    $\langle B_\delta\mid \delta\in S_i\rangle$, with $\sup(B_\delta)=\delta$ for
    each $\delta\in S_i$, such that for every club $D\s\lambda^+$, the set
    $\{\delta\in S_i\mid B_\delta\s D\cap H_i \}$
    is stationary. Now, for each $i<\theta$ and $\delta\in S_i$, pick a
    club $e_\delta$ in $\delta$ of order-type $\sigma$ with $\nacc(e_\delta)\s B_\delta$.
  \end{proof}

  For each $\alpha < \lambda^+$, define a sequence $\langle C^n_\alpha\mid n<\omega\rangle$
  by recursion on $n<\omega$ as follows:
  \begin{itemize}
    \item $C^0_\alpha:=e_\alpha$;
    \item $C^{n+1}_\alpha:=\cl(C^n_\alpha\cup\bigcup\{ e_\delta\mid \delta\in\acc(C^n_\alpha)\cap\Delta\})$.
  \end{itemize}
  Then, let $C_\alpha:=\cl(\bigcup_{n<\omega}C_\alpha^n)$.

  \begin{claim}\label{claim3143}
    All of the following hold.
    \begin{enumerate}
      \item $\vec C:=\langle C_\alpha\mid\alpha<\lambda^+\rangle$ is a $C$-sequence.
      \item For all $\alpha\in\acc(\lambda^+)$, $\otp(C_\alpha)=\cf(\alpha)$.
      \item For all $\alpha\in\acc(\lambda^+)$ and $\delta\in(\acc(C_\alpha)\cup\{\alpha\})\cap\Delta$,
        we have $e_\delta\s C_\alpha$.
      \item For all $\alpha\in E^{\lambda^+}_{\ge\mu}$, we have $C_\alpha\cap\Delta=\emptyset$.
    \end{enumerate}
  \end{claim}

  \begin{proof}
    It is easy to see that $C_\alpha=e_\alpha$ for all $\alpha\in
    E^{\lambda^+}_{\le\sigma}\cup E^{\lambda^+}_{\ge\mu}$.
    Now, for all $\alpha<\lambda^+$ with $\sigma<\cf(\alpha)<\mu$, we have
    $\otp(C_\alpha^n)=\cf(\alpha)\cdot\sigma=\cf(\alpha)$ for all $n<\omega$.
  \end{proof}

  We now perform walks along $\vec C$ and derive a closed coloring
  $c:[\lambda^+]^2\rightarrow\theta$ as in the proof of Theorem~\ref{simu}
  by letting, for all $\alpha < \beta < \lambda^+$,
  $$
    c(\alpha,\beta):=\max\{ h(\tau)\mid \tau\in\im(\tr(\alpha,\beta))\}.
  $$
  By the implication $(2)\implies(3)$ of Lemma~\ref{pumpclosed}, the following
  claim suffices to finish the proof of the theorem.

  \begin{claim}
    Suppose that $\mathcal A\s[\lambda^+]^{<\chi}$ is a family consisting of
    $\lambda^+$-many pairwise disjoint sets, $D$ is a club in $\lambda^+$, and $i<\theta$.
    Then there exist $\gamma\in D$, $a\in\mathcal A$, and $\epsilon < \gamma$ such that
    \begin{itemize}
      \item $\gamma < a$;
      \item for all $\alpha \in (\epsilon, \gamma)$ and all $\beta\in a$, we have $c(\alpha,\beta)>i$.
    \end{itemize}
  \end{claim}

  \begin{proof} \renewcommand{\qedsymbol}{\ensuremath{\square \ \square}}
    Fix $\delta\in\Delta$ such that $\sup(e_\delta\cap H_{i+1}\cap D)=\delta$,
    and then fix an arbitrary $a\in\mathcal A$ with $\delta < a$. Let
    $\Lambda:=\sup\{\lambda_2(\delta,\beta)\mid\beta\in a\}$.
    As $\cf(\delta)=\sigma\ge\chi>|a|$, we have $\Lambda<\delta$, so we may pick
    $\gamma\in e_\delta\cap H_{i+1}\cap D$ above $\Lambda$.

    Let $T := \{\delta\} \cup \bigcup_{\beta \in a} \im(\tr(\delta, \beta))$, and let
    $$
      \epsilon := \sup\{\Lambda, ~ \sup(C_\tau \cap \gamma) \mid \tau
      \in T \ \&\ \sup(C_\tau \cap \gamma) < \gamma\}.
    $$
    Since $\cf(\gamma) = \mu > \chi > |a|$, we have $\epsilon < \gamma$.
    We claim that $\gamma$, $a$, and $\epsilon$ are as desired. To verify this,
    let $\alpha \in (\epsilon, \gamma)$ and $\beta \in a$ be arbitrary.
    We will show that $\gamma \in \tr(\alpha, \beta)$, and hence $c(\alpha, \beta)
    \geq h(\gamma) > i$. Let $\ell:=\rho_2(\delta,\beta)$.
    By Lemma~\ref{lambda2}, we have $\tr(\delta,\beta)\sq\tr(\alpha,\beta)$,
    and there are two cases to consider.

    $\br$ If $\delta\in\nacc(C_{\tr(\delta,\beta)(\ell-1)})$, then, since
    $\tr(\alpha,\beta)(\ell-1)=\tr(\delta,\beta)(\ell-1)$ and
    $$\sup(C_{\tr(\delta,\beta)(\ell-1)}\cap\delta) \leq\lambda_2(\delta,\beta)<\alpha<\gamma<\delta,$$
    we have $\tr(\alpha,\beta)(\ell)=\min(C_{\tr(\delta,\beta)(\ell-1)}\setminus\alpha)=\delta$.
    As $\otp(C_\delta)=\cf(\delta)=\sigma<\mu=\cf(\gamma)$,
    we have $\sup(C_\delta\cap\gamma)<\gamma$.
    As $\delta\in T$, we then have
    $\sup(C_\delta\cap\gamma)\le\epsilon<\alpha<\gamma$.
    Finally, since $\gamma\in e_\delta\s C_\delta$,
    we have $\tr(\alpha,\beta)(\ell+1)=\min(C_\delta\setminus\alpha)=\gamma$.

    $\br$ If $\delta\in\acc(C_{\tr(\delta,\beta)(\ell-1)})$, then, by Claim~\ref{claim3143}(4),
    it follows that $\cf(\tr(\delta,\beta)(\ell-1))<\mu=\cf(\gamma)$, and hence
    $\sup(C_{\tr(\delta,\beta)(\ell-1)}\cap\gamma)\le\epsilon<\alpha<\gamma$.
    Consequently, $\tr(\alpha,\beta)(\ell)=\min(C_{\tr(\delta,\beta)(\ell-1)}\setminus\alpha)=\gamma$.
  \end{proof}
  \let\qed\relax
\end{proof}

We next show that the existence of closed witnesses to $\U(\lambda^+, \lambda^+,
\theta, \cf(\lambda))$ follows from a local instance of $\gch$.

\begin{thm}\label{ch}
  Suppose that $\lambda$ is a singular cardinal, $\theta \in \reg(\lambda)$,
  and $2^\lambda=\lambda^+$. Then there exists a closed witness to
  $\U(\lambda^+,\lambda^+,\theta,\cf(\lambda))$.
\end{thm}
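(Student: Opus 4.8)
Set $\kappa:=\lambda^+$ and $\chi:=\cf(\lambda)$. The plan is to produce the coloring by walks on ordinals, in the spirit of Theorems~\ref{simu} and~\ref{l23s}, feeding the construction with the guessing power of $2^\lambda=\lambda^+$ where the earlier proofs used stationary non-reflection. The case $\theta=\chi$ needs nothing new: $\U(\lambda^+,\lambda^+,\cf(\lambda),\lambda)$ holds in $\zfc$ by Corollary~\ref{predecessor}, so assume $\theta\ne\chi$.

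Fix a regular cardinal $\sigma$ with $\chi<\sigma<\lambda$ — say $\sigma:=\chi^+$, which is legitimate since $\lambda$ is singular — so that $\sigma\notin\{\aleph_0,\cf(\lambda)\}$. As $\sigma\ne\cf(\lambda)$, the easy case of Shelah's diamond theorem gives $\diamondsuit(E^\kappa_\sigma)$ from $2^\lambda=\lambda^+$. Using this $\diamondsuit$-sequence I would build, by recursion on $\alpha<\kappa$, a $C$-sequence $\vec C=\langle C_\alpha\mid\alpha<\kappa\rangle$ with $\otp(C_\alpha)=\cf(\alpha)$, together with a function $h:\kappa\to\theta$ for which each $H_i:=h^{-1}\{i\}\cap E^\kappa_\sigma$ is stationary, arranged so that the recursion reacts at guessed stages $\delta\in E^\kappa_\sigma$ to putative instances $(\langle a_\gamma\mid\gamma<\kappa\rangle,i)$ of the principle by choosing $C_\delta$ (and adjusting the nearby $C_\alpha$'s) so that, for that instance, every walk from an ordinal just below $\delta$ into a member of the guessed family above $\delta$ is forced through an ordinal with $h$-value exceeding $i$, while $\vec C$ still carries enough coherence for Lemma~\ref{lambda2} to apply. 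With this in hand I would set, as in Theorem~\ref{simu},
\[
  c(\alpha,\beta):=\max\{\,h(\tau)\mid\tau\in\im(\tr(\alpha,\beta))\,\}.
\]
Lemma~\ref{lambda2} then gives that $c$ is closed, and the reactive property gives clause~(2) of Lemma~\ref{pumpclosed}, whence $(2)\Rightarrow(3)$ there yields that $c$ witnesses $\U(\kappa,\kappa,\theta,\chi)$.

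I expect the main obstacle to be exactly the point at which the argument must part ways with Corollary~\ref{l23} and Theorem~\ref{l23s}. There the $C$-sequence is steered to \emph{avoid} a fixed stationary set at all large cofinalities, which is precisely what prevents a walk from overshooting a guessing node; this is where a non-reflecting (or low-reflection-spectrum) stationary subset of $E^\kappa_{\ge\chi}$ is consumed, and under $2^\lambda=\lambda^+$ alone no such set need exist — e.g.\ when $\lambda$ is a singular limit of supercompact cardinals, where every stationary subset of $E^\kappa_{\ge\chi}$ reflects at every admissible cofinality. The substitute is to let $\diamondsuit(E^\kappa_\sigma)$ do the steering \emph{locally}: at a guessed stage one anticipates the finitely-much of $\vec C$ that the relevant future walks will traverse and tailors $C_\delta$ (and its neighbours, keeping $\otp(C_\alpha)=\cf(\alpha)$) accordingly. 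The delicate verification is then that the reactions prescribed for different guessed instances do not conflict, that every $H_i$ remains stationary, and that the coloring $c$ coming out of the tailored $\vec C$ is genuinely closed — the analogue, shorn of the non-reflecting crutch, of Claim~\ref{claim3143} and the final claim in the proof of Theorem~\ref{l23s}. (A workable alternative bypasses the $C$-sequence and builds $c$ directly by recursion on $\beta<\kappa$, defining $c({\cdot},\beta):\beta\to\theta$ so that $\{\alpha<\beta\mid c(\alpha,\beta)\le i\}$ is closed for all $i<\theta$ while using $\diamondsuit(E^\kappa_\sigma)$ to defeat every would-be counterexample; closedness forces the ``$>i$'' sets to be open, so the reactions happen on intervals, and the same conflict-management issue returns.)
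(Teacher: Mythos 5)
Your high-level plan — replace the non-reflecting stationary set of Corollary~\ref{l23} by $\diamondsuit$ on a cofinality-$\sigma$ set, build a $C$-sequence by recursion, walk along it, and set $c(\alpha,\beta):=\max\{h(\tau)\mid\tau\in\im(\tr(\alpha,\beta))\}$ — is genuinely the right shape, and matches the paper's proof in outline. But there are two concrete gaps.

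First, $\sigma:=\cf(\lambda)^+$ is too small when $\theta>\cf(\lambda)$. The engine of the argument is that $e_\delta$ (a club in $\delta$ of order type $\cf(\delta)=\sigma$) must, for each color $i<\theta$ and each $j<\cf(\lambda)$, contain cofinally many $\gamma$ lying in the guessed club with $h(\gamma)=i$ and $\cf(\gamma)=\lambda_j$ (the high cofinality of $\gamma$ is needed so that the finitely many accumulated side-conditions $\sup(C\cap\gamma)$ fall below $\gamma$). A set of order type $\cf(\lambda)^+$ cannot meet $\theta$-many pairwise disjoint sets cofinally when $\theta\ge\cf(\lambda)^+$. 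The paper takes $\Delta:=E^{\lambda^+}_\chi$ with $\chi:=\max\{\theta,\cf(\lambda)\}^+$; this is not cosmetic, it is what makes the diagonalization possible.

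Second, and more substantively, the ``reactive'' use of $\diamondsuit$ — steering $\vec C$ by guessing putative instances $(\langle a_\gamma\rangle,i)$ of the principle and tailoring $C_\delta$ to defeat them — is both overcomplicated and exactly where your plan stalls (you flag the conflict-management problem yourself). The paper avoids it entirely. The $\diamondsuit(\Delta)$-sequence is used only to guess the club $D$ (via $X_\delta=D\cap\delta$), and the role of the instance $(\mathcal A,i)$ is taken over by the pre-fixed coloring $h:\lambda^+\to\theta$, chosen so that $H^i_j:=\{\gamma\in E^{\lambda^+}_{\lambda_j}\mid h(\gamma)=i\}$ is stationary for \emph{every} pair $(i,j)$. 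The club-guessing requirement on $e_\delta$ is then local and instance-independent: whenever $X_\delta\cap H^i_j$ is cofinal in $\delta$, so is $X_\delta\cap H^i_j\cap e_\delta$. There are no conflicts to manage because each $e_\delta$ is built from $X_\delta$ and $h$ alone. The $C$-sequence $\vec C$ is then obtained from $\vec e$ by the fixed (non-adaptive) club-swallowing recursion, which guarantees $e_\delta\subseteq C_\alpha$ whenever $\delta\in(\acc(C_\alpha)\cup\{\alpha\})\cap\Delta$, and this, via Lemma~\ref{lambda2}, is what forces the walk from a small $\alpha$ to a large $\beta\in a$ through the chosen $\gamma\in e_\delta$ of high $h$-value. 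So the idea you should absorb from the paper's proof is: let $\diamondsuit$ guess only the club; push all the ``for every $i<\theta$'' quantification into a fixed partition $h$ indexed by both color and cofinality; and let a static club-swallowing $C$-sequence do the steering.
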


\begin{proof}
  Let $\chi:=\max\{\theta,\cf(\lambda)\}^+$ and $\Delta:=E^{\lambda^+}_\chi$.
  As $\Delta$ is a stationary subset of $E^{\lambda^+}_{\neq\cf(\lambda)}$ and $2^\lambda=\lambda^+$,
  \cite[Claim~2.3]{Sh:922} provides us with a $\diamondsuit(\Delta)$-sequence,
  $\langle X_\delta\mid\delta\in \Delta\rangle$.

  Let $\langle \lambda_j\mid j<\cf(\lambda)\rangle$ be a strictly increasing
  sequence of regular cardinals converging to $\lambda$, with $\lambda_0>\chi$.
  Fix a function $h:\lambda^+\rightarrow\theta$ such that for every $i<\theta$ and
  $j<\cf(\lambda)$, the following set is stationary:
  $$
    H^i_j:=\{ \gamma\in E^{\lambda^+}_{\lambda_j}\mid h(\gamma)=i\}.
  $$

  Now, let $\langle e_\delta\mid\delta<\lambda^+\rangle$ be a $C$-sequence such that
  \begin{itemize}
    \item for all $\delta\in\acc(\lambda^+)$, $\otp(e_\delta)=\cf(\delta)$;
    \item for all $\delta\in \Delta$, $i<\theta$, and $j<\cf(\lambda)$, if
      $\sup(X_\delta\cap H^i_j)=\delta$, then $\sup(X_\delta\cap H^i_j\cap e_\delta)=\delta$.
  \end{itemize}

  For each $\alpha < \lambda^+$, define a sequence $\langle C^n_\alpha\mid n<\omega\rangle$
  by recursion on $n<\omega$ as follows:
  \begin{itemize}
    \item $C^0_\alpha:=e_\alpha$;
    \item $C^{n+1}_\alpha:=\cl(C^n_\alpha\cup\bigcup\{ e_\delta\mid \delta\in\acc(C^n_\alpha)\cap\Delta\})$.
  \end{itemize}
  Then, let $C_\alpha:=\cl(\bigcup_{n<\omega}C_\alpha^n)$.

  \begin{claim} \label{4171}
    All of the following hold.
    \begin{enumerate}
      \item $\vec C:=\langle C_\alpha\mid\alpha<\lambda^+\rangle$ is a $C$-sequence.
      \item For all $\alpha\in\acc(\lambda^+)$, $\otp(C_\alpha)<\lambda$.
      \item For all $\alpha\in\acc(\lambda^+)$ and $\delta\in(\acc(C_\alpha)\cup\{\alpha\})\cap\Delta$,
        $e_\delta\s C_\alpha$.
      \item For every club $D$ in $\lambda^+$, there is $\delta\in \Delta$ such that
        for every $\mu<\lambda$, $\Lambda<\delta$, and $i<\theta$, there is
        $\gamma\in e_\delta\cap D$ such that $\cf(\gamma)>\mu$, $\gamma>\Lambda$ and $h(\gamma)=i$.
    \end{enumerate}
  \end{claim}

  \begin{proof}
    Clause~(1) is straightforward.

    (2) Fix $\alpha \in \acc(\lambda^+)$. $|C^0_\alpha| = \cf(\alpha)$
    and, by induction on $n$, it is then easy to see that, for all $n < \omega$,
    we have $|C^{n+1}_\alpha| \leq \cf(\alpha) \cdot \chi$. It follows that
    $|C_\alpha| \leq \cf(\alpha) \cdot \chi < \lambda$.

    (3) For each $\alpha\in\Delta$, since $\otp(e_\alpha)=\cf(\alpha)=\chi$,
    we simply have $\acc(C^n_\alpha)\cap\Delta=\emptyset$ for all $n<\omega$, so $C_\alpha=e_\alpha$.
    Now, for $\alpha\in\acc(\lambda^+)$ and $\delta\in\acc(C_\alpha)$,
    since $\cf(\delta)=\chi>\omega$, there must exist some $n<\omega$
    such that $\delta\in\acc(C^n_\alpha)$, and hence $e_\delta\s C_\alpha^{n+1}\s C_\alpha$.

    (4) Fix a club $D$ in $\lambda^+$. For each $i < \theta$ and $j<\cf(\lambda)$,
    $H^i_j$ is stationary, so
    $$
      E:=\bigcap_{i<\theta}\bigcap_{j<\cf(\lambda)}\acc^+(D\cap H^i_j)
    $$
    is a club. Since $\{ \delta\in \Delta\mid D\cap\delta=X_\delta\}$ is stationary,
    we can pick $\delta\in \Delta\cap E$ such that $D\cap\delta=X_\delta$. For all
    $i < \theta$ and $j < \cf(\lambda)$, we have $\sup(X_\delta\cap H^i_j)=\delta$,
    and hence $\sup(X_\delta\cap H^i_j\cap e_\delta)=\delta$. In particular,
    for every $\mu<\lambda$, $\Lambda<\delta$, and $i<\theta$, we may fix some
    $j<\cf(\lambda)$ such that $\lambda_j>\mu$ and then find $\gamma\in X_\delta\cap
    H^i_j\cap e_\delta$ above $\Lambda$. Clearly, $\cf(\gamma)>\mu$,
    $h(\gamma)=i$, and $\gamma$ is an accumulation point of the club $D$
  \end{proof}

  We now perform walks along $\vec C$ and derive a closed coloring
  $c:[\kappa]^2\rightarrow\theta$ as in the proof of Theorem~\ref{simu}
  by letting, for all $\alpha < \beta < \lambda^+$,
  $$
    c(\alpha,\beta):=\max\{ h(\tau)\mid \tau\in\im(\tr(\alpha,\beta))\}.
  $$
  We claim that $c$ witnesses $\U(\lambda^+, \lambda^+, \theta, \cf(\lambda))$
  and prove this by verifying Clause~(2) of Lemma~\ref{pumpclosed}. To this
  end, fix a family $\mathcal{A} \subseteq [\lambda^+]^{<\cf(\lambda)}$
  consisting of $\lambda^+$-many pairwise disjoint sets, a club $D$ in $\lambda^+$,
  and a color $i < \theta$. We will find $\gamma \in D$, $a \in \mathcal{A}$,
  and $\epsilon < \gamma$ such that
  \begin{itemize}
    \item $\gamma < a$;
    \item for all $\alpha \in (\epsilon, \gamma)$ and all $\beta \in a$, we have
      $c(\alpha, \beta) > i$.
  \end{itemize}

  Use Clause~(4) of Claim~\ref{4171} to find $\delta\in \Delta$ such that,
  for every $\mu<\lambda$ and $\Lambda<\delta$, there exists $\gamma\in e_\delta\cap D$
  such that $\cf(\gamma)>\mu$, $\gamma>\Lambda$, and $h(\gamma)=i+1$. Fix an
  arbitrary $a\in\mathcal A$ with $\delta < a$, and set
  \begin{itemize}
    \item $\Lambda:=\sup\{\lambda_2(\delta,\beta)\mid \beta\in a\}$;
    \item $C:=C_\delta \cup \bigcup_{\beta\in a}\bigcup_{\tau\in\im(\tr(\delta,\beta))}C_\tau$.
  \end{itemize}
  As $|a|<\cf(\lambda)<\cf(\delta)$, we have $\Lambda<\delta$ and $|C|<\lambda$.
  Thus, we can pick $\gamma\in e_\delta \cap D$ such that $\cf(\gamma)>|C|$,
  $\gamma>\Lambda$, and $h(\gamma)=i+1$. Let $\epsilon := \max\{\Lambda, \sup(C \cap \gamma)\}$.

  We claim that $\gamma$, $a$, and $\epsilon$ are as desired. To this end, let
  $\alpha \in (\epsilon, \gamma)$ and $\beta \in a$ be arbitrary. Then
  $$
    \lambda_2(\delta,\beta)\le\Lambda\le\epsilon<\alpha<\gamma<\delta<\beta,
  $$
  so, by Lemma~\ref{lambda2}, $\tr(\delta,\beta)\sq \tr(\alpha,\beta)$.
  We claim that $\gamma \in \im(\tr(\alpha, \beta))$. To see this, let
  $\ell:=\rho_2(\delta,\beta)$, and consider the following two cases, each
  of which will involve the use of Clause~(3) of Claim~\ref{4171}.

  $\br$ If $\delta\in\nacc(C_{\tr(\delta,\beta)(\ell-1)})$, then
  $\sup(C_{\tr(\delta,\beta)(\ell-1)}\cap\delta) \leq\lambda_2(\delta,\beta)$, and hence
  $$
    [\alpha,\delta)\cap C_{\tr(\delta,\beta)(\ell-1)}\s(\lambda_2(\delta,\beta),\delta)
    \cap C_{\tr(\delta,\beta)(\ell-1)}=\emptyset.
  $$
  Consequently, $\tr(\alpha,\beta)(\ell)=\min(C_{\tr(\alpha,\beta)(\ell-1)}\setminus\alpha)
  =\min(C_{\tr(\delta,\beta)(\ell-1)}\setminus\alpha)=\delta$. As
  $$
    [\alpha,\gamma)\cap C_{\tr(\alpha,\beta)(\ell)}\s (\epsilon,\gamma)\cap C_\delta=\emptyset
  $$
  and $\gamma\in e_{\delta}\s C_{\delta}\s C$, we have $\tr(\alpha,\beta)(\ell+1)=
  \min(C_\delta\setminus\alpha)=\gamma$.

  $\br$ If $\delta\in\acc(C_{\tr(\delta,\beta)(\ell-1)})$, then
  $$
    [\alpha,\gamma)\cap  C_{\tr(\delta,\beta)(\ell-1)}\s(\epsilon,\gamma)\cap C=\emptyset.
  $$
  As $\delta\in\acc(C_{\tr(\delta,\beta)(\ell-1)}) \cap \Delta$, we have
  $\gamma\in e_{\delta}\s C_{\tr(\delta,\beta)(\ell-1)}$, and hence
  $\tr(\alpha,\beta)(\ell)=\min(C_{\tr(\delta,\beta)(\ell-1)}\setminus\alpha)=\gamma$.

  In either case, we have shown that $\gamma\in\im(\tr(\alpha, \beta))$,
  and hence $c(\alpha,\beta)\ge h(\gamma)=i+1$.
\end{proof}

Our final result of this subsection shows that a failure of the simultaneous
stationary reflection principle $\refl({<}\cf(\lambda), \lambda^+)$ entails the
existence of a closed witness to $\U(\lambda^+, \lambda^+, \theta, \cf(\lambda))$
for all $\theta \in \reg(\lambda)$.

\begin{thm}\label{lemma215}\label{increasechi}
  Suppose that $\lambda$ is a singular cardinal and $\theta \in \reg(\lambda)$.
  If any one of the following conditions holds:
  \begin{enumerate}
    \item $\refl({<}\cf(\lambda),\lambda^+)$ fails;
    \item $\cf(\ns_{\cf(\lambda)},\s)<\lambda$ and $\theta<\cf(\lambda)$;
    \item there exists a tail-closed witness to $\U(\lambda^+,2,\theta,2)$;
    \item there exists a somewhere-closed witness to $\U(\lambda^+,2,\theta,\omega)$;
  \end{enumerate}
  then there exists a closed witness to $\U(\lambda^+,\lambda^+,\theta,\cf(\lambda))$.
\end{thm}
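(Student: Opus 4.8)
Set $\kappa:=\lambda^+$, $\mu:=\cf(\lambda)$, and $\sigma:=\max\{\theta,\mu\}^+$, and note that $\mu<\sigma<\lambda$ and $\theta<\sigma$. Fix a strictly increasing sequence $\langle\lambda_j\mid j<\mu\rangle$ of regular cardinals cofinal in $\lambda$ with $\lambda_0>\sigma$. As a harmless first move, via Corollary~\ref{l23} (with $\chi:=\mu$) and then Theorem~\ref{l23s} (with $\chi:=\mu$), we may assume that $\sup\{\nu<\lambda\mid\Tr(S)\cap E^\kappa_\nu\text{ is stationary}\}=\lambda$ for every stationary $S\s E^\kappa_{\ge\mu}$; in particular no non-reflecting target is available, and the desired witness must be produced by a club-guessing argument.

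The engine is the one driving the proof of Theorem~\ref{ch}. Suppose we are handed a $C$-sequence $\langle e_\delta\mid\delta<\kappa\rangle$ with $\otp(e_\delta)=\cf(\delta)$ for $\delta\in\acc(\kappa)$, a function $h:\kappa\to\theta$, and the following diagonal club-guessing property: for every club $D\s\kappa$ there is $\delta\in E^\kappa_\sigma$ such that, for all $\bar\mu<\lambda$, $\Lambda<\delta$, and $i<\theta$, there is $\gamma\in e_\delta\cap D$ with $\cf(\gamma)>\bar\mu$, $\gamma>\Lambda$, and $h(\gamma)=i$. Setting $C^0_\alpha:=e_\alpha$, $C^{n+1}_\alpha:=\cl(C^n_\alpha\cup\bigcup\{e_\delta\mid\delta\in\acc(C^n_\alpha)\cap E^\kappa_\sigma\})$, and $C_\alpha:=\cl(\bigcup_{n<\omega}C^n_\alpha)$, one obtains, exactly as in Claim~\ref{4171}, a $C$-sequence $\langle C_\alpha\mid\alpha<\kappa\rangle$ with $\otp(C_\alpha)<\lambda$ for all $\alpha$ and with $e_\delta\s C_\alpha$ whenever $\delta\in(\acc(C_\alpha)\cup\{\alpha\})\cap E^\kappa_\sigma$. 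Walking along $\langle C_\alpha\rangle$ and putting $c(\alpha,\beta):=\max\{h(\tau)\mid\tau\in\im(\tr(\alpha,\beta))\}$ yields a coloring which is closed --- the closure argument in the proof of Theorem~\ref{simu} uses only Lemma~\ref{lambda2} and the fact that $c$ is a maximum over the trace-image --- and which witnesses $\U(\kappa,\kappa,\theta,\mu)$: the verification of Clause~(2) of Lemma~\ref{pumpclosed} carried out in the last part of the proof of Theorem~\ref{ch} goes through verbatim, using that $\cf(\delta)=\sigma>\mu>|a|$ when one bounds $\sup\{\lambda_2(\delta,\beta)\mid\beta\in a\}$ below $\delta$. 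Thus the whole theorem reduces to manufacturing, from each of the four hypotheses, such a triple.

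For (3) and (4) this is largely bookkeeping starting from the given coloring $c_0$. Its (tail- or somewhere-) closedness pins down a stationary set that, by Fodor, we may take inside some $E^\kappa_{\sigma'}$; after enlarging $\sigma$ to dominate $\sigma'$ (and, in case (4), first passing from $\U(\kappa,2,\theta,\omega)$ to $\U(\kappa,\kappa,\theta,\omega)$ by the argument of Lemma~\ref{pumpclosed}, which also exposes the relevant cofinality), one chooses each ladder $e_\delta$ so that $c_0(\cdot,\delta)$ realizes every colour cofinally in $e_\delta$ and is closed in the needed sense, defines $h$ accordingly, and reads the club-guessing off the $\U(\kappa,2,\theta,\cdot)$-property of $c_0$; alternatively one post-composes $c_0$ with the transformation of $[\kappa]^2$ supplied by \cite[Theorem~3.1]{paper13} as in the proof of Lemma~\ref{lemma24}, checks that closedness is preserved, and invokes Lemma~\ref{pumpclosed}. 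For (1) and (2) there is no coloring to start from and the triple must be built from scratch: from the failure of $\refl({<}\mu,\kappa)$ one fixes a family $\langle S_k\mid k<\rho\rangle$ of fewer than $\mu$ stationary sets with $\bigcap_k\Tr(S_k)=\emptyset$ and threads the ladders $e_\delta$ through the $S_k$'s so as to force the guessing, while from a cofinal family of size ${<}\lambda$ in $(\ns_\mu,\s)$ together with $\theta<\mu$ one exploits the bounded supply of order-isomorphism types of clubs in $\mu$ to build the ladders on $E^\kappa_\sigma$ with guessing built in. I expect cases (1) and (2) to be the main obstacle: unlike Theorems~\ref{ch} and~\ref{l23s}, where $\diamondsuit$ or $\clubsuit^-$ delivers the guessing outright, here it must be extracted from the combinatorics of the singular cardinal $\lambda$ itself, and doing so while keeping every $\otp(C^n_\alpha)$ below $\lambda$ is the delicate point.
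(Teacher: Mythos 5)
Your plan identifies the right rough machinery (walks along a swallowed $C$-sequence, closure via Lemma~\ref{lambda2}, verifying Clause~(2) of Lemma~\ref{pumpclosed}), but it substitutes the wrong engine and leaves the hardest half of the theorem unresolved, so there is a genuine gap.

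The specific "diagonal club-guessing" property you propose as the reduction target --- a single $\delta$ whose ladder $e_\delta$ simultaneously guesses into $D$ with all cofinalities $\bar\mu<\lambda$, all colors $i<\theta$, and above all $\Lambda<\delta$ --- is the engine of Theorem~\ref{ch}, and there it is extracted from a $\diamondsuit$-sequence, which $2^\lambda=\lambda^+$ provides. None of the four hypotheses of the present theorem gives a $\diamondsuit$-sequence, and you give no argument that any of them yields this strong simultaneous guessing. What the paper actually uses is a strictly weaker object: a $\zfc$-provable club-guessing sequence on $E^{\lambda^+}_{\cf(\lambda)}$ from Eisworth--Shelah \cite[Theorem~2]{EiSh:819} (and, in the case $\cf(\lambda)=\omega$, an ``off-center'' club-guessing matrix from \cite[\S5]{MR2652193}), from which it builds a proper ideal $\mathcal I$ on $\lambda^+$. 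The corresponding guessing lemma (Claim~\ref{2122}/\ref{3192}) only guarantees, for each ideal-positive $\Gamma$ and each club $D$, a single $\gamma\in D\cap\Gamma$ landing on the trace --- nothing like the simultaneous universal quantification over $\bar\mu$, $\Lambda$, $i$ that your plan requires. The paper then builds the coloring by layering this weak guessing against the structure of $\mathcal I$.

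That ideal $\mathcal I$ is also the missing key to conditions~(1) and~(2), which you explicitly flag as ``the main obstacle'' and leave unresolved. For~(2), the paper proves (Claim~\ref{3181}(c)) that $\cf(\ns_{\cf(\lambda)},\s)<\lambda$ implies $\mathcal I$ is not weakly $\cf(\lambda)$-saturated, whence a surjection $h$ with all fibers $\mathcal I$-positive produces the witness (Claim~\ref{3183}). For~(1), the argument is by contraposition, not the direct ladder-threading you describe: assuming no closed witness exists, the paper deduces $\mathcal I$ is weakly $\theta$-saturated and $\theta$-indecomposable, then invokes Eisworth's \cite[Theorem~2(4)]{MR2652193} to obtain $\refl({<}\cf(\lambda),S^*)$ on $S^*:=E^{\lambda^+}_{\ge\theta}\cap E^{\lambda^+}_{\neq\cf(\lambda)}$, and finally upgrades this to $\refl({<}\cf(\lambda),\lambda^+)$ via Theorem~\ref{l23s}. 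There is no reason to expect the direct construction you sketch (``threading the ladders $e_\delta$ through the $S_k$'s'') to succeed without this detour through the ideal. Finally, for~(3)/(4), the paper does not post-compose with the \cite{paper13} transformation (that typically destroys closedness); it constructs a fresh coloring $d(\alpha,\beta):=\max\{c(\zeta,\gamma)\mid(\zeta,\gamma)\in[\im(\tr(\alpha,\beta))]^2\}$ and runs a three-stage stationary argument (Claim~\ref{3184}/\ref{3193}) to verify it works. Your plan, as written, would need these further ideas to be turned into a proof.
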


  The rest of this subsection is dedicated to the proof of Theorem~\ref{increasechi}.
  The proof splits into two cases based on whether $\lambda$ has uncountable or
  countable cofinality. The structures of the proofs in the two cases are similar
  to one another. We begin by identifying a useful club-guessing sequence (or, in the countable
  cofinality case, an ``off-center" club-guessing matrix) and its associated ideal.
  We use these objects to identify a $C$-sequence (or, in the countable cofinality
  case, a collection of $C$-sequences) along which we will perform walks. After
  isolating the salient properties of walks along these $C$-sequences, we
  will verify, in turn, that each of the conditions identified in the statement
  of the theorem implies the existence of a closed witness to $\U(\lambda^+,\lambda^+,\theta,\cf(\lambda))$.
  Let us begin.

  \medskip

  \textbf{Case 1: Uncountable cofinality.} Assume in this case that $\cf(\lambda) > \omega$.
  By \cite[Theorem~2]{EiSh:819}, we may find a stationary $\Delta\s E^{\lambda^+}_{\cf(\lambda)}$
  and a sequence $\vec e=\langle e_\delta\mid \delta\in \Delta\rangle$ such that
  \begin{itemize}
    \item for every $\delta\in \Delta$, $e_\delta$ is a club in $\delta$ of order type $\cf(\lambda)$;
    \item for every $\delta\in \Delta$, $\langle \cf(\gamma)\mid \gamma\in \nacc(e_\delta)\rangle$
      is strictly increasing and converging to $\lambda$;
    \item for every club $D$ in $\lambda^+$, there exists $\delta\in \Delta$ such that $e_\delta\s D$.
  \end{itemize}

  Now, define $\mathcal I\s\mathcal P(\lambda^+)$ by letting $A\in\mathcal P(\lambda^+)$
  be in $\mathcal I$ iff there exists a club $D\s\lambda^+$ such that for every
  $\delta\in \Delta\cap D$, we have $\sup(\nacc(e_\delta)\cap D\cap A)<\delta$.

  \begin{claim}\label{3181}
    $\mathcal{I}$ satisfies all of the following properties:
    \begin{enumerate}
      \item[(a)] $\mathcal I$ is a $\cf(\lambda)$-complete proper ideal over $\lambda^+$, extending $\ns_{\lambda^+}$;
      \item[(b)] $\mathcal I$ is $\tau$-indecomposable for all $\tau\in\reg(\lambda)\setminus\{\cf(\lambda)\}$;
      \item[(c)] if $\cf(\ns_{\cf(\lambda)},\s)<\lambda$,
      then $\mathcal I$ is not weakly $\cf(\lambda)$-saturated;
      \item[(d)] for all $\sigma<\lambda$, $E^{\lambda^+}_{\ge\sigma}\in\mathcal I^*$.
    \end{enumerate}
  \end{claim}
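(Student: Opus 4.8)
The plan is to derive clauses (a), (b) and (d) by routinely unwinding the definition of $\mathcal I$, using the two salient features of $\vec e=\langle e_\delta\mid\delta\in\Delta\rangle$ — that each $\nacc(e_\delta)$ has size $\cf(\lambda)$ and is cofinal in $\delta$, and that $\langle\cf(\gamma)\mid\gamma\in\nacc(e_\delta)\rangle$ is strictly increasing with supremum $\lambda$ — while clause (c) is the genuinely combinatorial one. For (a): downward closure of $\mathcal I$ is immediate; $\cf(\lambda)$-completeness follows because, if each $A_\xi\in\mathcal I$ ($\xi<\tau<\cf(\lambda)$) is witnessed by a club $D_\xi$, then the club $D:=\bigcap_{\xi<\tau}D_\xi$ witnesses $\bigcup_\xi A_\xi\in\mathcal I$, as $\nacc(e_\delta)\cap D\cap\bigcup_\xi A_\xi$ is, for $\delta\in\Delta\cap D$, a union of $\tau<\cf(\delta)$ sets each bounded below $\delta$; any club disjoint from a nonstationary $A$ witnesses $A\in\mathcal I$, so $\mathcal I\supseteq\ns_{\lambda^+}$; and $\mathcal I$ is proper, for if a club $D$ witnessed $\lambda^+\in\mathcal I$ we could use the guessing property to fix $\delta\in\Delta$ with $e_\delta\s D$, whence $\delta\in\Delta\cap D$ while $\nacc(e_\delta)\cap D=\nacc(e_\delta)$ is cofinal in $\delta$ — a contradiction.

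For (d), given $\sigma<\lambda$ it suffices to show $E^{\lambda^+}_{<\sigma}\in\mathcal I$, and here $\lambda^+$ itself is a witnessing club: for every $\delta\in\Delta$, since $\langle\cf(\gamma)\mid\gamma\in\nacc(e_\delta)\rangle$ is strictly increasing with supremum $\lambda>\sigma$, the set $\{\gamma\in\nacc(e_\delta)\mid\cf(\gamma)<\sigma\}$ is a proper initial segment of $\nacc(e_\delta)$ in its increasing enumeration, hence has order type $<\cf(\lambda)=\cf(\delta)$ and is bounded below $\delta$. For (b): if $\tau\in\reg(\lambda)$ and $\tau<\cf(\lambda)$, then $\tau$-indecomposability (no $\s$-increasing $\tau$-chain from $\mathcal I$ having union in $\mathcal I^+$) is immediate from the $\cf(\lambda)$-completeness established in (a); and if $\cf(\lambda)<\tau<\lambda$, then, given a $\s$-increasing $\langle A_i\mid i<\tau\rangle$ with each $A_i\in\mathcal I$ witnessed by a club $D_i$, let $D:=\bigcap_{i<\tau}D_i$ (a club, as $\tau<\lambda^+=\cf(\lambda^+)$); for any $\delta\in\Delta\cap D$, since $|\nacc(e_\delta)|=\cf(\lambda)<\tau$ and $\tau$ is regular there is $i^*<\tau$ with $\nacc(e_\delta)\cap D\cap\bigcup_{i<\tau}A_i\s A_{i^*}$, so $\sup(\nacc(e_\delta)\cap D\cap\bigcup_iA_i)\le\sup(\nacc(e_\delta)\cap D_{i^*}\cap A_{i^*})<\delta$, and thus $D$ witnesses $\bigcup_iA_i\in\mathcal I$.

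For (c), write $\rho:=\cf(\lambda)$ and $\theta^*:=\cf(\ns_\rho,\s)<\lambda$. I would first reduce the statement to producing $\rho$-many pairwise disjoint $\mathcal I$-positive sets (padding one of them with the complement of their union yields a partition of $\lambda^+$ into $\rho$ positive sets), and record that, by the guessing property of $\vec e$, a set $A$ is $\mathcal I$-positive whenever $A\cap\nacc(e_\delta)$ is cofinal in $\delta$ for every $\delta\in\Delta$. For the construction, fix an increasing sequence $\langle\lambda_k\mid k<\rho\rangle$ cofinal in $\lambda$ with $\lambda_0>\theta^*$, and fix clubs $\langle d_\zeta\mid\zeta<\theta^*\rangle$ of $\rho$ such that every club of $\rho$ contains some $d_\zeta$ (such a family exists precisely because $\cf(\ns_\rho,\s)=\theta^*$). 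For $\gamma<\lambda^+$ with $\cf(\gamma)\ge\lambda_0$, let $k(\gamma)$ be the unique $k$ with $\cf(\gamma)\in[\lambda_k,\lambda_{k+1})$; because the cofinalities along $\nacc(e_\delta)$ strictly increase to $\lambda$, the map $\gamma\mapsto k(\gamma)$ is non-decreasing along $\nacc(e_\delta)$ with cofinal range $K_\delta\s\rho$, and $A\cap\nacc(e_\delta)$ is cofinal in $\delta$ exactly when $\{k(\gamma)\mid\gamma\in A\cap\nacc(e_\delta),\ \cf(\gamma)\ge\lambda_0\}$ is cofinal in $\rho$. One then defines the desired partition $\langle B_j\mid j<\rho\rangle$ by sending $\gamma$ into $B_j$ according to the value at $k(\gamma)$ of a single function $\rho\to\rho$ assembled from the guessing family $\langle d_\zeta\rangle$ together with a partition of $\rho$ into $\rho$ stationary sets. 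The crux — and the main obstacle in the whole claim — is exactly this last step: $K_\delta$ need not be closed in $\rho$, so a bare appeal to stationarity in $\rho$ would not force $B_j\cap\nacc(e_\delta)$ to be cofinal in $\delta$; indeed no fixed $\rho$-colouring of $\rho$ meets every colour cofinally inside a genuinely arbitrary cofinal subset of $\rho$. The point of the hypothesis $\cf(\ns_\rho,\s)<\lambda$ is precisely to supply — via the guessing family on $\rho$ and the fact that fewer than $\lambda$ clubs are in play — enough control over the $K_\delta$'s (e.g. thinning them so that each is captured by one of the $d_\zeta$'s) that a single colouring does the job.
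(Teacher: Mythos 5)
Your treatments of clauses (a), (b) and (d) are correct and run along the same lines as the paper (in (b) the paper handles the two regimes $\tau<\cf(\lambda)$ and $\tau>\cf(\lambda)$ in one breath via ``$\cf(\delta)\neq\cf(\tau)$,'' but your two-case split is equivalent). Clause (c), however, you do not actually prove: you set up a framework (the map $\gamma\mapsto k(\gamma)$, the ranges $K_\delta$, the reduction to cofinal-in-every-$\nacc(e_\delta)$ sets) and you correctly diagnose the obstacle --- no single colouring of $\cf(\lambda)$ can hit every colour cofinally inside every cofinal $K_\delta$ --- but the sentence ``the point of the hypothesis is precisely to supply \dots enough control over the $K_\delta$'s'' gestures at a resolution without supplying one.

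The missing idea in the paper is to run the argument by contradiction with $\lambda$-many \emph{candidate} partitions rather than a single one. Writing $\rho:=\cf(\lambda)$, fix a family $\langle C^\iota\mid\iota<\lambda\rangle$ generating $\ns_\rho$ and a strictly increasing \emph{continuous} $\langle\lambda_j\mid j<\rho\rangle$ cofinal in $\lambda$. For each $\iota$, one re-indexes cofinalities along $C^\iota$ via $h^\iota(\gamma):=\min\{j\mid\cf(\gamma)\le\lambda_{C^\iota(j)}\}$ and composes with a fixed $\varphi:\rho\to\rho$ each of whose fibres meets $\nacc(\rho)$ cofinally, obtaining a partition $\langle\Gamma^\iota_i\mid i<\rho\rangle$ of $\lambda^+$. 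If \emph{none} of these partitions witnesses non-weak-$\rho$-saturation, then for each $\iota$ there is a club $D^\iota$ and an $i(\iota)$ killing $\Gamma^\iota_{i(\iota)}$; intersecting the $\lambda$-many $D^\iota$ still gives a club $D$, so one finds $\delta\in\Delta$ with $e_\delta\s D$. Because $\langle\lambda_j\rangle$ is continuous, the set $C:=\{j<\rho\mid\lambda_j\in\acc^+(\{\cf(\gamma)\mid\gamma\in\nacc(e_\delta)\})\}$ is \emph{closed} (not merely cofinal) in $\rho$, hence some $C^\iota\s C$; for that $\iota$ one checks $h^\iota[\nacc(e_\delta)]\supseteq\nacc(\rho)\setminus\{0\}$, so $\Gamma^\iota_{i(\iota)}\cap\nacc(e_\delta)$ is cofinal in $\delta$, contradicting the choice of $D^\iota$. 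Two specific features of this argument are absent from your sketch and are what make it work: the closure of $C$ (obtained by passing from $K_\delta$ to the accumulation points of the cofinality set, using continuity of $\langle\lambda_j\rangle$), which is what lets the club-guessing on $\rho$ bite, and the diagonalization over all $\lambda$-many partitions simultaneously, which is where the bound $\cf(\ns_\rho,\s)<\lambda<\lambda^+$ is really used. Your suggested device of a partition of $\rho$ into stationary sets does not help here, precisely because (as you yourself note) $K_\delta$ need not be closed.
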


  \begin{proof}
    (a)  It is clear that $\mathcal I$ is downward closed and contains all
    nonstationary subsets of $\lambda^+$. Also, by the choice of $\vec e$, we know
    that $\lambda^+\notin\mathcal I$. Finally, since $\cf(\delta)=\cf(\lambda)$ for
    all $\delta\in\Delta$, and since the intersection of fewer than $\cf(\lambda)$-many clubs
    in $\lambda^+$ is a club, we infer that $\mathcal I$ is $\cf(\lambda)$-complete.

    (b) Suppose that $\tau\in\reg(\lambda)\setminus\{\cf(\lambda)\}$ and that
    $\vec A=\langle A_j\mid j<\tau\rangle$ is a $\s$-increasing sequence of elements
    from $\mathcal I$. We shall show that $A:=\bigcup_{j<\tau}A_j$ is in $\mathcal I$, as well.
    For each $j<\tau$, pick a witnessing club $D_j$. We claim that the club
    $D:=\bigcap_{j<\tau}D_j$ witnesses that $A\in\mathcal I$. To see this, let
    $\delta\in\Delta\cap D$ be arbitrary. Then $\sup(\nacc(e_\delta)\cap D\cap A_j)<\delta$
    for all $j<\tau$. As $\cf(\delta)\neq\cf(\tau)$ and $\vec A$ is $\s$-increasing, we
    infer that $\sup(\nacc(e_\delta)\cap D\cap A)<\delta$, as well.

    (c) Using the fact that $\cf(\ns_{\cf(\lambda)},\s)<\lambda$, fix a sequence
    $\langle C^\iota\mid \iota<\lambda\rangle$ of clubs in $\cf(\lambda)$ such that,
    for every club $C$ in $\cf(\lambda)$, there exists $\iota<\lambda$ with $C^\iota\s C$.
    For each $\iota<\lambda$ and $j<\cf(\lambda)$, we let $C^\iota(j)$ denote the
    unique $\alpha\in C^\iota$ with $\otp(C^\iota\cap\alpha)=j$. Let
    $\langle \lambda_j\mid j<\cf(\lambda)\rangle$ be a strictly increasing and
    continuous sequence of cardinals converging to $\lambda$. For every $\iota<\lambda$,
    define $h^\iota:\lambda^+\rightarrow\cf(\lambda)$ by setting, for all $\gamma < \lambda^+$,
    $$
      h^\iota(\gamma):=\min\{ j<\cf(\lambda)\mid \cf(\gamma)\le\lambda_{C^\iota(j)}\}.
    $$
    Fix a surjection $\varphi:\cf(\lambda)\rightarrow\cf(\lambda)$ such that
    $|\varphi^{-1}\{i\} \cap \nacc(\cf(\lambda))|=\cf(\lambda)$ for all $i<\cf(\lambda)$,
    and then let $\Gamma^\iota_i:=\{ \gamma<\lambda^+\mid \varphi(h^\iota(\gamma))=i\}.$

    We claim that there is $\iota < \lambda$ for which $\langle \Gamma^\iota_i \mid
    i < \cf(\lambda) \rangle$ is a counterexample to the weak $\cf(\lambda)$-saturation of $\mathcal{I}$.
    It is trivial to see that, for all $\iota<\lambda$, $\langle \Gamma^\iota_i\mid
    i<\cf(\lambda)\rangle$ is a partition of $\lambda^+$. Thus, it suffices to prove
    that there exists some $\iota<\lambda$ such that, for all $i<\cf(\lambda)$,
    $\Gamma^\iota_i\in\mathcal I^+$. Suppose that this is not the case, and, for each
    $\iota<\lambda$, fix a club $D^\iota\s\lambda^+$ and an $i(\iota)<\cf(\lambda)$
    such that, for every $\delta\in\Delta\cap D^\iota$, we have $\sup(\nacc(e_\delta)\cap
    D^\iota\cap \Gamma^\iota_{i(\iota)})<\delta$. Consider the club
    $D:=\bigcap_{\iota<\lambda}D^\iota$. Pick $\delta\in \Delta$ such that $e_\delta\s D$.
    Let
    $$
      C:=\{ j<\cf(\lambda)\mid \lambda_j\in\acc^+(\{\cf(\gamma)\mid
      \gamma\in\nacc(e_\delta)\})\},
    $$
    and note that $C$ is a club in $\cf(\lambda)$.
    Find $\iota<\lambda$ such that $C^\iota\s C$. For each $j<\cf(\lambda)$, as
    $C^\iota(j+1)\in C$, we have $\lambda_{C^{\iota}(j+1)}\in\acc^+(\{\cf(\gamma)\mid
    \gamma\in\nacc(e_\delta)\})$, so there exists some $\gamma\in\nacc(e_\delta)$
    such that $h^\iota(\gamma)=j+1$. Thus, $h^\iota[\nacc(e_\delta)] \supseteq
    \nacc(\cf(\lambda)) \setminus \{0\}$, so, by the choice of $\varphi$, it follows that,
    for all $i<\cf(\lambda)$,
    $$
      |\{\gamma\in\nacc(e_\delta)\mid \varphi(h^\iota(\gamma))=i\}|=\cf(\lambda)=\otp(e_\delta).
    $$
    In particular, $\sup(\nacc(e_\delta)\cap \Gamma^\iota_{i(\iota)})=\delta$,
    contradicting the fact that $e_\delta\s D\s D^\iota$.

    (d) By the choice of $\vec e$, we have $E^{\lambda^+}_{<\sigma}\in\mathcal I$ for all $\sigma<\lambda$.
  \end{proof}

  Next, by a standard club-swallowing trick (see the procedure before Claim~\ref{claim3143}), we may find a $C$-sequence
  $\vec C=\langle C_\alpha\mid\alpha<\lambda^+\rangle$ such that
  \begin{itemize}
    \item for all $\alpha\in\acc(\lambda^+)$, $C_\alpha$ is a club in $\alpha$ of
      order-type $<\lambda$;
    \item for all $\alpha\in\acc(\lambda^+)$ and $\delta\in(\acc(C_\alpha)\cup\{\alpha\})
      \cap\Delta$, we have $e_\delta\s C_\alpha$.
  \end{itemize}
  Let $\tr:[\lambda^+]^2\rightarrow{}^{<\omega}\lambda^+$ denote the function derived from walking along $\vec{C}$.

  \begin{claim}\label{2122}
    Suppose that $\mathcal A\s[\lambda^+]^{<\cf(\lambda)}$ is a family consisting
    of $\lambda^+$-many pairwise disjoint sets, $D$ is a club in $\lambda^+$, and
    $\Gamma\in \mathcal{I}^+$. Then there exist
    $\gamma\in D\cap\Gamma$, $a \in \mathcal{A}$, and $\epsilon < \gamma$ such that
    \begin{itemize}
      \item $\gamma < a$;
      \item for all $\alpha\in(\epsilon,\gamma)$ and all $\beta \in a$,
        we have $\gamma\in\im(\tr(\alpha,\beta))$.
    \end{itemize}
  \end{claim}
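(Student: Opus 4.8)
The plan is to find a single $\delta\in\Delta$ together with a point $\gamma\in e_\delta\cap D\cap\Gamma$ lying below some member $a$ of $\mathcal A$, arranged so that every walk from a point just below $\gamma$ up to a point of $a$ is forced first to reach $\delta$ and then to step down exactly onto $\gamma$. Since $\Gamma\in\mathcal I^+$, unpacking the definition of $\mathcal I$ with the club $D$ gives $\delta\in\Delta\cap D$ for which $\nacc(e_\delta)\cap D\cap\Gamma$ is cofinal in $\delta$. As $\mathcal A$ consists of $\lambda^+$-many pairwise disjoint sets, the minima of its members are unbounded in $\lambda^+$, so fix $a\in\mathcal A$ with $\delta<a$. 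Put $\Lambda:=\sup\{\lambda_2(\delta,\beta)\mid\beta\in a\}$, which is below $\delta$ because $|a|<\cf(\lambda)=\cf(\delta)$; put $T:=\{\delta\}\cup\bigcup_{\beta\in a}\im(\tr(\delta,\beta))$, a set of size $<\lambda$; and fix an infinite cardinal $\nu<\lambda$ with $\nu>|T|$ and $\nu>\otp(C_\tau)$ for all $\tau\in T$ (possible since $\lambda$ is a limit cardinal).

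The crucial step is the choice of $\gamma$. Because $\langle\cf(\gamma)\mid\gamma\in\nacc(e_\delta)\rangle$ is strictly increasing and converges to $\lambda$, any cofinal subset of $\nacc(e_\delta)$ realizes cofinalities that are cofinal in $\lambda$; in particular, since $\nacc(e_\delta)\cap D\cap\Gamma$ is cofinal in $\delta$ and $\Lambda<\delta$, we may pick $\gamma\in\nacc(e_\delta)\cap D\cap\Gamma$ with $\gamma>\Lambda$ and $\cf(\gamma)>\nu$. Then for each $\tau\in T$ we have $\otp(C_\tau\cap\gamma)\le\otp(C_\tau)<\nu<\cf(\gamma)$, so $\sup(C_\tau\cap\gamma)<\gamma$; hence $\epsilon:=\max(\{\Lambda\}\cup\{\sup(C_\tau\cap\gamma)\mid\tau\in T\})$ is a supremum of fewer than $\cf(\gamma)$ ordinals each below $\gamma$, so $\epsilon<\gamma$. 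Note $\gamma\in D\cap\Gamma$, $\gamma<\delta<a$, and $\epsilon<\gamma$, so it remains only to check the walking property.

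To that end, fix $\alpha\in(\epsilon,\gamma)$ and $\beta\in a$. Since $\lambda_2(\delta,\beta)\le\Lambda\le\epsilon<\alpha<\gamma<\delta<\beta$, Lemma~\ref{lambda2} gives $\tr(\delta,\beta)\sq\tr(\alpha,\beta)$, and, setting $\ell:=\rho_2(\delta,\beta)$ and $\eta:=\tr(\delta,\beta)(\ell-1)\in T$, we have $\delta\in C_\eta$ with either $\delta\in\nacc(C_\eta)$ or $\delta\in\acc(C_\eta)$. If $\delta\in\nacc(C_\eta)$, then $\sup(C_\eta\cap\delta)\le\lambda_2(\delta,\beta)<\alpha$, so $\tr(\alpha,\beta)(\ell)=\min(C_\eta\setminus\alpha)=\delta$; and since $\gamma\in e_\delta\s C_\delta$ and $\sup(C_\delta\cap\gamma)\le\epsilon<\alpha$, the next step is $\tr(\alpha,\beta)(\ell+1)=\min(C_\delta\setminus\alpha)=\gamma$. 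If instead $\delta\in\acc(C_\eta)$ (so $\eta\in\acc(\lambda^+)$ and $\delta\in\acc(C_\eta)\cap\Delta$), then the swallowing property of $\vec C$ yields $e_\delta\s C_\eta$, and, since $\gamma\in e_\delta\s C_\eta$ and $\sup(C_\eta\cap\gamma)\le\epsilon<\alpha$, we get $\tr(\alpha,\beta)(\ell)=\min(C_\eta\setminus\alpha)=\gamma$. In either case $\gamma\in\im(\tr(\alpha,\beta))$, as required.

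I expect the main obstacle to be the joint feasibility of the demands on $\gamma$ in the second paragraph: it must lie in $D\cap\Gamma$, above the common return point $\Lambda$ of the finitely many walks from $\delta$ into $a$, and with cofinality large enough to avoid being an accumulation point of any of the local clubs $C_\tau$ ($\tau\in T$) encountered along those walks. It is precisely to make this reconciliation possible that the club-guessing sequence $\vec e$ was chosen with $\langle\cf(\gamma)\mid\gamma\in\nacc(e_\delta)\rangle$ strictly increasing and unbounded in $\lambda$; granting the choice of $\gamma$, the remaining walk computation is a routine application of Lemma~\ref{lambda2} and the defining properties of $\vec C$.
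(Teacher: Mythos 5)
Your proof is correct and takes essentially the same approach as the paper: extract $\delta\in\Delta$ from $\Gamma\in\mathcal I^+$, fix $a>\delta$, take $\Lambda$ to absorb the bound given by Lemma~\ref{lambda2}, pick $\gamma\in\nacc(e_\delta)\cap D\cap\Gamma$ with cofinality large enough to dominate the relevant clubs, and then split the walk computation into the two cases of Lemma~\ref{lambda2} using the club-swallowing property of $\vec C$. The only cosmetic difference is that you bound the obstruction via $\otp(C_\tau)$ for $\tau\in T$ and a threshold cardinal $\nu$, whereas the paper bounds it via the cardinality of $C:=C_\delta\cup\bigcup_{\tau\in T}C_\tau$ directly and chooses $\cf(\gamma)>|C|$; both are valid and the calculations are otherwise identical.
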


\begin{proof} As $\Gamma\notin\mathcal I$,
let us fix some $\delta\in\Delta$ such that $\sup(\nacc(e_\delta)\cap D\cap\Gamma)=\delta$.
Pick an arbitrary $a\in\mathcal A$ with $a>\delta$, and put
\begin{itemize}
\item $\Lambda:=\sup\{\lambda_2(\delta,\beta)\mid \beta\in a\}$; and
\item $C:=C_\delta\cup\bigcup_{\beta\in a}\bigcup_{\tau\in\im(\tr(\delta,\beta))}C_\tau$.
\end{itemize}
As $|a|<\cf(\lambda)=\cf(\delta)$, we have $\Lambda<\delta$ and $|C|<\lambda$.
Pick $\gamma\in\nacc(e_\delta)\cap D\cap\Gamma$ such that $\gamma>\Lambda$ and $\cf(\gamma)>|C|$.
Let $\epsilon:=\max\{\Lambda,\sup(C\cap\gamma)\}$. As $\cf(\gamma)>|C|$, we have $\epsilon<\gamma$.
We shall show that $\gamma$, $a$ and $\epsilon$ are as sought.

To this end, fix arbitrary $\alpha\in(\epsilon,\gamma)$ and $\beta\in a$.
We have $$\lambda_2(\delta,\beta)\le\Lambda\le\epsilon<\alpha<\gamma<\delta<\beta,$$
so, by Lemma~\ref{lambda2}, $\tr(\delta,\beta)\sq \tr(\alpha,\beta)$.
Let $\ell:=\rho_2(\delta,\beta)$.
There are now two cases to consider.

$\br$ If $\delta\in\nacc(C_{\tr(\delta,\beta)(\ell-1)})$,
then $\sup(C_{\tr(\delta, \beta)(\ell-1)} \cap \delta) \leq \lambda_2(\delta,\beta)$ and hence
$$[\alpha,\delta)\cap C_{\tr(\delta,\beta)(\ell-1)}\s(\lambda_2(\delta,\beta),\delta)\cap C_{\tr(\delta,\beta)(\ell-1)}=\emptyset.$$
Consequently, $\tr(\alpha,\beta)(\ell)=\min(C_{\tr(\alpha,\beta)(\ell-1)}\setminus\alpha)=\min(C_{\tr(\delta,\beta)(\ell-1)}\setminus\alpha)=\delta$.
As $$[\alpha,\gamma)\cap C_{\tr(\alpha,\beta)(\ell)}\s (\epsilon,\gamma)\cap C=\emptyset$$
and $\gamma\in e_\delta\s C_\delta=C_{\tr(\alpha,\beta)(\ell)}\s C$,
we have $\tr(\alpha,\beta)(\ell+1)=\min(C_{\tr(\alpha,\beta)(\ell)}\setminus\alpha)=\gamma$.

$\br$ If $\delta\in\acc(C_{\tr(\delta,\beta)(\ell-1)})$, then
$$[\alpha,\gamma)\cap  C_{\tr(\delta,\beta)(\ell-1)}\s(\epsilon,\gamma)\cap C=\emptyset.$$
As $\delta\in\acc(C_{\tr(\delta,\beta)(\ell-1)})\cap\Delta$,
we have $\gamma\in e_\delta\s C_{\tr(\delta,\beta)(\ell-1)}\s C$.
It follows that $\tr(\alpha,\beta)(\ell)=\min(C_{\tr(\delta,\beta)(\ell-1)}\setminus\alpha)=\gamma$.
\end{proof}

  We are now ready to begin verifying, in turn, that the existence of a closed witness to
  $\U(\lambda^+, \lambda^+, \theta, \cf(\lambda))$ follows from each of the
  conditions isolated in the statement of the theorem. We begin with condition (2).

  \begin{claim}\label{3183}
    Suppose that $\mathcal I$ is not weakly $\theta$-saturated. Then there exists
    a closed witness to $\U(\lambda^+,\lambda^+,\theta,\cf(\lambda))$.
  \end{claim}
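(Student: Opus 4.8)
The plan is to reuse the coloring construction of Theorem~\ref{simu}, now with the $C$-sequence $\vec C$ and walk function $\tr$ fixed above in Case~1, feeding in a coloring $h$ read off from the failure of weak $\theta$-saturation. First, using the hypothesis, fix a partition $\langle\Gamma_i\mid i<\theta\rangle$ of $\lambda^+$ with $\Gamma_i\in\mathcal I^+$ for every $i<\theta$, and define $h:\lambda^+\rightarrow\theta$ by letting $h(\gamma)$ be the unique $i<\theta$ with $\gamma\in\Gamma_i$. Then define $c:[\lambda^+]^2\rightarrow\theta$ by
$$
  c(\alpha,\beta):=\max\{h(\tau)\mid\tau\in\im(\tr(\alpha,\beta))\}
$$
for all $\alpha<\beta<\lambda^+$; this is well-defined since $\tr(\alpha,\beta)$ is a finite sequence and $\theta$ is a limit ordinal.

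The verification that $c$ is closed is verbatim the corresponding claim from the proof of Theorem~\ref{simu}: given $\beta<\lambda^+$, $i\le\theta$, and $A\s D^c_{\le i}(\beta)$ with $\gamma:=\sup(A)$ below $\beta$, pick $\alpha\in A$ above $\lambda_2(\gamma,\beta)$; by Lemma~\ref{lambda2}, $\im(\tr(\gamma,\beta))\s\im(\tr(\alpha,\beta))$, so $c(\gamma,\beta)\le c(\alpha,\beta)\le i$, and hence $\gamma\in D^c_{\le i}(\beta)$.

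To see that $c$ witnesses $\U(\lambda^+,\lambda^+,\theta,\cf(\lambda))$, I would verify Clause~(2) of Lemma~\ref{pumpclosed} — whose standing hypothesis $\omega\le\cf(\lambda)<\lambda^+$ holds since we are in Case~1 — and then invoke its implication $(2)\implies(3)$. So fix a family $\mathcal A\s[\lambda^+]^{<\cf(\lambda)}$ consisting of $\lambda^+$-many pairwise disjoint sets, a club $D\s\lambda^+$, and a color $i<\theta$. Put $\Gamma:=\bigcup_{i<j<\theta}\Gamma_j$; since $\theta$ is regular we have $\Gamma\supseteq\Gamma_{i+1}\in\mathcal I^+$, and as $\mathcal I^+$ is upward closed, $\Gamma\in\mathcal I^+$. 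Applying Claim~\ref{2122} to $\mathcal A$, $D$ and $\Gamma$ yields $\gamma\in D\cap\Gamma$, $a\in\mathcal A$, and $\epsilon<\gamma$ with $\gamma<a$ such that $\gamma\in\im(\tr(\alpha,\beta))$ whenever $\alpha\in(\epsilon,\gamma)$ and $\beta\in a$. Since $\gamma\in\Gamma$, we have $h(\gamma)>i$, so $c(\alpha,\beta)\ge h(\gamma)>i$ for all such $\alpha$ and $\beta$, which is exactly what Clause~(2) requires.

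I do not expect a real obstacle here: the heavy combinatorial work — constructing the ideal $\mathcal I$ and, especially, analyzing walks along $\vec C$ so as to extract Claim~\ref{2122} — has already been carried out. The only points needing (minor) care are verifying that passing to the tail union $\bigcup_{i<j<\theta}\Gamma_j$ preserves $\mathcal I$-positivity, and noting that defining $c$ as the composition $\max\circ h\circ\tr$ is precisely what turns the conclusion ``$\gamma\in\im(\tr(\alpha,\beta))$'' of Claim~\ref{2122} into the conclusion ``$c(\alpha,\beta)>i$'' demanded by Lemma~\ref{pumpclosed}(2).
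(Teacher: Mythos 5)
Your proposal is correct and follows the paper's argument almost verbatim: define $c$ by composing $h$ (a function with all fibers $\mathcal I$-positive) with $\max$ and $\tr$, note that closedness is inherited via Lemma~\ref{lambda2} exactly as in Theorem~\ref{simu}, and then discharge the $\U$-property by verifying Clause~(2) of Lemma~\ref{pumpclosed} through an application of Claim~\ref{2122} to an $\mathcal I$-positive set of ordinals colored above $i$. The only cosmetic difference is your choice of positive set $\bigcup_{i<j<\theta}\Gamma_j$ versus the paper's $h^{-1}\{i+1\}\cap E^{\lambda^+}_{\ge\cf(\lambda)}$; both work (and regularity of $\theta$ is not actually needed to see $i+1<\theta$).
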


  \begin{proof}
    Fix a function $h:\lambda^+\rightarrow\theta$ such that, for all $i<\theta$,
    $h^{-1}\{i\}\in I^+$. Derive a closed coloring $c:[\lambda^+]^2\rightarrow\theta$
    as in the proof of Theorem~\ref{simu} by letting, for all $\alpha < \beta < \lambda^+$,
    $$c
      (\alpha,\beta):=\max\{ h(\tau)\mid \tau\in\im(\tr(\alpha,\beta))\}.
    $$
    To show that $c$ witnesses $\U(\lambda^+,\lambda^+,\theta,\cf(\lambda))$,
    it suffices to verify Clause~(2) of Lemma~\ref{pumpclosed}. To this end,
    fix a family $\mathcal A\s[\lambda^+]^{<\cf(\lambda)}$ consisting of
    $\lambda^+$-many pairwise disjoints sets, a club $D\s\lambda^+$, and a color $i<\theta$.
    As $h^{-1}\{i+1\}\in\mathcal I^+$ and $E^{\lambda^+}_{\ge\cf(\lambda)}\in\mathcal I^*$,
    Claim~\ref{2122} provides us with $\gamma\in D\cap h^{-1}\{i+1\}\cap
    E^{\lambda^+}_{\ge\cf(\lambda)}$, $a \in \mathcal{A}$, and $\epsilon < \gamma$
    such that $\gamma < a$ and, for all $\alpha\in(\epsilon,\gamma)$ and all $\beta \in a$,
    we have $\gamma\in\im(\tr(\alpha,\beta))$. Then $c(\alpha,\beta)\ge h(\gamma)>i$
    for all $\alpha\in(\epsilon,\gamma)$ and all $\beta\in a$, so $\gamma$, $a$,
    and $\epsilon$ witness the conclusion of Clause~(2) of Lemma~\ref{pumpclosed}.
  \end{proof}

  In particular, it follows from Claim~\ref{3181}(3) that, if
  $\cf(\ns_{\cf(\lambda)},\s)<\lambda$ and $\theta\le\cf(\lambda)$, then there
  exists a closed witness to $\U(\lambda^+,\lambda^+,\theta,\cf(\lambda))$.

  We now turn our attention to conditions (3) and (4) from the statement of the
  theorem, which are taken care of by the next claim.

  \begin{claim}\label{3184}
    Suppose that $c:[\lambda^+]^2\rightarrow\theta$ is a coloring,
    $\sigma\in\reg(\lambda)$, and one of the following conditions holds:
    \begin{itemize}
      \item $c$ is a somewhere-closed witness to $\U(\lambda^+,2,\theta,\omega)$; or
      \item $c$ is an $E^{\lambda^+}_{\ge\sigma}$-closed witness to $\U(\lambda^+,2,\theta,2)$.
    \end{itemize}
    Then there exists a closed witness to $\U(\lambda^+,\lambda^+,\theta,\cf(\lambda))$.
  \end{claim}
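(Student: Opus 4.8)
The plan is to feed the given coloring $c$ into a walk along the $C$-sequence $\vec C$ fixed above (the one satisfying $e_\delta\s C_\alpha$ whenever $\delta\in(\acc(C_\alpha)\cup\{\alpha\})\cap\Delta$) and thereby obtain a coloring $d:[\lambda^+]^2\rightarrow\theta$ which we argue is a \emph{closed} witness to $\U(\lambda^+,\lambda^+,\theta,\cf(\lambda))$. The natural candidate is a coloring of the form
\[
  d(\alpha,\beta):=\max\{c(\alpha,\tau)\mid\tau\in\im(\tr(\alpha,\beta))\}
\]
(or a close variant built from the consecutive stops of the walk). Write $\Sigma$ for the set on which $c$ is closed: in the first case $\Sigma$ is stationary and $c$ witnesses $\U(\lambda^+,2,\theta,\omega)$, while in the second case $\Sigma=E^{\lambda^+}_{\ge\sigma}$, so $\Sigma\in\mathcal I^*$ by Claim~\ref{3181}(d), and $c$ witnesses $\U(\lambda^+,2,\theta,2)$. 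Both cases are treated essentially uniformly, the tail-closed one being cleaner; in the somewhere-closed one we will first want to refine $\Sigma$ to a set concentrating on high cofinalities.

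To see $d$ is closed, fix $\beta<\lambda^+$, $i\le\theta$, and a limit point $\gamma<\beta$ of $D^d_{\le i}(\beta)$. Picking $\alpha\in D^d_{\le i}(\beta)$ with $\lambda_2(\gamma,\beta)<\alpha<\gamma$ and applying Lemma~\ref{lambda2} gives $\tr(\gamma,\beta)\sq\tr(\alpha,\beta)$, so $c(\alpha,\tau)\le d(\alpha,\beta)\le i$ for every $\tau\in\im(\tr(\gamma,\beta))$; letting $\alpha$ run over a cofinal piece of $\gamma\cap D^d_{\le i}(\beta)$ above $\lambda_2(\gamma,\beta)$ shows that $\gamma$ is an accumulation point of $D^c_{\le i}(\tau)$ for each such $\tau$, whence closedness of $c$ together with $\gamma\in\Sigma$ yields $c(\gamma,\tau)\le i$ and hence $d(\gamma,\beta)\le i$. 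The real work is to push this through for \emph{every} $\gamma$, including $\gamma\notin\Sigma$: here one case-splits via Lemma~\ref{lambda2} according to whether $\gamma$ appears on $\tr(\alpha,\beta)$ or only as an accumulation point of some $C_\delta$ with $\delta$ on the trace, uses Corollary~\ref{rho2_closed} to keep the number of steps under control, and exploits the coherence of $\vec C$ (together with $\Sigma\in\mathcal I^*$ in the tail-closed case, or the refinement of $\Sigma$ in the other) to dispose of the remaining ordinals.

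For the witnessing property, by the implication $(2)\implies(3)$ of Lemma~\ref{pumpclosed} it suffices to verify Clause~(2) of that lemma. So fix a family $\mathcal A\s[\lambda^+]^{<\cf(\lambda)}$ of $\lambda^+$-many pairwise disjoint sets, a club $D\s\lambda^+$, and $i<\theta$. The key step is to produce an $\mathcal I$-positive set $\Gamma$ such that $\sup(D^c_{\le i}(\gamma))<\gamma$ for every $\gamma\in\Gamma$, i.e.\ such that $c(\cdot,\gamma)$ is eventually $>i$ below $\gamma$; this is extracted from the closedness of $c$ (which renders each $D^c_{\le i}(\gamma)$ ``$\Sigma$-closed''), the witnessing property of $c$, and once again Claim~\ref{3181}(d). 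Feeding $\mathcal A$, $D$ and $\Gamma$ into Claim~\ref{2122} produces $\gamma\in D\cap\Gamma$, $a\in\mathcal A$, and $\epsilon_0<\gamma$ with $\gamma<a$ and $\gamma\in\im(\tr(\alpha,\beta))$ for all $\alpha\in(\epsilon_0,\gamma)$ and $\beta\in a$; setting $\epsilon:=\max\{\epsilon_0,\sup(D^c_{\le i}(\gamma))\}<\gamma$ then gives $d(\alpha,\beta)\ge c(\alpha,\gamma)>i$ for all such $\alpha,\beta$, which is precisely Clause~(2). I expect the main obstacle to be exactly the $\mathcal I$-positivity bookkeeping underlying both halves --- reconciling the cofinality of $\Sigma$, the completeness and possible weak saturation of the club-guessing ideal $\mathcal I$, and the hypothesis $\theta\in\reg(\lambda)$ --- and it is this point that forces the assumption of conditions (3) or (4).
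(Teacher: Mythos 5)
Your proposed definition $d(\alpha,\beta) := \max\{c(\alpha,\tau) \mid \tau \in \im(\tr(\alpha,\beta))\}$ differs from the paper's in a way that opens two genuine gaps. The paper instead sets $d(\alpha,\beta) := \max\{c(\delta,\gamma) \mid (\delta,\gamma) \in [\im(\tr(\alpha,\beta))]^2\}$, with $d(\alpha,\beta):=0$ when the trace has fewer than two elements; crucially, $\alpha$ itself is never fed into $c$.

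The first gap is in closedness. With the paper's $d$, closedness is immediate and requires no closure property of $c$ at all: if $\eta=\sup(A)<\beta$ with $A\s D^d_{\le i}(\beta)$, pick $\alpha\in A$ above $\lambda_2(\eta,\beta)$; Lemma~\ref{lambda2} gives $\im(\tr(\eta,\beta))\s\im(\tr(\alpha,\beta))$, hence $[\im(\tr(\eta,\beta))]^2\s[\im(\tr(\alpha,\beta))]^2$, so $d(\eta,\beta)\le d(\alpha,\beta)\le i$. With your $d$, the analogous argument only gives $\eta\in\acc^+(D^c_{\le i}(\tau))$ for each $\tau\in\im(\tr(\eta,\beta))$; to conclude $c(\eta,\tau)\le i$ you need $\eta\in\Sigma$. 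You acknowledge the issue and propose to dispose of $\eta\notin\Sigma$ via Lemma~\ref{lambda2}, Corollary~\ref{rho2_closed}, and coherence of $\vec C$, but those are statements about the walk, not about the coloring $c$ --- they give no control whatsoever on $c(\eta,\tau)$ when $\eta\notin\Sigma$. In the somewhere-closed case, $\Sigma$ is merely stationary, so this gap is fatal; no refinement of $\Sigma$ fixes it.

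The second gap is in the witnessing argument. You want an $\mathcal I$-positive set of ordinals $\gamma$ with $\sup(D^c_{\le i}(\gamma))<\gamma$, i.e., $D^c_{\le i}(\gamma)$ \emph{bounded} in $\gamma$. But the implication $(1)\implies(2)$ of Lemma~\ref{pumpclosed} only produces a \emph{gap}: it yields $\gamma$, $a$ with $\gamma<a$ and an interval $(\varepsilon,\gamma)$ on which $c(\cdot,\beta)>i$ for $\beta\in a$; the set $D^c_{\le i}(\beta)$ can still be cofinal in $\beta$. Nothing in the hypotheses forces boundedness, and for a typical witness coming from walks it will fail. The paper works around this by interleaving \emph{two} applications of Claim~\ref{2122}: it finds $\gamma_\delta$ and a lower ordinal $\zeta$ sitting inside the gap of $D^c_{\le i}(\gamma_\delta)$, arranges that \emph{both} $\zeta$ and $\gamma_\delta$ land on $\im(\tr(\alpha,\beta))$ (this is why the stationary sets $\Gamma'$, $S'$, $Z$ are successively extracted and their parameters stabilized via Fodor), and then uses $c(\zeta,\gamma_\delta)>i$; the pair $(\zeta,\gamma_\delta)$ is picked up by $d$ precisely because $d$ evaluates $c$ on pairs drawn from the trace. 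Your two-layer argument has no mechanism for catching a gap that sits in the middle of $\gamma$ rather than at its top, and with your $d$ only $\alpha$ is ever paired with a trace point, so the extra ordinal $\zeta$ would buy you nothing even if you introduced it.
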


  \begin{proof}
    Define $d:[\lambda^+]^2\rightarrow\theta$ by setting, for all $\alpha < \beta < \lambda^+$,
    $$
      d(\alpha,\beta):=\max\{ c(\delta,\gamma)\mid (\delta,\gamma)\in[\im(\tr(\alpha,\beta))]^2\},
    $$
    provided that the set is nonempty, and $d(\alpha,\beta):=0$, otherwise.

    We claim that $d$ is as desired. To see that $d$ is closed, suppose that
    $\beta<\lambda^+$, $i < \theta$, and $A\s D^d_{\leq i}(\beta)$, with $\eta :=
    \sup(A)$ smaller than $\beta$. To show that $\eta \in D^d_{\leq i}(\beta)$, fix
    $\alpha\in A$ above $\lambda_2(\eta,\beta)$. By Lemma~\ref{lambda2},
    $\im(\tr(\eta,\beta))\s\im(\tr(\alpha,\beta))$, and hence, by the definition of $d$,
    we have $d(\eta,\beta)\le d(\alpha,\beta)\le i$.

    To see that $d$ witnesses $\U(\lambda^+,\lambda^+,\theta,\cf(\lambda))$,
    it suffices to verify Clause~(2) of Lemma~\ref{pumpclosed}.
    To this end, suppose that $\mathcal A\s[\lambda^+]^{<\cf(\lambda)}$ is a
    family consisting of $\lambda^+$-many pairwise disjoint sets, $D$ is a club in
    $\lambda^+$, and $i<\theta$. We shall prove that there exist $\zeta\in D$,
    $a\in\mathcal A$, and $\epsilon^* < \zeta$ for which
    \begin{itemize}
      \item $\zeta < a$;
      \item for all $\alpha \in (\epsilon^*, \zeta)$ and all $\beta\in a$,
        we have $d(\alpha,\beta)>i$.
    \end{itemize}
    Let $\Gamma$ be the set of $\gamma \in E^{\lambda^+}_{\geq \sigma}$ for which
    there exist $a \in \mathcal{A}$ and $\epsilon < \gamma$ such that
    \begin{itemize}
      \item $\gamma < a$;
      \item for all $\beta \in a$ and $\alpha \in (\epsilon, \gamma)$, we have
        $\gamma \in \im(\tr(\alpha, \beta))$.
    \end{itemize}
    By Claim~\ref{2122} and Claim~\ref{3181}(4), $\Gamma$ is stationary.
    For each $\gamma\in\Gamma$, pick $a_\gamma\in\mathcal A$ and $\epsilon_\gamma<\gamma$
    witnessing $\gamma\in\Gamma$. Fix a stationary subset $\Gamma'\s\Gamma$
    on which the map $\gamma\mapsto\epsilon_\gamma$ is constant, with value, say,
    $\epsilon$. Now, let $S$ be the set of $\delta < \lambda^+$ for which
    there exist $\gamma \in \Gamma'$ and $\varepsilon < \delta$ such that
    \begin{itemize}
      \item $\delta < \gamma$;
      \item for all $\zeta \in (\varepsilon, \delta)$, we have $c(\zeta, \gamma) > i$.
    \end{itemize}

    We claim that $S$ is stationary. To see this, consider the following two cases.

    $\br$ If $c$ is a somewhere-closed witness to $\U(\lambda^+,2,\theta,\omega)$,
    then by the implication $(1)\implies(2)$ of Lemma~\ref{pumpclosed},
    we infer that $S$ is stationary.

    $\br$ If $c$ is a $E^{\lambda^+}_{\ge\sigma}$-closed witness to $\U(\lambda^+,2,\theta,2)$,
    then repeating the proof of the implication $(1)\implies(2)$ of Lemma~\ref{pumpclosed}
    in the current setting implies that, furthermore, $S\cap\Gamma'$ is stationary.

    For each $\delta\in S$, pick $\gamma_\delta\in\Gamma'$ and $\varepsilon_\delta<\delta$
    witnessing $\delta\in S$. Fix a stationary subset $S'\s S$ on which the map
    $\delta\mapsto\varepsilon_\delta$ is constant, with value, say, $\varepsilon$.
    Finally, let $Z$ be the set of $\zeta \in E^{\lambda^+}_{\geq \cf(\lambda)}$
    for which there exist $\delta \in S'$ and $\eta < \zeta$ such that
    \begin{itemize}
      \item $\zeta < \delta$;
      \item for all $\alpha \in (\eta, \zeta)$, we have $\zeta \in \im(\tr(\alpha, \gamma_\delta))$.
    \end{itemize}
    By Claim~\ref{2122} and Claim~\ref{3181}(4), $Z$ is stationary, so we may
    find $\zeta\in Z\cap D$ above $\max\{\epsilon,\varepsilon\}$. Pick $\delta\in S'$
    and $\eta<\zeta$ above $\max\{\epsilon,\varepsilon\}$ such that $\delta > \zeta$ and, for all $\alpha\in(\eta,\zeta)$,
    we have $\zeta\in\im(\tr(\alpha,\gamma_\delta))$. Set $a:=a_{\gamma_\delta}$
    and $\epsilon^* := \eta$.
    We claim that $\zeta$, $a$, and $\epsilon^*$ are as desired. To this end,
    let $\alpha \in (\epsilon^*, \zeta)$ and $\beta \in a$ be arbitrary. Then
    $$
      \max\{\epsilon,\varepsilon\}<\eta<\alpha<\zeta<\delta<\gamma_{\delta}<\beta.
    $$
    As $\gamma_\delta \in \Gamma'$, $\beta\in a_{\gamma_\delta}$, and
    $\alpha\in(\epsilon, \gamma_\delta)$, we have $\gamma_{\delta}\in\im(\tr(\alpha,\beta))$.
    Next, as $\alpha\in(\eta,\zeta)$, we have $\zeta\in\im(\tr(\alpha,\gamma_\delta))$, and,
    consequently, $\zeta\in\im(\tr(\alpha,\beta))$. Finally, as $\delta \in S'$ and
    $\zeta\in(\varepsilon,\delta)$, we infer that $c(\zeta,\gamma_\delta)>i$.
    Altogether, we obtain $d(\alpha,\beta)>i$, as desired.
  \end{proof}

  We end the uncountable cofinality case of the proof by addressing condition (1).

  \begin{claim} Suppose that there exists no closed witness to
    $\U(\lambda^+,\lambda^+,\theta,\cf(\lambda))$. Then $\refl({<}\cf(\lambda),\lambda^+)$ holds.
  \end{claim}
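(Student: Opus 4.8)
The plan is to prove the contrapositive: assuming that $\refl({<}\cf(\lambda),\lambda^+)$ fails, I would produce a closed witness to $\U(\lambda^+,\lambda^+,\theta,\cf(\lambda))$. So I would fix a sequence $\langle S_j\mid j<\sigma\rangle$ of stationary subsets of $\lambda^+$ with $\sigma<\cf(\lambda)$ and $\bigcap_{j<\sigma}\Tr(S_j)=\emptyset$, and, shrinking via Fodor's lemma, assume that for each $j<\sigma$ there is $\mu_j\in\reg(\lambda)$ with $S_j\s E^{\lambda^+}_{\mu_j}$; since $\sigma<\cf(\lambda)$, the cardinal $\mu^\ast:=\sup_{j<\sigma}\mu_j$ is $<\lambda$. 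The argument would then split according to whether the traces $\Tr(S_j)$ are stationary.

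First I would dispose of the case in which $\Tr(S_j)$ is nonstationary for some $j<\sigma$. Fixing such a $j$ and a club $D\s\lambda^+$ disjoint from $\Tr(S_j)$, a routine computation (using that $D$ is closed) shows that $S_j\cap D$ is a non-reflecting stationary subset of $E^{\lambda^+}_{\ge\omega}$. Corollary~\ref{l23}, applied with $\chi=\omega$, would then yield a closed witness $c:[\lambda^+]^2\to\theta$ to $\U(\lambda^+,\lambda^+,\theta,\omega)$; by Proposition~\ref{obvious}(3) this $c$ is in particular a somewhere-closed witness to $\U(\lambda^+,2,\theta,\omega)$, so Claim~\ref{3184} would hand me a closed witness to $\U(\lambda^+,\lambda^+,\theta,\cf(\lambda))$, finishing this case.

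I would then turn to the case in which $\Tr(S_j)$ is stationary for every $j<\sigma$ (so necessarily $\sigma\ge2$). In view of Claim~\ref{3183}, it would suffice to show that the ideal $\mathcal I$ constructed above is not weakly $\theta$-saturated — that is, to build a function $h:\lambda^+\to\theta$ all of whose fibers lie in $\mathcal I^+$, equivalently such that against every club $D\s\lambda^+$ there is a club-guessed $\delta\in\Delta\cap D$ for which each of the $\theta$ colours occurs cofinally in $\nacc(e_\delta)\cap D$. The failure of simultaneous reflection would enter here: for each relevant $\delta\in\Delta$ one has $\delta\notin\bigcap_{j<\sigma}\Tr(S_j)$, so I could attach to $\delta$ an index $j(\delta)<\sigma$ and a club $c_\delta\s\delta$ with $c_\delta\cap S_{j(\delta)}=\emptyset$, and the idea would be to use these clubs — in the role played by the small club-basis on $\cf(\lambda)$ in the proof of Claim~\ref{3181}(3) — to diagonalize a single global colouring that spreads all of $\theta$ transversally to cofinality across the nacc-points (whose cofinalities strictly increase to $\lambda$) of some club-guessed $e_\delta$. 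Granting such an $h$, Claim~\ref{3183} would complete the proof.

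The hard part will be exactly this last construction: producing one global assignment $h$ that defeats \emph{every} club simultaneously, in the presence of two obstacles — distinct club-guessed $\delta$'s competing for the same ordinals of $\lambda^+$, and the clubs $c_\delta$ supplied by non-reflection carrying only the rather weak information of avoiding a single $S_{j(\delta)}$. This is the combinatorial core, refining the weak-saturation argument of Claim~\ref{3181}(3). A plausible alternative, sidestepping the saturation route, would be to manufacture directly from $\langle S_j\mid j<\sigma\rangle$ a somewhere-closed or tail-closed witness to one of the low-$\mu$ instances of $\U(\ldots)$ that feed Claim~\ref{3184}; either way, the essential step is the passage from the abstract failure of simultaneous reflection to an explicit witness.
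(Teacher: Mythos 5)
Your Case 1 is sound: if some $\Tr(S_j)$ is nonstationary, then $S_j\cap D$ is a non-reflecting stationary subset of $E^{\lambda^+}_{\ge\omega}$ for an appropriate club $D$, so Corollary~\ref{l23} with $\chi=\omega$ produces a closed witness to $\U(\lambda^+,\lambda^+,\theta,\omega)$, which is in particular a somewhere-closed witness to $\U(\lambda^+,2,\theta,\omega)$, and Claim~\ref{3184} upgrades the fourth parameter to $\cf(\lambda)$. But Case 2 contains a genuine gap, and you flag it as such: you never actually produce the function $h:\lambda^+\to\theta$ with all fibers $\mathcal I$-positive, and the diagonalization you sketch (attaching to each $\delta$ an index $j(\delta)$ and a club $c_\delta$ avoiding $S_{j(\delta)}$) is not an elementary adaptation of the Claim~\ref{3181}(c) argument. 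That passage — from failure of simultaneous reflection to failure of weak $\theta$-saturation of the club-guessing ideal — is precisely the content of an external theorem that the paper cites and you do not: Theorem~2(4) of Eisworth \cite{MR2652193}. That is the missing key idea.

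The paper's proof is in fact not a contrapositive at all. It is a direct chain: by Corollary~\ref{predecessor}, $\theta\neq\cf(\lambda)$; by Claim~\ref{3183} (contrapositively) $\mathcal I$ is weakly $\theta$-saturated; by Claim~\ref{3181}(b) it is $\theta$-indecomposable and $\cf(\lambda)$-complete; Eisworth's Theorem~2(4) then gives $\refl({<}\cf(\lambda),S^*)$ for $S^*:=E^{\lambda^+}_{\ge\theta}\cap E^{\lambda^+}_{\neq\cf(\lambda)}$; and the contrapositive of Claim~\ref{3184} together with Theorem~\ref{l23s} shows $\Tr(S)\cap S^*$ is stationary for every stationary $S\s\lambda^+$, which lifts reflection from $S^*$ to all of $\lambda^+$ via $\Tr(\Tr(S))\s\Tr(S)$. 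Beyond the missing citation, your bifurcation on whether the $\Tr(S_j)$ are stationary does not line up with the actual sources of a closed witness: even when every $\Tr(S_j)$ is stationary, the witness may have to come from a trace that concentrates on a bounded set of cofinalities (the situation of Theorem~\ref{l23s}) rather than from non-saturation, a branch your Case 2 plan does not account for. If you want to run the contrapositive, you would need a three-way split corresponding to Corollary~\ref{l23}, Theorem~\ref{l23s}, and (the contrapositive of) Eisworth's theorem; the paper's direct argument avoids that bookkeeping entirely.
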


  \begin{proof}
    By Corollary~\ref{predecessor}, $\theta\neq\cf(\lambda)$.
    So, by Claims \ref{3181} and \ref{3183}, it follows that
    $\mathcal I$ is a $\cf(\lambda)$-complete ideal which is weakly $\theta$-saturated
    and $\theta$-indecomposable. But then by
    \cite[Theorem~2(4)]{MR2652193}, $\refl({<}\cf(\lambda),S^*)$ holds for
    $S^*:=E^{\lambda^+}_{\ge\theta}\cap E^{\lambda^+}_{\neq\cf(\lambda)}$.
    In addition, by Claim~\ref{3184}, there exists no closed witness to
    $\U(\lambda^+,\lambda^+,\theta,\omega)$, and hence by Theorem~\ref{l23s},
    for every stationary $S\s\lambda^+$, we know that $\Tr(S)\cap S^*$ is stationary.
    Consequently, $\refl({<}\cf(\lambda),\lambda^+)$ holds.
  \end{proof}

  \medskip

  \textbf{Case 2: Countable cofinality.} Assume now that $\cf(\lambda) = \omega$.
  Let $\chi := \theta^+$ and $\Delta:=E^{\lambda^+}_\chi$.
  By a result of Eisworth \cite[\S5]{MR2652193}, we obtain a strictly
  increasing sequence of regular cardinals $\langle \lambda_m\mid m<\omega\rangle$
  that converges to $\lambda$ and two matrices, $\vec C=\langle C^m_\alpha\mid
  \alpha<\lambda^+, ~ m<\omega\rangle$ and $\vec e=\langle e_\delta^m \mid
  \delta\in \Delta, ~ m<\omega\rangle$, such that
  \begin{itemize}
    \item for all $m < \omega$, $C^m_0=\emptyset$;
    \item for all $\alpha<\lambda^+$ and $m < \omega$, $C^m_{\alpha+1}=\{\alpha\}$;
    \item for all $\delta\in\Delta$, $\langle e^m_\delta\mid m<\omega\rangle$
      is a $\subseteq$-increasing sequence of club subsets of $\delta$;
    \item for all $\alpha\in\acc(\lambda^+)$, $\langle C^m_\alpha\mid m<\omega\rangle$ is a
      $\subseteq$-increasing sequence of club subsets of $\alpha$;
    \item for all $\alpha\in\acc(\lambda^+)$ and $m < \omega$, $|C^m_\alpha|\le\max\{\lambda_m,\cf(\alpha)\}$;
    \item for all $\alpha\in\acc(\lambda^+)$, $m < \omega$, and $\delta\in(\acc(C^m_\alpha)\cup\{\alpha\})
      \cap \Delta$, $e^m_\delta\s C^m_\alpha$;
    \item for every club $D$ in $\lambda^+$, there exists $\delta\in\Delta$
      such that $\sup(e^m_\delta\cap D\cap E^{\delta}_{>\lambda_m})=\delta$ for all $m<\omega$.
  \end{itemize}
  Define $\mathcal I\s\mathcal P(\lambda^+)$ by letting $A\in\mathcal P(\lambda^+)$
  be in $\mathcal I$ iff there exists a club $D\s\lambda^+$ such that for every
  $\delta\in\Delta\cap D$, for a tail of  $m<\omega$, we have
  $\sup(e^m_\delta\cap D\cap E^{\delta}_{>\lambda_m}\cap A)<\delta$.

  \begin{claim}\label{3191}
    $\mathcal{I}$ satisfies all of the following properties:
    \begin{enumerate}
      \item[(a)] $\mathcal I$ is a proper ideal over $\lambda^+$, extending $\ns_{\lambda^+}$;
      \item[(b)] $\mathcal I$ is $\tau$-indecomposable for all
        $\tau\in\reg(\lambda)\setminus\{\omega,\chi\}$;
      \item[(c)] for all $\sigma<\lambda$, $E^{\lambda^+}_{\ge\sigma}\in\mathcal I^*$.
    \end{enumerate}
  \end{claim}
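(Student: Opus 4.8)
The plan is to follow the template of Claim~\ref{3181} from the uncountable-cofinality case, the two new wrinkles being the ``for a tail of $m$'' quantifier and the factor $E^\delta_{>\lambda_m}$ occurring in the definition of $\mathcal I$. For part~(a), downward closure is immediate; any nonstationary set lies in $\mathcal I$ as witnessed by a club disjoint from it, so $\ns_{\lambda^+}\s\mathcal I$; and closure under finite unions follows by intersecting the two witnessing clubs and using that an intersection of two cofinite subsets of $\omega$ is cofinite. For properness, if $\lambda^+\in\mathcal I$ as witnessed by a club $D$, then the last bullet in the construction of $\vec C,\vec e$ yields $\delta\in\Delta$ with $\sup(e^m_\delta\cap D\cap E^\delta_{>\lambda_m})=\delta$ for all $m<\omega$; as this set is a cofinal subset of the closed set $D\cap\delta$, we get $\delta\in\Delta\cap D$, whence the defining property of $D$ forces this same supremum to be $<\delta$ for a tail of $m$ --- a contradiction. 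For part~(c), fix $\sigma<\lambda$; then $E^{\lambda^+}_{<\sigma}\in\mathcal I$ as witnessed by the club $\lambda^+$ itself, since for every $\delta\in\Delta$ and every $m$ with $\lambda_m\ge\sigma$ the set $E^\delta_{>\lambda_m}\cap E^{\lambda^+}_{<\sigma}$ is empty, and $\sup_m\lambda_m=\lambda>\sigma$.

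The substance is in part~(b). Fix $\tau\in\reg(\lambda)\setminus\{\omega,\chi\}$, a $\s$-increasing sequence $\langle A_j\mid j<\tau\rangle$ of elements of $\mathcal I$, and, for each $j$, a club $D_j$ witnessing $A_j\in\mathcal I$; put $A:=\bigcup_{j<\tau}A_j$. Since $\tau<\lambda^+$, the set $D:=\bigcap_{j<\tau}D_j$ is a club, and I will check it witnesses $A\in\mathcal I$. So fix $\delta\in\Delta\cap D$, and for each $j<\tau$ fix $m_j<\omega$ with $\sup(e^m_\delta\cap D_j\cap E^\delta_{>\lambda_m}\cap A_j)<\delta$ whenever $m\ge m_j$. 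Because $\cf(\tau)=\tau>\omega$, some $k<\omega$ has $J:=\{j<\tau\mid m_j\le k\}$ cofinal in $\tau$; since $\langle A_j\rangle$ is $\s$-increasing, $A=\bigcup_{j\in J}A_j$. For any $m\ge k$ we then have
$$e^m_\delta\cap D\cap E^\delta_{>\lambda_m}\cap A=\bigcup_{j\in J}\bigl(e^m_\delta\cap D\cap E^\delta_{>\lambda_m}\cap A_j\bigr),$$
a $\s$-increasing union, over an index set of cofinality $\tau$, of subsets of $\delta$ each bounded below $\delta$ (using $D\s D_j$ and $m\ge k\ge m_j$ for $j\in J$); since $\cf(\delta)=\chi\ne\tau$, that union is bounded below $\delta$. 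As $m\ge k$ was arbitrary, $D$ witnesses $A\in\mathcal I$.

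The main obstacle is exactly this ``tail of $m$'' bookkeeping in part~(b): unlike in Claim~\ref{3181}, one cannot intersect all $\tau$-many witnessing conditions directly, since the tail quantifier is destroyed by an uncountable intersection, so one must first stabilize the starting index $m_j$ of the tail by a counting argument using $\cf(\tau)>\omega$, and only then invoke the standard fact that a $\s$-increasing union, indexed by a set of cofinality $\ne\cf(\delta)$, of sets bounded below $\delta$ is bounded below $\delta$. This last fact should be checked separately in the two regimes: when $\tau<\chi$ it is immediate, and when $\tau>\chi$ one observes that, were the union cofinal in $\delta$, the sups of the $\cf(\delta)=\chi$-many bounded proper initial segments of $\tau$ determined by a cofinal sequence in $\delta$ would themselves be cofinal in $\tau$, forcing $\cf(\tau)\le\chi<\tau$ and contradicting the regularity of $\tau$.
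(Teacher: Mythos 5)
Your proof is correct, and it handles parts (a) and (c), which the paper dismisses as straightforward, carefully: for properness you correctly note that cofinality of $e^m_\delta\cap D\cap E^\delta_{>\lambda_m}$ in $\delta$ puts $\delta\in\acc^+(D)\s D$ (as $D$ is closed), and for (c) the trivial witness $D=\lambda^+$ works because $E^\delta_{>\lambda_m}\cap E^{\lambda^+}_{<\sigma}=\emptyset$ once $\lambda_m\ge\sigma$. In part (b) you use the same two ingredients as the paper --- $\cf(\tau)>\omega$ and $\cf(\delta)=\chi\neq\tau$ --- but you exchange the quantifiers in the opposite order. You first extract, for each $j<\tau$, the tail-start $m_j$, then use $\cf(\tau)>\omega$ to find a single $k<\omega$ with $J:=\{j\mid m_j\le k\}$ cofinal in $\tau$, and finally use $\cf(\delta)\neq\tau$ to bound the $J$-indexed increasing union for each fixed $m\ge k$. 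The paper instead begins with the set of ``bad'' $m$ for $A$ at $\delta$, uses $\cf(\delta)\neq\tau$ to find $j_m<\tau$ with $A_{j_m}$ already bad at each such $m$, uses $\cf(\tau)>\omega$ to dominate all $j_m$ by a single $j<\tau$, and then invokes $D\s D_j$ to conclude only a finite set of $m$ can be bad. Both arguments are valid reorganizations of the same quantifier exchange; your version has the mild advantage that it mirrors the statement of the ideal's definition more directly (first stabilizing where each tail begins, then checking the uniform tail), while the paper's version is a proof by contradiction starting from the failure set.
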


  \begin{proof}
    (b) Suppose that $\tau\in\reg(\lambda)\setminus\{\omega,\chi\}$ and that
    $\vec A=\langle A_j\mid j<\tau\rangle$ is a $\s$-increasing sequence of elements
    from $\mathcal I$. We shall show that $A:=\bigcup_{j<\tau}A_j$ is in $\mathcal I$, as well.
    For each $j<\tau$, pick a witnessing club $D_j$. We claim that the club
    $D:=\bigcap_{j<\tau}D_j$ witnesses that $A\in\mathcal I$. To see this, let
    $\delta\in\Delta\cap D$ be arbitrary.
    For each $m<\omega$ such that  $\sup(e^m_\delta\cap D\cap E^{\delta}_{>\lambda_m}\cap A)=\delta$,
    as $\vec A$ is $\s$-increasing and $\cf(\delta)\neq\cf(\tau)$,
   we may find $j_m<\tau$ such that
  $\sup(e^m_\delta\cap D\cap E^{\delta}_{>\lambda_m}\cap A_{j_m})=\delta$.
    As $\vec A$ is $\s$-increasing and $\cf(\tau)>\omega$, it follows that there exists a large enough $j<\omega$ such that, for all $m<\omega$,
    if $\sup(e^m_\delta\cap D\cap E^{\delta}_{>\lambda_m}\cap A)=\delta$,
    then
  $\sup(e^m_\delta\cap D\cap E^{\delta}_{>\lambda_m}\cap A_{j})=\delta$.
   But $D\s D_j$, and hence
  $\sup(e^m_\delta\cap D\cap E^{\delta}_{>\lambda_m}\cap A_{j})<\delta$
  for a tail of $m<\omega$. So,
   $\sup(e^m_\delta\cap D\cap E^{\delta}_{>\lambda_m}\cap A)=\delta$
  for a tail of $m<\omega$.
  \end{proof}

  For each $m<\omega$, let $\tr^m(\cdot,\cdot)$, $\rho_2^m(\cdot,\cdot)$ and
  $\lambda_2^m(\cdot,\cdot)$ denote the respective characteristic functions derived
  from walking along the $C$-sequence $\langle C_\alpha^m\mid \alpha<\lambda^+\rangle$.
  Note that for all $\alpha<\beta<\lambda^+$, there is $n < \omega$ such that, for
  every integer $m \geq n$, we have $\tr^m(\alpha, \beta) = \tr^n(\alpha, \beta)$
  (cf.~\cite[p.~1094]{paper13}).

  \begin{claim}\label{3192}
    Suppose that $\mathcal A\s[\lambda^+]^{<\omega}$ is a family consisting of
    $\lambda^+$-many pairwise disjoint sets, $D$ is a club in $\lambda^+$, and
    $\Gamma\in\mathcal{I}^+$. Then there exist $\gamma\in D\cap\Gamma$, $a \in \mathcal{A}$,
    $\epsilon < \gamma$, and $k < \omega$ such that
    \begin{itemize}
      \item $\gamma < a$;
      \item for all $\alpha\in(\epsilon,\gamma)$ and all $\beta \in a$, we have
        $\gamma\in\im(\tr^k(\alpha,\beta))$.
    \end{itemize}
  \end{claim}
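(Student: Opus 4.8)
The plan is to replay the proof of Claim~\ref{2122} inside a single, carefully chosen layer $\langle C^k_\alpha\mid\alpha<\lambda^+\rangle$ of the matrix $\vec C$. First I would exploit $\Gamma\in\mathcal I^+$: since $\Gamma\notin\mathcal I$, the club $D$ cannot witness $\Gamma\in\mathcal I$, so there is $\delta\in\Delta\cap D$ for which the set $M:=\{m<\omega\mid\sup(e^m_\delta\cap D\cap E^\delta_{>\lambda_m}\cap\Gamma)=\delta\}$ is infinite. Pick any $a\in\mathcal A$ with $\delta<a$; such an $a$ exists because $\mathcal A$ consists of $\lambda^+$-many pairwise disjoint finite sets. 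Using the fact recalled just before the claim that $\tr^m(\delta,\beta)$ is eventually constant in $m$, together with the finiteness of $a$, fix $N<\omega$ with $\tr^m(\delta,\beta)=\tr^N(\delta,\beta)$ for all $m\ge N$ and all $\beta\in a$. Then $T:=\{\delta\}\cup\bigcup_{\beta\in a}\im(\tr^N(\delta,\beta))$ is a finite set of ordinals below $\lambda^+$, and since $\lambda$ is singular every $\tau\in T$ has $\cf(\tau)<\lambda$, whence $\mu^*:=\sup\{\cf(\tau)\mid\tau\in T\}<\lambda$.

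Next I would use the infinitude of $M$ to fix $k\in M$ with $k\ge N$ and $\lambda_k\ge\mu^*$. From this point on, all walk characteristics are taken along the $C$-sequence $\langle C^k_\alpha\mid\alpha<\lambda^+\rangle$, so that Lemma~\ref{lambda2} applies. Exactly as in Claim~\ref{2122}, set $\Lambda:=\sup\{\lambda^k_2(\delta,\beta)\mid\beta\in a\}$ and $C:=C^k_\delta\cup\bigcup_{\beta\in a}\bigcup_{\tau\in\im(\tr^k(\delta,\beta))}C^k_\tau$. Since $a$ is finite we have $\Lambda<\delta$; and since $\tr^k(\delta,\beta)=\tr^N(\delta,\beta)$, every index $\tau$ occurring in the definition of $C$ lies in $T$, so $|C^k_\tau|\le\max\{\lambda_k,\cf(\tau)\}\le\lambda_k$ and hence $|C|\le\lambda_k<\lambda$. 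Because $k\in M$ we may pick $\gamma\in e^k_\delta\cap D\cap E^\delta_{>\lambda_k}\cap\Gamma$ with $\gamma>\Lambda$; then $\cf(\gamma)>\lambda_k\ge|C|$, so $\epsilon:=\max\{\Lambda,\sup(C\cap\gamma)\}<\gamma$, while $\gamma\in D\cap\Gamma$ and $\gamma<\delta<a$, as needed.

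Finally, one checks that $\gamma\in\im(\tr^k(\alpha,\beta))$ for every $\alpha\in(\epsilon,\gamma)$ and $\beta\in a$; this repeats, mutatis mutandis, the two-case analysis of Claim~\ref{2122}. From $\lambda^k_2(\delta,\beta)\le\Lambda\le\epsilon<\alpha<\gamma<\delta<\beta$ and Lemma~\ref{lambda2} one gets $\tr^k(\delta,\beta)\sq\tr^k(\alpha,\beta)$; then, writing $\ell:=\rho^k_2(\delta,\beta)$ and using $C^k_\delta\s C$, the matrix property that $e^k_\delta\s C^k_\sigma$ whenever $\delta\in(\acc(C^k_\sigma)\cup\{\sigma\})\cap\Delta$, and $\gamma\in e^k_\delta$, the case $\delta\in\nacc(C^k_{\tr^k(\delta,\beta)(\ell-1)})$ yields $\tr^k(\alpha,\beta)(\ell)=\delta$ and then $\tr^k(\alpha,\beta)(\ell+1)=\min(C^k_\delta\setminus\alpha)=\gamma$, while the case $\delta\in\acc(C^k_{\tr^k(\delta,\beta)(\ell-1)})$ yields directly $\tr^k(\alpha,\beta)(\ell)=\min(C^k_{\tr^k(\delta,\beta)(\ell-1)}\setminus\alpha)=\gamma$. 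The only genuinely new point compared with Claim~\ref{2122} is the bookkeeping that selects one index $k$: it must be large enough for the relevant traces to have stabilized and for $\lambda_k$ to dominate the cofinalities of the finitely many ordinals visited by those traces --- so that the sets $C^k_\tau$ have size $\le\lambda_k$ --- yet still lie in $M$, so that $e^k_\delta$ guesses $D\cap\Gamma$ at the point $\delta$. The infinitude of $M$, i.e.\ the hypothesis $\Gamma\in\mathcal I^+$ in place of mere stationarity, is exactly what makes such a $k$ available, and reconciling these three demands is the (mild) crux of the argument.
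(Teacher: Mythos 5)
Your proposal is correct and follows essentially the same line as the paper's own proof: extract $\delta\in\Delta$ for which infinitely many layers $m$ guess $D\cap\Gamma$ along $e^m_\delta$, stabilize the finitely many traces $\tr^m(\delta,\beta)$ at some level $N$, choose $k\in M$ above $N$ with $\lambda_k$ dominating the cofinalities of the ordinals in $T$ (so that $|C^k_\tau|\le\lambda_k$ for all $\tau\in T$), and then run the usual two-case walk argument along the single $C$-sequence $\langle C^k_\alpha\mid\alpha<\lambda^+\rangle$. The one place where you are a touch more careful than the text is in picking $\gamma$ from $e^k_\delta\cap D\cap E^\delta_{>\lambda_k}\cap\Gamma$ rather than just from $e^k_\delta\cap D\cap E^\delta_{>\lambda_k}$; this is what the conclusion requires, and it matches the intent of the paper (which has a small typo, writing $A$ where $\Gamma$ is meant in the selection of $k$).
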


  \begin{proof}
    Since $\Gamma\notin\mathcal I$, we may fix $\delta\in\Delta$ such
    that $\sup(e^m_\delta\cap D\cap E^{\delta}_{>\lambda_m}\cap\Gamma)=\delta$
    for cofinally many $m<\omega$. Fix an arbitrary $a\in\mathcal A$ with $a>\delta$.
    Since $a$ is finite, we may find an $n<\omega$ large enough so that, for every $\beta\in a$
    and every integer $m \geq n$, we have $\tr^m(\delta, \beta) = \tr^n(\delta, \beta)$.

    Consider the finite set $T:=\{\delta\} \cup \bigcup_{\beta\in a}\im(\tr^n(\delta,\beta))$,
    and then find an integer $k>n$ such that $\max\{\cf(\tau)
    \mid \tau\in T\}\le\lambda_k$ and $\sup(e^k_\delta\cap D\cap
    E^{\delta}_{>\lambda_k}\cap A)=\delta$. Finally, pick $\gamma\in e^k_\delta\cap
    D\cap E^{\delta}_{>\lambda_k}$ above $\Lambda:=\sup\{\lambda^k_2(\delta,\beta)
    \mid \beta\in a\}$. Let $C:=\bigcup\{ C^k_\tau\mid \tau\in T\}$.
    For all $\tau\in T$, we have $|C^k_\tau|\le\max\{\lambda_k,\cf(\tau)\}=\lambda_k<\cf(\gamma)$,
    and hence $\epsilon:=\max\{\Lambda,\sup(C\cap\gamma)\}$ is less than $\gamma$.

    We claim that $\gamma$, $a$, $\epsilon$, and $k$ are as desired. We clearly
    have $\gamma < a$. To finish, fix an arbitrary $\beta \in a$ and $\alpha\in(\epsilon,\gamma)$.
    We have
    $$
      \lambda^k_2(\delta,\beta)\le\Lambda\le\epsilon<\alpha<\gamma<\delta<\beta,
    $$
    so, by Lemma~\ref{lambda2}, $\tr^k(\delta,\beta)\sq\tr^k(\alpha,\beta)$.
    Set $\ell:=\rho_2^k(\delta,\beta)$. There are now two cases to consider.

    $\br$ If $\delta\in\nacc(C^k_{\tr^k(\delta,\beta)(\ell-1)})$, then, since
    $$
      [\alpha,\delta)\cap C^k_{\tr^k(\delta,\beta)(\ell-1)}\s(\lambda_2^k(\delta,
      \beta),\delta)\cap C^k_{\tr^k(\delta,\beta)(\ell-1)}=\emptyset,
    $$
    we have $\tr^k(\alpha,\beta)(\ell)=\min(C^k_{\tr^k(\alpha,\beta)(\ell-1)}
    \setminus\alpha)=\min(C^k_{\tr^k(\delta,\beta)(\ell-1)}\setminus\alpha)=\delta$.
    It follows that $\gamma\in e^k_\delta\s C^k_\delta\s C$, so
    $$
      [\alpha,\gamma)\cap C^k_{\tr^k(\alpha,\beta)(\ell)}\s (\epsilon,\gamma)\cap C=\emptyset,
    $$
    and hence $\tr^k(\alpha,\beta)(\ell+1)=\min(C^k_{\tr^k(\alpha,\beta)(\ell)}\setminus\alpha)=\gamma$.

    $\br$ If $\delta\in\acc(C^k_{\tr^k(\delta,\beta)(\ell-1)})$, then, since $\delta\in\Delta$,
    we observe that $\gamma\in e^k_\delta\s C^k_{\tr^k(\delta,\beta)(\ell-1)}\s C$.
    It follows that $[\alpha,\gamma)\cap  C^k_{\tr^k(\delta,\beta)(\ell-1)}=\emptyset$,
    and hence $\tr^k(\alpha,\beta)(\ell)=\min(C^k_{\tr^k(\delta,\beta)(\ell-1)}\setminus\alpha)=\gamma$.
  \end{proof}

  We now show that the existence of a closed witness to $\U(\lambda^+,
  \lambda^+, \theta, \omega)$ follows from each of the hypotheses identified
  in the statement of the theorem. Note first that condition (2) is trivially
  taken care of, as there are no infinite cardinals strictly less
  than $\omega = \cf(\lambda)$. The next claim will deal with
  conditions (3) and (4).

  \begin{claim}\label{3193}
    Suppose that $c:[\lambda^+]^2\rightarrow\theta$ is a coloring,
    $\sigma\in\reg(\lambda)$, and one of the following two conditions holds:
    \begin{itemize}
      \item $c$ is a somewhere-closed witness to $\U(\lambda^+,2,\theta,\omega)$; or
      \item $c$ is a $E^{\lambda^+}_{\ge\sigma}$-closed witness to $\U(\lambda^+,2,\theta,2)$.
    \end{itemize}
    Then there exists a closed  witness to $\U(\lambda^+,\lambda^+,\theta,\omega)$.
  \end{claim}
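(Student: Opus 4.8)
The proof will parallel that of Claim~\ref{3184}, with the matrix $\vec C=\langle C^m_\alpha\mid\alpha<\lambda^+,\ m<\omega\rangle$ and the functions $\tr^m$ playing the roles that the single $C$-sequence and its $\tr$ played there, and with Claim~\ref{3192} and Claim~\ref{3191} replacing Claim~\ref{2122} and Claim~\ref{3181}. First I would define a coloring $d:[\lambda^+]^2\rightarrow\theta$ from $c$ by composing two walks: for $\alpha<\beta<\lambda^+$, set
$$
  d(\alpha,\beta):=\max\left(\{0\}\cup\left\{c(\mu,\nu)\mid \alpha<\mu<\nu,\ \nu\in\bigcup_{m<\omega}\im(\tr^m(\alpha,\beta)),\ \mu\in\bigcup_{m<\omega}\im(\tr^m(\alpha,\nu))\right\}\right).
$$
Since, for each pair $\alpha<\beta$, the sequence $\langle\tr^m(\alpha,\beta)\mid m<\omega\rangle$ is eventually constant (as recorded just before Claim~\ref{3192}), each set $\bigcup_{m<\omega}\im(\tr^m(\alpha,\beta))$ is finite, so the displayed maximum is taken over a finite set and $d$ is a well-defined function into $\theta$.

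Next I would verify that $d$ is closed. Fixing $\beta<\lambda^+$, $i<\theta$, and $A\s D^d_{\le i}(\beta)$ with $\eta:=\sup(A)$ below $\beta$, note that $\eta$ is a limit ordinal, and use the stabilization of the walks to produce a single ordinal $\rho<\eta$ dominating the finitely many values $\lambda_2^m(\eta,\beta)$ together with all $\lambda_2^m(\eta,\nu)$ for $\nu\in\bigcup_{m<\omega}\im(\tr^m(\eta,\beta))$. Picking $\alpha\in A$ with $\alpha>\rho$ and applying Lemma~\ref{lambda2} to each $C$-sequence $\langle C^m_\xi\mid\xi<\lambda^+\rangle$ separately yields $\bigcup_{m<\omega}\im(\tr^m(\eta,\beta))\s\bigcup_{m<\omega}\im(\tr^m(\alpha,\beta))$, and likewise for the inner walks; hence every pair contributing to $d(\eta,\beta)$ also contributes to $d(\alpha,\beta)$, so $d(\eta,\beta)\le d(\alpha,\beta)\le i$ and $\eta\in D^d_{\le i}(\beta)$.

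The heart of the proof is to show that $d$ satisfies Clause~(2) of Lemma~\ref{pumpclosed} with $\kappa=\lambda^+$ and $\chi=\omega$, which then gives that $d$ witnesses $\U(\lambda^+,\lambda^+,\theta,\omega)$. Given $\mathcal A\s[\lambda^+]^{<\omega}$ consisting of $\lambda^+$-many pairwise disjoint sets, a club $D$, and $i<\theta$, I would run the three-tier stationary-reflection argument of Claim~\ref{3184}. First, Claim~\ref{3192} applied with $\Gamma:=E^{\lambda^+}_{\ge\sigma}\in\mathcal I^*$ (Claim~\ref{3191}(c)), followed by a pigeonhole over $\omega$ to freeze a common walk-level $k_1<\omega$ and an application of Fodor's Lemma, produces a stationary $\Gamma'$ with assignments $\gamma\mapsto a_\gamma\in\mathcal A$ and a fixed $\epsilon$ such that $\gamma\in\im(\tr^{k_1}(\alpha,\beta))$ for all $\gamma\in\Gamma'$, $\beta\in a_\gamma$, $\alpha\in(\epsilon,\gamma)$. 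Second, applying to $c$ itself the implication $(1)\implies(2)$ of Lemma~\ref{pumpclosed} — verbatim in the somewhere-closed case, and via its proof (with the extra bookkeeping of Claim~\ref{3184}) in the $E^{\lambda^+}_{\ge\sigma}$-closed case — yields a stationary $S'$ with assignments $\delta\mapsto\gamma_\delta\in\Gamma'$ and a fixed $\varepsilon$ such that $\delta<\gamma_\delta$ and $c(\zeta,\gamma_\delta)>i$ for all $\zeta\in(\varepsilon,\delta)$. Third, Claim~\ref{3192} applied with $\Gamma:=E^{\lambda^+}_{\ge\omega}\in\mathcal I^*$ and with a pairwise-disjoint family extracted from $\{\{\delta,\gamma_\delta\}\mid\delta\in S'\}$ produces a stationary $Z$ of ordinals $\zeta$ carrying some $\delta\in S'$, some $\eta<\zeta$, and a level $k_3<\omega$ with $\zeta<\delta$ and $\zeta\in\im(\tr^{k_3}(\alpha,\gamma_\delta))$ for all $\alpha\in(\eta,\zeta)$. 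Choosing $\zeta\in Z\cap D$ above $\max\{\epsilon,\varepsilon\}$ and setting $a:=a_{\gamma_\delta}$ and $\epsilon^*:=\eta$, then for any $\alpha\in(\epsilon^*,\zeta)$ and $\beta\in a$ one has $\alpha\in(\epsilon,\gamma_\delta)$, hence $\gamma_\delta\in\im(\tr^{k_1}(\alpha,\beta))$ and $\zeta\in\im(\tr^{k_3}(\alpha,\gamma_\delta))$; taking $\nu:=\gamma_\delta$, $\mu:=\zeta$ in the definition of $d$ (legitimate as $\alpha<\zeta<\delta<\gamma_\delta$) and recalling $c(\zeta,\gamma_\delta)>i$, we conclude $d(\alpha,\beta)>i$, so $\zeta$, $a$, and $\epsilon^*$ witness Clause~(2).

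The step I expect to be the main obstacle, and the reason $d$ is defined by composing walks rather than by the naive formula $\max\{c(\mu,\nu)\mid\exists m,\ \{\mu,\nu\}\in[\im(\tr^m(\alpha,\beta))]^2\}$, is that in the countable-cofinality setting the two walks that must be spliced in the final computation — from $\beta$ down through $\gamma_\delta$ to $\alpha$, and from $\gamma_\delta$ down through $\zeta$ to $\alpha$ — occur at a priori different levels $k_1$ and $k_3$ of the matrix, and membership in a walk at one level is in general neither preserved by passing to another level nor inherited by the image of the walk at another level. Because $d(\alpha,\beta)$ only requires that $\nu$ lie on \emph{some} walk from $\beta$ to $\alpha$ and that $\mu$ lie on \emph{some} walk from $\nu$ to $\alpha$, this level mismatch does no harm. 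A secondary, purely technical matter is the extraction of a pairwise-disjoint family in the third tier and the bookkeeping in the second tier handling the merely $E^{\lambda^+}_{\ge\sigma}$-closed case of the hypothesis; both are dealt with exactly as in the proof of Claim~\ref{3184}.
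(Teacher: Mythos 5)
Your proof is correct, but it takes a genuinely different route from the paper's, and the divergence is exactly at the place you flagged as the main obstacle. The paper defines
$d(\alpha,\beta):=\max\bigl\{ c(\zeta,\gamma)\mid(\zeta,\gamma)\in\bigl[\bigcup_{m<\omega}\im(\tr^m(\alpha,\beta))\bigr]^2\bigr\}$ — pairs drawn from the union over all levels, not from a single level, so this is neither your ``naive formula'' nor your composed-walks formula — and then resolves the level-mismatch problem not in the definition of $d$ but in the subclaim: there, $\zeta$ is found on a walk $\tr^l(\alpha,\beta)$ directly from $\beta\in a_{\gamma_\varsigma}$ to $\alpha$ (by first walking from $\beta$ to a club-guessing ordinal $\delta$ below $\varsigma$ and then into $e^l_\delta$), so that both $\zeta$ and $\gamma_\varsigma$ come from walks-from-$\beta$, at possibly different levels $l$ and $k$, and the pair $(\zeta,\gamma_\varsigma)$ lands in $\bigl[\bigcup_m\im(\tr^m(\alpha,\beta))\bigr]^2$. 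You instead build the composition into $d$ itself and keep the third tier structurally identical to the one in Claim~\ref{3184}, finding $\zeta$ on a walk $\tr^{k_3}(\alpha,\gamma_\delta)$. Both moves succeed for the same reason — only finitely many levels matter for any fixed pair by eventual stabilization — and each has its own cost: the paper's $d$ is simpler to state and its closedness proof is a one-liner via Lemma~\ref{lambda2}, but the subclaim has to thread $\delta$, $\varsigma$, and $a_{\gamma_\varsigma}$ together carefully; your $d$ is more elaborate (the closedness argument now needs to control $\lambda_2^m(\eta,\nu)$ for the inner walks as well, which you do handle), but tier three becomes a mechanical invocation of Claim~\ref{3192} on the family $\{\{\delta,\gamma_\delta\}\mid\delta\in S'\}$. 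Your identification of the level-mismatch issue — that membership in $\im(\tr^l(\alpha,\gamma))$ does not transfer to $\im(\tr^k(\alpha,\beta))$ the way it does for a single $C$-sequence — is precisely the obstacle that forces the countable-cofinality case to differ from Claim~\ref{3184}, and both your fix and the paper's are valid.
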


  \begin{proof}
    Define $d:[\lambda^+]^2\rightarrow\theta$ by setting, for all $\alpha < \beta < \lambda^+$,
    $$
      d(\alpha,\beta):=\max\left\{ c(\zeta,\gamma)\Mid(\zeta,\gamma)\in
      \left[\bigcup_{m<\omega}\im(\tr^m(\alpha,\beta))\right]^2\right\},
    $$
    provided that the set is nonempty, and $d(\alpha,\beta):=0$, otherwise.

    We claim that $d$ is as desired. To see that $d$ is closed, suppose that
    $\beta < \lambda^+$, $i < \theta$, and $A \subseteq D^d_{\leq i}(\beta)$,
    with $\eta := \sup(A)$ smaller than $\beta$. To show that $\eta \in D^d_{\leq i}(\beta)$,
    fix $n < \omega$ large enough so that $\{ \tr^m(\eta,\beta)\mid m<\omega\}=
    \{ \tr^m(\eta,\beta)\mid m<n\}$, and then fix $\alpha \in A$ above
    $\max_{m<n}\lambda^m_2(\eta,\beta)$. By Lemma~\ref{lambda2}, $\bigcup_{m<n}
    \im(\tr^m(\eta,\beta))\s\bigcup_{m<\omega}\im^m(\tr(\alpha,\beta))$, and hence,
    by the definition of $d$, we have $d(\eta,\beta)\le d(\alpha,\beta)\le i$.

    To see that $d$ witnesses $\U(\lambda^+,\lambda^+,\theta,\omega)$,
    it suffices to verify Clause~(2) of Lemma~\ref{pumpclosed}. To this end,
    suppose that $\mathcal A\s[\lambda^+]^{<\omega}$ is a family consisting of
    $\lambda^+$-many pairwise disjoint sets, $D$ is a club in $\lambda^+$, and $i<\theta$.
    We shall prove that there exist $\zeta\in D$, $a\in\mathcal A$, and
    $\epsilon^* < \zeta$ for which
    \begin{itemize}
      \item $\zeta < a$;
      \item for all $\alpha \in (\epsilon^*, \zeta)$ and all $\beta\in a$,
        we have $d(\alpha,\beta)>i$.
    \end{itemize}
    Let $\Gamma$ be the set of $\gamma \in E^{\lambda^+}_{\geq \sigma}$ for
    which there exist $a \in \mathcal{A}$, $\epsilon < \gamma$, and $k < \omega$
    such that
    \begin{itemize}
      \item $\gamma < a$;
      \item for all $\beta \in a$ and $\alpha \in (\epsilon, \gamma)$, we have
        $\gamma \in \im(\tr^k(\alpha, \beta))$.
    \end{itemize}
    By Claim~\ref{3192} and Claim~\ref{3191}(3), $\Gamma$ is stationary.
    For each $\gamma\in\Gamma$, pick $a_\gamma\in\mathcal A$, $\epsilon_\gamma<\gamma$
    and $k_\gamma<\omega$ witnessing that $\gamma\in\Gamma$. Fix a
    stationary subset $\Gamma'\s\Gamma$ on which the map $\gamma\mapsto(\epsilon_\gamma,k_\gamma)$
    is constant, with value, say, $(\epsilon,k)$.

    Now, let $S$ be the set of $\varsigma < \lambda^+$ for which there exist
    $\gamma \in \Gamma'$ and $\varepsilon < \varsigma$ such that
    \begin{itemize}
      \item $\varsigma < \gamma$;
      \item for all $\zeta \in (\varepsilon, \varsigma)$, we have $c(\zeta, \gamma) > i$.
    \end{itemize}
    We claim that $S$ is stationary. There are two cases to consider.

    $\br$ If $c$ is a somewhere-closed witness to $\U(\lambda^+,2,\theta,\omega)$,
    then, by the implication $(1)\implies(2)$ of Lemma~\ref{pumpclosed},
    we infer that $S$ is stationary.

    $\br$ If $c$ is a $E^{\lambda^+}_{\ge\sigma}$-closed witness to $\U(\lambda^+,2,\theta,2)$,
    then repeating the proof of the implication $(1)\implies(2)$ of Lemma~\ref{pumpclosed}
    in the current setting implies that, furthermore, $S\cap\Gamma'$ is stationary.

    For each $\varsigma\in S$, pick $\gamma_\varsigma\in\Gamma'$ and
    $\varepsilon_\varsigma<\varsigma$ witnessing that $\varsigma\in S$.
    Fix a stationary subset $S'\s S$ on which the map $\varsigma\mapsto\varepsilon_\varsigma$
    is constant, with value, say, $\varepsilon$.

    \begin{subclaim}
      There exist $\zeta\in D$, $\varsigma\in S'$, $\eta<\zeta$ and $l<\omega$ such that
      \begin{itemize}
        \item $a_{\gamma_\varsigma}>\gamma_\varsigma>\varsigma>\zeta>\max\{\epsilon,\varepsilon\}$;
        \item for all $\beta\in a_{\gamma_\varsigma}$ and $\alpha\in(\eta,\zeta)$,
          we have $\zeta\in\im(\tr^l(\alpha,\beta))$.
      \end{itemize}
    \end{subclaim}

    \begin{proof}
      The proof is nearly identical to that of Claim~\ref{3192}. Fix $\delta\in\Delta$
      above $\max\{\epsilon, \varepsilon \}$ such that
      $\sup(e^m_\delta\cap D\cap E^{\delta}_{>\lambda_m})=\delta$ for all $m<\omega$.
      Fix $\varsigma\in S'$ above $\delta$, and let $a:=a_{\gamma_\varsigma}$.
      Find $n<\omega$ large enough so that, for every
      $\beta\in a$ and every integer $m \geq n$, we have $\tr^m(\delta, \beta) =
      \tr^n(\delta, \beta)$. Let $T:=\{\delta\} \cup \bigcup_{\beta\in a}\im(\tr^n(\delta,\beta))$,
      and find an integer $l>n$ large enough so that, for all $\tau\in T$, we have
      $\cf(\tau)\le\lambda_l$. Let $\Lambda:=\sup\{\lambda^l_2(\delta,\beta)\mid \beta\in a\}$,
      and pick $\zeta\in e^l_\delta\cap D\cap E^{\lambda^+}_{>\lambda_l}$
      above $\max\{\Lambda,\epsilon,\varepsilon\}$. Let $C:=\bigcup\{ C^l_\tau\mid \tau\in T\}$.
      For all $\tau\in T$, we have $|C^l_\tau|\le\max\{\lambda_l,\cf(\tau)\}=\lambda_l<\cf(\zeta)$,
      and hence $\eta:=\max\{\Lambda,\sup(C\cap\zeta)\}$ is less than $\zeta$.

      We claim that $\zeta$, $\varsigma$, $\eta$, and $l$ are as desired.
      The first requirement is clearly satisfied. To verify the second, fix an
      arbitrary $\beta \in a$ and $\alpha\in(\eta,\zeta)$. We have
      $$
        \lambda^l_2(\delta,\beta)\le\Lambda\le\eta<\alpha<\zeta<\delta<\varsigma<\gamma_\varsigma<\beta,
      $$
      so, by Lemma~\ref{lambda2}, $\rho_2^l(\delta,\beta)\sq\rho_2^l(\alpha,\beta)$.
      Let $\ell:=\rho_2^l(\delta,\beta)$. Then, as in the proof of Claim~\ref{3192},
      we infer that $\zeta\in\{\tr^l(\alpha,\beta)(\ell), ~ \tr^l(\alpha,\beta)(\ell+1)\}$.
    \end{proof}

    Let $\zeta$, $\varsigma$, $\eta$, and $l$ be given by the preceding subclaim.
    Let $\gamma:=\gamma_\varsigma$, $a:=a_{\gamma}$, and $\epsilon^* := \max\{\epsilon,\varepsilon,\eta\}$.
    We claim that $\zeta$, $a$, and $\epsilon^*$ are as sought. To prove this,
    let $\alpha \in (\epsilon^*, \zeta)$ and $\beta \in a$ be arbitrary.
    As $\gamma \in \Gamma'$, $\beta \in a = a_\gamma$, and
    $\alpha \in (\epsilon, \gamma)$, we have $\gamma \in \im(\tr^k(\alpha, \beta))$.
    As $\alpha \in (\eta, \zeta)$, we have $\zeta \in \im(\tr^l(\alpha, \beta))$.
    Finally, as $\varsigma \in S'$ and $\zeta \in (\varepsilon, \varsigma)$,
    we have $c(\zeta, \gamma) > i$. Altogether, we obtain $d(\alpha, \beta)
    > i$, as desired.
  \end{proof}

  We now finish the proof of the countable cofinality case and hence
  the theorem by disposing with condition (1).

  \begin{claim}
    Suppose that there exists no closed witness to $\U(\lambda^+,\lambda^+,\theta,\omega)$.
    Then $\refl({<}\omega,\lambda^+)$ holds.
  \end{claim}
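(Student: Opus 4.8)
The plan is to mirror the proof of the analogous claim in the uncountable-cofinality case, with the Case~2 objects replacing their Case~1 analogues. First I would observe that $\theta\neq\omega$: by Corollary~\ref{predecessor} there is a closed witness to $\U(\lambda^+,\lambda^+,\cf(\lambda),\lambda)$, and since closedness is a property of the coloring alone, the same coloring is, by Proposition~\ref{obvious}(3), a closed witness to $\U(\lambda^+,\lambda^+,\cf(\lambda),\omega)$; as $\cf(\lambda)=\omega$, were $\theta=\omega$ this would contradict the hypothesis. Thus $\theta$ is an uncountable regular cardinal below $\lambda$, and since $\chi=\theta^+$ we have $\theta\in\reg(\lambda)\setminus\{\omega,\chi\}$, so by Claim~\ref{3191}(b) the ideal $\mathcal I$ is $\theta$-indecomposable. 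Next I would note that the argument of Claim~\ref{3183} goes through verbatim here, with Claim~\ref{3192} and Claim~\ref{3191} in place of Claim~\ref{2122} and Claim~\ref{3181}: for all $\alpha<\beta<\lambda^+$ the set $\bigcup_{m<\omega}\im(\tr^m(\alpha,\beta))$ is finite, so given $h\colon\lambda^+\to\theta$ with $h^{-1}\{i\}\in\mathcal I^+$ for all $i<\theta$, the coloring $c(\alpha,\beta):=\max\{h(\tau)\mid\tau\in\bigcup_{m<\omega}\im(\tr^m(\alpha,\beta))\}$ is well-defined and closed, and Claim~\ref{3192} shows that it witnesses $\U(\lambda^+,\lambda^+,\theta,\omega)$. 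Hence, were $\mathcal I$ not weakly $\theta$-saturated, there would be a closed witness to $\U(\lambda^+,\lambda^+,\theta,\omega)$, contrary to hypothesis; so $\mathcal I$ is weakly $\theta$-saturated.

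Having assembled that $\mathcal I$ is a proper ideal over $\lambda^+$ extending $\ns_{\lambda^+}$ that is weakly $\theta$-saturated and $\theta$-indecomposable, I would apply \cite[Theorem~2(4)]{MR2652193} --- the required completeness being automatic since $\cf(\lambda)=\omega$ --- to conclude that $\refl({<}\omega,S^*)$ holds for $S^*:=E^{\lambda^+}_{\ge\theta}\cap E^{\lambda^+}_{\neq\omega}$. Since $\theta>\omega$, in fact $S^*=E^{\lambda^+}_{\ge\theta}$, and this set is stationary.

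The central step is to show that $\Tr(S)\cap S^*$ is stationary for every stationary $S\s\lambda^+$. By Proposition~\ref{obvious}(3), a closed witness to $\U(\lambda^+,\lambda^+,\theta,\sigma)$ is also a closed witness to $\U(\lambda^+,\lambda^+,\theta,\omega)$ whenever $\omega\le\sigma$, so the hypothesis gives that there is no closed witness to $\U(\lambda^+,\lambda^+,\theta,\sigma)$ for any $\sigma\in\reg(\lambda)$; hence, by the contrapositive of Theorem~\ref{l23s}, for each $\sigma\in\reg(\lambda)$ and each stationary $S'\s E^{\lambda^+}_{\ge\sigma}$ the set $\{\nu<\lambda\mid \Tr(S')\cap E^{\lambda^+}_\nu\text{ is stationary}\}$ is unbounded in $\lambda$, so that $\Tr(S')\cap E^{\lambda^+}_{\ge\theta}=\Tr(S')\cap S^*$ is stationary (here using $\theta<\lambda$). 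Now, given an arbitrary stationary $S\s\lambda^+$, since every limit ordinal below $\lambda^+$ has cofinality in $\reg(\lambda)$ and $|\reg(\lambda)|\le\lambda<\cf(\lambda^+)$, there is $\sigma\in\reg(\lambda)$ for which $S':=S\cap E^{\lambda^+}_\sigma$ is stationary; applying the previous sentence, $\Tr(S')\cap S^*$ is stationary, and as $\Tr(S')\s\Tr(S)$ it follows that $\Tr(S)\cap S^*$ is stationary.

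Finally, to deduce $\refl({<}\omega,\lambda^+)$, I would fix stationary sets $S_0,\dots,S_{n-1}\s\lambda^+$ and put $T_j:=\Tr(S_j)\cap S^*$ for $j<n$. By the central step, each $T_j$ is a stationary subset of $S^*$, so $\refl({<}\omega,S^*)$ provides some $\delta\in\bigcap_{j<n}\Tr(T_j)$. For each $j<n$, since $T_j\s\Tr(S_j)$, $T_j\cap\delta$ is stationary in $\delta$, and $\cf(\delta)>\omega$, a standard argument --- given a club $C\s\delta$, pick $\alpha\in T_j\cap\acc(C)$ and note that $C\cap\alpha$ is a club in $\alpha$, hence meets $S_j$ --- shows that $S_j\cap\delta$ is stationary in $\delta$, i.e.\ $\delta\in\Tr(S_j)$. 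Thus $\delta\in\bigcap_{j<n}\Tr(S_j)$, completing the proof. I expect the central step to demand the most care: one must pass from a wholly arbitrary stationary set to one concentrated on a single cofinality before invoking Theorem~\ref{l23s} at the correct value of its fourth parameter, and one should also confirm that \cite[Theorem~2(4)]{MR2652193} does apply to $\mathcal I$ in this setting, where $\mathcal I$ need not be more than finitely complete.
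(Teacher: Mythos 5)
Your proposal is correct and follows essentially the same line of argument as the paper's own proof, which is quite terse; you simply fill in the details the paper leaves implicit. The one minor inefficiency is in your ``central step'': you need not first concentrate the arbitrary stationary set $S$ on a single cofinality $\sigma$ before invoking Theorem~\ref{l23s}. Since a stationary set has a stationary intersection with $\acc(\lambda^+)\s E^{\lambda^+}_{\ge\omega}$, you can apply the contrapositive of Theorem~\ref{l23s} directly with fourth parameter $\omega$ (the $\chi$ appearing in the statement of Theorem~\ref{l23s} is a local variable, independent of the ambient $\chi := \theta^+$ fixed in Case~2), obtaining at once that $\sup\{\nu<\lambda\mid \Tr(S\cap\acc(\lambda^+))\cap E^{\lambda^+}_\nu\text{ is stationary}\}=\lambda$ and hence $\Tr(S)\cap S^*$ is stationary. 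Your detour through Fodor is sound but unnecessary. Everything else — the use of Corollary~\ref{predecessor} to rule out $\theta=\omega$, the deduction of weak $\theta$-saturation via the adaptation of Claim~\ref{3183} using Claims \ref{3192} and \ref{3191}, the appeal to $\theta$-indecomposability and \cite[Theorem~2(4)]{MR2652193}, and the final argument showing $\Tr(\Tr(S_j)\cap S^*)\s\Tr(S_j)$ — matches the paper's intent.
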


  \begin{proof}
    The proof of Claim~\ref{3183} makes it clear that Claim~\ref{3192} implies the
    existence of a closed witness to $\U(\lambda^+,\lambda^+,\theta,\omega)$,
    provided that $\mathcal I$ is not weakly $\theta$-saturated.
    Consequently, $\mathcal I$ is weakly $\theta$-saturated.
    By Corollary~\ref{predecessor}, $\theta\neq\omega$. Altogether,
    $\theta\in\reg(\lambda)\setminus\{\omega,\chi\}$, so,
    by Claim~\ref{3191}, $\mathcal I$ is an ideal that is weakly $\theta$-saturated
    and $\theta$-indecomposable. It then follows from \cite[Theorem~2(4)]{MR2652193}
    that $\refl({<}\omega,S^*)$ holds, where $S^*:=E^{\lambda^+}_{\ge\theta}\cap
    E^{\lambda^+}_{\neq\omega}$. In addition, by Theorem~\ref{l23s},
    for every stationary $S\s\lambda^+$, we know that $\Tr(S)\cap S^*$ is stationary.
    Therefore, $\refl({<}\omega,\lambda^+)$ holds.
  \end{proof}

\subsection{Inaccessible cardinals} \label{inaccessible_subsection}

We begin this subsection by noting the following result. It follows immediately
from the proof of Theorem~\ref{l23s}, so we do not provide a separate proof here.

\begin{prop}
  Suppose that $\kappa$ is an inaccessible cardinal, $\theta, \chi\in\reg(\kappa)$,
  and there exists a stationary $S\s E^\kappa_\chi$ such that $\clubsuit(S)$ holds and
  $$
    \sup\{ \nu<\kappa\mid  \Tr(S)\cap E^{\kappa}_\nu\text{ is stationary}\}<\kappa.
  $$
  Then there exists a closed witness to $\U(\kappa,\kappa,\theta,\chi)$. \qed
\end{prop}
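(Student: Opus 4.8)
The plan is to replay the argument of Theorem~\ref{l23s} with $\lambda^+$ replaced by the inaccessible $\kappa$, invoking the hypothesis $\clubsuit(S)$ in the one place where Theorem~\ref{l23s} exploited that it was working at a successor cardinal (namely, to secure a suitable club-guessing $C$-sequence via \cite[Proposition~1.4]{paper07}). Since we are handed $\clubsuit(S)$ directly on $S\s E^\kappa_\chi$, the preliminary massaging claim of Theorem~\ref{l23s} is unnecessary, and we simply set $\Delta:=S$. As $\kappa$ is inaccessible and $\sup\{\nu<\kappa\mid\Tr(S)\cap E^\kappa_\nu\text{ is stationary}\}<\kappa$, fix $\mu\in\reg(\kappa)$ with $\mu>\chi$ and $\mu$ above that supremum; then $\Delta\cap\alpha$ is nonstationary in $\alpha$ for every $\alpha\in E^\kappa_{\ge\mu}$. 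Fix $h:\kappa\rightarrow\theta$ with $H_i:=h^{-1}\{i\}\cap E^\kappa_\mu$ stationary for each $i<\theta$.

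First I would build, from $\clubsuit(S)$, a $C$-sequence $\langle e_\delta\mid\delta<\kappa\rangle$ such that $\otp(e_\delta)=\cf(\delta)$ for $\delta\in\acc(\kappa)$, $\otp(e_\delta)=\chi$ for $\delta\in\Delta$, $\cl(e_\alpha)\cap\Delta=\emptyset$ for $\alpha\in E^\kappa_{\ge\mu}$ (possible since $\Delta\cap\alpha$ is then nonstationary in $\alpha$), and, most importantly, such that for every club $D\s\kappa$ and every $i<\theta$ there is $\delta\in\Delta$ with $\sup(e_\delta\cap H_i\cap D)=\delta$ --- this last clause being extracted from the $\clubsuit(S)$-sequence exactly as the corresponding clause of Theorem~\ref{l23s} is extracted from $\clubsuit^-$ of a partition of $\Delta$ (cf.\ the reduction on p.~145 of \cite{paper_s01}). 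I then thicken $\langle e_\delta\rangle$ by the usual club-swallowing recursion ($C^0_\alpha:=e_\alpha$, $C^{n+1}_\alpha:=\cl(C^n_\alpha\cup\bigcup\{e_\delta\mid\delta\in\acc(C^n_\alpha)\cap\Delta\})$, $C_\alpha:=\cl(\bigcup_{n<\omega}C^n_\alpha)$) to obtain a $C$-sequence $\vec C$ with $|C_\alpha|\le\max\{\cf(\alpha),\chi\}<\kappa$ for all $\alpha$ (here inaccessibility enters), with $e_\delta\s C_\alpha$ whenever $\delta\in(\acc(C_\alpha)\cup\{\alpha\})\cap\Delta$, and with $C_\alpha\cap\Delta=\emptyset$ for $\alpha\in E^\kappa_{\ge\mu}$; this is verbatim Claim~\ref{claim3143}.

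Next, walking along $\vec C$, I would set $c(\alpha,\beta):=\max\{h(\tau)\mid\tau\in\im(\tr(\alpha,\beta))\}$ as in Theorem~\ref{simu}; closedness of $c$ follows word for word from Lemma~\ref{lambda2}. By the implication $(2)\implies(3)$ of Lemma~\ref{pumpclosed}, to conclude $\U(\kappa,\kappa,\theta,\chi)$ it remains to verify Clause~(2) of that lemma: given a family $\mathcal A\s[\kappa]^{<\chi}$ of $\kappa$-many pairwise disjoint sets, a club $D$, and $i<\theta$, use the guessing clause to pick $\delta\in\Delta$ with $\sup(e_\delta\cap H_{i+1}\cap D)=\delta$, fix $a\in\mathcal A$ with $\delta<a$, put $\Lambda:=\sup\{\lambda_2(\delta,\beta)\mid\beta\in a\}$ (which is $<\delta$ since $|a|<\chi=\cf(\delta)$), pick $\gamma\in e_\delta\cap H_{i+1}\cap D$ above $\Lambda$, and with $T:=\{\delta\}\cup\bigcup_{\beta\in a}\im(\tr(\delta,\beta))$ and $\epsilon:=\max(\{\Lambda\}\cup\{\sup(C_\tau\cap\gamma)\mid\tau\in T,\ \sup(C_\tau\cap\gamma)<\gamma\})$, argue (splitting into the cases $\delta\in\nacc(C_{\tr(\delta,\beta)(\ell-1)})$ and $\delta\in\acc(C_{\tr(\delta,\beta)(\ell-1)})$, $\ell:=\rho_2(\delta,\beta)$, via Lemma~\ref{lambda2}) that $\gamma\in\im(\tr(\alpha,\beta))$ for all $\alpha\in(\epsilon,\gamma)$ and $\beta\in a$, whence $c(\alpha,\beta)\ge h(\gamma)=i+1>i$. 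The key numerical facts making this go through are $\cf(\gamma)=\mu>\chi>|a|$ (so $\Lambda<\delta$ and $|T|<\chi\le\cf(\gamma)$, whence $\epsilon<\gamma$), and that any $\tau\in T$ with $\delta\in\acc(C_\tau)$ must have $\tau\notin E^\kappa_{\ge\mu}$ (since $C_\tau\cap\Delta\ne\emptyset$) and hence $|C_\tau|<\mu=\cf(\gamma)$, so $\sup(C_\tau\cap\gamma)<\gamma$; together with $\gamma\in e_\delta\s C_\delta$ and $\otp(C_\delta)=\chi<\mu$, this yields the required trace equalities.

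The step I expect to be the main obstacle is the construction of the guessing $C$-sequence $\langle e_\delta\rangle$: one must check that $\clubsuit(S)$ is precisely the strength needed to replace the automatic club-guessing available at a successor cardinal, i.e.\ that it survives the split of $\Delta$ into $\theta$-many stationary pieces and yields guessing of the sets $D\cap H_i$ rather than merely clubs $D$. Everything downstream of that point is a routine transcription of the singular-cofinality analysis of Theorem~\ref{l23s}, with the cardinal-arithmetic bounds now supplied by the inaccessibility of $\kappa$ rather than by $|C_\alpha|<\lambda$.
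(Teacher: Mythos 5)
The overall strategy — set $\Delta:=S$, build a club-guessing $C$-sequence from $\clubsuit(S)$ in place of the automatic club-guessing at a successor, swallow, walk, and verify Clause~(2) of Lemma~\ref{pumpclosed} — is indeed the right transcription of Theorem~\ref{l23s}, and your observation that one should apply $\clubsuit(S)$ directly to the sets $D\cap H_i$ (rather than manufacture $\clubsuit^-$ on $\theta$-many pieces, which $\clubsuit(S)$ does \emph{not} supply) is the correct way to avoid the issue you flagged.

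There is, however, a genuine gap earlier: the assertion that ``$\Delta\cap\alpha$ is nonstationary in $\alpha$ for every $\alpha\in E^\kappa_{\ge\mu}$'' does not follow from the hypothesis. That hypothesis says each individual $\Tr(S)\cap E^\kappa_\nu$ (for $\nu$ above the supremum) is nonstationary \emph{in $\kappa$}; it is compatible with $\Tr(S)\cap E^\kappa_{\ge\mu}$ being nonempty, and even stationary, via regular cardinals: each regular $\tau\in\Tr(S)$ contributes only the singleton $\{\tau\}$ to $\Tr(S)\cap E^\kappa_\tau$, so a stationary set of such $\tau$'s leaves every $\Tr(S)\cap E^\kappa_\nu$ nonstationary. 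At such a $\tau$ there is no club $e_\tau$ with $\cl(e_\tau)\cap\Delta=\emptyset$, Claim~\ref{claim3143}(4) fails, and the $\acc$-case of your verification (where you invoke ``$\tau\notin E^\kappa_{\ge\mu}$, hence $|C_\tau|<\mu=\cf(\gamma)$'') breaks down, because $|C_\tau|=\tau>\mu$ for a regular $\tau$. The ``preliminary massaging'' claim of Theorem~\ref{l23s} is not skippable decoration: at $\lambda^+$ it converts the stated hypothesis into the genuinely stronger statement $\Tr(\Delta)\cap E^{\lambda^+}_{\ge\mu}=\emptyset$ by means of a Fodor argument, which works there because $\alpha\mapsto\cf(\alpha)$ is regressive on $(\lambda,\lambda^+)$; at an inaccessible $\kappa$ that Fodor argument is blocked by $\reg(\kappa)$. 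Compare Theorem~\ref{inaccessible_reflection}, which confronts the same obstruction by arranging $S\s\card$ and using non-reflection at inaccessibles, so that $S$ fails to reflect at \emph{any} regular cardinal and the walk $C$-sequence can be chosen disjoint from $S$ at regular levels. Your write-up needs either an argument that the hypothesis together with $\clubsuit(S)$ rules out the bad regular-cardinal reflection, or an extra shrinking step (and a check that $\clubsuit$ survives it) that actually achieves $\Tr(\Delta)\cap E^\kappa_{\ge\mu}=\emptyset$; as written, it does not.
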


Our last result of this section, similarly to Theorem~\ref{l23s}, provides an
improvement to the implication $(3)\implies(4)$ from Corollary~\ref{l23}, this
time in the context of inaccessible cardinals.

\begin{thm} \label{inaccessible_reflection}
  Suppose that $\kappa$ is an inaccessible cardinal, $\theta, \chi\in\reg(\kappa)$, and
  there exists a stationary $S\s E^{\kappa}_{\ge\chi}$ that does not reflect at any
  inaccessible cardinal. Then there exists a closed witness to $\U(\kappa,\kappa,\theta,\chi)$.
\end{thm}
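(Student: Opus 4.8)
The plan is to transcribe the proof of Theorem~\ref{l23s} to the inaccessible setting, the crucial new feature being that no fixed cardinal below $\kappa$ bounds the objects involved. First, by the implication $(3)\implies(4)$ of Corollary~\ref{l23}, I may assume that $\Tr(S')\cap E^\kappa_{\ge\theta}$ is stationary for \emph{every} stationary $S'\s E^\kappa_{\ge\chi}$; in particular for $S$ itself. Next, imitating the opening Claim in the proof of Theorem~\ref{l23s}, I would apply the operator $\Tr$ finitely many times and thin by Fodor's lemma at each stage --- using that $\Tr$ always lands inside $E^\kappa_{>\omega}$ and that $\Tr(S_n)\s E^\kappa_{\ge\sigma_n}$ whenever $S_n\s E^\kappa_{\sigma_n}$ --- to obtain a regular cardinal $\sigma\in\reg(\kappa)$ with $\chi\le\sigma$ and $\omega<\sigma$ together with a stationary $\Delta\s E^\kappa_\sigma$ satisfying $\Tr(\Delta)\s\Tr(S)$. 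Then $\Tr(\Delta)$ contains no inaccessible cardinal, so each $\tau\in\Tr(\Delta)$ is either singular or a successor cardinal and hence comes with a cardinal $\mu_\tau<\tau$ witnessing this ($\cf(\tau)$, or a $\nu$ with $|\tau|\le 2^\nu$); this local bookkeeping replaces the single uniform cardinal $\mu<\lambda$ available for free in Theorem~\ref{l23s}.

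Now fix $h:\kappa\to\theta$ with each $H_i:=h^{-1}\{i\}$ stationary, and build a $C$-sequence $\vec C=\langle C_\alpha\mid\alpha<\kappa\rangle$ --- or, as I expect is needed here, a matrix of $C$-sequences stratified by scale --- together with a club-guessing sequence $\langle e_\delta\mid\delta\in\Delta\rangle$, arranged (via the club-swallowing recursion preceding Claim~\ref{claim3143}) so that $\otp(C_\alpha)=\cf(\alpha)$; $e_\delta\s C_\tau$ whenever $\delta\in(\acc(C_\tau)\cup\{\tau\})\cap\Delta$; the order types $\otp(C_\tau)$ of clubs carrying $\Delta$-points below a given $\delta\in\Delta$ stay below cofinalities realized cofinally often inside $e_\delta$; and $\langle e_\delta\rangle$ does club-guessing \emph{together with large cofinalities}: for every club $D\s\kappa$ and every $i<\theta$ there is $\delta\in\Delta$ with $\sup\{\gamma\in e_\delta\cap D\cap H_i\mid\cf(\gamma)>\rho\}=\delta$ for all $\rho<\delta$. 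The guessing machinery is available in $\zfc$ since $\kappa$, being a limit cardinal, satisfies $\sigma^+<\kappa$; this is the analogue, in the spirit of \cite{EiSh:819} and of the countable-cofinality half of the proof of Theorem~\ref{increasechi}, of the $\clubsuit^-$-based step of Theorem~\ref{l23s} invoking \cite[Proposition~1.4]{paper07} and \cite{paper_s01}. With $\vec C$ and $h$ in hand, set $c(\alpha,\beta):=\max\{h(\tau)\mid\tau\in\im(\tr(\alpha,\beta))\}$ as in Theorem~\ref{simu}; the closedness claim there, which only uses Lemma~\ref{lambda2}, shows $c$ is closed.

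To conclude that $c$ witnesses $\U(\kappa,\kappa,\theta,\chi)$ it suffices, by the implication $(2)\implies(3)$ of Lemma~\ref{pumpclosed}, to verify Clause~(2) of that lemma, and here I would replay the walk argument of the concluding Claim of Theorem~\ref{l23s}. Given $\mathcal A\s[\kappa]^{<\chi}$ consisting of $\kappa$-many pairwise disjoint sets, a club $D$, and $i<\theta$: use the club-guessing to fix $\delta\in\Delta$; fix any $a\in\mathcal A$ with $\delta<a$; let $T$ collect the ordinals on the (appropriate-layer) walks from $\delta$ to the members of $a$; exploit that each $C_\tau$ ($\tau\in T$) is small below $\delta$ to pick $\gamma\in e_\delta\cap D\cap H_{i+1}$ of cofinality exceeding every $\otp(C_\tau\cap\delta)$ and above $\sup\{\lambda_2(\delta,\beta)\mid\beta\in a\}$; let $\epsilon$ be the supremum of that quantity together with the ordinals $\sup(C_\tau\cap\gamma)$ for those $\tau\in T$ with $\sup(C_\tau\cap\gamma)<\gamma$. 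For $\alpha\in(\epsilon,\gamma)$ and $\beta\in a$, Lemma~\ref{lambda2} gives $\tr(\delta,\beta)\sq\tr(\alpha,\beta)$; distinguishing whether $\delta$ lies in the accumulation or the non-accumulation set of $C_{\tr(\delta,\beta)(\rho_2(\delta,\beta)-1)}$ --- and, in the accumulation case, using $e_\delta\s C_{\tr(\delta,\beta)(\rho_2(\delta,\beta)-1)}$ together with the largeness of $\cf(\gamma)$ --- one obtains $\gamma\in\im(\tr(\alpha,\beta))$, whence $c(\alpha,\beta)\ge h(\gamma)=i+1>i$, as Clause~(2) demands.

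The main obstacle, by far, is the construction in the second paragraph. In Theorem~\ref{l23s} every ordinal in sight has cardinality and cofinality below the singular $\lambda$, so the clubs $C_\tau$ are automatically of size $<\lambda$ and the guessing sequence can realize cofinalities cofinal in $\lambda$; at an inaccessible $\kappa$ there is no such ambient cardinal, and --- as one sees already when $\kappa$ is the least inaccessible and $S=E^\kappa_\chi$, an instance where the hypothesis holds trivially --- $\Delta$ may reflect to ordinals of cofinality cofinal in $\kappa$, so a $\tau$ on a walk out of $\delta$ can be much larger than $\delta$ with $\otp(C_\tau\cap\delta)$ correspondingly large. Getting around this requires genuinely using, $\tau$ by $\tau$, that every member of $\Tr(S)$ is non-inaccessible --- hence singular or a successor cardinal --- to keep $\otp(C_\tau\cap\delta)$ below the cofinalities that $e_\delta$ realizes cofinally; I expect this forces one to work with several $C$-sequences (a matrix stratified by scale, as in the countable-cofinality case of Theorem~\ref{increasechi}), with the non-reflection-at-inaccessibles hypothesis supplying the combinatorial data needed to build them and the accompanying club-guessing matrix. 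Checking that this stratified control coexists with the coherence demand $e_\delta\s C_\tau$ and with the guessing, and survives the closure operation producing the $C_\alpha$'s, is where the real work lies; the rest is a faithful transcription of the arguments already carried out for Theorems~\ref{simu} and~\ref{l23s}.
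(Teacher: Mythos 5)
You correctly identify the opening reductions: invoking Corollary~\ref{l23}(3)$\implies$(4) to assume that $\Tr(T)\cap E^\kappa_{\ge\theta}$ is stationary for every stationary $T\s E^\kappa_{\ge\chi}$, and iterating $\Tr$ finitely often to concentrate on a fixed cofinality $\sigma$. You also correctly pinpoint the central obstacle: at an inaccessible $\kappa$ there is no ambient cardinal $\lambda$ bounding $|C_\tau|$ for ordinals $\tau$ appearing on a walk out of $\delta$, so the set $C$ used to compute $\epsilon$ might be too large relative to $\cf(\gamma)$. Where the proposal goes wrong is in the proposed remedy: you expect to need a matrix of $C$-sequences stratified by scale, together with a \emph{strengthened} club-guessing sequence that simultaneously guesses every piece $H_i$ of an arbitrary stationary partition while also realizing arbitrarily large cofinalities inside each $e_\delta$. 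That second demand is not a $\zfc$ theorem, and you offer no mechanism to produce it; the order of quantifiers is backwards --- you fix the partition first and then ask the guessing sequence to find it.

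The paper resolves both issues by a single, more direct device, and never touches a matrix of $C$-sequences. Two pieces of machinery do the work.

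First, because $S$ consists of cardinals and does not reflect at inaccessibles, one can invoke Hoffman \cite[Proposition~4.3.1]{MR3321938} to build a single $C$-sequence $\vec C$ in which $C_\alpha\cap S=\emptyset$ whenever $\alpha\in\reg(\kappa)$, and $|C_\alpha|<\min(C_\alpha)$ and $|C_\alpha|\le\max\{\sigma,\cf(\alpha)\}$ for singular $\alpha$, with the club-swallowing requirement $\sup(e_\delta\setminus C_\alpha)<\delta$ for $\delta\in(C_\alpha\cup\{\alpha\})\cap S$. The payoff: if $\delta\in S$ and $\delta\in C_\xi$ for some $\xi$ on a walk, then $\xi$ is forced to be non-inaccessible (since $C_\xi$ meets $S$), hence $|C_\xi|<\min(C_\xi)<\delta$ --- a bound below $\delta$ that costs nothing. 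This is exactly the uniform control you were worried about losing, obtained not by stratification but by routing the non-reflection hypothesis into the construction of $\vec C$ itself.

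Second, the paper does not demand that the club-guessing sequence find a prescribed partition. Instead it first builds, via Hoffman \cite[Theorem~2.1.1]{MR3321938}, a club-guessing sequence $\langle e_\delta\mid\delta\in S\rangle$ with $\otp(e_\delta)=\sigma$ and $\nacc(e_\delta)$ realizing cofinalities cofinal in $\delta$, then forms the associated ideal $\mathcal I$, and then proves $\mathcal I$ is not weakly $\theta$-saturated. Proving this uses the \emph{second} stationary set $S^0\s E^\kappa_\tau$ (with $\tau>\sigma$) not reflecting at inaccessibles, via Case~$(\beta)(a)$ of Shelah \cite[Claim~3.3]{Sh:365}; the partition $h$ into $\theta$-many $\mathcal I^+$ pieces is then an output, not an input. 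Your proposal has no analogue of this step and no replacement for it, so even granting the $C$-sequence, you have not supplied the colour-selection mechanism. The last walk-verification paragraph of your proposal is essentially faithful to the paper once these two pieces are in place, but you should be aware that the stratified route you sketch is neither what the paper does nor visibly salvageable without the ideal-theoretic step.
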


\begin{proof}
  By Corollary~\ref{l23s}, we may assume that, for every stationary
  $T\s E^\kappa_{\ge\chi}$, the set $\Tr(T)\cap E^\kappa_{\ge\theta}$ is stationary.
  We begin by isolating stationary sets as in the statement of the theorem
  that are slightly better-behaved.

  \begin{claim}\label{c3201}
    There exist regular cardinals $\sigma,\tau$ with $\max\{\aleph_1,\chi,\theta\}\le\sigma<\tau<\kappa$
    and stationary subsets $S,S^0$ of $\kappa$ such that
    \begin{itemize}
      \item $S\s E^\kappa_{\sigma}\cap\card$, and $S$ does not
        reflect at inaccessibles;
      \item $S^0\s E^\kappa_\tau$, and $S^0$ does not reflect at inaccessibles.
    \end{itemize}
  \end{claim}

  \begin{proof}
    By the hypothesis of the theorem, we can fix a stationary $T\s E^{\kappa}_{\ge\chi}$
    such that $T$ does not reflect at inaccessibles. Then $\Tr(T)\cap E^\kappa_{\ge\theta}$
    is a stationary set consisting of singular ordinals, so Fodor's lemma
    entails the existence of a cardinal $\sigma\in\reg(\kappa)\setminus\theta$ for which
    $\Tr(T)\cap E^\kappa_{\sigma}$ is stationary. Since $\card \cap \kappa$ is a club
    in the inaccessible $\kappa$, $S:=\Tr(T)\cap\card\cap E^\kappa_\sigma$ is a
    stationary subset of $E^\kappa_{>\chi}$. As $\Tr(S) \s \Tr(T)$, we can repeat the process to find
    $\tau \in \reg(\kappa)$ such that $S^0 := \Tr(S) \cap E^\kappa_\tau$
    is stationary. Then $\tau>\sigma>\chi\ge\aleph_0$, $\sigma\ge\theta$ and $\Tr(S^0)\s\Tr(S)\s\Tr(T)$,
    so $\sigma$, $\tau$, $S$, and $S^0$ are as sought.
  \end{proof}

  Let $\sigma$, $\tau$, $S$, and $S^0$ be given by the preceding claim.
  By \cite[Theorem~2.1.1]{MR3321938}, there exists a sequence
  $\langle e_\delta\mid\delta\in S\rangle$ such that
  \begin{itemize}
    \item for all $\delta\in S$, $e_\delta$ is a club in $\delta$ of order type $\sigma$;
    \item for all $\delta\in S$, $\langle \cf(\gamma)\mid \gamma\in\nacc(e_\delta)\rangle$
      is strictly increasing and converges to $\delta$;
    \item for every club $D\s\kappa$, there exists $\delta\in S$ with $e_\delta\s D$.
  \end{itemize}
  Define $\mathcal I\s\mathcal P(\kappa)$ by letting $A\in\mathcal P(\kappa)$ be in
  $\mathcal I$ iff there exists a club $D\s\kappa$ such that for every
  $\delta\in S\cap\acc(D)$, $\sup(\nacc(e_\delta)\cap D\cap A)<\delta$.

  \begin{claim}\label{3202}
    $\mathcal{I}$ satisfies the following two conditions:
    \begin{enumerate}
      \item $\mathcal I$ is a $\sigma$-complete proper ideal over $\kappa$, extending $\ns_{\kappa}$;
      \item $\mathcal I$ is not weakly $\theta$-saturated.
    \end{enumerate}
  \end{claim}

  \begin{proof}
    Clause~(1) is straightforward to verify. To see that Clause~(2) holds, we shall
    want to appeal to \cite[Claim~3.3]{Sh:365}. For each $\delta\in S$, let
    $I_\delta:=\{ A\s e_\delta\mid \sup(\nacc(e_\delta)\cap A)<\delta\}$, so that
    $I_\delta$ is a $\sigma$-complete and $\tau$-indecomposable ideal over $e_\delta$.
    Trivially, $\sup_{\delta\in S}|e_\delta|^+<\kappa$. Setting $\bar C:=\langle
    e_\delta\mid\delta\in S\rangle$ and $\bar I:=\langle I_\delta\mid \delta\in S\rangle$,
    and recalling \cite[Definition~3.1]{Sh:365}, it is evident that the ideal
    $\id_p(\bar C,\bar I)$ is equal to our proper ideal $\mathcal I$.
    Now, since $S^0$ is a stationary subset of $E^\kappa_\tau$ that does not reflect at
    inaccessibles, Case $(\beta)(a)$ of \cite[Claim~3.3]{Sh:365} entails the existence
    of a partition of $\kappa$ into $\tau$-many $\mathcal I$-positive sets.
    In particular, since $\tau>\theta$, $\mathcal I$ is not weakly $\theta$-saturated.
  \end{proof}

  Using the preceding claim, fix a surjection $h:\kappa\rightarrow\theta$ such that
  $h^{-1}\{i\}\in\mathcal I^+$ for all $i<\theta$. Next, using \cite[Proposition~4.3.1]{MR3321938}
  and the fact that $S \s \card$ and $S$ does not reflect at inaccessibles, fix a $C$-sequence
  $\vec C=\langle C_\alpha\mid\alpha<\kappa\rangle$ such that
  \begin{itemize}
    \item for all $\alpha\in\reg(\kappa)$, $C_\alpha$ is a club in $\alpha$
      disjoint from $S$;
    \item for all $\alpha\in\acc(\kappa)\setminus\reg(\kappa)$, $C_\alpha$ is a
      club in $\alpha$ satisfying:
      \begin{itemize}
        \item $|C_\alpha|<\min(C_\alpha)$;
        \item $|C_\alpha|\le\max\{\sigma,\cf(\alpha)\}$;
        \item for all $\delta\in(C_\alpha\cup\{\alpha\})\cap S$, $\sup(e_\delta\setminus C_\alpha)<\delta$.
      \end{itemize}
  \end{itemize}

  We shall walk along $\vec C$. Derive a closed coloring $c:[\kappa]^2\rightarrow\theta$
  as in the proof of Theorem~\ref{simu} by setting, for all $\alpha < \beta < \kappa$,
  $$
    c(\alpha,\beta):=\max\{ h(\xi)\mid \xi\in\im(\tr(\alpha,\beta))\}.
  $$
  We claim that $c$ witnesses $\U(\kappa, \kappa, \theta, \chi)$ and prove this by
  verifying Clause~(2) of Lemma~\ref{pumpclosed}. To this end, fix a family
  $\mathcal{A} \s [\kappa]^{<\chi}$ consisting of $\kappa$-many pairwise disjoint
  sets, a club $D$ in $\kappa$, and a color $i < \theta$. We will find $\gamma \in D$,
  $a \in \mathcal{A}$, and $\epsilon < \gamma$ such that
  \begin{itemize}
    \item $\gamma < a$;
    \item for all $\alpha \in (\epsilon, \gamma)$ and all $\beta \in a$,
      we have $c(\alpha, \beta) > i$.
  \end{itemize}

  Since $\Gamma:=h^{-1}\{i+1\} \setminus (\tau + 1)$ is in $\mathcal I^+$, we may
  fix $\delta\in S \setminus (\tau + 1)$ such that $\sup(\nacc(e_\delta)\cap
  D\cap\Gamma)=\delta$. Fix an arbitrary $a\in\mathcal A$ with $\delta < a$, and set
  \begin{itemize}
    \item $T:=\{\xi\in \bigcup_{\beta\in a}\im(\tr(\delta,\beta))\mid \delta\in C_\xi\}$;
    \item $C:=C_\delta\cup\bigcup\{ C_\xi\mid \xi\in T\}$;
    \item $\Lambda:=\sup\{\lambda^k_2(\delta,\beta), ~ \sup(e_\delta\setminus C_\xi)
      \mid \beta\in a, ~ \xi\in T\}$.
  \end{itemize}

  For all $\xi\in T$, since $C_\xi\cap S\neq\emptyset$, we infer that
  $\xi\notin\reg(\kappa)$, $\sup(e_\delta\setminus C_\xi)<\delta$, and
  $|C_\xi|<\min(C_\xi)<\delta$. In addition, $|C_\delta|=\sigma<\tau<\delta$
  and $|a|<\chi\le\sigma=\cf(\delta)$, so it follows that both $|C|$ and $\Lambda$
  are less than $\delta$. Pick $\gamma\in\nacc(e_\delta)\cap D\cap\Gamma$ large enough so that
  $\gamma>\Lambda$ and $\cf(\gamma)>\max\{|C|,\chi\}$, and hence
  $\epsilon:=\max\{\Lambda, ~ \sup(C\cap\gamma)\}$ is less than $\gamma$.

  We claim that $\gamma$, $a$, and $\epsilon$ are as desired. To this end,
  let $\alpha \in (\epsilon, \gamma)$ and $\beta \in a$ be arbitrary. We have
  $$
    \lambda_2(\delta,\beta)\le\Lambda\le\epsilon<\alpha<\gamma<\delta<\beta,
  $$
  so, by Lemma~\ref{lambda2}, $\rho_2(\delta,\beta)\sq\rho_2(\alpha,\beta)$.
  We claim that $\gamma \in \im(\tr(\alpha, \beta))$.
  Set $\ell:=\rho_2(\delta,\beta)$, and consider the following two cases.

  $\br$ If $\delta\in\nacc(C_{\tr(\delta,\beta)(\ell-1)})$, then, since
  $$
    [\alpha,\delta)\cap C_{\tr(\delta,\beta)(\ell-1)}\s(\lambda_2(\delta,\beta),\delta)
    \cap C_{\tr(\delta,\beta)(\ell-1)}=\emptyset,
  $$
  we have $\tr(\alpha,\beta)(\ell)=\min(C_{\tr(\alpha,\beta)(\ell-1)}\setminus\alpha)=
  \min(C_{\tr(\delta,\beta)(\ell-1)}\setminus\alpha)=\delta$. It follows that
  $C_{\tr(\alpha,\beta)(\ell)} = C_\delta$, so, since $\gamma \in e_\delta
  \setminus (\Lambda + 1) \s C_\delta \s C$, we have
  $$
    [\alpha,\gamma)\cap C_{\tr(\alpha,\beta)(\ell)}\s (\epsilon,\gamma)\cap C=\emptyset
  $$
  and $\tr(\alpha,\beta)(\ell+1)=\min(C_{\tr(\alpha,\beta)(\ell)}\setminus\alpha)=\gamma$.

  $\br$ If $\delta\in\acc(C_{\tr(\delta,\beta)(\ell-1)})$, then $\gamma\in
  e_\delta\setminus(\Lambda + 1) \s C_{\tr(\delta,\beta)(\ell-1)}\s C$, so
  $[\alpha,\gamma)\cap  C_{\tr(\delta,\beta)(\ell-1)}=\emptyset$
  and $\tr(\alpha,\beta)(\ell)=\min(C_{\tr(\delta,\beta)(\ell-1)}\setminus\alpha)=\gamma$.

  So, in either case, $\gamma \in \im(\tr(\alpha, \beta))$, and hence
  $c(\alpha,\beta)\ge h(\gamma)>i$, as desired.
\end{proof}

\section{Concluding remarks}
\begin{enumerate}
\item Theorem~1 of \cite{paper13} states that if $\lambda$ is a singular cardinal,
$\theta\le\lambda^+$, and $\pr_1(\lambda^+,\lambda^+,\theta,\chi)$ holds for $\chi=2$,
then $\pr_1(\lambda^+,\lambda^+,\theta,\chi)$ holds also for $\chi=\cf(\lambda)$.
Theorem~\ref{t310} above implies that the latter is optimal and cannot be improved to $\chi=\cf(\lambda)^+$.
Specifically, if $\lambda$ is a singular limit of strongly compact cardinals,
then $\pr_1(\lambda^+,\lambda^+,\lambda^+,2)$ holds,\footnote{By \cite{ehr}, if $2^\lambda=\lambda^+$, then $\pr_1(\lambda^+,\lambda^+,\lambda^+,2)$ holds.}
but $\pr_1(\lambda^+,\lambda^+,\lambda^+,\cf(\lambda)^+)$ fails.
\item In light of Lemma~\ref{lemma24} and Theorem~\ref{increasechi}(3), we ask whether $\U(\lambda^+,2,\theta,2)$
for every pair of infinite cardinals $\theta \leq \lambda$, the instance $\U(\lambda^+,2,\theta,2)$
implies $\U(\lambda^+,\lambda^+,\theta,\cf(\lambda))$.
\item We do not know whether it is the case that, in $\zfc$, any true instance $\U(\kappa,\kappa,\ldots)$ may be witnessed by a closed coloring.
\item We wonder whether Subsection~\ref{inaccessible_subsection} can be expanded
to say more on instances of $\U(\kappa,\kappa,\ldots)$ in which $\kappa$ is
an inaccessible cardinal of the form $\cf(2^\nu)$.
\item In view of Fact~\ref{fact29}, we conjecture that $\kappa$ is weakly compact
iff $\U(\kappa, 2, \theta, 2)$ fails for all $\theta \in\reg(\kappa)$.
Recalling \cite[Question~8.1.4]{todorcevic_book}, we furthermore
conjecture that $\kappa$ is weakly compact iff $\U(\kappa, 2, \omega, 2)$ fails.
\end{enumerate}

\section*{Acknowledgments}
The results of this paper were presented by the first author at the
\emph{Set Theory, Model Theory and Applications} conference in Eilat, April 2018,
and at the \emph{SETTOP 2018} conference in Novi Sad, July 2018,
and by the second author at the \emph{$11^{\text{th}}$ Young Set Theory} workshop in Lausanne,
June 2018. We thank the organizers for the warm hospitality.

\end{document}